\font\cmssl=cmss10 at 12 pt
\newcommand{\n}{\nabla}
\newtheorem{theorem}{Theorem}[section]
\newtheorem{proposition}[theorem]{Proposition}
\newtheorem{lemma}[theorem]{Lemma}
\newtheorem{corollary}[theorem]{Corollary}
\newtheorem{remark}[theorem]{Remark}
\newtheorem{example}[theorem]{Example}
\newtheorem{definition}[theorem]{Definition}
\begin{document}

\title{Intrinsic characterization of projective special complex manifolds}
\author{%
Vicente Cort{\' e}s
and
Kazuyuki Hasegawa
}

\maketitle
\begin{abstract}
We define the notion of an $S^1$-bundle of projective special complex base type
and construct a conical special complex manifold from it. 
Consequently the base space of such an $S^{1}$-bundle can be realized as $\mathbb{C}^{\ast}$-quotient of 
a conical special complex manifold. 
As a corollary, we give an intrinsic characterization 
of a projective special complex manifold generalizing Mantegazza's characterization of 
a projective special K\"ahler manifold. 
Our characterization is in the language of c-projective structures. 
As an application, a non-trivial  
$S^1$-family of Obata-Ricci-flat hypercomplex structures (given by a generalization of the rigid c-map) on the tangent bundle of the total space of a 
$\mathbb{C}^*$-bundle over a complex manifold with certain kind of 
c-projective structure is constructed. Finally, we show that the quaternionic structure underlying any of these hypercomplex structures is in general not flat and that its 
flatness implies the vanishing of the c-projective Weyl tensor of  
the base of the $\mathbb{C}^*$-bundle. Conversely, any c-projectively flat complex manifold satisfying a cohomological integrality condition gives rise to a flat quaternionic structure. \\

\noindent
2020 Mathematics Subject Classification: 53C15, 53A20, 53C26\\
Keywords: projective special complex manifold, c-projective structure, c-map. \\
\end{abstract}
\tableofcontents

\section{Introduction}
\setcounter{equation}{0}

In \cite{ACD}, (conical) special complex and 
projective special complex manifolds are introduced as generalizations of  
(conical) special K{\"a}hler and  projective special K{\"a}hler manifolds. 
A projective special complex manifold is a $\mathbb{C}^{\ast}$-quotient of a conical special complex manifold. 
The rigid c-map associates with a special K{\"a}hler manifold a  
hyperK{\"a}hler structure on its tangent bundle \cite{CFG}. Also the supergravity c-map, which was introduced in  \cite{FS}, is an assignment from a projective special 
K{\"a}hler manifold to a quaternionic K{\"a}hler manifold. This is a special case of the HK/QK-correspondence, see \cite{Haydys,ACM,Hitchin,ACDM}. 
In \cite{CH}, the rigid c-map, the supergravity c-map and the HK/QK-correspondence 
are generalized in the absence of a metric.  
In these generalizations, a (conical) special complex and 
a projective special complex manifold play an important role, see the diagram below. 

{
\begin{wrapfigure}{l}{0.5cm}
\vspace{-0.7cm}
\tiny
\begin{align*}
TN&\mbox{: hyperK{\"a}hler (hypercomplex)} \\
\overline{TN}&\mbox{: quaternionic K{\"a}hler (quaternionic) } \\
N&\mbox{: conical special K{\"a}hler (conical special complex)} \\ 
\bar{N}&\mbox{: projective special K{\"a}hler (projective special complex)}
\end{align*}
\end{wrapfigure}

\begin{eqnarray*}
\hspace{-3cm}
  \begin{diagram}
    \node[2]{N}     
    \arrow[3]{e,t,..}{\mbox{\tiny (generalized) rigid c-map.}}
    \arrow[1]{s,r}{/\mathbb{C}^{\ast}}
    \node[3]{TN}
    \arrow[1]{s,r,..}{\tiny \mbox{HK/QK (H/Q-) -corresp. }}\\
    \node[2]{\bar{N}}
    \arrow[3]{e,t,..}{\mbox{\tiny (generalized) supergravity c-map}}
    \node[3]{\overline{TN}}
  \end{diagram}
\end{eqnarray*}
}

A c-projective structure on a projective special complex manifold is canonically induced from 
the special complex manifold (\cite{CH}). 
Therefore the generalized supergravity c-map associates with this type of 
complex manifold of real dimension $2n$ with c-projective structure a quaternionic manifold of dimension $4n+4$ (\cite{CH}). 
In \cite{BC}, a construction of a quaternionic manifold  
from a complex manifold with a certain type of c-projective structure is given. 
This construction yields a quaternionic manifold from a complex manifold of half the dimension with 
c-projective structure and is known as the quaternionic Feix-Kaledin construction, which is a generalization 
of \cite{K, F}.

Therefore it is natural to consider the problem of characterizing those complex manifolds with c-projective structure that can be 
realized as $\mathbb{C}^{\ast}$-quotients of conical special complex manifolds. 
In \cite{CH} it is shown that the Weyl curvature of 
the canonically induced c-projective structure on a projective special 
complex manifold is of type $(1,1)$. Therefore 
any complex manifold with a c-projective structure whose Weyl curvature is {\it not} type $(1,1)$ 
cannot be realized as $\mathbb{C}^{\ast}$-quotient of a conical special complex manifold 
such that its canonical c-projective structure coincides with the given one. 
Based on these considerations, we introduce in Definition \ref{intrinsic_3} the notion of an $S^{1}$-bundle of 
projective special complex base type (abbreviated as PSCB type for simplicity)  
and show in Theorem \ref{char:thm} that any base manifold of an $S^{1}$-bundle of PSCB type 
can be realized as a 
$\mathbb{C}^{\ast}$-quotient of a conical special complex manifold, 
i.e., its base manifold is a projective special complex manifold.
The proof of Theorem \ref{char:thm} is performed by 
considering the product  of a $S^{1}$-bundle of PSCB type with $\mathbb{R}^{>0}$ and 
patching several geometric objects associated to an open covering with local triviality. 
Then a special connection can be constructed on the total space of the resulting $\mathbb{C}^{\ast}$-bundle 
over the given complex manifold. 

One of the simplest examples of Theorem \ref{char:thm} is to construct $\mathbb{C}^{n+1} \backslash \{ 0 \}$ with 
the usual $\mathbb{C}^{\ast}$-action from the the Hopf fibration $S^{2n+1} \to \mathbb{C}P^{n}$. 
More generally, a c-projectively flat manifold can be realized as a projective special 
complex manifold under certain assumptions, each of which is expressed 
in terms of c-projective geometric objects  (Corollary \ref{cflat_ex}). 
In particular, for the case of $\mathbb{C}P^{n}$ ($n \geq 2$) with the standard c-projective structure, 
this result recovers the Hopf fibration 
(Example \ref{ex_cp_n}). 
If there is a trivial $S^1$-bundle of type PSCB over the given complex manifold with a c-projective structure, 
then it is a projective special complex manifold, that is,   
Corollary \ref{app_1} gives an intrinsic characterization of projective special complex manifolds  
in terms of conditions 
all of which are described by objects on the complex manifold with c-projective structure. 
Since the c-projective structure is unique for the lowest dimensional case, 
we give another definition of a projective special complex base type for that case 
 (Definition \ref{intrinsic_2}) and obtain Theorem \ref{char:thm_two_dim} 
 in a similar way. As applications of Theorems \ref{char:thm} and \ref{char:thm_two_dim}, 
Section \ref{low_ex} provides examples of four- and two-dimensional projective special complex manifolds with a parallel Ricci tensor. 
In particular, we present an example that does not occur in projective special K{\"a}hler manifolds.

As we stated in the above paragraph, 
there is an assignment from a projective special K{\"a}hler manifold to 
a quaternionic K{\"a}hler manifold. Projective special K{\"a}hler manifolds are a special class of special complex manifolds. In \cite{M}, an intrinsic characterization of projective special K{\"a}hler manifolds is given, which we recover as Corollary \ref{proj_kaehler_char}.  
We also refer to \cite{MS1}, which provides an intrinsic characterization of a simply connected K{\"a}hler group with an exact Kähler form that admits a compatible projective special K{\"a}hler structure. 
In this case, since the Kähler group is parallelizable and the associated 
$S^{1}$-bundle is trivial, the characterization is described in terms of matrix-valued one-forms. This characterization can be recovered within our framework (Remark \ref{gr_ch_ms}).

Applying Theorem \ref{char:thm} to the generalized rigid c-map yields 
a non-trivial $(\mathbb{R}/\pi \mathbb{Z})$-family of hypercomplex structures on the space of the c-map, 
originating from a certain kind of complex manifold with c-projective structure (Theorem \ref{app_cmap} and Corollary \ref{app3}). 
Additionally, a local example of such a manifold is given by using the local expression of a conical special complex manifold in terms of a holomorphic $1$-form.
The Ricci curvature of the Obata connection of these hypercomplex structures vanishes (\cite{CH}). 
Since the Obata connection is unique for a hypercomplex manifold (\cite{O}), 
all of its invariants, such as the curvature tensor, Ricci curvature, geodesics, and holonomy group, are also invariants of the hypercomplex structure. 
It is known that the Ricci curvature of the Obata connection on 
a hypercomplex manifold of dimension $4n$ ($n \geq 2$)
vanishes 
if and only if its restricted holonomy group is contained in $\mathrm{SL}(n,\mathbb{H})$. 
Therefore, a hypercomplex manifold with a Ricci-flat Obata connection is of particular interest in hypercomplex geometry.

In the last section of the paper we study the quaternionic structure underlying a Ricci-flat hypercomplex structure constructed by 
the generalized rigid c-map. We show that the quaternionic structure can only be flat if the c-projective Weyl tensor of the initial complex manifold is zero, see 
Theorem \ref{flatness_q_c}. In the setting of projective special K\"ahler manifolds 
this reduces to the case of complex hyperbolic space.\\

\section{Preliminaries}
\setcounter{equation}{0}

Throughout this paper, 
all manifolds are assumed to be smooth and without boundary 
and maps are assumed to be smooth 
unless otherwise mentioned. 
The space of sections of a vector bundle $E\rightarrow M$ 
is denoted by $\Gamma(E)$. 

\subsection{Projective special complex manifolds}

In this subsection, we recall the definition of a (conical/projective) special complex manifold 
(see \cite{ACD, CH}). See also \cite{Freed, MS} for the more specific case of a special K{\"a}hler manifold.

\begin{definition}\label{scm:def}
A {\cmssl special complex manifold} $(M,J,\nabla)$ is a complex manifold $(M,J)$ endowed with a 
torsion-free flat connection $\nabla$ such that the $(1,1)$-tensor field $\nabla J$ is symmetric.
Such a connection $\nabla$ is called {\cmssl special}.
A {\cmssl conical special complex manifold}  $(M,J,\nabla, \xi)$ is a special complex manifold $(M,J,\nabla)$
endowed with a nowhere-vanishing vector field $\xi$ such that 
\begin{itemize}
\item $\nabla \xi=\mathrm{Id}$ and 
\item $L_{\xi} J=0$ or, equivalently, $\nabla_\xi J =0$.
\end{itemize}
\end{definition}

We recall \cite{CH} that this definition implies $L_{J\xi }J=0$ and we set $A:=\nabla J$. 
If $A=0$, then $(M,J,\nabla)$ is called {\cmssl trivial}. 
We know that the connection 
\begin{align}\label{connection}
D := \nabla -\frac12 J A 
\end{align}
is torsion-free and complex. Its curvature is given by 
\begin{equation} 
\label{RD:eq}R^D= -\frac14 [A,A]. 
\end{equation}

\begin{lemma}\label{closed_A}
$d^{D}A=0$.
\end{lemma}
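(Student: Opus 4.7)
The plan is to exploit the flatness of $\nabla$ by first establishing that $d^\nabla A = 0$, and then comparing $d^D$ with $d^\nabla$ on $\mathrm{End}(TM)$-valued forms. Viewing $A = \nabla J$ as an $\mathrm{End}(TM)$-valued $1$-form via $A_X = \nabla_X J$, the curvature identity for the induced connection on $\mathrm{End}(TM)$ gives
\[
(d^\nabla A)(X,Y) = \nabla_X\nabla_Y J - \nabla_Y\nabla_X J - \nabla_{[X,Y]} J = [R^\nabla(X,Y), J] = 0.
\]
(Equivalently, in a local $\nabla$-parallel frame one has $A = dJ$, and $d^\nabla A = d^2 J = 0$.)

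Next I would write $D = \nabla + S$ with $S_X = -\tfrac{1}{2} JA_X$. The induced connection on $\mathrm{End}(TM)$ satisfies $D_X \omega_Y = \nabla_X \omega_Y + [S_X, \omega_Y]$ for any $\mathrm{End}(TM)$-valued $1$-form $\omega$, so the general comparison formula
\[
(d^D \omega)(X,Y) = (d^\nabla \omega)(X,Y) + [S_X, \omega_Y] - [S_Y, \omega_X]
\]
holds. Specialising to $\omega = A$ and using $d^\nabla A = 0$ reduces the claim to the purely algebraic identity $[JA_X, A_Y] = [JA_Y, A_X]$ in $\mathrm{End}(TM)$.

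To finish, I would invoke the standard anticommutation $JA_X + A_X J = 0$, obtained by differentiating $J^2 = -\mathrm{Id}$. This gives $JA_X A_Y = -A_X JA_Y = A_X A_Y J$ and $A_Y JA_X = -A_Y A_X J$, hence
\[
[JA_X, A_Y] = (A_X A_Y + A_Y A_X)\, J,
\]
which is visibly symmetric in $X,Y$. The required identity follows and therefore $d^D A = 0$. The only real point of care is the sign bookkeeping in this last anticommutation calculation; beyond that the argument is entirely formal, and, perhaps surprisingly, does not require the symmetry of $A$ as a $(1,2)$-tensor.
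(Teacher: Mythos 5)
Your proof is correct and is essentially the paper's own argument: both decompose $d^{D}A$ as $d^{\nabla}A$ plus the commutator terms $-\tfrac12[JA_X,A_Y]+\tfrac12[JA_Y,A_X]$, kill the first by flatness of $\nabla$, and kill the second by observing that $[JA_X,A_Y]=J\{A_X,A_Y\}$ is symmetric in $X,Y$ thanks to the anticommutation of $A_X$ with $J$. Your version merely spells out the Ricci-identity justification of $d^{\nabla}A=0$ and the sign bookkeeping in more detail.
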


\begin{proof}
We have   
\[ (d^DA)(X,Y) = (d^\nabla A)(X,Y) -\frac12 [J A_X,A_Y] +\frac12 [JA_Y,A_X].\]
The first term vanishes, since  $(d^\nabla A)(X,Y) = R^\nabla (X,Y)J=0$ and the other two terms cancel, since 
\[ [J A_X,A_Y] = J \{ A_X, A_Y\} \]
is symmetric. 
\end{proof}

Let $(M,J)$ be a complex manifold and consider  the following spaces of connections  
on $(M,J)$: 
\begin{align*}
{\cal S}_{(M,J)} :=&\{ \nabla \mid \nabla \,\, \mbox{is  a special connection} \} \\
{\cal C}_{(M,J)} :=&\{ (D,S) \mid D \,\, \mbox{is a torsion-free complex connection,} \\  
&\hspace{1.7cm}S \,\, \mbox{is a symmetric $(1,2)$-tensor and anti-commutes with}\,\, J, \,\, \\
&\hspace{1.7cm}R^{D}=-[S,S], \,\, d^{D}S=0 \}.
\end{align*}

\begin{lemma}\label{char_sc} There is a natural bijection 
${\cal S}_{(M,J)} \cong {\cal C}_{(M,J)}$.  
\end{lemma}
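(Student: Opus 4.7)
The plan is to exhibit the bijection explicitly. For the forward map I would send $\nabla\in\mathcal{S}_{(M,J)}$ to $(D,S)$, where $D=\nabla-\tfrac12 JA$ is the connection already introduced in \eqref{connection} and $S:=\tfrac12 JA$, with $A=\nabla J$. The inverse map will send $(D,S)\in\mathcal{C}_{(M,J)}$ to $\nabla:=D+S$. Once both maps land in the correct set, checking that they are mutually inverse is an elementary calculation: from $S=\tfrac12 JA$ one gets $A=-2JS$, so $D+S=(\nabla-\tfrac12 JA)+\tfrac12 JA=\nabla$; conversely, starting from $(D,S)$ and setting $\nabla=D+S$, one computes $\nabla J=2[S,J]$ summand-wise, and using $SJ=-JS$ this simplifies to $\nabla J=-2JS$, giving back $\tfrac12 J\nabla J=S$ and $\nabla-\tfrac12 J\nabla J=D$.

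For the forward map, the text already records that $D$ is torsion-free and complex and that $R^D=-\tfrac14[A,A]$. It remains to verify the conditions defining $\mathcal{C}_{(M,J)}$ for $S=\tfrac12 JA$. Symmetry of $S$ in its two covariant slots follows from the symmetry of $A$ (which is the hypothesis that $\nabla J$ is symmetric). The anti-commutation $S_X J+JS_X=0$ reduces, after multiplying by $2$, to $JA_X J+J\cdot JA_X=0$, which is immediate from $A_X J+JA_X=0$ (obtained by differentiating $J^2=-\mathrm{Id}$). The identity $R^D=-[S,S]$ follows from \eqref{RD:eq} together with the algebraic observation that $[JA_X,JA_Y]=[A_X,A_Y]$, which is obtained by moving the left $J$'s through the adjacent $A$'s using $A J=-JA$ and $J^2=-\mathrm{Id}$. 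Finally, $d^D S=\tfrac12 d^D(JA)=\tfrac12 J\,d^D A=0$ by Lemma~\ref{closed_A} and the fact that $DJ=0$.

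For the inverse map, given $(D,S)$ I define $\nabla=D+S$. Torsion-freeness of $\nabla$ follows from $T^D=0$ and the symmetry of $S$. For flatness, the standard deformation formula reads
\[
R^{\nabla}(X,Y)=R^D(X,Y)+(d^D S)(X,Y)+[S_X,S_Y],
\]
and since $(D,S)\in\mathcal{C}_{(M,J)}$ the first and third terms cancel while the middle term vanishes, giving $R^\nabla=0$. Symmetry of $\nabla J$ is the computation $(\nabla_X J)Y=[S_X,J]Y=-2JS_X Y$, which is symmetric in $(X,Y)$ precisely because $S$ is symmetric.

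The entire proof is algebraic manipulation of tensor identities, and the only place where a non-obvious observation enters is the equality $[JA,JA]=[A,A]$ needed to match the two normalizations of the curvature constraint; this is really the single substantive point and is where I would focus attention to make sure the factor $-1$ (rather than $-\tfrac14$) in the definition of $\mathcal{C}_{(M,J)}$ is correctly recovered from \eqref{RD:eq}.
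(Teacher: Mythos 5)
Your proof is correct and follows essentially the same route as the paper: both directions are handled by the explicit correspondence $D=\nabla-\tfrac12 JA$, $S=\pm\tfrac12 JA$, with membership in $\mathcal{C}_{(M,J)}$ checked via \eqref{RD:eq}, the identity $[JA_X,JA_Y]=[A_X,A_Y]$, and Lemma~\ref{closed_A}, and the converse via the curvature deformation formula. The only differences are cosmetic: you take $S=+\tfrac12 JA$ where the paper takes $S=-\tfrac12 JA$ (immaterial, since every condition on $S$ is even or linear in $S$), and your intermediate expression $\nabla J=2[S,J]$ has a spurious factor of $2$ although the formula $\nabla J=-2JS$ that you actually use is correct.
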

 
\begin{proof}
If $\nabla \in  {\cal S}_{(M,J)}$, put $S=-\frac{1}{2}JA$. Then we see that  
$D=\nabla +S$ is a torsion-free complex connection and 
$S$ is symmetric and anti-commute with $J$.
Moreover we have 
\[ d^{D}S=-\frac{1}{2} \left( (DJ) \wedge A+J(d^{D}A) \right)=0 \]
by Lemma \ref{closed_A}. 
Finally we see from  (\ref{RD:eq}) that $R^{D}=-[S,S]$. 
Conversely, take $(D,S) \in {\cal C}_{(M,J)}$ and set $\nabla=D-S$. 
Because $S$ is symmetric, $\nabla$ is torsion-free. 
Since $\nabla J=-2JS$, 
we see that $\nabla J$ is symmetric. 
From $R^{D}=R^{\nabla}+d^{D}S-[S,S]$, $\nabla$ is flat. 
Therefore $\nabla$ is a special connection. 
\end{proof}

Recall that $A_\xi = A_{J\xi}=0$. 
We also have that $L_\xi A =0$, since $L_\xi \nabla=0$ and $L_\xi J =0$.

\begin{lemma}\label{Lie_A}
$L_{J\xi}\nabla = A$, $L_{J\xi}A=-2JA$ and $L_{J\xi}(JA) = 2 A$. 
\end{lemma}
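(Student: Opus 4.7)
The strategy is to first compute $\nabla(J\xi)$ and then feed this into the standard formula for the Lie derivative of a flat torsion-free connection. From $\nabla\xi=\mathrm{Id}$, $\nabla J=A$, and $A_\xi=0$ (noted just before the lemma), we get $\nabla_Y(J\xi)=A_Y\xi+J\nabla_Y\xi=JY$, that is, $\nabla(J\xi)=J$. The general identity
\[
(L_X\nabla)(Y,Z)=R^{\nabla}(X,Y)Z+\nabla_Y\nabla_Z X-\nabla_{\nabla_Y Z}X,
\]
valid for any torsion-free $\nabla$, combined with $R^\nabla=0$, then gives
\[
(L_{J\xi}\nabla)(Y,Z)=\nabla_Y(JZ)-J\nabla_YZ=(\nabla_YJ)Z=A_YZ,
\]
which is the first identity.

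For the second formula I would expand $L_{J\xi}(A_YZ)$ directly from $A_YZ=\nabla_Y(JZ)-J\nabla_YZ$, using
\[
L_X(\nabla_YW)=(L_X\nabla)(Y,W)+\nabla_{L_XY}W+\nabla_YL_XW
\]
for both $\nabla_Y(JZ)$ and $\nabla_YZ$, together with $L_{J\xi}J=0$. The terms containing $L_{J\xi}Y$ and $L_{J\xi}Z$ reassemble into $A(L_{J\xi}Y,Z)+A(Y,L_{J\xi}Z)$, and what survives is the tensorial part
\[
(L_{J\xi}A)(Y,Z)=(L_{J\xi}\nabla)(Y,JZ)-J(L_{J\xi}\nabla)(Y,Z)=A_Y(JZ)-JA_YZ=[A_Y,J]Z.
\]
Differentiating $J^2=-\mathrm{Id}$ by $\nabla_Y$ yields $A_YJ+JA_Y=0$, so $[A_Y,J]=-2JA_Y$ and hence $L_{J\xi}A=-2JA$.

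The third identity is then automatic: since $L_{J\xi}$ is a derivation and $L_{J\xi}J=0$, we have $L_{J\xi}(JA)=J\,L_{J\xi}A=-2J^2A=2A$.

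\textbf{Main obstacle.} The only delicate step is the bookkeeping in the Lie-derivative-of-connection formula, specifically checking that the non-tensorial contributions produced when expanding $L_{J\xi}(A_YZ)$ cancel precisely against $A(L_{J\xi}Y,Z)+A(Y,L_{J\xi}Z)$ so that one recovers a tensor identity. Beyond this, the algebraic fact $A_YJ=-JA_Y$, which follows from $\nabla_Y(J^2)=0$, is the key ingredient that upgrades the first identity into the second.
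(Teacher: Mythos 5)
Your proof is correct and follows essentially the same route as the paper: both hinge on $\nabla(J\xi)=J$ (via $\nabla\xi=\mathrm{Id}$ and $A_\xi=0$), flatness and torsion-freeness to obtain $L_{J\xi}\nabla=A$, and then the Leibniz rule together with $L_{J\xi}J=0$ and the anticommutation $A_YJ=-JA_Y$ to deduce $L_{J\xi}A=[A,J]=-2JA$ and $L_{J\xi}(JA)=2A$. The only (cosmetic) difference is that the paper evaluates $L_{J\xi}\nabla$ on a $\nabla$-parallel frame, whereas you invoke the general identity $(L_X\nabla)(Y,Z)=R^\nabla(X,Y)Z+\nabla_Y\nabla_ZX-\nabla_{\nabla_YZ}X$ with $R^\nabla=0$.
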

\begin{proof}
We compute $L_{J\xi}\nabla$ on a pair of $\nabla$-parallel vector fields $X, Y$:
\begin{align*}
(L_{J\xi}\nabla)_XY &= [J\xi ,\nabla_XY]-\nabla_{[J\xi , X]}Y -\nabla_X[J\xi ,Y]\\
&=  0-0 + \nabla_X\nabla_Y(J\xi) = \nabla_X(\nabla_YJ)\xi +\nabla_X(J\nabla_Y\xi ) = 
(\nabla_XJ)Y = A_XY,
\end{align*}
where we have used that $A_Y\xi = A_\xi Y=0$. This implies
\[ L_{J\xi}A =   L_{J\xi}(\nabla J) = [L_{J\xi}\nabla ,J] = [A,J] =2AJ = -2JA\]
and $L_{J\xi}(JA) = J (-2JA)= 2A$. 
\end{proof}

\begin{definition}
If the holomorphic vector field 
$\xi-\sqrt{-1} J \xi$ generates a principal $\mathbb{C}^{\ast}$-action with corresponding 
principal $\mathbb{C}^{\ast}$-bundle 
$\pi:(M,J,\nabla,\xi) \to (\bar{M},\bar{J})$, then we call 
the complex base manifold  $(\bar{M},\bar{J})$
a {\cmssl projective special complex manifold}. \end{definition}

\subsection{Realization of projective special complex manifolds as submanifolds of 
projective space}\label{realizations}

In this subsection, we give an alternative proof of an extrinsic realization theorem of 
special complex and projective special complex manifolds, originally proved in \cite{ACD}. 
The alternative proof for a special complex manifold 
is carried out from the viewpoint of affine immersions. 
See \cite{NS} for affine immersions.
This leads to a useful expression for a conical special complex manifold in terms of a holomorphic one-form,  
which will be used in Section \ref{loc_ex_TU}. 
Let $(M,J)$ be a simply connected complex manifold of real dimension $\dim M=2n+2$.

\begin{proposition}\label{realization:prop}
For all $(D,S) \in {\cal C}_{(M,J)}$, there exists a holomorphic affine immersion 
$f:(M,J,D) \to (\mathbb{C}^{2n+2}, \tilde{J},\tilde{D})$ with the transversal bundle $\varepsilon( f_*TM)$ 
and the affine fundamental form $\varepsilon f_*S$, where 
$\tilde{D}$ is the standard flat torsion-free connection on $\mathbb{C}^{2n+2}$, 
$\tilde{J}$ is the standard complex structure on $\mathbb{C}^{2n+2}$ and 
$\varepsilon$ is a field of endomorphisms such that $\tilde{D} \varepsilon=0$, $\varepsilon^{2}=\mathrm{Id}$ and 
$\varepsilon \tilde{J}=-\tilde{J} \varepsilon$. 
\end{proposition}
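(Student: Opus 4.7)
The plan is to construct $f$ by integrating a closed $V$-valued $1$-form on $M$, decomposed according to the $\pm 1$-eigenspaces of $\varepsilon$. By Lemma~\ref{char_sc}, the pair $(D,S)$ corresponds to the flat torsion-free special connection $\nabla := D-S$, which satisfies $\nabla J = A = 2JS$. The guiding idea is that the $(-1)$-eigen-component of $f_{*}$ should be $\nabla$-parallel, while the $(+1)$-eigen-component should be its precomposition with $J$, so that $f$ becomes holomorphic and the Gauss decomposition falls out for free.

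Concretely, I fix a basepoint $p_{0}\in M$, set $W := T_{p_{0}}M$, and define a $W$-valued $1$-form $\phi^{-}$ on $M$ by letting $\phi^{-}_{p}(X)\in W$ be the $\nabla$-parallel transport of $X\in T_{p}M$ to $p_{0}$; this is well-defined because $\nabla$ is flat and $M$ is simply connected, and by construction $\nabla\phi^{-} = 0$. Put $\phi^{+} := \phi^{-}\circ J$, whence the key identity
\begin{equation*}
(\nabla_{X}\phi^{+})(Y) = \phi^{-}((\nabla_{X}J)Y) = \phi^{-}(A_{X}Y) = 2\phi^{+}(S_{X}Y)
\end{equation*}
follows from $A = 2JS$. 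I realize $V = \mathbb{C}^{2n+2}$ as $W\oplus W$ equipped with the constant endomorphisms $\tilde{J}(a,b) := (-b,a)$ and $\varepsilon(a,b) := (a,-b)$; these satisfy $\tilde{J}^{2} = -\mathrm{Id}$, $\varepsilon^{2} = \mathrm{Id}$, $\varepsilon\tilde{J} = -\tilde{J}\varepsilon$, and $\tilde{D}\varepsilon = 0$ for the standard flat connection $\tilde{D}$, as required.

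Setting $\phi := (\phi^{+},\phi^{-})\colon TM\to V$, the symmetry of $S$ together with $\nabla$ being torsion-free yields $d\phi^{+}(X,Y) = 2\phi^{+}(S_{X}Y - S_{Y}X) = 0$ and $d\phi^{-} = 0$, so $d\phi = 0$. Since $M$ is simply connected, $\phi$ integrates to a smooth $f\colon M\to V$ with $f_{*} = \phi$, unique up to translation. Holomorphicity $\phi(JX) = (-\phi^{-}(X),\phi^{+}(X)) = \tilde{J}\phi(X)$ is immediate, injectivity of $\phi$ at each point follows from injectivity of $\phi^{-}$, and the transversality decomposition $V = f_{*}TM\oplus\varepsilon f_{*}TM$ is verified by the direct computation $\phi(X)+\varepsilon\phi(Y) = (\phi^{-}(J(X+Y)),\phi^{-}(X-Y))$, which sweeps out all of $V$ as $X,Y$ vary. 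Finally, componentwise differentiation of $\phi$ gives
\begin{equation*}
\tilde{D}_{X}\phi(Y) = \phi(\nabla_{X}Y) + \bigl(2\phi^{+}(S_{X}Y),0\bigr) = \phi(D_{X}Y) + \varepsilon\phi(S_{X}Y),
\end{equation*}
which identifies $D$ as the induced connection and $\varepsilon f_{*}S$ as the affine fundamental form.

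The main obstacle will be establishing the key identity $\nabla_{X}\phi^{+} = 2\phi^{+}\circ S_{X}$: this is the single place where the special-complex structure enters essentially, through the relation $\nabla J = 2JS$ built into a special connection, and it is what welds $\phi^{\pm}$ into a single closed holomorphic $V$-valued $1$-form with the correct Gauss decomposition. Everything else is formal bookkeeping in the splitting $V = W\oplus W$ together with Poincaré's lemma.
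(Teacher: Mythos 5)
Your argument is correct, and every step checks out: the identity $(\nabla_X\phi^+)(Y)=2\phi^+(S_XY)$ follows from $\nabla\phi^-=0$ and $\nabla J=2JS$ (note the paper's inline claim $\nabla J=-2JS$ in Lemma \ref{char_sc} has a sign slip; your sign is the right one, and only the symmetry of $\nabla J$ matters there), closedness of $\phi^\pm$ uses exactly the torsion-freeness of $\nabla$ and the symmetry of $S$, and the displayed Gauss formula correctly identifies $D$ as the induced connection and $\varepsilon f_*S$ as the affine fundamental form relative to the transversal bundle $\varepsilon(f_*TM)$.

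Your route is, however, genuinely different from the paper's. The paper assembles the pair $(D,S)$ into the connection $D^E_X(Y,Z)=(D_XY+S_XZ,\,S_XY+D_XZ)$ on $E=TM\oplus TM$, checks that $D^E$ is flat and that $J^E$, $\varepsilon^E$ are parallel (here $R^D=-[S,S]$ and $d^DS=0$ play the roles of the Gauss and Codazzi equations), and then appeals to the existence theorem for holomorphic affine immersions as a black box. You instead pass to the flat special connection $\nabla=D-S$ of Lemma \ref{char_sc}, trivialize $TM$ by a single $\nabla$-parallel transport, and integrate an explicit closed $V$-valued one-form; in effect you have diagonalized the paper's $\varepsilon^E$, on whose $\mp 1$-eigenspaces $D^E$ restricts to $D\mp S$, and you reconstruct the $+1$-component from the $-1$-component via $J$ rather than carrying a second parallel frame. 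What the paper's version buys is brevity and a clean identification of which conditions in ${\cal C}_{(M,J)}$ are the Gauss, Codazzi and Ricci equations; what your version buys is a self-contained proof with no appeal to the immersion existence theorem, an explicit formula $f=\int\phi$ that exhibits $f$ as essentially the $\nabla$-affine development of $M$, and a construction from which the conical normalization in Proposition \ref{realization:projective} and the local description by regular holomorphic one-forms can be read off directly.
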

\begin{proof}
We consider the vector bundle 
$E=TM \oplus TM$ over $M$ and define a connection $D^{E}$ and endomorphisms $J^{E}$, 
$\varepsilon^{E}$ by 
\begin{align*}
 D^{E}_{X}(Y,Z) &=(D_{X}Y+S_{X}Z,S_{X}Y+D_{X}Z), \\
 J^{E}(X,Y)&=(JX,-JY), \,\,\, \varepsilon^{E}(X,Y)=(Y,X),  
\end{align*}
for $X$, $Y$, $Z \in \Gamma(TM)$. From the assumption $(D,S) \in {\cal C}_{(M,J)}$ one can easily check that $D^{E}$ is flat
and that the equations $D^{E}J^{E}=0$ and $D^{E} \varepsilon^{E}=0$ hold. 
There exists a holomorphic affine immersion 
$f:(M,J,D) \to (\mathbb{C}^{2n+2},\tilde{J},\tilde{D})$ 
and an endomorphism $\varepsilon$ on $\mathbb{C}^{2n+2}$ 
such that $\tilde{D} \varepsilon=0$, $\varepsilon^{2}=\mathrm{Id}$, 
$\varepsilon \tilde{J}=-\tilde{J} \varepsilon$ whose transversal bundle is $\varepsilon(f_*TM)$ 
by the existence theorem for holomorphic affine immersions. In fact, $R^{D}=-[S,S]$ is the Gauss (and Ricci) equation and $d^{D}S=0$ is the Codazzi equation. It is easy to see that the affine fundamental form is 
$\varepsilon f_{\ast} S$ by the definition of $D^{E}$. Note that the 
tangent bundle corresponds to first summand of $E=TM \oplus TM$ and the transversal bundle corresponds to the second one. The holomorphicity for the immersion $f$ follows from the $J^{E}$-invariance of 
the first summand of $E$. 
\end{proof}

By Proposition \ref{realization:prop} and Lemma \ref{char_sc}, we recover the result that any simply connected special complex manifold $(M,J,\nabla)$ 
admits a holomorphic totally complex immersion into $\mathbb{C}^{2n+2}$ 
(see Theorem 4 in \cite{ACD}). Here totally complex means that $f_{\ast}(TM)$ is transversal to the fixed-point  
set of $\varepsilon$.  We note that the fixed-point set of $\varepsilon$ is a (real) transversal bundle different from the (complex) transversal bundle $\varepsilon(f_*TM)$ of the affine immersion. 
Any holomorphic one-form $f(=\sum F_{i} d z_{i})$
on a domain $U \subset \mathbb{C}^{n+1}$ can be considered as a holomorphic immersion $f:U \to T^{\ast} \mathbb{C}^{n+1} \cong \mathbb{C}^{2n+2}$. 
It is known that the real matrix 
$\mathrm{Im} \frac{\partial F_{i}}{\partial z_{j}}$ is invertible, which is referred to as 
{\cmssl regular} if and only if  the corresponding immersion $f$ is totally complex 
(\cite[Lemma 3]{ACD}). 
Therefore any special complex manifold can be locally obtained by a regular holomorphic one-form 
(\cite[Corollary 3]{ACD}).

Next, we consider a projective special complex manifold $(\bar{M},\bar{J})$, with 
$M$ simply connected and $\pi: (M,J,\nabla,\xi) \to (\bar{M},\bar{J})$
the corresponding principal $\mathbb{C}^*$-bundle. 
As discussed above, we have a holomorphic affine immersion 
$f:(M,J,D) \to (\mathbb{C}^{2n+2},\tilde{J},\tilde{D})$. In the next proposition 
we recover of \cite[Remark 3]{ACD}.  

\begin{proposition}\label{realization:projective}
Up to a translation in $\mathbb{C}^{2n+2}$, the immersion $f$ is conical, that is 
$f: M \to \mathbb{C}^{2n+2} \backslash \{ 0 \}$ and 
$f(\alpha \cdot x)=\alpha f(x)$ for all $\alpha \in \mathbb{C}^{\ast}$ and $x \in M$. As a consequence,
it induces a holomorphic immersion of $(\bar{M},\bar{J})$  into $\mathbb{C}P^{2n+1}$.
\end{proposition}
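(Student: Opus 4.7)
The plan is to show the infinitesimal equivariance $f_\ast \xi = P\circ f$ (after a translation in $\mathbb{C}^{2n+2}$), where $P$ denotes the position vector field on $\mathbb{C}^{2n+2}$ characterized by $\tilde{D}_v P=v$, to deduce $f_\ast(J\xi) = \tilde{J}f$ from the holomorphicity of $f$, and then to integrate the flows of $\xi$ and $J\xi$ to obtain the desired identity $f(\alpha\cdot x)=\alpha f(x)$.

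The crux is a short computation. Using $\nabla \xi = \mathrm{Id}$, the recalled identity $A_\xi = 0$ (and hence $A_X\xi = A_\xi X = 0$ by the symmetry of $A$), together with $D=\nabla -\tfrac{1}{2}JA$ and $S=-\tfrac{1}{2}JA$, one immediately obtains
\[
D_X \xi = X, \qquad S(X,\xi) = 0.
\]
Substituting $Y=\xi$ into the affine immersion identity $\tilde{D}_X(f_\ast Y) = f_\ast(D_X Y) + \varepsilon f_\ast S(X,Y)$ supplied by Proposition \ref{realization:prop} therefore gives $\tilde{D}_X(f_\ast \xi) = f_\ast X$. Since $\tilde{D}_X(P\circ f)=f_\ast X$ as well, the section $f_\ast\xi - P\circ f$ of $f^\ast T\mathbb{C}^{2n+2}$ is $\tilde{D}$-parallel; as $M$ is connected, it equals a constant vector $c\in\mathbb{C}^{2n+2}$. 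Replacing $f$ by the translate $f+c$, which is still a holomorphic affine immersion with the same transversal bundle and affine fundamental form, I may assume $f_\ast(\xi_x)=f(x)$.

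Holomorphicity of $f$ then yields $f_\ast(J\xi_x)=\tilde{J}f(x)$, so the real flows of $\xi$ and $J\xi$ on $M$ are $f$-related to the radial scaling flow and the $\tilde{J}$-rotation flow on $\mathbb{C}^{2n+2}$. Integrating shows $f(\alpha\cdot x)=\alpha f(x)$ for every $\alpha\in\mathbb{C}^\ast$. Since $\xi$ is nowhere vanishing and $f$ is an immersion, $f(x)=f_\ast(\xi_x)\neq 0$ for every $x\in M$, so $f$ takes values in $\mathbb{C}^{2n+2}\setminus\{0\}$ and descends to a holomorphic immersion $\bar{M}\hookrightarrow \mathbb{C}P^{2n+1}$.

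The main obstacle is ensuring that no transversal correction survives in the comparison of $f_\ast\xi$ with $P\circ f$; this is exactly the content of the identity $S(X,\xi)=0$, which in turn reflects the fact that the Euler field $\xi$ and its conjugate $J\xi$ annihilate the tensor $A=\nabla J$. Once this vanishing is in hand, the remainder of the proof is essentially integration of a parallel section on a connected manifold together with an appeal to holomorphicity of $f$.
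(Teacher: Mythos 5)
Your proof is correct and follows essentially the same route as the paper: both arguments use the Gauss formula together with $D_X\xi=X$ and $S_X\xi=0$ to show $\tilde D_X(f_*\xi)=f_*X$, compare with the Euler (position) field to conclude $f_*\xi=\tilde\xi\circ f$ after a translation, and then deduce conicality and the induced immersion into projective space. Your version merely makes explicit the parallel-section/connectedness argument that the paper compresses into ``up to changing the immersion $f$ by a translation.''
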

\begin{proof}
Since $M$ is a conical special complex manifold, 
the Gauss formula for $f$ gives 
\[ \tilde{D}_{X} f_{\ast} \xi = f_{\ast}(D_{X}\xi)+\varepsilon f_{\ast} S_{X}\xi=f_{\ast}X \quad \mbox{and } \quad
    \tilde{D}_{X} (\tilde{J} f_{\ast} \xi )= \tilde{J}  \tilde{D}_{X}f_{\ast} \xi=\tilde{J} f_{\ast}X. 
\]  
The $\mathbb{C}^{\ast}$-multiplication on $\mathbb{C}^{2n+2} \backslash \{ 0 \}$ is induced by 
$\tilde{\xi}-\sqrt{-1}\tilde{J} \tilde{\xi}$, where $\tilde \xi$ is the Euler field (position vector field), which satisfies
$\tilde{D}_X\tilde \xi = X$ and $\tilde{D}_X(\tilde J \tilde \xi )= \tilde J X$ for all vector fields $X$ on $\mathbb{C}^{2n+2}$. Comparing the above 
formulas, we see that 
\[ f_{\ast} \xi_{x}=\tilde{\xi}_{f(x)}\quad and \quad
f_{\ast} J \xi_{x}=(\tilde{J}\tilde{\xi})_{f(x)}\] 
for all $x\in M$, up to changing the immersion $f$ by a translation. 
Now it follows that the immersion does not go through the origin (since $\xi$ does not vanish), i.e.\ $f: M \to \mathbb{C}^{2n+2} \backslash \{ 0 \}$. 
These observations  mean  that $f(\alpha \cdot x)=\alpha f(x)$ 
for all $\alpha \in \mathbb{C}^{\ast}$, $x \in M$, and that $f$ induces an immersion into a 
projective space. 
Here $(\alpha ,x)\mapsto \alpha \cdot x$ denotes the principal $\mathbb{C}^{\ast}$-action on $M$. 
\end{proof}

In Proposition \ref{realization:projective}, the condition of the immersion $f$ being conical corresponds to 
the one-form $\sum F_{i} d z_{i}$ having homogeneity of degree one, i.e. 
$F_{i}(\lambda z)=\lambda F_{i}( z)$ for all $z \in U$ and all $\lambda$ near $1$ in $\mathbb{C}^{\ast}$. 
Thus any conical special complex manifold can be locally obtained by a regular holomorphic one-form $\sum F_{i} d z_{i}$ with homogeneity of degree one \cite[Corollary 5]{ACD}.

\subsection{The canonical c-projective structure}\label{subsec23}

In this subsection, we analyze the canonical c-projective structure of a projective special complex manifold introduced in \cite{CH}.  
A smooth curve $t\mapsto c(t)$ on a complex manifold $(M,J)$ 
is called {\cmssl $J$-planar with
respect to a connection $D$ if $D_{c'}c' \in \langle  c', Jc' \rangle$.} 
We say that torsion-free complex connections $D^{1}$ and $D^{2}$ on a 
complex manifold $(M,J)$ are {\cmssl c-projectively related} (\cite{Ishi, CEMN})   
if they have the same $J$-planar curves. It is known that 
$D^{1}$ and $D^{2}$ are c-projectively related
if and only if there exists a one-form $\theta$ on $M$ such that
\begin{align}\label{def_cpro}
D^{1}_{X}Y
=&D^{2}_{X} Y
+\theta(X)Y+\theta(Y)X
-\theta(JX)JY-\theta(JY)JX
\end{align}
for $X$, $Y \in \Gamma(TM)$.
This defines an equivalence relation on the space of torsion-free complex connections on $M$.
The equivalence classes are called {\cmssl c-projective structures}.
For a complex connection $D$, 
we define a $(0,2)$-tensor $P^{D}$ on $M$ by 
\begin{align}\label{P-tensor:eq}
P^{D} =& \frac{1}{n+1} \left(
Ric^{D}+\frac{1}{n-1} 
\left( (Ric^{D})^{s} -J^{*}(Ric^{D})^{s} \right) \right), 
\end{align}
which is called the {\cmssl Rho tensor}, where $2n=\dim M \geq 4$, 
$Ric^{D}$ is the Ricci tensor of $D$ and 
$(Ric^{D})^{s}$ is its symmetrization. 
For a $(0,2)$-tensor $l$ and a $(1,1)$-tensor $K$, we define $l \wedge K$ by 
\[  (l \wedge K)_{X,Y}Z=l(X,Z)KY-l(Y,Z)KX \]
for tangent vectors $X$, $Y$ and $Z$. 
For any $(0,2)$-tensor $k$ on a complex manifold with a complex structure $J$, 
define the $(0,2)$-tensor $k_{J}$ by 
\[ k_{J}(X,Y):=k(X,JY) \]
for tangent vectors $X$ and $Y$. 
The c-projective Weyl curvature $W^{[D]}$ of $[D]$ is given by the following coordinate-free version of the formula 
(24) in \cite{CEMN}: 
\begin{align}\label{W_cur}
W^{[D]} =& R^{D} +(P^{D})^{a} \otimes Id 
       -(P^{D}_{J})^{a} \otimes J
+ \frac{1}{2}P^{D} \wedge Id  - \frac{1}{2}P^{D}_{J} \wedge J, 
\end{align}
where $(\, \cdot \,)^{a}$ denotes anti-symmetrization. 
This expression immediately implies the following.
\begin{proposition} \label{comm_W_J:prop}The c-projective Weyl curvature preserves the complex structure: $[W^{[D]} , J]=0$. 
\end{proposition}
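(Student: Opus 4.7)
The plan is to check the identity $[W^{[D]}, J] = 0$ term by term in the decomposition (\ref{W_cur}). Three of the five summands commute with $J$ pointwise, and the remaining two cancel each other in the commutator.

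For the three trivial contributions: $R^{D}$ commutes with $J$ because $D$ is a complex connection, so $DJ=0$ implies $R^{D}(X,Y)J = JR^{D}(X,Y)$ for all $X,Y$. The endomorphism-valued two-forms $(P^{D})^{a}\otimes \mathrm{Id}$ and $-(P^{D}_{J})^{a}\otimes J$ take values at each point in the line spanned by $\mathrm{Id}$, respectively by $J$, both of which manifestly commute with $J$.

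The nontrivial part is to show that $\frac{1}{2}P^{D}\wedge \mathrm{Id}-\frac{1}{2}P^{D}_{J}\wedge J$ commutes with $J$. I would unfold the definition of $\wedge$ given just above (\ref{W_cur}) and use the two elementary identities $P^{D}(X,JZ)=P^{D}_{J}(X,Z)$ and $P^{D}_{J}(X,JZ)=-P^{D}(X,Z)$ (the latter coming from $J^{2}=-\mathrm{Id}$) to expand both $(\,\cdot\,)_{X,Y}(JZ)$ and $J(\,\cdot\,)_{X,Y}Z$. The expansion produces four scalar multiples of $X$ and $Y$ (contributed by $P^{D}\wedge \mathrm{Id}$ acting on $JZ$ together with $J$ acting after $P^{D}_{J}\wedge J$ applied to $Z$, using $J^{2}=-\mathrm{Id}$) and four scalar multiples of $JX$ and $JY$ (contributed by $J$ acting after $P^{D}\wedge \mathrm{Id}$ together with $P^{D}_{J}\wedge J$ acting on $JZ$). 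In each group the four contributions pair off with equal magnitude and opposite sign, so their total is zero.

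The main obstacle is purely clerical: tracking the position of $J$ and the signs through the cancellation. No use of the explicit formula (\ref{P-tensor:eq}) for $P^{D}$ or of any curvature identity is needed; the proposition depends only on $P^{D}$ being a $(0,2)$-tensor on $(M,J)$ and on $D$ being a complex connection.
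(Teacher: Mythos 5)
Your proposal is correct and matches the paper's approach: the paper offers no written proof beyond the remark that the formula \eqref{W_cur} "immediately implies" the claim, and your term-by-term verification (the curvature term commuting because $DJ=0$, the two $\otimes$-terms being scalar multiples of $\mathrm{Id}$ and $J$, and the two $\wedge$-terms cancelling in the commutator via $J^{2}=-\mathrm{Id}$) is exactly that immediate verification, carried out correctly.
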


\begin{lemma}\label{pho_ricci}
$(P^{D}_{J})^{a} =\frac{1}{n+1} (Ric^{D}_{J})^{a}$. 
\end{lemma}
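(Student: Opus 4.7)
The plan is to unpack the definition \eqref{P-tensor:eq} of $P^{D}$ after contraction with $J$ in the second slot, and then observe that the correction term built from the symmetric part of $\mathrm{Ric}^{D}$ becomes symmetric in its two arguments, so that anti-symmetrization kills it and only the $\mathrm{Ric}^{D}$ contribution survives.

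\smallskip

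First I would compute $(n+1)P^{D}_{J}(X,Y) = (n+1)P^{D}(X,JY)$ directly from \eqref{P-tensor:eq}. Using the convention $(J^{*}k)(X,Y) = k(JX,JY)$ and $J^{2}=-\mathrm{Id}$, this gives
\begin{equation*}
(n+1)P^{D}_{J}(X,Y) \;=\; \mathrm{Ric}^{D}_{J}(X,Y) \;+\; \tfrac{1}{n-1}\bigl( (\mathrm{Ric}^{D})^{s}(X,JY) \;+\; (\mathrm{Ric}^{D})^{s}(JX,Y) \bigr).
\end{equation*}
Let $T(X,Y)$ denote the expression in parentheses on the right.

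\smallskip

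The key step is then to show that $T$ is symmetric. Swapping $X$ and $Y$ and using symmetry of $(\mathrm{Ric}^{D})^{s}$ yields
\begin{equation*}
T(Y,X) \;=\; (\mathrm{Ric}^{D})^{s}(Y,JX)+(\mathrm{Ric}^{D})^{s}(JY,X) \;=\; (\mathrm{Ric}^{D})^{s}(JX,Y)+(\mathrm{Ric}^{D})^{s}(X,JY) \;=\; T(X,Y).
\end{equation*}
Hence $T^{a}=0$, and applying the anti-symmetrization operator to the displayed identity leaves $(n+1)(P^{D}_{J})^{a} = (\mathrm{Ric}^{D}_{J})^{a}$, which is the claim.

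\smallskip

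There is no real obstacle here: the result is essentially a two-line algebraic manipulation. The only point requiring mild care is tracking the sign produced by $J^{*}$ acting on the second slot, namely $(J^{*}(\mathrm{Ric}^{D})^{s})(X,JY) = (\mathrm{Ric}^{D})^{s}(JX, J^{2}Y) = -(\mathrm{Ric}^{D})^{s}(JX,Y)$, which is what converts the minus sign inside \eqref{P-tensor:eq} into the plus sign inside $T$ that makes it manifestly symmetric.
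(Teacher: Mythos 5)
Your proposal is correct: it carries out exactly the ``straightforward calculation'' that the paper's proof leaves implicit, with the right convention $(J^{*}k)(X,Y)=k(JX,JY)$ and the correct sign bookkeeping from $J^{2}=-\mathrm{Id}$. Since the correction term $T$ is symmetric, its anti-symmetrization vanishes and the claimed identity follows; nothing further is needed.
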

\begin{proof}
It follows from a straightforward calculation.  
\end{proof}

\begin{lemma}\label{rho_inv}
For a  complex manifold $(M,J,[D])$ 
with a c-projective structure $[D]$, 
if the $2$-form $(P^{D}_{J})^{a}$ is closed, then 
the de Rham cohomology class $[(P^{D}_{J})^{a} ]=[\frac{1}{n+1} (Ric^{D}_{J})^{a}]$ is independent of the choice 
of a connection in $[D]$. 
\end{lemma}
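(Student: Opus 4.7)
The plan is to show that if $D^1$ and $D^2$ are two representatives of $[D]$ related by \eqref{def_cpro} for some $1$-form $\theta$, then $(P^{D^1}_J)^a - (P^{D^2}_J)^a$ is an exact $2$-form; the usual consequences for de Rham cohomology then give the result. By Lemma \ref{pho_ricci} it suffices to work with $(Ric^D_J)^a$.

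The first step is the key identity
\[
(Ric^D_J)^a(Y, Z) = -\tfrac{1}{2}\,\mathrm{tr}\bigl(J R^D(Y, Z)\bigr),
\]
which I would derive from $Ric^D_J(Y,Z) = \mathrm{tr}_X(R^D(X,Y)JZ)$, the commutation $R^D \circ J = J \circ R^D$ (consequence of $D$ being complex), and the first Bianchi identity after a cyclic rearrangement of $R^D(X,Y)Z$. This rewrites $(Ric^D_J)^a$ (up to sign and factor) as a trace-of-curvature form, which makes the transformation under a c-projective change transparent.

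Next I would apply the standard difference formula
\[
R^{D^1}(Y, Z) - R^{D^2}(Y, Z) = (D^2_Y \Delta)(Z, \cdot) - (D^2_Z \Delta)(Y, \cdot) + [\Delta_Y, \Delta_Z],
\]
where $\Delta := D^1 - D^2$ and $\Delta_X := \Delta(X, \cdot)$. Two ingredients tame the right-hand side after tracing with $J$: (i) a direct check from \eqref{def_cpro} yields $\Delta_X J = J \Delta_X$, so by cyclicity $\mathrm{tr}(J[\Delta_Y, \Delta_Z]) = 0$; and (ii) since $D^2 J = 0$ and trace is natural with respect to covariant differentiation, the $1$-form $\mu(Z) := \mathrm{tr}_X(J\Delta(Z, X))$ satisfies $\mathrm{tr}(J (D^2_Y \Delta)(Z, \cdot)) = (D^2_Y \mu)(Z)$. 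Combined with torsion-freeness of $D^2$, this gives
\[
\mathrm{tr}(J R^{D^1}) - \mathrm{tr}(J R^{D^2}) = d\mu.
\]
A direct rank-one trace evaluation from \eqref{def_cpro} using $\mathrm{tr}(J) = 0$ and $\dim_{\R} M = 2n$ produces $\mu = 2(n+1)\, J^*\theta$, whence
\[
(P^{D^1}_J)^a - (P^{D^2}_J)^a = \tfrac{1}{n+1}\bigl[(Ric^{D^1}_J)^a - (Ric^{D^2}_J)^a\bigr] = -\, d(J^*\theta).
\]
Exactness of the right-hand side proves the invariance of the cohomology class.

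The main computational step is the rank-one trace tally yielding $\mu = 2(n+1)J^*\theta$, which must be carried out carefully across all four terms of \eqref{def_cpro}. Conceptually the proof is clean: reinterpreted via the complex trace on $T^{1,0}M$, the form $-\frac{1}{2}\mathrm{tr}(J R^D)$ is (up to a factor) the imaginary part of the curvature of the induced connection on a complex line bundle, so it is automatically closed; this suggests that the closedness hypothesis of the lemma is in fact redundant, and the substantive content is the explicit primitive $-J^*\theta$ for the c-projective change.
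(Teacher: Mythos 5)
Your proof is correct, but it follows a genuinely different route from the paper's. The paper simply quotes the transformation rule for the Rho tensor under a c-projective change, $P^{D^{1}} = P^{D^{2}} - 2(D^{2}\theta) + \theta\otimes\theta - J^{*}\theta\otimes J^{*}\theta$ (from \cite[Proposition 2.11]{CEMN}), composes with $J$, and antisymmetrizes: the quadratic terms cancel and $-2(D^{2}J^{*}\theta)^{a} = -d(J^{*}\theta)$, giving $(P^{D^{1}}_{J})^{a} = (P^{D^{2}}_{J})^{a} - d(J^{*}\theta)$ in three lines. You instead identify $(Ric^{D}_{J})^{a} = -\tfrac12\mathrm{tr}(JR^{D})$ via the first Bianchi identity and $[R^{D},J]=0$, and then track the change of this trace through the curvature difference formula; I checked the key steps ($\Delta_{X}$ commutes with $J$ so the commutator trace dies, $DJ=0$ turns the derivative terms into $d\mu$, and the rank-one tally gives $\mu = 2(n+1)J^{*}\theta$), and they all hold, landing on the same primitive $-d(J^{*}\theta)$. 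What your approach buys: it is self-contained (no appeal to the Rho transformation formula), it makes the statement transparent as an invariance of a first-Chern-type form — consistent with the paper's later identity $c_{1}(\bar M) = [\frac{1}{4\pi}\mathrm{Tr}_{\mathbb{R}}R^{D^{(\omega)}}\circ\bar J]$ and Proposition \ref{rep_fist} — and your closing observation is correct: since $\mathrm{tr}(JR^{D})$ is the curvature of the induced connection on the anticanonical line bundle (equivalently, closed by the second Bianchi identity with $DJ=0$), the closedness hypothesis in the lemma is automatically satisfied. What the paper's approach buys is brevity, at the cost of citing the external transformation formula. One minor caveat: your intermediate signs depend on the Ricci convention $Ric(Y,Z)=\mathrm{tr}(X\mapsto R(X,Y)Z)$, but the exactness conclusion, which is all the lemma needs, is convention-independent.
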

\begin{proof}
If two complex connections $D^{1}$ and 
$D^{2}$ are c-projective related as in 
\eqref{def_cpro}, then 
\begin{align*}
P^{D^{1}} = & P^{D^{2}} - 2(D^{2} \theta) 
+\theta \otimes \theta-J^{\ast}\theta \otimes J^{\ast}\theta. 
\end{align*}
Here we refer to \cite[Proposition 2.11]{CEMN}. 
Using this equation, we have 
\begin{align*}
P^{D^{1}}_{J} = & P^{D^{2}}_{J} - 2 J^{\ast}(D^{2} \theta) 
+\theta \otimes J^{\ast}\theta+J^{\ast}\theta \otimes \theta. 
\end{align*}
Thus it follows that 
\begin{align*}
(P^{D^{1}}_{J})^{a} =& (P^{D^{2}}_{J})^{a} - 2 ((D^{2} J^{\ast}\theta)^{a} 
                              +\frac{1}{2} ( \theta \otimes J^{\ast}\theta - J^{\ast}\theta \otimes \theta) 
                              +\frac{1}{2} ( J^{\ast}\theta \otimes \theta -  \theta \otimes J^{\ast}\theta ) \\
                            =&  (P^{D^{2}}_{J})^{a} - d(J^{\ast} \theta)
\end{align*}
since $D^{2}J=0$. 
\end{proof}

From now on and for the rest of this subsection, let $(M,J,\n , \xi )$ be a conical special complex manifold of $\dim M=2n+2$ 
which is the total space of a 
(holomorphic) principal $\mathbb{C}^*$-bundle $\pi : M \rightarrow \bar M$, the base of which is a projective special complex manifold $\bar{M}$. 
The fundamental vector field generated by $1$ is denoted by $\xi$. 
Note that $J \xi$ is the fundamental vector field generated by $\sqrt{-1}$. 
Let $\omega$ be a connection form on $M$. 
We assume that the horizontal distribution $\mathcal H := \mathrm{Ker}\, \omega $ is 
$J$-invariant or, equivalently, that $\omega$ is of type $(1,0)$
 (but not necessarily holomorphic). 
Using $\omega$ we can project the connection 
$D$ on $M$ to a connection $D^{(\omega)}$ on 
$\bar{M}$, which is complex with respect to $\bar{J}$. 

\begin{theorem}[\cite{CH}]\label{can_c_pro}
Let $\omega$ be a principal connection in the smooth principal bundle $\pi:M\rightarrow \bar M$
such that $\mathrm{Ker}\, \omega$ is $J$-invariant. 
The connection $D^{(\omega)}$ defines a 
c-projective structure $[D^{(\omega)}]$ on $\bar M$. 
The class $[D^{(\omega)}]$ is independent of $\omega$. 
\end{theorem}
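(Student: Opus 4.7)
The plan is to construct $D^{(\omega)}$ by the standard horizontal-lift recipe: for $\bar X,\bar Y\in\Gamma(T\bar M)$ with $\omega$-horizontal lifts $X,Y$ on $M$, set $D^{(\omega)}_{\bar X}\bar Y:=\pi_*(D_XY)$. The first hurdle is well-definedness, for which I need the vector field $D_XY$ on $M$ to be $\mathbb{C}^*$-invariant, so that its pushforward is consistent along each fibre. Since horizontal lifts of vector fields on $\bar M$ are themselves $\mathbb{C}^*$-invariant, this reduces to the tensor identities $L_\xi D=L_{J\xi}D=0$. The first is immediate from $L_\xi\nabla=0$, $L_\xi J=0$, and hence $L_\xi A=0$. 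The second is exactly what Lemma~\ref{Lie_A} is designed for: combining $L_{J\xi}\nabla=A$ with $L_{J\xi}(JA)=2A$ yields $L_{J\xi}D=A-\tfrac12\cdot 2A=0$.

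With $D^{(\omega)}$ well-defined on $\bar M$, its basic properties follow by standard pushforward arguments. Torsion-freeness comes from $\pi_*[X,Y]=[\bar X,\bar Y]$ on horizontal lifts together with torsion-freeness of $D$. For complexness, the assumption that $\mathrm{Ker}\,\omega$ is $J$-invariant forces the horizontal lift of $\bar J\bar Y$ to coincide with $JY$, and $DJ=0$ then descends to $D^{(\omega)}\bar J=0$.

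For independence of the c-projective class, let $\omega^1,\omega^2$ be two connection forms with $J$-invariant kernels. Their difference is horizontal, $\mathbb{C}^*$-invariant, and of type $(1,0)$, so it descends to a type-$(1,0)$ $\mathbb{C}$-valued 1-form on $\bar M$, which I write as $\theta-i\bar J^*\theta$ for a real 1-form $\theta$ on $\bar M$. The two horizontal lifts of $\bar X$ then differ by a vertical vector of the form $\theta(\bar X)\,\xi-\theta(\bar J\bar X)\,J\xi$ (up to sign convention). The main computation is to expand $D_{X^1}Y^1-D_{X^2}Y^2$ by bilinearity, apply the conical identities $D_Z\xi=Z$ and $D_Z(J\xi)=JZ$ (which follow from $\nabla\xi=\mathrm{Id}$, $A_\xi=0$ and $DJ=0$) together with $D_\xi Y^2=Y^2$ and $D_{J\xi}Y^2=JY^2$ on invariant $Y^2$, and then project by $\pi_*$. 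All genuinely vertical contributions drop out and one is left with
\[ D^{(\omega^1)}_{\bar X}\bar Y - D^{(\omega^2)}_{\bar X}\bar Y = \theta(\bar X)\bar Y+\theta(\bar Y)\bar X-\theta(\bar J\bar X)\bar J\bar Y-\theta(\bar J\bar Y)\bar J\bar X, \]
which is exactly the c-projective transformation (\ref{def_cpro}) with one-form $\theta$. The main obstacle I expect is this final bookkeeping: the type-$(1,0)$ constraint inherited from the $J$-invariance of $\mathrm{Ker}\,\omega^i$ is what couples the $\xi$- and $J\xi$-coefficients of the vertical correction via $\bar J^*$, producing the antisymmetric $\bar J$-terms in (\ref{def_cpro}); without that hypothesis the difference $D^{(\omega^1)}-D^{(\omega^2)}$ would only be a general affine change on $\bar M$, not a c-projective one.
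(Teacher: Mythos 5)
Your proof is correct and follows exactly the route the paper intends: the paper states this theorem with a citation to \cite{CH} rather than proving it, but it defines $D^{(\omega)}$ precisely as the projection of $D$ along $\mathrm{Ker}\,\omega$, and Lemma \ref{Lie_A} is set up exactly to give the $\mathbb{C}^*$-invariance $L_{J\xi}D=L_{J\xi}(\nabla-\tfrac12 JA)=A-A=0$ that you use for projectability. Your computation of the difference of the two projected connections via $D\xi=\mathrm{Id}$, $D(J\xi)=J$ and the type-$(1,0)$ form $\theta-\sqrt{-1}\,\bar J^*\theta$ correctly reproduces the c-projective change \eqref{def_cpro} (up to replacing $\theta$ by $-\theta$, which is immaterial for the class).
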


The c-projective structure $[D^{(\omega)}]$ in Theorem \ref{can_c_pro} is  called the {\cmssl canonical} 
c-projective structure of the projective special complex manifold $\bar M$ and is denoted 
by $\bar{\mathcal{P}}(\pi)$.  
Note that $\pi:(M,D) \to (\bar{M}, D^{(\omega)})$
is an affine submersion with the horizontal distribution $\mathcal{H}=\mathrm{Ker}\, \omega$ in the sense of \cite{AH} .  
The horizontal lift of $X \in T\bar{M}$ is denoted by $\tilde{X}$. 
Let $h:TM \to \mathcal{H}$ and $v:TM \to \mathcal{V}$ be the projections with respect to 
the decomposition $TM= \mathcal{H} \oplus \mathcal{V}$, where $\mathcal{V}=\mathrm{Ker}\, \pi_{\ast}$. 
We define the fundamental tensors $\mathcal{A}^{D}$ and $\mathcal{T}^{D}$ by 
\[ \mathcal{A}^{D}_{E} F=v (D_{hE} hF) + h(D_{hE} vF) \]
and
\[ \mathcal{T}^{D}_{E} F=h (D_{vE} vF) + v(D_{vE} hF) \]
for $E$, $F \in \Gamma(T M)$. 
We have $ \mathcal{T}^{D}=0$ and $\mathcal{A}^{D}_{X}{\xi} = X, \,\,\,  \mathcal{A}^{D}_{X} (J\xi ) =JX$
for any horizontal vector $X$. 
Since $D$ and the projections $v$, $h$ are $\mathbb{C}^{\ast}$-invariant, 
$\mathcal{A}^{D}$ is as well, and hence, 
\begin{align}\label{fund_A}
\mathcal{A}^{D}_{ \tilde{X}} \tilde{Y} 
&=\bar{a}(X,Y)\xi - \bar{a}_{\bar{J}}(X,Y) J\xi 
\end{align}
for a $(0,2)$-tensor $\bar{a}$ on $\bar{M}$. 
See \cite[Lemmas 7.1, 7.7]{CH}.

\begin{definition}\label{fund_aa}
The tensor $\bar{a}$ defined in \eqref{fund_A} 
is called a {\cmssl fundamental $(0,2)$-tensor} on a projective special complex manifold $\bar{M}$. 
\end{definition}

The fundamental $(0,2)$-tensor $\bar{a}$ will be given as \eqref{bar_a} if $\dim \bar{M} \geq 4$. 
If $\bar{M}$ is a projective special K{\"a}hler manifold, then the fundamental $(0,2)$-tensor coincides 
with the K{\"a}hler metric (see Section \ref{SpKah}).

\begin{lemma}\label{A_eq}
The fundamental tensor $\mathcal{A}^{D}$ satisfies 
\begin{align*}
v((D_{\tilde{X}} \mathcal{A}^{D})(\tilde{Y},\tilde{Z}))
&=(D^{(\omega)}_{X} \bar{a})(Y,Z) \xi
-(D^{(\omega)}_{X} \bar{a}_{\bar{J}})(Y,Z) J \xi 
\end{align*}
for tangent vectors $X$, $Y$ and $Z$ on $\bar{M}$. 
\end{lemma}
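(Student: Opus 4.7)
The plan is to expand $(D_{\tilde{X}} \mathcal{A}^{D})(\tilde{Y},\tilde{Z})$ by the Leibniz rule as
\[
D_{\tilde{X}}\bigl(\mathcal{A}^{D}_{\tilde{Y}}\tilde{Z}\bigr) - \mathcal{A}^{D}_{D_{\tilde{X}}\tilde{Y}}\tilde{Z} - \mathcal{A}^{D}_{\tilde{Y}}\bigl(D_{\tilde{X}}\tilde{Z}\bigr),
\]
substitute (\ref{fund_A}) at every occurrence of $\mathcal{A}^{D}$, and then project onto the vertical distribution. Two simple observations drive the computation. First, the identities $\nabla \xi = \mathrm{Id}$ and $A_{\xi}=0$, recalled before Lemma \ref{Lie_A}, together with $D = \nabla - \tfrac{1}{2}JA$ and the complex-ness of $D$, yield $D_{\tilde{X}}\xi = \tilde{X}$ and $D_{\tilde{X}}(J\xi) = J\tilde{X}$. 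Second, by its very definition $\mathcal{A}^{D}_{E}F = v(D_{hE}hF) + h(D_{hE}vF)$ depends only on $hE$, so vertical vectors in the ``middle'' slot are annihilated by the outer $\mathcal{A}^{D}$.

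For the first term, a Leibniz expansion of $\mathcal{A}^{D}_{\tilde{Y}}\tilde{Z} = \bar{a}(Y,Z)\xi - \bar{a}_{\bar{J}}(Y,Z)J\xi$ using the two observations produces a vertical contribution $(X\cdot \bar{a}(Y,Z))\xi - (X\cdot \bar{a}_{\bar{J}}(Y,Z))J\xi$ from differentiating the scalar coefficients (which are $\mathbb{C}^{\ast}$-invariant, hence pulled back from $\bar M$), together with a horizontal piece along $\tilde{X}, J\tilde{X}$ produced by $D_{\tilde X}\xi$ and $D_{\tilde X}(J\xi)$. For the two correction terms I would invoke the affine submersion identity
\[
D_{\tilde{X}}\tilde{Y} = \widetilde{D^{(\omega)}_{X}Y} + \mathcal{A}^{D}_{\tilde{X}}\tilde{Y},
\]
and the analogue for $\tilde{Z}$. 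The vertical summand $\mathcal{A}^{D}_{\tilde{X}}\tilde{Y}$ is killed by the outer $\mathcal{A}^{D}$ via the second observation, so $\mathcal{A}^{D}_{D_{\tilde{X}}\tilde{Y}}\tilde{Z}$ reduces, by a second application of (\ref{fund_A}), to $\bar{a}(D^{(\omega)}_{X}Y,Z)\xi - \bar{a}_{\bar{J}}(D^{(\omega)}_{X}Y,Z)J\xi$. In the third term, the horizontal component of $D_{\tilde{X}}\tilde{Z}$ contributes the vertical piece $\bar{a}(Y,D^{(\omega)}_{X}Z)\xi - \bar{a}_{\bar{J}}(Y,D^{(\omega)}_{X}Z)J\xi$, while its vertical component is fed through $\mathcal{A}^{D}_{\tilde{Y}}$ only as $h(D_{\tilde{Y}}(\cdots))$, which produces multiples of $\tilde{Y}, J\tilde{Y}$ and is therefore horizontal.

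Taking the vertical projection of the full Leibniz expansion then deletes every contribution along $\tilde{X}, J\tilde{X}, \tilde{Y}, J\tilde{Y}$, and the surviving coefficients of $\xi$ and $J\xi$ reassemble into $(D^{(\omega)}_{X}\bar{a})(Y,Z)$ and $-(D^{(\omega)}_{X}\bar{a}_{\bar{J}})(Y,Z)$ respectively, yielding the stated formula. No step is genuinely delicate; the only part requiring discipline is keeping track of horizontal versus vertical contributions across the three Leibniz terms, since the identity is essentially a bookkeeping transcription of the definition of the projected connection $D^{(\omega)}$ via the affine submersion formalism.
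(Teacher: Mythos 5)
Your computation is correct and is just the fully written-out version of the argument the paper compresses into the single remark that $D^{(\omega)}\bar{J}=0$: expand $(D_{\tilde X}\mathcal{A}^D)(\tilde Y,\tilde Z)$ by Leibniz, use $D_{\tilde X}\xi=\tilde X$, $D_{\tilde X}(J\xi)=J\tilde X$, the horizontality of $\mathcal{A}^D$ in its first slot, and the decomposition $D_{\tilde X}\tilde Y=\widetilde{D^{(\omega)}_XY}+\mathcal{A}^D_{\tilde X}\tilde Y$, then project vertically. All the bookkeeping checks out, so this is the same approach with the details supplied.
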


\begin{proof}
By $D^{(\omega)} \bar{J}=0$, we have the conclusion. 
\end{proof}

Let $(r,\theta)$ be the polar coordinates of $\mathbb{C}^{\ast}$ and consider 
a (smooth) local trivialization 
$\pi^{-1}(\bar{U}) \cong \bar{U} \times \mathbb{C}^*$ of the 
$\mathbb{C}^*$-principal bundle $\pi : M\rightarrow \bar M$, such that $\xi=r \partial_{r}$ and 
$J\xi = \partial_\theta$. 
For each local trivialization 
$\pi^{-1}(\bar{U}) \cong \bar{U} \times \mathbb{C}^*$,  
we set 
\[ B:=e^{2 \theta J} A \,\,\,\,\, (e^{\theta J}:=(\cos \theta)\mathrm{Id}+(\sin \theta)J). \]

\begin{lemma}\label{localB}
The tensor $B$ is projectable, that is, $B_{\xi}=B_{J\xi}=0$ and 
$L_{\xi}B=L_{J\xi}B=0$.
\end{lemma}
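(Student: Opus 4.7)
The plan is to expand $B$ explicitly as $B = \cos(2\theta)\, A + \sin(2\theta)\, JA$ using the definition of $e^{2\theta J}$ and then verify each of the four claimed identities directly. The key facts I intend to use are: $A_\xi = A_{J\xi} = 0$, $L_\xi A = 0$, $L_\xi J = 0$ (hence also $L_{J\xi}J = 0$), and the two Lie-derivative identities $L_{J\xi}A = -2JA$ and $L_{J\xi}(JA) = 2A$ provided by Lemma \ref{Lie_A}. In the local trivialization one has $\xi = r\partial_r$ and $J\xi = \partial_\theta$, so $\theta$ is constant along the flow of $\xi$, while $\partial_\theta(\cos 2\theta) = -2\sin 2\theta$ and $\partial_\theta(\sin 2\theta) = 2\cos 2\theta$.

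For the first two identities, I would simply plug $\xi$ (respectively $J\xi$) into $B$; since $A_\xi = 0$ and $(JA)_\xi = J A_\xi = 0$, and similarly with $J\xi$ in place of $\xi$, this is immediate. For $L_\xi B = 0$, since the scalar functions $\cos 2\theta$ and $\sin 2\theta$ are $\xi$-invariant and both $A$ and $JA$ are $\xi$-invariant (the latter because $L_\xi J = 0$ and $L_\xi A = 0$), the Leibniz rule gives the vanishing.

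The one computation that requires actual cancellation is $L_{J\xi}B = 0$. I would apply the Leibniz rule to both summands of $B = \cos(2\theta)A + \sin(2\theta)JA$, obtaining
\begin{align*}
L_{J\xi}B &= -2\sin(2\theta)\, A + \cos(2\theta)\,L_{J\xi}A + 2\cos(2\theta)\, JA + \sin(2\theta)\, L_{J\xi}(JA)\\
&= -2\sin(2\theta)\, A - 2\cos(2\theta)\, JA + 2\cos(2\theta)\, JA + 2\sin(2\theta)\, A = 0,
\end{align*}
substituting Lemma \ref{Lie_A} in the middle step. This is precisely where the factor of $2$ in the exponent $e^{2\theta J}$ becomes essential: it is chosen exactly so that the rate of rotation of $(\cos 2\theta, \sin 2\theta)$ along the flow of $J\xi$ cancels the rate $2$ at which $L_{J\xi}$ rotates the pair $(A,JA)$.

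I do not anticipate any real obstacle here; the only points to be careful about are tracking the signs in the derivatives of $\cos 2\theta$ and $\sin 2\theta$ and matching them against the signs in $L_{J\xi}A = -2JA$ and $L_{J\xi}(JA) = +2A$. Once the expansion of $e^{2\theta J}$ is written out, the lemma reduces to a two-line check.
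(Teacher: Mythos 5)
Your proof is correct and is essentially the paper's argument: the paper's proof consists of the single line ``It follows from Lemma \ref{Lie_A},'' and your computation simply writes out the details of that deduction, using $A_\xi=A_{J\xi}=0$, $L_\xi A=L_\xi J=0$, and the identities $L_{J\xi}A=-2JA$, $L_{J\xi}(JA)=2A$ to cancel against the $\theta$-derivatives of $\cos 2\theta$ and $\sin 2\theta$. The signs all check out, so nothing further is needed.
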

\begin{proof}
It follows from Lemma \ref{Lie_A}. 
\end{proof}

We note that the condition $B_{\xi}=B_{J\xi}=0$ is called horizontality of $B$ while 
the condition $L_{\xi}B=L_{J\xi}B=0$ is equivalent to the $\mathbb{C}^*$-invariance of $B$. 
Since the symmetric tensor field $B$ is defined {\it locally} on the $\mathbb{C}^*$-invariant 
open subset $\pi^{-1}(\bar U)\subset M$ and is projectable,  
it projects to a {\it locally} defined symmetric tensor field $\bar{B}$ on $\bar{U}$.

\begin{lemma}
The tensor $B^{2}:(X,Y) \mapsto B_{X} \circ B_{Y}$ is a {\it globally} defined tensor field on $M$, in particular it holds $[B,B]=[A,A]$. 
As a consequence, we have the {\it globally} defined tensor field $[\bar{B},\bar{B}]$ on $\bar{M}$. 
\end{lemma}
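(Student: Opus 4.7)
The plan is to exploit the fact that $A$ anti-commutes with $J$, so that the local factor $e^{2\theta J}$ in the definition of $B$ cancels when two copies of $B$ are composed.

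First I would verify the anti-commutation $A_X J + J A_X = 0$. This follows by differentiating $J^2 = -\mathrm{Id}$:
\begin{equation*}
0 = \nabla_X(J^2) = (\nabla_X J)J + J(\nabla_X J) = A_X J + J A_X.
\end{equation*}
Consequently, $A_X$ commutes with every even power of $J$ and anti-commutes with every odd power, which gives
\begin{equation*}
A_X \circ e^{2\theta J} = A_X\bigl((\cos 2\theta)\mathrm{Id} + (\sin 2\theta) J\bigr) = \bigl((\cos 2\theta)\mathrm{Id} - (\sin 2\theta) J\bigr) A_X = e^{-2\theta J} \circ A_X.
\end{equation*}

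Next I would compute the composition directly. Since $B_X = e^{2\theta J} \circ A_X$ as endomorphisms,
\begin{equation*}
B_X \circ B_Y = e^{2\theta J} \circ A_X \circ e^{2\theta J} \circ A_Y = e^{2\theta J} \circ e^{-2\theta J} \circ A_X \circ A_Y = A_X \circ A_Y.
\end{equation*}
Thus the local expression $B_X \circ B_Y$ coincides with the globally defined $A_X \circ A_Y$, so $B^2$ is in fact a globally defined $(1,3)$-tensor field on $M$. Taking the antisymmetric part in $(X,Y)$ yields $[B,B](X,Y) = B_X B_Y - B_Y B_X = A_X A_Y - A_Y A_X = [A,A](X,Y)$, giving the identity claimed.

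For the final assertion, I would combine this with Lemma \ref{localB}: since $B$ is horizontal and $\mathbb{C}^*$-invariant on $\pi^{-1}(\bar U)$, so is $B^2$, and it projects to a locally defined tensor on $\bar U$. But because the global tensor $B^2 = A^2$ agrees with these local projections on overlaps, the local $\bar{B}^2 = [\bar{B},\bar{B}]^{\text{sym}} + \text{antisym}$ patch together; in particular $[\bar{B},\bar{B}]$ is globally defined on $\bar{M}$ as the projection of the global, horizontal, $\mathbb{C}^*$-invariant tensor $[B,B] = [A,A]$. There is no real obstacle here: once one recognizes that $A$ anti-commutes with $J$, the cancellation is automatic, and the globalization on $\bar{M}$ is just the standard projectability argument.
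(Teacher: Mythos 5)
Your proof is correct, and it supplies exactly the argument the paper leaves implicit (the lemma is stated without proof there): since $A=\nabla J$ anti-commutes with $J$, the conjugation identity $A_X\circ e^{2\theta J}=e^{-2\theta J}\circ A_X$ makes the local factors cancel, giving $B_X\circ B_Y=A_X\circ A_Y$ globally, and the projectability of $B$ (horizontality and $\mathbb{C}^*$-invariance from Lemma \ref{localB}) then yields the global tensor $[\bar B,\bar B]$ on $\bar M$.
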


By Lemma \ref{localB}, we can decompose 
\begin{equation} \label{BBbar:eq}B_{\tilde{X}}\tilde{Y}=(\bar{B}_{X}Y)^{\widetilde{}}+(\pi^{\ast}\bar{c})(X,Y)\xi 
+(\pi^{\ast}\bar{c}_{\bar{J}})(X,Y)J\xi \end{equation}
for $X$, $Y \in T\bar{U}$, where $\bar{c}$ is a symmetric tensor on $\bar{U}$. 
Since $B$ anti-commutes with ${\bar{J}}$,   
we get $\bar{c}(\bar{J}X,\bar{J}Y)=-\bar{c}(X,Y)$ 
for $X$, $Y \in T\bar{M}$.

Using (\ref{RD:eq}), we see that $R^{D}_{E,F}G=0$ if one of the vectors $E$, $F$, $G$ is vertical.  
By a fundamental equation of an affine submersion (\cite{AH}), 
the curvature $R^{D^{(\omega)}}$ is given in \cite{CH}:
\begin{align}\label{curv} 
R^{D^{(\omega)}} 
=& 
-\frac{1}{4}[\bar{B},\bar{B}]
 +2 \bar{a}^{a} \otimes \mathrm{Id} - 2 (\bar{a}_{\bar{J}})^{a} \otimes \bar{J} 
 + \bar{a} \wedge \mathrm{Id} - \bar{a}_{\bar{J}} \wedge \bar{J}.  
\end{align}
Note that the sign convention of \cite{AH} for the curvature tensor is opposite to ours.  
The equation \eqref{curv} corresponds to the horizontal component of $R^{D}_{\tilde{X},\tilde{Y}} \tilde{Z}$. 
By computing the vertical component of 
$R^{D}_{\tilde{X},\tilde{Y}} \tilde{Z}$, we have the following. 

\begin{lemma}\label{bar_c}
$(d^{D^{(\omega)}} \bar{a})(X,Y,Z)=-\frac{1}{4}(\bar{c}(X,\bar{B}_{Y}Z) -\bar{c}(Y,\bar{B}_{X}Z))$ 
for all $X$, $Y$, $Z \in T \bar{M}$. 
\end{lemma}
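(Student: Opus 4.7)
The strategy is to compute the vertical component of $R^{D}_{\tilde{X},\tilde{Y}}\tilde{Z}$ in two independent ways and equate them. Passing to horizontal components was what yielded (\ref{curv}); the present lemma is its vertical counterpart.

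For the first computation, I start from $R^{D}=-\frac{1}{4}[A,A]=-\frac{1}{4}[B,B]$, interpreting $[B,B](X,Y) = B_{X}\circ B_{Y} - B_{Y}\circ B_{X}$, and plug in the decomposition (\ref{BBbar:eq}) of $B$ on horizontal lifts. The central observation is that the symmetry of $B$ combined with $B_{\xi} = B_{J\xi}=0$ (Lemma \ref{localB}) forces $B_{\tilde{X}}\xi = B_{\tilde{X}}(J\xi)=0$. Hence the $\xi$- and $J\xi$-terms of $B_{\tilde{Y}}\tilde{Z}$ are annihilated by the outer application of $B_{\tilde{X}}$, and only the horizontal-lift piece contributes. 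A second use of (\ref{BBbar:eq}) together with antisymmetrization in $X,Y$ yields a vertical part of $R^{D}(\tilde{X},\tilde{Y})\tilde{Z}$ whose $\xi$-coefficient is exactly $-\frac{1}{4}\bigl(\bar{c}(X,\bar{B}_{Y}Z)-\bar{c}(Y,\bar{B}_{X}Z)\bigr)$, i.e.\ the right-hand side of the claim.

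For the second computation, I use the affine submersion decomposition $D_{\tilde{Y}}\tilde{Z} = \widetilde{D^{(\omega)}_{Y}Z} + \mathcal{A}^{D}_{\tilde{Y}}\tilde{Z}$, with $\mathcal{A}^{D}_{\tilde{Y}}\tilde{Z} = \bar{a}(Y,Z)\xi - \bar{a}_{\bar{J}}(Y,Z)J\xi$ by (\ref{fund_A}), and expand $R^{D}_{\tilde{X},\tilde{Y}}\tilde{Z} = D_{\tilde{X}}D_{\tilde{Y}}\tilde{Z} - D_{\tilde{Y}}D_{\tilde{X}}\tilde{Z} - D_{[\tilde{X},\tilde{Y}]}\tilde{Z}$. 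The essential auxiliary facts are
(i) $D_{E}\xi = E$ and $D_{E}(J\xi) = JE$ for any $E\in TM$, coming from $\nabla\xi=\mathrm{Id}$, $A_{\xi}=0$, and $DJ=0$, which makes the second differentiation of the $\xi$- and $J\xi$-pieces purely horizontal;
(ii) $[\xi,\tilde{Z}]=[J\xi,\tilde{Z}]=0$, since the horizontal distribution is $\mathbb{C}^{\ast}$-invariant, so that $D_{\xi}\tilde{Z} = \tilde{Z}$ and $D_{J\xi}\tilde{Z} = J\tilde{Z}$ (again horizontal);
(iii) $v[\tilde{X},\tilde{Y}] = \mathcal{A}^{D}_{\tilde{X}}\tilde{Y} - \mathcal{A}^{D}_{\tilde{Y}}\tilde{X}$ by torsion-freeness of $D$, combined with (i) to force $D_{v[\tilde{X},\tilde{Y}]}\tilde{Z}$ to be horizontal.
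These together ensure that the only vertical contributions come from the $\mathcal{A}^{D}$-parts of $D_{\tilde{X}}D_{\tilde{Y}}\tilde{Z}$ and $D_{[\tilde{X},\tilde{Y}]}\tilde{Z}$. Reorganizing the $\xi$-coefficient by the definition of the covariant derivative of a $(0,2)$-tensor and the torsion-freeness of $D^{(\omega)}$ collapses it to $(d^{D^{(\omega)}}\bar{a})(X,Y,Z)$; equivalently, one can invoke Lemma \ref{A_eq} directly.

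Equating the $\xi$-coefficients from the two computations is the content of the lemma. The main obstacle is the bookkeeping in the second computation; the simplifying point is that items (i)--(iii) jointly force every apparent extra term to be horizontal, leaving behind only a clean exterior covariant derivative of $\bar{a}$.
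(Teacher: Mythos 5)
Your proposal is correct and takes essentially the same route as the paper: the paper also equates the vertical component of $R^{D}_{\tilde{X},\tilde{Y}}\tilde{Z}$ computed once via the affine-submersion formula of \cite[Theorem 3.5]{AH} together with Lemma \ref{A_eq} (which you re-derive by hand in your second computation) and once via $R^{D}=-\frac{1}{4}[A,A]=-\frac{1}{4}[B,B]$ with the decomposition \eqref{BBbar:eq} (your first computation). The only difference is that you unpack the cited submersion identity explicitly, which is sound.
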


\begin{proof}
The vertical component of $R^{D}_{\tilde{X},\tilde{Y}} \tilde{Z}$ is 
\[ v(R^{D}_{\tilde{X},\tilde{Y}} \tilde{Z})
=(d^{D^{(\omega)}} \bar{a})(X,Y,Z) \xi - (d^{D^{(\omega)}} \bar{a}_{\bar{J}})(X,Y,Z)J\xi \]
by \cite[Theorem 3.5]{AH} and Lemma \ref{A_eq}. The equation (\ref{RD:eq}) leads to the conclusion. 
\end{proof}

Since $d^{D^{(\omega)}} \bar{a}_{\bar J}=(d^{D^{(\omega)}} \bar{a})
(\, \cdot \,  ,  \, \cdot \, ,  \bar{J} \, \cdot \, )$, Lemma \ref{bar_c} 
is equivalent to 
\[ 
(d^{D^{(\omega)}} \bar{a}_{\bar{J}})(X,Y,Z)
=\frac{1}{4}(\bar{c}_{\bar{J}}(X,\bar{B}_{Y}Z) -\bar{c}_{\bar{J}}(Y,\bar{B}_{X}Z)),
\]
which corresponds to the $J \xi$-component.

Defining $\bar{\cal B}$ by $\bar{\cal B}(Y,Z)=\mathrm{Tr} (\bar{B}_{Y}\bar{B}_{Z})$ for $Y$, $Z \in T\bar{M}$, 
which is a symmetric, $\bar{J}$-Hermitian globally defined 
$(0,2)$-tensor, 
we have shown in \cite{CH} that 
\begin{align}\label{bar_a}
\bar{a}=\frac{1}{8(n+1)} \bar{\cal B} 
                 -\frac{1}{2} P^{D^{(\omega)}}. 
\end{align}
Note that
\begin{align}
\bar{a}^{a} &=-\frac{1}{2} (P^{D^{(\omega)}})^{a} \left(=-\frac{1}{2(n+1)} (Ric^{D^{(\omega)}})^{a} \right), \label{a_ricci} \\ 
(\bar{a}_{\bar{J}})^{a} & = \frac{1}{8(n+1)} \bar{\cal B}_{\bar{J}}
-\frac{1}{2} (P^{D^{(\omega)}}_{\bar{J}})^{a}. \label{eq_11}
\end{align}

\begin{remark}\label{a_bar_skew}
{\rm 
Let $V:=(\Lambda^{2n}(T^{\ast}\bar{M}) \backslash \{ 0 \}) 
/\{ \pm 1\}$ be the (trivial) principal $\mathbb{R}^{>0}$-bundle over 
$\bar{M}$. 
Let $\Omega$ be a global frame of $\Lambda^{2n}(T^{\ast}\bar{M})$. We can define 
a connection form on $V$ as follows. The equation $D^{(\omega)} \Omega=l \otimes \Omega$ defines a 
one-form $l$ on $\bar M$. 
Since $-2(Ric^{D^{(\omega)}})^{a}=d l $ and (\ref{a_ricci}), 
the two-form $p^{\ast} \bar{a}^{a}$ is the curvature form of the connection 
\[ p^{\ast} \left( \frac{1}{4(n+1)} l \right) + \frac{dr}{r}\]
on $V$, where $r$ is the standard coordinate on $\mathbb{R}^{>0}$ and 
$p:V(\cong \bar{M} \times \mathbb{R}^{>0}) \to \bar{M}$ 
is the bundle projection. 
}
\end{remark}

The c-projective Weyl curvature of the canonical c-projective structure 
$\bar{\mathcal{P}}(\pi)$ on $\bar{M}$ is given as follows \cite[Theorem 7.10]{CH}: 
\begin{align}\label{Weyl_bar}
W^{\bar{\mathcal{P}}(\pi)}
=& -\frac{1}{4} [\bar{B},\bar{B}] 
-\frac{1}{4(n+1)} \bar{\cal B}_{\bar{J}} \otimes \bar{J} 
+\frac{1}{8(n+1)} \bar{\cal B} \wedge \mathrm{Id}
    -\frac{1}{8(n+1)} \bar{\cal B}_{\bar{J}} \wedge \bar{J}. 
\end{align}
In particular, $W^{\bar{\mathcal{P}}(\pi) }_{\bar{J}(\, \cdot \, ),\bar{J}(\, \cdot \, )}=W^{\bar{\mathcal{P}}(\pi) }$, that is, 
$W^{\bar{\mathcal{P}}(\pi)}$ is of type $(1,1)$ 
as an $\mathrm{End}(T\bar{M})$-valued two-form. 
Therefore we observe that 
\begin{itemize}
\item any complex manifold $(\bar{M},\bar{J},[\bar{D}])$ 
with a c-projective structure $[\bar{D}]$ whose c\--pro\-jec\-tive Weyl curvature 
is {\it not} of type $(1,1)$ {\it cannot} be realized as a projective special complex manifold 
such that $[\bar{D}]$ coincides with the canonical c-projective structure 
$\bar{\mathcal{P}}(\pi)$. 
\end{itemize}
The realization problem for $(\bar{M},\bar{J},[\bar{D}])$ as a projective special complex manifold, 
which is the purpose of this paper, will be addressed in Section \ref{sec3}.

Since the two-form $2(\bar{a}_{\bar{J}})^{a}$ is the curvature form of an $S^{1}$-bundle $S \to \bar{M}$ 
(\cite[Proposition 7.9 and Remark 8.12]{CH}), we see that  
\begin{align}\label{int_1}
\left[ \frac{1}{\pi } (\bar{a}_{\bar{J}})^{a} \right] \in H^{2}(\bar{M},\mathbb{Z}). 
\end{align}
The cohomology class $\left[ \frac{1}{\pi } (\bar{a}_{\bar{J}})^{a} \right]$ 
is called the {\cmssl characteristic class} of $S \to \bar{M}$.  
By \eqref{curv}, we have
\begin{align}\label{tr_R}
\mathrm{Tr}_{\mathbb{R}} R^{D^{(\omega)}}_{X,Y} \circ \bar{J}
=-\frac{1}{2} \bar{\cal B}_{\bar{J}} (X,Y)
+4(n+1)(\bar{a}_{\bar{J}})^{a}(X,Y)
\end{align}
for $X$, $Y \in T\bar{M}$. 
Since we know that the first Chern class $c_{1}(\bar{M})$ of $(\bar{M},\bar{J})$ is given by  
$c_{1}(\bar{M})= [ \frac{1}{4 \pi} (\mathrm{Tr}_{\mathbb{R}} R^{D^{(\omega)}} \circ \bar{J}) ]$, 
\eqref{int_1} and \eqref{tr_R} imply  
\begin{align}\label{trB_coh}
\left[ \frac{1}{8 \pi} \, \bar{\cal B}_{\bar{J}} \right]=-c_{1}(\bar{M})
+(n+1)\left[ \frac{1}{\pi} (\bar{a}_{\bar{J}})^{a} \right] \in H^{2}(\bar{M},\mathbb{Z}). 
\end{align} 
In particular, if $c_{1}(\bar{M}) \neq (n+1) \left[ \frac{1}{\pi} (\bar{a}_{\bar{J}})^{a} \right]$, then the conical special complex manifold $M$
is not 
trivial, that is, $A \neq 0$. 

\begin{proposition}\label{rep_fist}
The first Chern class $c_{1}(\bar{M})$
of a projective special complex manifold $(\bar{M},\bar{J})$ with $\dim \bar{M} \geq 4$ is given by 
\begin{align*}
c_{1}(\bar{M})=\left[ -\frac{n+1}{2 \pi} (P^{D^{(\omega)}}_{\bar{J}})^{a} \right] 
= \left[ -\frac{1}{2\pi} (Ric^{D^{(\omega)}}_{\bar{J}})^{a} \right]. 
\end{align*}
\end{proposition}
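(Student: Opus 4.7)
The plan is a direct algebraic manipulation of the two identities already established in this subsection, namely equation (\ref{tr_R}) relating $\mathrm{Tr}_{\mathbb{R}} R^{D^{(\omega)}}\circ \bar{J}$ to $\bar{\mathcal{B}}_{\bar{J}}$ and $(\bar{a}_{\bar{J}})^{a}$, together with equation (\ref{eq_11}) expressing $(\bar{a}_{\bar{J}})^{a}$ in terms of $\bar{\mathcal{B}}_{\bar{J}}$ and $(P^{D^{(\omega)}}_{\bar{J}})^{a}$. The starting point is the standard Chern--Weil formula for any torsion-free complex connection on $(\bar{M},\bar{J})$:
\[
c_{1}(\bar{M})=\left[\frac{1}{4\pi}\,\mathrm{Tr}_{\mathbb{R}}R^{D^{(\omega)}}\circ \bar{J}\right].
\]

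First I would use (\ref{eq_11}) to solve for $\bar{\mathcal{B}}_{\bar{J}}$ as an affine combination of $(\bar{a}_{\bar{J}})^{a}$ and $(P^{D^{(\omega)}}_{\bar{J}})^{a}$, obtaining $\bar{\mathcal{B}}_{\bar{J}}=8(n+1)(\bar{a}_{\bar{J}})^{a}+4(n+1)(P^{D^{(\omega)}}_{\bar{J}})^{a}$. Substituting this into (\ref{tr_R}) the $(\bar{a}_{\bar{J}})^{a}$-terms cancel, leaving $\mathrm{Tr}_{\mathbb{R}}R^{D^{(\omega)}}\circ \bar{J}=-2(n+1)(P^{D^{(\omega)}}_{\bar{J}})^{a}$. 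Dividing by $4\pi$ and taking cohomology classes yields the first asserted equality
\[
c_{1}(\bar{M})=\left[-\frac{n+1}{2\pi}(P^{D^{(\omega)}}_{\bar{J}})^{a}\right].
\]

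The second equality is then immediate from Lemma \ref{pho_ricci}, which gives $(P^{D^{(\omega)}}_{\bar{J}})^{a}=\frac{1}{n+1}(Ric^{D^{(\omega)}}_{\bar{J}})^{a}$.

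There is essentially no conceptual obstacle here; the whole content of the statement has already been packaged into the earlier identities (\ref{tr_R}) and (\ref{eq_11}) together with the expression (\ref{bar_a}) for $\bar{a}$. The only point that deserves a brief check is that the left-hand side of the Chern--Weil formula makes sense, i.e.\ that $D^{(\omega)}$ is indeed a torsion-free complex connection on $(\bar{M},\bar{J})$, which is guaranteed by Theorem \ref{can_c_pro}; and that both sides are closed two-forms so that passing to cohomology is legitimate, which follows because $\mathrm{Tr}_{\mathbb{R}}R^{D^{(\omega)}}\circ \bar{J}$ represents $4\pi c_{1}(\bar{M})$ and hence is closed, whence equation (\ref{eq_11}) transported through the computation forces $(P^{D^{(\omega)}}_{\bar{J}})^{a}$ to be closed as well (so Lemma \ref{rho_inv} applies and the class on the right-hand side is independent of the choice of representative in $\bar{\mathcal{P}}(\pi)$, consistent with the c-projective invariance of the statement).
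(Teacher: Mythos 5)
Your proposal is correct and follows essentially the same route as the paper: the paper's one-line proof invokes \eqref{eq_11}, \eqref{trB_coh} and Lemma \ref{pho_ricci}, and since \eqref{trB_coh} is itself obtained by combining \eqref{tr_R} with the Chern--Weil formula $c_{1}(\bar{M})=[\frac{1}{4\pi}\mathrm{Tr}_{\mathbb{R}}R^{D^{(\omega)}}\circ\bar{J}]$, your substitution of \eqref{eq_11} into \eqref{tr_R} is the same computation carried out at the level of forms rather than cohomology classes. The cancellation of the $(\bar{a}_{\bar{J}})^{a}$-terms and the resulting coefficient $-2(n+1)$ are both correct.
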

\begin{proof}
This follows from equations \eqref{eq_11}, \eqref{trB_coh} and Lemma \ref{pho_ricci}. 
\end{proof}

This also indicates that a c-projective geometric invariant 
$[ -\frac{n+1}{2 \pi} (P^{D^{(\omega)}}_{\bar{J}})^{a}]$ (see Lemma \ref{rho_inv}) on a projective special complex manifold 
$(\bar{M},\bar{J})$ 
is determined solely by the complex manifold structure of  
$(\bar{M},\bar{J})$.

\subsection{Local expressions}

In this subsection, we give local expressions for the fundamental objects of a 
special complex manifold and a projective special complex manifold.
Let $(r,\theta)$ be the polar coordinates of $\mathbb{C}^{\ast}$ and consider 
a (smooth) local trivialization 
$\pi^{-1}(\bar{U}) \cong \bar{U} \times \mathbb{C}^*$ of the 
$\mathbb{C}^*$-principal bundle $\pi : M\rightarrow \bar M$, such that $\xi=r \partial_{r}$ and 
$J\xi = \partial_\theta$. 
A principal connection $\omega$ is locally given by 
\[ \omega := \omega_{1} \otimes 1 +\omega_{2} \otimes \sqrt{-1} 
= \pi^{\ast}(\gamma_{1} \otimes 1+ \gamma_{2} \otimes \sqrt{-1}) 
+ \left(\frac{dr}{r} \otimes 1 +d \theta \otimes \sqrt{-1} \right) \]
for a $\mathbb{C}$-valued one-form $\gamma_{1} \otimes 1+ \gamma_{2} \otimes \sqrt{-1}$
on $\bar{U} \subset \bar{M}$. We recall  that $B=e^{2 \theta J} A.$
By Lemma \ref{closed_A} and using that 
$d^{D}e^{-2 \theta J}=-2d \theta \wedge J e^{-2 \theta J}$, 
we have
\begin{align*}
0 &=d^{D}A=d^{D}(e^{-2 \theta J} B)
=-2d \theta \wedge J e^{-2 \theta J} B + e^{-2 \theta J} d^{D}B.
\end{align*}
Therefore it holds
\begin{align}
  d^{D}B = 2d \theta \wedge JB=2(\omega_{2}-\pi^{\ast}\gamma_{2}) \wedge JB.
\end{align}

\begin{lemma}\label{Bbar:lem}
The equations
\begin{align*}
& d ^{D^{(\omega)}}\bar{B}  =\bar c \wedge \mathrm{Id} + \bar c_{\bar J}\wedge \bar J - 2\gamma_{2} \wedge \bar{J} \bar{B}, \\
& \bar{a}(X,\bar{B}_{Y}Z)-\bar{a}(Y,\bar{B}_{X}Z)+(d^{D^{(\omega)}} \bar{c})(X,Y,Z)
=2(\gamma_{2} \wedge \bar{c}_{\bar{J}}) (X,Y,Z)
\end{align*}
hold for all $X$, $Y$, $Z \in T\bar{M}$.  
\end{lemma}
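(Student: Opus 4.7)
The plan is to evaluate the identity $d^{D}B = 2(\omega_{2}-\pi^{\ast}\gamma_{2})\wedge JB$ displayed just before the lemma on horizontal lifts $\tilde{X},\tilde{Y},\tilde{Z}$ of vector fields $X,Y,Z$ on $\bar{U}$, and then read off its horizontal component (which will give the first equation) and its $\xi$-component (which will give the second). Since $\omega_{2} = \pi^{\ast}\gamma_{2} + d\theta$, the right-hand side reduces to $2\,d\theta \wedge JB$, and because horizontal vectors lie in $\ker\omega$ we have $d\theta(\tilde{X}) = -\gamma_{2}(X)$.

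For the right-hand side, the decomposition \eqref{BBbar:eq}, together with $J\tilde{X} = \widetilde{\bar{J}X}$ (from $J$-invariance of $\mathcal{H}$) and $\bar{J}\bar{B} = -\bar{B}\bar{J}$, gives $JB_{\tilde{Y}}\tilde{Z} = (\bar{J}\bar{B}_{Y}Z)^{\sim} + \bar{c}(Y,Z)J\xi - \bar{c}_{\bar{J}}(Y,Z)\xi$. Hence the horizontal component of $2\,d\theta\wedge JB(\tilde{X},\tilde{Y})\tilde{Z}$ is the horizontal lift of $-2(\gamma_{2}\wedge \bar{J}\bar{B})(X,Y)Z$, its $\xi$-component equals $2(\gamma_{2}\wedge \bar{c}_{\bar{J}})(X,Y,Z)$, and its $J\xi$-component equals $-2(\gamma_{2}\wedge \bar{c})(X,Y,Z)$.

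For the left-hand side, expand $(D_{\tilde{X}}B)(\tilde{Y},\tilde{Z}) = D_{\tilde{X}}(B_{\tilde{Y}}\tilde{Z}) - B_{D_{\tilde{X}}\tilde{Y}}\tilde{Z} - B_{\tilde{Y}}(D_{\tilde{X}}\tilde{Z})$, using (a) the affine submersion formula $D_{\tilde{X}}\tilde{Y} = (D^{(\omega)}_{X}Y)^{\sim} + \bar{a}(X,Y)\xi - \bar{a}_{\bar{J}}(X,Y)J\xi$ from \eqref{fund_A}, (b) the horizontality $B_{\xi}=B_{J\xi}=0$, (c) the decomposition \eqref{BBbar:eq} of $B_{\tilde Y}\tilde Z$, and (d) $D_{\tilde{X}}\xi = \tilde{X}$, $D_{\tilde{X}}(J\xi) = J\tilde{X}$ (which follow from $\nabla \xi = \mathrm{Id}$, $A_{\xi} = 0$, and $DJ=0$). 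After collecting terms and antisymmetrizing in $X,Y$, the horizontal component of $(d^{D}B)(\tilde{X},\tilde{Y})\tilde{Z}$ is the horizontal lift of $(d^{D^{(\omega)}}\bar{B})(X,Y)Z - (\bar{c}\wedge \mathrm{Id})(X,Y)Z - (\bar{c}_{\bar{J}}\wedge\bar{J})(X,Y)Z$, while its $\xi$-component is $\bar{a}(X,\bar{B}_{Y}Z) - \bar{a}(Y,\bar{B}_{X}Z) + (d^{D^{(\omega)}}\bar{c})(X,Y,Z)$. Equating sides component-wise yields the two stated identities; the $J\xi$-component produces the same information as the $\xi$-component after applying $\bar{J}$ and using $D^{(\omega)}\bar{J}=0$, and so serves as a consistency check.

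The main obstacle is not conceptual but bookkeeping: matching signs under the convention $(l\wedge K)(X,Y)Z = l(X,Z)KY - l(Y,Z)KX$, and tracking the swap $\bar{c} \leftrightarrow \bar{c}_{\bar{J}}$ (with a sign change) forced by $J\xi \leftrightarrow -\xi$ when $J$ acts on the vertical parts in \eqref{BBbar:eq}.
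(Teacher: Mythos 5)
Your proposal is correct and follows exactly the paper's own route: the paper likewise evaluates both sides of $d^{D}B = 2(\omega_{2}-\pi^{\ast}\gamma_{2})\wedge JB$ on horizontal lifts $\tilde{X},\tilde{Y},\tilde{Z}$ and reads off the horizontal, $\xi$-, and $J\xi$-components, arriving at the same expressions you describe (with the $J\xi$-component being the redundant consistency check noted after the lemma). The sign bookkeeping you flag matches the paper's computation.
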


\begin{proof}
It suffices to calculate 
$(d^{D}B)(\tilde{X}, \tilde{Y}, \tilde{Z})$ and 
$(2(\omega_{2}-\pi^{\ast}\gamma_{2}) \wedge JB)
(\tilde{X}, \tilde{Y},\tilde{Z})$, 
which lead to the conclusion. We find 
\begin{align*}
(d^{D} B)(\tilde{X}, \tilde{Y}, \tilde{Z})
=&(d^{D^{(\omega)}} \bar{B})(X,Y, Z){\,}^{\widetilde{}}+ \bar c (Y,Z)\tilde{X} + \bar c_{\bar J} (Y,Z)J\tilde{X} \\
&-\bar c (X,Z)\tilde{Y} - \bar c_{\bar J} (X,Z)J\tilde{Y} \\
&+\left( \bar{a}(X,\bar{B}_{Y}Z)-\bar{a}(Y,\bar{B}_{X}Z)+
(d^{D^{(\omega)}} \bar{c})(X,Y,Z) \right)\xi \\
&+\left( -\bar{a}_{\bar{J}}(X,\bar{B}_{Y}Z)+\bar{a}_{\bar{J}}(Y,\bar{B}_{X}Z)+
(d^{D^{(\omega)}} \bar{c}_{\bar{J}})(X,Y,Z) \right) J \xi
\end{align*}
and 
\begin{align*}
(2(\omega_{2}-\pi^{\ast}\gamma_{2}) \wedge JB)
(\tilde{X}, \tilde{Y},\tilde{Z})
=& -2 ((\gamma_{2} \wedge \bar{J} \bar{B})(X,Y,Z)){\,}^{\widetilde{}}
+2 (\gamma_{2} \wedge \bar{c}_{\bar{J}})(X,Y,Z) \xi \\
&-2 (\gamma_{2} \wedge \bar{c})(X,Y,Z) J\xi. \qedhere
\end{align*}
\end{proof}

The second equation in the previous lemma is equivalent to 
\[ 
 -\bar{a}_{\bar{J}}(X,\bar{B}_{Y}Z)+\bar{a}_{\bar{J}}(Y,\bar{B}_{X}Z)+
(d^{D^{(\omega )}} \bar{c}_{\bar{J}})(X,Y,Z) 
=-2 (\gamma_{2} \wedge \bar{c})(X,Y,Z)
\]
for all $X$, $Y$ and $Z \in T\bar{M}$. This can be seen by replacing $Z$ by $\bar J Z$ in the second 
equation of Lemma \ref{Bbar:lem} and doing a short calculation.

Let $\{ \bar{U}_{\alpha} \}_{\alpha \in \Lambda}$ be an open covering of 
$\bar{M}$ 
with local trivializations  
$U_{\alpha}:=\pi^{-1}(\bar{U}_{\alpha}) \cong \bar{U}_{\alpha} \times \mathbb{C}^{\ast}$ 
and $g_{\alpha \beta}:\bar{U}_{\alpha} \cap \bar{U}_{\beta} \to \mathbb{C}^{\ast}$ 
be the corresponding transition functions. If we write 
$g_{\alpha \beta}=e^{f^{1}_{\alpha \beta} + f^{2}_{\alpha \beta} \sqrt{-1}}$, then 
\begin{align}
&f^{1}_{\alpha \beta} + f^{1}_{\beta \gamma} - f^{1}_{\alpha \gamma} =0, \label{cocycle_1} \\ 
&f^{2}_{\alpha \beta} + f^{2}_{\beta \gamma} - f^{2}_{\alpha \gamma} \in 2 \pi \mathbb{Z}, \label{cocycle_2}\\
&\gamma_{1}^{\beta}-\gamma_{1}^{\alpha} =d f^{1}_{\alpha \beta}, \label{cocycle_3}\\
&\gamma_{2}^{\beta}-\gamma_{2}^{\alpha} =d f^{2}_{\alpha \beta} \label{cocycle_4},
\end{align}
where $\gamma_1^\alpha + \sqrt{-1}\gamma_2^\alpha$ denotes the connection form 
(previously denoted by $\gamma_{1}+\sqrt{-1}\gamma_{2}$)
associated with the trivialization labeled by $\alpha$. 
We write $\bar{B}^{\alpha}$ and $\bar{c}^{\alpha}$ for $\bar{B}$ and $\bar{c}$  
on each $\bar{U}_{\alpha}$ ($\alpha \in \Lambda$). 
Then, by the cocycle conditions, we see that   
\begin{align}
\bar{B}^{\beta} &=e^{-2 f^{2}_{\alpha \beta} \bar{J}} \bar{B}^{\alpha}, \label{cocy_B} \\
\bar{c}^{\beta}+\sqrt{-1}\, \bar{c}^{\beta}_{\bar{J}} 
&=e^{-2 f^{2}_{\alpha \beta} \sqrt{-1}}(\bar{c}^{\alpha}+\sqrt{-1} \, \bar{c}^{\alpha}_{\bar{J}} ). \label{cocy_c}
\end{align}
In fact, since $B^{\beta}=e^{2 \theta_{\beta} J}A
=e^{2 (\theta_{\alpha}-f^{2}_{\alpha \beta}) J}A
=e^{-2 f^{2}_{\alpha \beta} J}B^{\alpha}$, 
we have 
\begin{align*}
B^{\beta}_{\tilde{X}} \tilde{Y}
&=(\bar{B}^{\beta}_{X}Y){}^{\widetilde{}}+\bar{c}^{\beta}(X,Y)\xi
+\bar{c}^{\beta}_{\bar{J}}(X,Y)J\xi \\
&=(\cos(2 f^{2}_{\alpha \beta}) \mathrm{Id} - \sin(2 f^{2}_{\alpha \beta}) J)
((\bar{B}^{\alpha}_{X}Y){}^{\widetilde{}}+\bar{c}^{\alpha}(X,Y)\xi
+\bar{c}^{\alpha}_{\bar{J}}(X,Y)J\xi ).
\end{align*}
Therefore it holds that
\begin{align*}
\bar{B}^{\beta}
&=e^{-2 f^{2}_{\alpha \beta} \bar{J}}\bar{B}^{\alpha}, \\
\bar{c}^{\beta}
&=\cos(2 f^{2}_{\alpha \beta}) \bar{c}^{\alpha} 
+ \sin (2 f^{2}_{\alpha \beta}) \bar{c}^{\alpha}_{\bar{J}}, \,\, 
\bar{c}^{\beta}_{\bar{J}}
=\cos(2 f^{2}_{\alpha \beta}) \bar{c}^{\alpha}_{\bar{J}} 
- \sin (2 f^{2}_{\alpha \beta}) \bar{c}^{\alpha}. 
\end{align*}
The last two equations are equivalent to (\ref{cocy_c}).

\begin{definition} \label{hortype:def} A conical special complex manifold $M$ 
is said to be of {\cmssl horizontal type} with respect to a principal connection of 
$\pi:M \to \bar{M}$ 
if $v \circ A=0$, that is 
if the (1,2)-tensor  $\nabla J$ takes only horizontal values. 
\end{definition}
Note that the special complex  manifold is of horizontal type if and only if $\bar{c}^{\alpha}=0$ for all $\alpha \in \Lambda$.

\section{Characterization of projective special complex manifolds in terms of c-projective structures}
\label{sec3}
\setcounter{equation}{0}

The calculations in the previous section, \eqref{Weyl_bar} and 
Remark \ref{a_bar_skew} motivate the following definition. 
In the case of $\dim \bar{M}=2$, any two complex connections are 
c-projectively related. So we divide the definition into $\dim \bar{M} \geq 4$ and $\dim \bar{M}=2$. 

\subsection{$\dim \bar{M} \geq 4$ case}
\label{dim_ge4:sec}

\begin{definition}\label{intrinsic_3}
Let $(\bar M , \bar J, \mathcal{P})$ be a complex manifold of real dimension $\dim \bar{M}=2n \geq 4$ and  
a c-projective structure $\mathcal{P}$, and 
$\pi_{S}:S \to \bar{M}$ be a principal $S^{1}$-bundle with a connection 
$\eta$. 
Let $\{ \bar{U}_{\alpha} \}_{\alpha \in \Lambda}$ be an open covering of 
$\bar{M}$ 
with local trivializations  
$\pi_{S}^{-1}(\bar{U}_{\alpha}) \cong \bar{U}_{\alpha} \times S^{1}$ for 
each $\alpha \in \Lambda$ and $e^{f^{2}_{\alpha \beta} \sqrt{-1}}$ be the transition functions associated 
with $\{ \bar{U}_{\alpha} \}_{\alpha \in \Lambda}$. 
Let $\{ \gamma_{2}^{\alpha} \}_{\alpha \in \Lambda}$ be the collection of a local $1$-forms on $\bar{U}_{\alpha}$ 
associated to the connection $1$-form $\eta$.   
We say that $\pi_{S}:S \to \bar{M}$ (endowed with the connection $\eta$) is of {\cmssl projective special complex base (PSCB)  type} with
collections 
$\{\bar{B}^{\alpha} \}_{\alpha \in \Lambda}$,  
$\{\bar{c}^{\alpha} \}_{\alpha \in \Lambda}$ 
if the following conditions hold: \\
(1) \, $\bar{B}^{\alpha}$ is symmetric $(1,2)$-tensor on $\bar{U}_{\alpha}$ 
and anti-commutes with $\bar{J}$ for all $\alpha \in \Lambda$, \\
(2) \, 
$\bar{B}^{\beta} =e^{-2 f^{2}_{\alpha \beta} \bar{J}} \bar{B}^{\alpha}$
on $\bar{U}_{\alpha} \cap \bar{U}_{\beta}$, \\
(3) \, $\bar{c}^{\alpha}$ is a symmetric $(0,2)$-tensor on $\bar{U}_{\alpha}$ with 
$\bar{J}^{\ast} \bar{c}^{\alpha}=-\bar{c}^{\alpha}$
for all $\alpha \in \Lambda$, \\
(4) \, $\bar{c}^{\beta}+\sqrt{-1} \, \bar{c}^{\beta}_{\bar{J}} 
=e^{-2 f^{2}_{\alpha \beta} \sqrt{-1}} (\bar{c}^{\alpha}+\sqrt{-1} \, \bar{c}^{\alpha}_{\bar{J}} )$ on $\bar{U}_{\alpha} \cap \bar{U}_{\beta}$, \\ 
(5) \, the c-projective Weyl curvature $W^{\mathcal{P} }$ of $\mathcal{P}$ satisfies 
\begin{align*}
W^{\mathcal{P} }=& -\frac{1}{4} [\bar{B}^{\alpha},\bar{B}^{\alpha}] 
-\frac{1}{4(n+1)} \bar{\cal B}_{\bar{J}} \otimes \bar{J} 
+\frac{1}{8(n+1)} \bar{\cal B} \wedge \mathrm{Id}
-\frac{1}{8(n+1)} \bar{\cal B}_{\bar{J}} \wedge \bar{J},  
\end{align*}
where $\bar{\cal B}(Y,Z) :=\mathrm{Tr} (\bar{B}^{\alpha}_{Y}\bar{B}^{\alpha}_{Z})$ for 
$Y$, $Z \in T \bar{M}$, \\
(6) \, there exists a torsion-free complex connection $\bar{D} \in \mathcal{P}$ such that \\
(6-1) \, the curvature of $\eta$ satisfies $d \eta=2 \pi_{S}^{\ast}((\bar{a}_{\bar{J}})^{a})$, where 
\[ 
\bar{a}=\frac{1}{8(n+1)} \bar{\cal B}
                 -\frac{1}{2} P^{\bar{D}}, \]
(6-2) \, $(d^{\bar{D}} \bar{a})(X,Y,Z)=-(1/4) (\bar{c}^{\alpha}(X,\bar{B}^{\alpha}_{Y}Z) 
-\bar{c}^{\alpha}(Y,\bar{B}^{\alpha}_{X}Z))$ 
for all $\alpha \in \Lambda$ and $X$, $Y$, $Z \in T\bar{U}_{\alpha}$, \\
(6-3) \, $d ^{\bar{D}} \bar{B}^{\alpha}  =- 2\gamma_{2}^{\alpha} 
\wedge \bar{J} \bar{B}^{\alpha} +\bar c^\alpha \wedge \mathrm{Id} + \bar c^\alpha_{\bar J}\wedge \bar J$ for all $\alpha \in \Lambda$, \\
(6-4) \, $\bar{a}(X,\bar{B}^{\alpha}_{Y}Z)-\bar{a}(Y,\bar{B}^{\alpha}_{X}Z)
+(d^{\bar{D}} \bar{c}^{\alpha})(X,Y,Z)
=2(\gamma_{2}^{\alpha} \wedge \bar{c}^{\alpha}_{\bar{J}}) (X,Y,Z)$ for all $\alpha \in \Lambda$ and $X$, $Y$, $Z \in T\bar{U}_{\alpha}$. 
\end{definition}

\begin{remark}
{\rm 
The locally defined $\bar{B}^{\alpha}$ give a global $(1,3)$-tensor $(\bar{B}^{\alpha})^{2}$. 
Hence $\bar{\cal B}$ and $[\bar{B}^{\alpha},\bar{B}^{\alpha}]$ are globally defined on $\bar{M}$. 
}
\end{remark}

\begin{remark}
{\rm 
The local conditions (6-2), (6-3) and (6-4) are in fact global conditions on $\bar{M}$. 
For example, we have 
\begin{align*}
d^{\bar{D}} \bar{B}^{\beta} 
+ 2 \gamma^{\beta}_{2} \wedge \bar{J} \bar{B}^{\beta} - \bar c^\beta \wedge \mathrm{Id} 
& - \bar c^\beta_{\bar J}\wedge \bar J \\
&=e^{-2 f^{2}_{\alpha \beta} \bar{J}}
(d^{\bar{D}} \bar{B}^{\alpha} 
+ 2 \gamma^{\alpha}_{2} \wedge \bar{J} \bar{B}^{\alpha} -\bar c^\alpha \wedge \mathrm{Id} - \bar c^\alpha_{\bar J}\wedge \bar J). 
\end{align*}
The others are checked similarly using part (2) and (4) in Definition 
\ref{intrinsic_3} and (\ref{cocycle_4}). 
}
\end{remark}

\begin{example}\label{hopf_bdl}
Let $(\mathbb{C}P^{n}, \bar{J}, [ \bar{D}^{\bar g} ])$ 
be the complex projective space with the standard complex structure $\bar{J}$ and 
the c-projective structure of the Levi-Civita connection $\bar{D}^{\bar g}$ 
of the Fubini-Study metric $\bar{g}$. 
One of the simplest examples of a principal $S^1$-bundle of PSCB type 
is the Hopf fibration $S^{2n+1} \to \mathbb{C}P^{n}$. 
In fact, in this case $\eta$ is the canonical contact form of $S^{2n+1}$, 
 $\bar{a}=-\bar{g}$, 
$\bar B^{\alpha}=0$, $\bar c^{\alpha}=0$ for all $\alpha$. 
\end{example}

Next we will show that given an $S^1$-bundle $S \to (\bar M,\bar J, \mathcal{P})$ of PSCB type, 
the base manifold is really a projective special complex manifold such that 
its canonical c-projective structure coincides with $\mathcal{P}$. 
To prove this we construct a conical special complex manifold $M$ with principal $\mathbb{C}^*$-action 
and check that  $M/\mathbb{C}^* =\bar M$ with the canonical c-projective structure on the quotient 
coinciding with the initial c-projective structure $\mathcal{P}$. 

Let $(\bar M , \bar J, \mathcal{P})$ be a complex manifold with 
a c-projective structure $\mathcal{P}$ and 
let $\pi_{S}:S \to \bar M$ be a principal $S^1$-bundle of PSCB type with  connection 
$\eta$ as in Definition \ref{intrinsic_3}. 
Let $\bar{D} \in \cal{P}$ be a torsion-free complex connection 
satisfying (6) of Definition \ref{intrinsic_3}. 
Consider $p: V = (\Lambda^{2n}T^*\bar M\setminus \{ 0\})/\{ \pm 1\} \cong\bar{M} \times \mathbb{R}^{>0}\to \bar M$ the trivial $\mathbb{R}^{>0}$-bundle 
with the connection 
\begin{align}\label{loc_con}
\omega_1 := -p^{\ast} \left( \frac{1}{2(n+1)} l \right) + \frac{dr}{r}, 
\end{align}
where $l$ is defined by the covariant derivative $\bar D\Omega = l\otimes \Omega$ of the 
trivializing section $\Omega$. 
We set the $\mathbb{C}^{\ast}$-bundle 
$M:=S \times \mathbb{R}^{>0}$ and consider the connection $\omega = \omega_1 + \sqrt{-1}\omega_2$ 
on $M$ induced from (\ref{loc_con}) and $\omega_2 := \eta$ (considered as one-form on $M$). 
We denote by $\pi$ the projection from $M$ onto $\bar{M}$, which is defined 
as the composition of the canonical projection $\pi_{1}:M =S \times \mathbb{R}^{>0}\to S$ 
with $\pi_{S} : S \to \bar M$. 
The fundamental vector fields generated by 
$1 \in T_{1} \mathbb{R}^{>0} \cong \mathbb{R}$ and 
$\sqrt{-1} \in T_{1}S^{1} \cong \sqrt{-1} \mathbb{R}$ are denoted by 
$\xi$ and $Z_M$, respectively. 
Let $(r_{\alpha}=r,\theta_{\alpha})$ be the polar coordinates with respect to 
a local trivialization $\pi^{-1}(\bar{U}_{\alpha}) \cong \bar{U}_{\alpha} \times \mathbb{C}^{\ast}$. 

The connection form $\omega$ is locally expressed on 
$\pi^{-1}(\bar{U}_{\alpha}) =\pi_{S}^{-1}(\bar{U}_{\alpha}) \times \mathbb{R}^{>0} \cong 
\bar{U}_{\alpha} \times S^{1} \times \mathbb{R}^{>0}$ 
as 
\[ \omega = \omega_{1} \otimes 1 +\omega_{2} \otimes \sqrt{-1} = \pi^{\ast}(\gamma^{\alpha}_{1} \otimes 1
+ \gamma^{\alpha}_{2} \otimes \sqrt{-1}) 
+ \left(\frac{dr_{\alpha}}{r_{\alpha}} \otimes 1 
+d \theta_{\alpha} \otimes \sqrt{-1} \right). \]
Note that $\{ \gamma^{\alpha}_{1}=-\frac{1}{2(n+1)} l  \}_{\alpha \in \Lambda}$ 
and 
$\{ \gamma^{\alpha}_{2}=\sigma_{\alpha}^{\ast} \eta \}_{\alpha \in \Lambda}$ satisfy 
(\ref{cocycle_3}) and (\ref{cocycle_4}), respectively, 
where $\sigma_{\alpha}:\bar{U}_{\alpha} \to S$ is a local section of 
$\pi_{S}$ given by the local trivialization of 
$\pi^{-1}(\bar{U}_{\alpha}) \cong \bar{U}_{\alpha} \times \mathbb{C}^{\ast}$. 
Its curvature is expressed by 
\begin{align}\label{d_eta_1}
 d \omega =-2 (\pi^{\ast} \bar{a}^{a}) \otimes 1
+ 2 (\pi^{\ast} (\bar{a}_{\bar{J}})^{a}) \otimes \sqrt{-1}, 
\end{align}
where $\bar{a}$ is the $(0,2)$-tensor on $\bar{M}$ defined as 
\[ \bar{a}=\frac{1}{8(n+1)} \bar{\cal B}
                 -\frac{1}{2} P^{\bar{D}} \]
by (\ref{loc_con}) and (6-1) of Definition \ref{intrinsic_3}. 
We define an almost complex structure $J$ on $M$ by 
\[ J \xi=Z_M,\,\, JZ_M=-\xi, \,\, J(\tilde{X})=(\bar{J}X){}^{\widetilde{}} \]
for $X \in T\bar{M}$. 
Note that $\mathrm{Ker}\,  \omega$ is $J$-invariant.  
Next consider the connection $D$ on $M$ defined by 
\begin{align}\label{def_c_con}
D_{\tilde{X}} \tilde{Y}
&:=(\bar{D}_{X} Y) {}^{\widetilde{}} + (\pi^{\ast} \bar{a})(\tilde{X},\tilde{Y})\xi
- (\pi^{\ast} \bar{a}_{\bar J})(\tilde{X},\tilde{Y})Z_M
\\
D_{\tilde{X}} \xi &=D_{\xi} \tilde{X}:=\tilde{X} \nonumber\\
D_{\tilde{X}} Z_M &=D_{Z_M} \tilde{X}:=J \tilde{X} \nonumber\\
D_{\xi} \xi &:=\xi, \,\, D_{Z_M}Z_M:=-\xi, \,\, D_{Z_M} \xi=D_{\xi}Z_M:=Z_M \nonumber
\end{align}
for $X$ and $Y \in \Gamma(T\bar{M})$.

\begin{lemma}\label{cpx}
We have \\
(1) $D$ is torsion-free, \\
(2) $D J=0$, which implies the integrability of $J$, \\
(3) $D \xi = \mathrm{Id}$, $DZ_M=J$,\\
(4) $L_{\xi} J=0$ and $L_{Z_M} J=0$.
\end{lemma}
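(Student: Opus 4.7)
The verification is a case analysis on the basic frame built from horizontal lifts $\tilde X$ and the two vertical fields $\xi, Z_M$, which span $TM$ at each point. The only piece of nonformal input needed is the Lie bracket of two horizontal lifts: from the principal bundle identity $\omega([\tilde X,\tilde Y])=-d\omega(\tilde X,\tilde Y)$ together with (\ref{d_eta_1}) we obtain
$$[\tilde X,\tilde Y] = [X,Y]\,\widetilde{}\; + 2\bar a^{a}(X,Y)\,\xi \;-\; 2\bar a_{\bar J}^{\,a}(X,Y)\,Z_M,$$
while $\xi,Z_M$ and all $\tilde X$ mutually commute because $\xi,Z_M$ are the fundamental vector fields of a $\mathbb{C}^{\ast}$-action that preserves $\omega$ (hence the horizontal distribution).

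For (1) the only nontrivial case is horizontal-horizontal. Comparing with the definition (\ref{def_c_con}) of $D_{\tilde X}\tilde Y$, the horizontal parts of $D_{\tilde X}\tilde Y-D_{\tilde Y}\tilde X-[\tilde X,\tilde Y]$ cancel by torsion-freeness of $\bar D$; the $\xi$-component is $\bar a(X,Y)-\bar a(Y,X)-2\bar a^{a}(X,Y)=0$, and the $Z_M$-component is $-\bar a_{\bar J}(X,Y)+\bar a_{\bar J}(Y,X)+2\bar a_{\bar J}^{a}(X,Y)=0$. The mixed and purely vertical cases are one-line scalar checks such as $D_{\tilde X}\xi-D_\xi\tilde X-[\tilde X,\xi]=\tilde X-\tilde X-0=0$.

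For (2) I check $(D_EJ)F=0$ on every pair of basic fields. The horizontal-horizontal case expands to
$$(D_{\tilde X}J)\tilde Y = \bigl[(\bar D_X\bar J)Y\bigr]\widetilde{}\; +\bigl\{\bar a(X,\bar JY)-\bar a_{\bar J}(X,Y)\bigr\}\xi +\bigl\{-\bar a_{\bar J}(X,\bar JY)-\bar a(X,Y)\bigr\}Z_M,$$
which vanishes because $\bar D$ is complex, because $\bar a_{\bar J}(\cdot,\cdot)=\bar a(\cdot,\bar J\cdot)$, and because $\bar J^{2}=-\mathrm{Id}$. The remaining cases are immediate from (\ref{def_c_con}); for instance $(D_{\tilde X}J)Z_M=D_{\tilde X}(-\xi)-J(J\tilde X)=-\tilde X+\tilde X=0$. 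Integrability of $J$ then follows from the standard fact that a complex structure parallel with respect to a torsion-free connection is integrable.

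Item (3) is a direct reading of (\ref{def_c_con}): the formulas $D_{\tilde X}\xi=\tilde X$, $D_\xi\xi=\xi$, $D_{Z_M}\xi=Z_M$ together say $D\xi=\mathrm{Id}$, and likewise $DZ_M$ sends $\tilde X,\xi,Z_M$ to $J\tilde X,J\xi,JZ_M$, i.e.\ $DZ_M=J$. For (4) I use that for any torsion-free connection $D$ and vector field $V$ one has the identity
$$(L_VJ)F = (D_VJ)F - (DV)(JF) + J\bigl((DV)F\bigr),$$
obtained by expanding $[V,JF]$ and $[V,F]$ via torsion-freeness. Since $DJ=0$ by (2), the first term vanishes, and it remains to see that $DV$ commutes with $J$: for $V=\xi$ we have $DV=\mathrm{Id}$, for $V=Z_M$ we have $DV=J$, both of which commute with $J$, so $L_\xi J=L_{Z_M}J=0$. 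I do not foresee any real obstacle; the only place the PSCB-type hypotheses enter is through the curvature identity (\ref{d_eta_1}) used in (1), and the rest is tensorial bookkeeping.
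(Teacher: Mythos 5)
Your proposal is correct and follows essentially the same route as the paper: checking torsion-freeness on the frame of horizontal lifts and $\xi, Z_M$ using \eqref{d_eta_1} (your explicit bracket formula $[\tilde X,\tilde Y]=\widetilde{[X,Y]}+2\bar a^{a}(X,Y)\xi-2(\bar a_{\bar J})^{a}(X,Y)Z_M$ is just the paper's computation of $vT^D(\tilde X,\tilde Y)$ repackaged), verifying $DJ=0$ case by case with the identities $\bar a_{\bar J}(\cdot,\cdot)=\bar a(\cdot,\bar J\cdot)$ and $\bar J^2=-\mathrm{Id}$, and deducing (4) from $L_V=D_V-DV$ together with $D\xi=\mathrm{Id}$, $DZ_M=J$ commuting with $J$. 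No gaps.
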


\begin{proof}
Let $T^{D}$ be the torsion tensor of $D$. It is easy to see that 
$T^{D}(\xi, \, \cdot \,)=0$ and $T^{D}(Z_M, \, \cdot \,)=0$. 
For the horizontal lifts $\tilde{X}$ and $\tilde{Y}$ of $X$ and $Y$, we have  
\[ hT^D(\tilde{X},\tilde{Y})=\widetilde{[X,Y]}- h[\tilde{X},\tilde{Y}]=0\]  and 
\begin{align*}
v T^{D}(\tilde{X},\tilde{Y})
=2 \bar{a}^{a}(X,Y)\xi -2 \bar{a}_{\bar{J}}^{a}(X,Y)Z_M
+(d \omega_{1})(\tilde{X},\tilde{Y})\xi+(d \omega_{2})(\tilde{X},\tilde{Y})Z_M. 
\end{align*}
Therefore (\ref{d_eta_1}) implies that $T^{D}(\tilde{X},\tilde{Y})=0$. 
For (2), it is easy to see that $D_{\xi} J=0$, $D_{Z_M} J=0$, $(D_{\tilde{X}} J) \xi =0$ and 
$(D_{\tilde{X}} J) Z_M =0$. For the horizontal lifts $\tilde{X}$ and $\tilde{Y}$, we have 
\begin{align*}
(D_{\tilde{X}} J) \tilde{Y} 
=&((\bar{D}_{X} \bar{J}) Y{})^{\widetilde{}} + \bar{a}(X,\bar{J}Y)\xi - \bar{a}_{\bar{J}}(X,\bar{J}Y)Z_M
-\bar{a}(X,Y)Z_M - \bar{a}_{\bar{J}}(X,Y)\xi =0. 
\end{align*}
The definition of $D$ and $J$ immediately implies (3) and (4) follows from the 
previous formulas, since $L_\xi = D_\xi -D\xi = D_\xi -\mathrm{Id}$ and 
$L_{Z_M}=D_{Z_M}-DZ_M = D_{Z_M} -J$.   
\end{proof}

We define a symmetric tensor 
$B^{\alpha}$ on $\pi^{-1}(\bar{U}_{\alpha})$ for each 
$\alpha \in \Lambda$ by 
\begin{align}\label{test:def}
B^{\alpha}_{\tilde{X}} \tilde{Y}
&:=(\bar{B}^{\alpha}_{X}Y) {}^{\widetilde{}} +(\pi^{\ast}\bar{c}^{\alpha})(X,Y)\xi 
+(\pi^{\ast}\bar{c}_{\bar J}^{\alpha})(X,Y)Z_M, \quad 
B^{\alpha}_{\xi}  :=0, \,\,\, B^{\alpha}_{J \xi } :=0.   
\end{align}
Note that $B^{\alpha}$ and $J$  anti-commute.  
By (5) of Definition \ref{intrinsic_3}, 
the curvature $R^{\bar{D}}$ of $\bar{D}$ 
is of the form  
\begin{align}\label{RD_eq}
R^{\bar{D}} 
=& -\frac{1}{4}[\bar{B}^{\alpha},\bar{B}^{\alpha}]
 +2 \bar{a}^{a} \otimes Id - 2 (\bar{a}_{\bar{J}})^{a} \otimes \bar{J} 
+ \bar{a} \wedge Id - \bar{a}_{\bar{J}} \wedge \bar{J}. 
\end{align}

\begin{lemma}
The curvature $R^{D}$ of $D$ is given by 
\[ R^{D}=-\frac{1}{4} [B^{\alpha},B^{\alpha}]. \] 
\end{lemma}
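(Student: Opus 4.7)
The plan is to verify the identity separately when some input is vertical and when all three inputs are horizontal. Since \eqref{test:def} sets $B^{\alpha}_{\xi}=B^{\alpha}_{Z_{M}}=0$, the right-hand side vanishes as soon as one input is $\xi$ or $Z_{M}$, so the first step is to check the analogous property for $R^{D}$. This is immediate from \eqref{def_c_con}: $D_{\xi}$ acts as the identity on $\xi$, $Z_{M}$ and on horizontal lifts of projectable vector fields on $\bar{M}$ (and $[\xi,\tilde{X}]=0$), with the analogous statements for $D_{Z_{M}}$. A short direct computation then gives $R^{D}(\xi,\cdot)\cdot=R^{D}(Z_{M},\cdot)\cdot=R^{D}(\cdot,\cdot)\xi=R^{D}(\cdot,\cdot)Z_{M}=0$.

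For three horizontal lifts $\tilde{X},\tilde{Y},\tilde{Z}$ of vector fields $X,Y,Z\in\Gamma(T\bar{M})$, I would first record the bracket identity
\[
[\tilde{X},\tilde{Y}]=\widetilde{[X,Y]}+2\bar{a}^{a}(X,Y)\xi-2(\bar{a}_{\bar{J}})^{a}(X,Y)Z_{M},
\]
which follows from $\omega([\tilde{X},\tilde{Y}])=-d\omega(\tilde{X},\tilde{Y})$ applied to both real components of $\omega$ combined with \eqref{d_eta_1}. Then I would expand $R^{D}(\tilde{X},\tilde{Y})\tilde{Z}=D_{\tilde{X}}D_{\tilde{Y}}\tilde{Z}-D_{\tilde{Y}}D_{\tilde{X}}\tilde{Z}-D_{[\tilde{X},\tilde{Y}]}\tilde{Z}$ via \eqref{def_c_con}, using $\mathbb{C}^{\ast}$-invariance to identify $\tilde{X}[\bar{a}(Y,Z)]=X[\bar{a}(Y,Z)]$ and similarly for $\bar{a}_{\bar{J}}$. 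Each resulting term splits naturally into a horizontal piece of the form $\widetilde{(\cdot)}$ and $\xi$- and $Z_{M}$-components.

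For the horizontal piece, the terms produced by $D_{\tilde{X}}\xi=\tilde{X}$, $D_{\tilde{X}}Z_{M}=\widetilde{\bar{J}X}$ and by the vertical part of $[\tilde{X},\tilde{Y}]$ feeding through $D_{\xi}\tilde{Z}=\tilde{Z}$, $D_{Z_{M}}\tilde{Z}=\widetilde{\bar{J}Z}$ produce precisely the four summands $\bar{a}\wedge\mathrm{Id}-\bar{a}_{\bar{J}}\wedge\bar{J}+2\bar{a}^{a}\otimes\mathrm{Id}-2(\bar{a}_{\bar{J}})^{a}\otimes\bar{J}$ of \eqref{RD_eq} with the opposite signs. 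Substituting the expression for $R^{\bar{D}}$ supplied by condition (5) of Definition \ref{intrinsic_3}, all $\bar{a}$-terms cancel and the horizontal piece reduces to $\widetilde{-\tfrac{1}{4}[\bar{B}^{\alpha},\bar{B}^{\alpha}](X,Y)Z}$, matching the horizontal part of $-\tfrac{1}{4}[B^{\alpha},B^{\alpha}](\tilde{X},\tilde{Y})\tilde{Z}$ read off from \eqref{test:def}.

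For the vertical piece, after invoking torsion-freeness of $\bar{D}$, the $\xi$-coefficient of $R^{D}(\tilde{X},\tilde{Y})\tilde{Z}$ collapses to $(d^{\bar{D}}\bar{a})(X,Y,Z)$ and the $Z_{M}$-coefficient to $-(d^{\bar{D}}\bar{a}_{\bar{J}})(X,Y,Z)$. Condition (6-2) expresses $(d^{\bar{D}}\bar{a})$ in terms of $\bar{c}^{\alpha}$ and $\bar{B}^{\alpha}$; the analogous formula for $d^{\bar{D}}\bar{a}_{\bar{J}}$ follows by replacing $Z$ with $\bar{J}Z$ and using $\bar{D}\bar{J}=0$ together with the anti-commutation of $\bar{B}^{\alpha}$ with $\bar{J}$, which substitutes $\bar{c}^{\alpha}$ by $\bar{c}^{\alpha}_{\bar{J}}$. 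These match the vertical components of $-\tfrac{1}{4}[B^{\alpha},B^{\alpha}](\tilde{X},\tilde{Y})\tilde{Z}$ arising from the $\xi$- and $Z_{M}$-parts of $B^{\alpha}$ in \eqref{test:def}. The whole argument is a lengthy but routine expansion; the main obstacle is careful bookkeeping of signs and of the convention for $[\,\cdot\,,\,\cdot\,]$ applied to a symmetric $(1,2)$-tensor so that all prefactors line up on both sides, but no genuinely new ingredient is required.
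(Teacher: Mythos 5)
Your proposal is correct and follows essentially the same route as the paper: vanishing of $R^{D}$ on vertical arguments, then the horizontal computation in which the $\bar{a}$-terms generated by \eqref{def_c_con} and by $v[\tilde{X},\tilde{Y}]$ cancel against those in \eqref{RD_eq}, with the vertical components reduced to $\pm d^{\bar{D}}\bar{a}$ (resp.\ $\bar a_{\bar J}$) and handled by (6-2). The only cosmetic difference is that the paper packages the horizontal/vertical expansion via the fundamental equation of an affine submersion (\cite[Theorem 3.5]{AH}) rather than expanding the curvature directly from the definition.
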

\begin{proof}
It is easy to see that $R^{D}_{E,F}G=0$ if one of the vectors $E$, $F$, $G$ is vertical. 
Thus it is sufficient to calculate $R^{D}$ for horizontal vectors. 
Using (6-2) of Definition \ref{intrinsic_3} and (\ref{RD_eq}), we will do that. 
By the definition of $D$ in \eqref{def_c_con}, the projection $\pi:(M,D) \to (\bar{M},\bar{D})$ is 
an affine submersion and its fundamental tensors are given by  
$\mathcal{A}^{D}_{ \tilde{X}} \tilde{Y} =\bar{a}(X,Y)\xi - \bar{a}_{\bar{J}}(X,Y) Z_{M}$, 
$\mathcal{A}^{D}_{ \tilde{X}} \xi= \tilde{X}$, 
$\mathcal{A}^{D}_{ \tilde{X}} Z_{M}=({\bar{J} X})^{\widetilde{}} $, 
$\mathcal{T}^{D}=0$. By the fundamental equation of an affine submersion 
\cite[Theorem 3.5]{AH}, we have 
\begin{align*}
R^{D}_{\tilde{X},\tilde{Y}}\tilde{Z}
=&(R^{\bar{D}}_{X,Y}Z)^{\widetilde{}}-h (D_{v[\tilde{X},\tilde{Y}]} \tilde{Z})
-\mathcal{A}^{D}_{ \tilde{Y}} \mathcal{A}^{D}_{ \tilde{X}} \tilde{Z} 
+\mathcal{A}^{D}_{ \tilde{X}} \mathcal{A}^{D}_{ \tilde{Y}} \tilde{Z} \\
 &+v(D_{\tilde{X}} \mathcal{A}^{D} )_{\tilde{Y}} \tilde{Z}
   -v(D_{\tilde{Y}} \mathcal{A}^{D} )_{\tilde{X}} \tilde{Z} \\
=&(R^{\bar{D}}_{X,Y}Z)^{\widetilde{}}-h (D_{v[\tilde{X},\tilde{Y}]} \tilde{Z})\\
&-\bar{a}(X,Z)\tilde{Y}+ \bar{a}_{\bar{J}}(X,Z)(\bar{J}Y)^{\widetilde{}}
+\bar{a}(Y,Z)\tilde{X}- \bar{a}_{\bar{J}}(Y,Z)(\bar{J}X)^{\widetilde{}} \\
&+(d^{\bar{D}} \bar{a})(X,Y,Z) \xi - (d^{\bar{D}} \bar{a}_{\bar{J}})(X,Y,Z) Z_{M}. 
\end{align*}
With the help of \eqref{d_eta_1}, we calculate the term  
\begin{align*}
-h (D_{v[\tilde{X},\tilde{Y}]} \tilde{Z})
&=-h(D_{\tilde{Z}} v[\tilde{X},\tilde{Y}])
=(d \gamma^{\alpha}_{1})(X,Y)\tilde{Z}+(d \gamma^{\alpha}_{2})(X,Y)(\bar{J}Z)^{\widetilde{}}\\
&=-2\bar{a}^{a}(X,Y)\tilde{Z}+2\bar{a}_{\bar{J}}^{a}(X,Y) (\bar{J}Z)^{\widetilde{}}.
\end{align*}
Therefore, using (6-2) of Definition \ref{intrinsic_3} and (\ref{RD_eq}), we have 
\begin{align*}
R^{D}_{\tilde{X},\tilde{Y}}\tilde{Z}
=&\left( -\frac{1}{4} [\bar{B}^{\alpha}_{X}, \bar{B}^{\alpha}_{Y}]Z \right)^{\widetilde{}} \\
&-\frac{1}{4} \left( \bar{c}^{\alpha}(X,\bar{B}^{\alpha}_{Y}Z) 
-\bar{c}^{\alpha} (Y,\bar{B}^{\alpha}_{X}Z) \right) \xi
-\frac{1}{4} \left( \bar{c}^{\alpha}_{\bar{J}}(X,\bar{B}^{\alpha}_{Y}Z) 
-\bar{c}^{\alpha}_{\bar{J}} (Y,\bar{B}^{\alpha}_{X}Z) \right) Z_{M}.
\end{align*}
Moreover, using \eqref{test:def}, we see that 
\begin{align*}
[B^{\alpha}_{\tilde{X}}, B^{\alpha}_{\tilde{Y}} ] \tilde{Z}
=&B^{\alpha}_{\tilde{X}} (\bar{B}^{\alpha}_{Y}Z)^{\widetilde{}}
-B^{\alpha}_{\tilde{Y}} (\bar{B}^{\alpha}_{X}Z)^{\widetilde{}} \\
=&(\bar{B}^{\alpha}_{X} \bar{B}^{\alpha}_{Y}Z)^{\widetilde{}}
+\bar{c}^{\alpha}(X,\bar{B}^{\alpha}_{Y}Z)\xi
+\bar{c}^{\alpha}_{\bar{J}}(X,\bar{B}^{\alpha}_{Y}Z) Z_{M} \\
&-(\bar{B}^{\alpha}_{Y} \bar{B}^{\alpha}_{X}Z)^{\widetilde{}}
-\bar{c}^{\alpha}(Y,\bar{B}^{\alpha}_{X}Z)\xi
-\bar{c}^{\alpha}_{\bar{J}}(Y,\bar{B}^{\alpha}_{X}Z) Z_{M}.
\end{align*}
We conclude that 
\[ R^{D}_{\tilde{X},\tilde{Y}} \tilde{Z}
=-\frac{1}{4} [ B^{\alpha}_{\tilde{X}}, B^{\alpha}_{\tilde{Y}} ] \tilde{Z} 
\]
for all $X$, $Y$, $Z \in T \bar{M}$.
\end{proof}

We set 
\begin{equation}\label{AB:eq}
A^{\alpha}:=e^{-2 \theta_{\alpha} J} B^{\alpha} 
\end{equation}
on each $\pi^{-1}(\bar{U}_{\alpha})$

\begin{lemma}\label{def_A}
It holds that $A^{\alpha}=A^{\beta}$ on 
$\pi^{-1}(\bar{U}_{\alpha}) \cap \pi^{-1}(\bar{U}_{\beta})$.
\end{lemma}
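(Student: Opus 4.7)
The plan is to mimic, in reverse, the calculation carried out just before Definition \ref{intrinsic_3}. There, starting from a globally defined $A=\nabla J$ on a conical special complex manifold, one deduced the patching rule $B^\beta=e^{-2f^2_{\alpha\beta}J}B^\alpha$ (equations \eqref{cocy_B}--\eqref{cocy_c}). Here the roles are reversed: the $B^\alpha$ are defined locally by \eqref{test:def}, and the same patching rule has to be verified from conditions (2) and (4) of Definition \ref{intrinsic_3}. Once this is done, dividing by $e^{2\theta_\alpha J}$ produces the desired equality of the $A^\alpha$.

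\textbf{Step 1: the fibre angles.} Since $\omega_2=\eta$ is globally defined on $M$ (the pullback of the principal connection on $S$), the local expressions $d\theta_\alpha+\gamma^\alpha_2$ must agree on $\pi^{-1}(\bar U_\alpha\cap\bar U_\beta)$. Combined with $\gamma^\beta_2-\gamma^\alpha_2=df^2_{\alpha\beta}$ from \eqref{cocycle_4}, this forces $\theta_\alpha-\theta_\beta=f^2_{\alpha\beta}$ modulo $2\pi$, consistently with the sign convention used in Section 2.4.

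\textbf{Step 2: the patching rule $B^\beta=e^{-2f^2_{\alpha\beta}J}B^\alpha$.} Both sides vanish on $\xi$ and $Z_M$ by construction, so it suffices to compare them on pairs of horizontal lifts $\tilde X,\tilde Y$. Expanding $B^\beta_{\tilde X}\tilde Y$ via \eqref{test:def} and substituting the transition rules from Definition \ref{intrinsic_3}(2) and (4), one uses that $J$ lifts $\bar J$ on horizontal vectors while $J\xi=Z_M$, $JZ_M=-\xi$. Consequently $e^{-2f^2_{\alpha\beta}J}$ acts on $(\bar B^\alpha_XY)^{\widetilde{\ }}$ as the horizontal lift of $e^{-2f^2_{\alpha\beta}\bar J}$, which reproduces condition (2), and acts on the span of $\{\xi,Z_M\}$ as the planar rotation $\cos(2f^2_{\alpha\beta})-\sin(2f^2_{\alpha\beta})J$. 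Splitting $e^{-2f^2_{\alpha\beta}\sqrt{-1}}(\bar c^\alpha+\sqrt{-1}\,\bar c^\alpha_{\bar J})$ into real and imaginary parts and comparing with (4) shows that the coefficients of $\xi$ and $Z_M$ on both sides also agree.

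\textbf{Step 3: conclusion.} Combining the two previous steps gives
\[
A^\beta=e^{-2\theta_\beta J}B^\beta=e^{-2(\theta_\alpha-f^2_{\alpha\beta})J}\,e^{-2f^2_{\alpha\beta}J}B^\alpha=e^{-2\theta_\alpha J}B^\alpha=A^\alpha
\]
on $\pi^{-1}(\bar U_\alpha)\cap\pi^{-1}(\bar U_\beta)$. There is no genuine obstacle: the transformation rules (2) and (4) of Definition \ref{intrinsic_3} were designed precisely to encode the patching behaviour forced on $B^\alpha$ by writing $A=e^{-2\theta_\alpha J}B^\alpha$, and the verification is a direct expansion. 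The only point requiring mild care is keeping the sign convention between $\theta_\alpha-\theta_\beta$ and $f^2_{\alpha\beta}$ consistent with the conventions introduced in Section 2.4.
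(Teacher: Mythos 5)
Your proposal is correct and follows exactly the route the paper intends: the paper's proof is the one-line remark that the claim follows from conditions (2) and (4) of Definition \ref{intrinsic_3}, and your Steps 1--3 simply spell out that verification, reversing the computation of \eqref{cocy_B}--\eqref{cocy_c} from Section 2.4. The sign convention $\theta_\beta=\theta_\alpha-f^2_{\alpha\beta}$ you use matches the one the paper employs there, so no issues arise.
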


\begin{proof}
This follows from (2) and (4) of Definition \ref{intrinsic_3}.
\end{proof}

Therefore, by Lemma \ref{def_A}, the collection $\{ A^{\alpha} \}_{\alpha \in \Lambda}$ defines a globally defined symmetric $(1,2)$-tensor $A$ on $M$ and  we see that 
\[ R^{D}=-\frac{1}{4} [B^{\alpha},B^{\alpha}]=-\frac{1}{4} [A^{\alpha},A^{\alpha}]=-\frac{1}{4} [A,A]. \]

\begin{lemma}\label{closedA}
$d^{D}A=0$. 
\end{lemma}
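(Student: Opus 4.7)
The plan is to run the calculation of Lemma \ref{Bbar:lem} in reverse: the hypotheses (6-2)--(6-4) of Definition \ref{intrinsic_3} are set up precisely so that the identity $d^D A = 0$ follows as a consequence rather than needing to be derived.

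First I will reduce the global claim to a local one. Since $DJ = 0$ by Lemma \ref{cpx}(2) and $e^{-2\theta_\alpha J} = \cos(2\theta_\alpha)\,\mathrm{Id} - \sin(2\theta_\alpha)\,J$, differentiation gives $D_X(e^{-2\theta_\alpha J}) = -2\, d\theta_\alpha(X)\, J\, e^{-2\theta_\alpha J}$, and the Leibniz rule for $d^D$ on $\mathrm{End}(TM)$-valued one-forms yields
\[
d^D A^\alpha = e^{-2\theta_\alpha J}\bigl(d^D B^\alpha - 2\, d\theta_\alpha \wedge J B^\alpha\bigr).
\]
Since $e^{-2\theta_\alpha J}$ is invertible, this reduces $d^D A = 0$ on $\pi^{-1}(\bar U_\alpha)$ to the equivalent local identity $d^D B^\alpha = 2(\omega_2 - \pi^\ast\gamma_2^\alpha) \wedge J B^\alpha$, using the local decomposition $d\theta_\alpha = \omega_2 - \pi^\ast\gamma_2^\alpha$.

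Next I will verify this identity on all triples drawn from horizontal lifts of vector fields on $\bar M$ together with the vertical fields $\xi$ and $Z_M$. Cases with at least two vertical entries are trivial. The cases with exactly one vertical entry I will handle by a short direct computation using $B^\alpha_\xi = B^\alpha_{Z_M} = 0$ from \eqref{test:def}, the values $d\theta_\alpha(\xi) = 0$ and $d\theta_\alpha(Z_M) = 1$, the formulae $D_V \xi = V$ and $D_V Z_M = JV$ from Lemma \ref{cpx}(3), and the $\mathbb{C}^\ast$-invariance of $B^\alpha(\tilde X, \tilde Y)$; both sides will vanish when the vertical entry is $\xi$ and both will equal $2 J B^\alpha(\tilde Y, \tilde Z)$ when it is $Z_M$.

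The core case is three horizontal arguments $\tilde X, \tilde Y, \tilde Z$. Here expanding $(d^D B^\alpha)(\tilde X, \tilde Y, \tilde Z)$ by means of the explicit formulae \eqref{def_c_con} for $D$ and \eqref{test:def} for $B^\alpha$ reproduces verbatim the expansion carried out in the proof of Lemma \ref{Bbar:lem}, split into a horizontal component and two vertical components along $\xi$ and $Z_M$. Matching each of these three components against the right-hand side $-2\, \pi^\ast\gamma_2^\alpha \wedge J B^\alpha$ (the $\omega_2$-part contributes nothing since $\omega_2$ vanishes on horizontal vectors) will give precisely condition (6-3) for the horizontal part, condition (6-2) for the $\xi$-part, and the $\bar J$-twin of (6-4) for the $Z_M$-part, all of which hold by hypothesis in Definition \ref{intrinsic_3}. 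This yields $d^D A^\alpha = 0$ on each chart, which globalises via Lemma \ref{def_A} to $d^D A = 0$ on $M$. The main obstacle will be the careful bookkeeping of signs and $\bar J$-twists when matching the $Z_M$-component, but this is identical to the bookkeeping already performed in the proof of Lemma \ref{Bbar:lem} and introduces no new difficulty.
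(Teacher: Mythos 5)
Your proposal follows essentially the same route as the paper's proof: reduce $d^{D}A=0$ via the Leibniz rule to the local identity $d^{D}B^{\alpha}=2\,d\theta_{\alpha}\wedge JB^{\alpha}$, check the vertical cases directly, and match the three components of the horizontal case against the hypotheses of Definition \ref{intrinsic_3}; your computations for the vertical cases are correct. One labelling slip: the $\xi$-component of $(d^{D}B^{\alpha})(\tilde X,\tilde Y,\tilde Z)$ is
\[
(d^{\bar D}\bar c^{\alpha})(X,Y,Z)+\bar a(X,\bar B^{\alpha}_{Y}Z)-\bar a(Y,\bar B^{\alpha}_{X}Z),
\]
so it is matched by condition (6-4) itself (and the $Z_{M}$-component by its $\bar J$-twin), not by (6-2) --- condition (6-2) concerns $d^{\bar D}\bar a$ and is used only in the curvature computation $R^{D}=-\frac14[B^{\alpha},B^{\alpha}]$, not in this lemma. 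Since (6-4) is available as a hypothesis, the argument goes through unchanged.
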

\begin{proof}
The conditions (6-3) and (6-4) of Definition \ref{intrinsic_3} imply that 
$d^{D} B^{\alpha}=2 d \theta_{\alpha} \wedge JB{\alpha}$ for each $\alpha \in \Lambda$. 
In fact, one can easily check that this equation holds when evaluated on three vectors 
such that at least one of them is vertical.  To check it on horizontal vectors we compute 
for all $X$, $Y$, $Z \in T\bar{M}$: 
\begin{align*}
(D_{\tilde{X}}B^{\alpha})_{\tilde{Y}} \tilde{Z}
=&((\bar{D}_{X} \bar{B}^{\alpha})_{Y}Z)^{\widetilde{}}
+((\bar{D}_{X} \bar{c}^{\alpha})(Y,Z))\xi
+((\bar{D}_{X} \bar{c}^{\alpha}_{\bar{J}})(Y,Z))Z_{M}\\
&+\bar{a}(X,\bar{B}^{\alpha}_{Y}Z)\xi-\bar{a}_{\bar{J}}(X,\bar{B}^{\alpha}_{Y}Z) Z_{M}
+\bar{c}^{\alpha}(Y,Z)\tilde{X}+\bar{c}^{\alpha}_{\bar{J}}(Y,Z)(\bar{J}X)^{\widetilde{}}.
\end{align*}
It follows that
\begin{align*}
 &(d^{D} B^{\alpha})(\tilde{X},\tilde{Y},\tilde{Z})\\
=&(d^{\bar{D}} \bar{B}^{\alpha})(X,Y,Z))^{\widetilde{}}
+(d^{\bar{D}} \bar{c}^{\alpha})(X,Y,Z))\xi
+(d^{\bar{D}} \bar{c}^{\alpha}_{\bar{J}})(X,Y,Z))Z_{M}\\
&+(\bar{a}(X,\bar{B}^{\alpha}_{Y}Z)-\bar{a}(Y,\bar{B}^{\alpha}_{X}Z))\xi
-(\bar{a}_{\bar{J}}(X,\bar{B}^{\alpha}_{Y}Z)-\bar{a}_{\bar{J}}(Y,\bar{B}^{\alpha}_{X}Z) )Z_{M}\\
&+\bar{c}^{\alpha}(Y,Z)\tilde{X}-\bar{c}^{\alpha}(X,Z)\tilde{Y}
+\bar{c}^{\alpha}_{\bar{J}}(Y,Z)(\bar{J}X)^{\widetilde{}}
-\bar{c}^{\alpha}_{\bar{J}}(X,Z)(\bar{J}Y)^{\widetilde{}}.
\end{align*}
The conditions (6-3) and (6-4) 
of Definition \ref{intrinsic_3} give that 
\begin{align*}
 &(d^{D} B^{\alpha})(\tilde{X},\tilde{Y},\tilde{Z})\\
=&(-2 \gamma^{\alpha}_{2} \wedge \bar{J} \bar{B}^{\alpha} 
+\bar{c}^{\alpha} \wedge \mathrm{Id} +\bar{c}^{\alpha}_{\bar{J}} \wedge \bar{J})(X,Y,Z))^{\widetilde{}} \\
&+2 (\gamma^{\alpha}_{2} \wedge \bar{c}^{\alpha}_{\bar{J}})(X,Y,Z)\xi
-2 (\gamma^{\alpha}_{2} \wedge \bar{c}^{\alpha})(X,Y,Z)Z_{M}\\
&-(\bar{c}^{\alpha} \wedge \mathrm{Id} +\bar{c}^{\alpha}_{\bar{J}} \wedge \bar{J})(X,Y,Z))^{\widetilde{}}\\
=&(-2 \gamma^{\alpha}_{2} \wedge \bar{J} \bar{B}^{\alpha}(X,Y,Z))^{\widetilde{}}
+2 (\gamma^{\alpha}_{2} \wedge \bar{c}^{\alpha}_{\bar{J}})(X,Y,Z)\xi
-2 (\gamma^{\alpha}_{2} \wedge \bar{c}^{\alpha})(X,Y,Z)Z_{M}. 
\end{align*}
On the other hand, we have
\begin{align*}
 &2(d \theta_{\alpha} \wedge J B^{\alpha}) (\tilde{X},\tilde{Y},\tilde{Z})\\
=& 2((\eta-\pi^{\ast} \gamma^{\alpha}_{2})
\wedge J B^{\alpha})(\tilde{X},\tilde{Y},\tilde{Z}) \\
=&-2\gamma^{\alpha}_{2}(X)J B^{\alpha}_{\tilde{Y}} \tilde{Z}
+2\gamma^{\alpha}_{2}(Y)J B^{\alpha}_{\tilde{X}} \tilde{Z}\\
=&-2\gamma^{\alpha}_{2}(X)J ((\bar{B}^{\alpha}_{Y} Z)^{\widetilde{}}
+\bar{c}^{\alpha}(Y,Z)\xi+\bar{c}^{\alpha}_{\bar{J}}(Y,Z)Z_{M})\\
&+2\gamma^{\alpha}_{2}(Y)J ((\bar{B}^{\alpha}_{X} Z)^{\widetilde{}}
+\bar{c}^{\alpha}(X,Z)\xi+\bar{c}^{\alpha}_{\bar{J}}(X,Z)Z_{M})\\
=&((-2\gamma^{\alpha}_{2} \wedge \bar{J}\bar{B}^{\alpha}) (X,Y,Z))^{\widetilde{}}
-2 \gamma^{\alpha}_{2}(X) \bar{c}^{\alpha}(Y,Z)Z_{M}
+2 \gamma^{\alpha}_{2}(X) \bar{c}^{\alpha}_{\bar{J}}(Y,Z)\xi \\
&+2 \gamma^{\alpha}_{2}(Y) \bar{c}^{\alpha}(X,Z)Z_{M}
-2 \gamma^{\alpha}_{2}(Y) \bar{c}^{\alpha}_{\bar{J}}(X,Z)\xi  \\
=&((-2 \gamma^{\alpha}_{2} \wedge \bar{J} \bar{B}^{\alpha})(X,Y,Z))^{\tilde{}} \\
 &+2 (\gamma^{\alpha}_{2} \wedge \bar{c}^{\alpha}_{\bar{J}})(X,Y,Z)\xi
-2 (\gamma^{\alpha}_{2} \wedge \bar{c}^{\alpha})(X,Y,Z)Z_{M}.
\end{align*}
Therefore we proved that $d^{D} B^{\alpha}=2 d \theta_{\alpha} \wedge JB_{\alpha}$. 
Then we have 
\begin{align*}
d^{D}A
&=d^{D}A^{\alpha}=(De^{-2 \theta_{\alpha}J})\wedge B^{\alpha}+e^{-2 \theta_{\alpha}J}(d^{D}B^{\alpha}) \\
&=-2 d \theta_{\alpha} J e^{-2 \theta_{\alpha}J} \wedge B^{\alpha}
+ 2 e^{-2 \theta_{\alpha}J} d \theta_{\alpha} \wedge JB_{\alpha}=0
\end{align*}
on $\pi^{-1}(\bar{U}_{\alpha})$ for all $\alpha \in \Lambda$.
\end{proof}

Then it holds that $(D,-\frac{1}{2}JA) \in {\cal C}_{(M,J)}$. 
By Lemma \ref{char_sc}, we obtain a special connection $\nabla= D+(1/2)JA \in {\cal S}_{(M,J)}$ on $(M,J)$.

\begin{lemma}
$\nabla \xi = \mathrm{Id}$.
\end{lemma}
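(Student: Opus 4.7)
The plan is to expand $\nabla_X\xi$ using the relation $\nabla = D + \tfrac12 JA$ that has just been obtained from Lemma \ref{char_sc}, which gives
\[
\nabla_X\xi \;=\; D_X\xi + \tfrac12 J(A_X\xi)
\]
for every vector field $X$ on $M$. Since Lemma \ref{cpx}(3) already records $D\xi = \mathrm{Id}$, the entire statement reduces to verifying the vanishing $A_X\xi = 0$ for all $X$.

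To establish this, I would work locally on each $\pi^{-1}(\bar U_\alpha)$, where by \eqref{AB:eq} the globally defined tensor $A$ agrees with $A^\alpha = e^{-2\theta_\alpha J}B^\alpha$. The endomorphism $e^{-2\theta_\alpha J}$ acts only on the output slot and is $C^\infty$-linear, so $A$ inherits whatever symmetry in the two covariant slots $B^\alpha$ possesses. But $B^\alpha$ was \emph{defined} to be symmetric in \eqref{test:def}, and the same display prescribes $B^\alpha_\xi = 0$; by symmetry this forces $B^\alpha_X\xi = B^\alpha_\xi X = 0$ for every $X$. Consequently $A_X\xi = e^{-2\theta_\alpha J}(B^\alpha_X\xi) = 0$, and combining with the display above yields $\nabla_X\xi = X$ as required.

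There is no substantive obstacle: the identity is a bookkeeping consequence of Lemma \ref{cpx}(3) together with the observation that the horizontal/vertical extension prescribed in \eqref{test:def} was engineered precisely so that $B^\alpha$ (and hence $A$) annihilates $\xi$ in either argument. The same line of reasoning covers horizontal and vertical $X$ simultaneously, so no case analysis is needed.
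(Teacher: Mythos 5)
Your proof is correct and follows the same route as the paper, which simply cites $A_\xi=0$ and $D\xi=\mathrm{Id}$ (Lemma \ref{cpx}(3)); you merely make explicit the small point the paper leaves implicit, namely that the symmetry of $B^\alpha$ (hence of $A$) upgrades $A_\xi=0$ to $A_X\xi=0$ for all $X$. No issues.
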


\begin{proof}
This follows from $A_{\xi}=0$ and $D \xi=\mathrm{Id}$.
\end{proof}

We can conclude the following theorem. 

\begin{theorem}\label{char:thm}
Let $(\bar M , \bar J, \mathcal{P})$ be a complex manifold with $\dim \bar{M} \geq 4$ and 
a c-projective structure $\mathcal{P}$, and 
$\pi_{S}:S \to \bar{M}$ be a principal $S^{1}$-bundle with a connection 
$\eta$ of PSCB type in the sense of Definition \ref{intrinsic_3}. 
Then $M:=S \times \mathbb{R}^{>0}$ admits a conical special complex structure $(J,\nabla,\xi)$ such that 
$\bar{M}$ is a projective special complex manifold with the corresponding principal $\mathbb{C}^{\ast}$-bundle 
$\pi:=\pi_{S} \circ \pi_{1}:M \to \bar{M}$ 
and the canonical c-projective structure $\bar{\mathcal{P}}(\pi)$ on $\bar{M}$ 
coincides with $\mathcal{P}$, where $\pi_{1}:S \times \mathbb{R}^{>0} \to S$ is the projection onto 
the first factor.  
\end{theorem}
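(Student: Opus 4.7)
The plan is to combine the lemmas established earlier in this subsection with three structural observations, one for each clause of the theorem. The construction of $J$, $D$, $\omega$, and the local tensors $B^\alpha$ has already been carried out, and the key facts $R^D = -\frac{1}{4}[A,A]$, closedness $d^D A = 0$ (Lemma \ref{closedA}), and integrability/complex properties of $D$ (Lemma \ref{cpx}) are in place, so the theorem reduces essentially to assembling what is available and recognizing the principal bundle structure.

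First, I would argue that $(J, \nabla, \xi)$ forms a conical special complex structure on $M$. Lemma \ref{cpx} shows that $J$ is a complex structure on $M$ and $D$ is a torsion-free complex connection with $D\xi = \mathrm{Id}$ and $L_\xi J = 0$. Combined with the identities $R^D = -\frac{1}{4}[A,A]$ and $d^D A = 0$, the pair $(D, -\frac{1}{2}JA)$ lies in $\mathcal{C}_{(M,J)}$, so Lemma \ref{char_sc} produces the special connection $\nabla = D + \frac{1}{2}JA$. Because $A_\xi = 0$ (by the horizontality property of $B^\alpha$ inherited from \eqref{AB:eq}), one has $\nabla \xi = D\xi = \mathrm{Id}$, and Lemma \ref{cpx}(4) gives $L_\xi J = 0$. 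Thus $(M, J, \nabla, \xi)$ satisfies all of Definition \ref{scm:def}.

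Second, I would identify $\pi:M \to \bar M$ as the principal $\mathbb{C}^*$-bundle associated to this conical special complex manifold. In the local trivialization $M \cong \bar U_\alpha \times \mathbb{C}^*$, the field $\xi = r\partial_r$ is the fundamental field of the $\mathbb{R}^{>0}$-factor and $J\xi = Z_M = \partial_\theta$ is that of the $S^1$-factor; the two commute globally, so $\xi - \sqrt{-1}J\xi$ is a holomorphic vector field whose flow exponentiates to a free principal $\mathbb{C}^*$-action on $M$ whose quotient map is precisely $\pi = \pi_S \circ \pi_1$. This makes $\bar M$ a projective special complex manifold in the sense of the definition at the end of Subsection 2.1.

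Third, for the identification $\bar{\mathcal{P}}(\pi) = \mathcal{P}$, I would take the principal connection $\omega = \omega_1 + \sqrt{-1}\omega_2$ built from \eqref{loc_con} and $\omega_2 = \eta$. Its kernel is $J$-invariant by construction, so Theorem \ref{can_c_pro} applies. Direct inspection of the defining formula \eqref{def_c_con} shows that under the horizontal lift identification, the horizontal component of $D_{\tilde X}\tilde Y$ is $(\bar D_X Y)^{\sim}$, i.e.\ the projected connection $D^{(\omega)}$ is exactly the given $\bar D \in \mathcal{P}$. Therefore $\bar{\mathcal{P}}(\pi) = [D^{(\omega)}] = [\bar D] = \mathcal{P}$.

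The real difficulty of the proof lies not in this final assembly but in the preparatory lemmas — in particular Lemma \ref{closedA}, where the global closedness $d^D A = 0$ on $M$ had to be checked by matching the local derivative of $B^\alpha$ against the intrinsic conditions (6-3) and (6-4) of Definition \ref{intrinsic_3}, via the change-of-trivialization rule \eqref{AB:eq}. Once that patching is in hand, the theorem follows by collecting the preceding results.
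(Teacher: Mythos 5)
Your proposal is correct and follows essentially the same route as the paper, which likewise derives the theorem by assembling the preceding lemmas: Lemma \ref{cpx} for the complex structure and properties of $D$, the curvature identity $R^D=-\frac14[A,A]$, the patching of the $B^\alpha$ into a global $A$, Lemma \ref{closedA} for $d^DA=0$, and Lemma \ref{char_sc} to produce $\nabla=D+\frac12 JA$ with $\nabla\xi=\mathrm{Id}$. Your additional remarks on the $\mathbb{C}^*$-action and the identification $D^{(\omega)}=\bar D$ make explicit what the paper leaves implicit, but introduce no new argument.
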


By \eqref{def_c_con}, the fundamental $(0,2)$-tensor of the constructed 
principal $\mathbb{C}^{\ast}$-bundle 
$\pi : M \to \bar{M}$ is given by (6-1) in Definition \ref{intrinsic_3}. 
We turn to the simplest example described in Example \ref{hopf_bdl}. 
The Hopf fibration $S^{2n+1} \to \mathbb{C}P^{n}$ (with its canonical connection) is of PSCB type  
and the constructed conical trivial special complex manifold is $\mathbb{C}^{n+1} \backslash \{ 0 \}$ with 
the usual $\mathbb{C}^{\ast}$-action. 
The canonical c-projective structure coincides with the c-projective structure $[\bar{D}^{\bar g}]$, where 
$\bar{D}^{\bar g}$ is the Levi-Civita connection of the 
Fubini-Study metric $\bar g$. See also Corollary \ref{cflat_ex} 
and Example \ref{ex_cp_n}. 
The following corollary characterizes the Hopf fibration from our viewpoint.

\begin{corollary}\label{hopf_uni}
Any principal $S^{1}$-bundle over 
$(\mathbb{C}P^{n}, \bar{J}, [\bar{D}^{\bar g}])$ of PSCB type 
with $[ \bar{\cal B}_{\bar{J}} ]=0$ is isomorphic to 
the Hopf fibration $S^{2n+1} \to \mathbb{C}P^{n}$ as a principal $S^{1}$-bundle if $n \geq 2$. 
\end{corollary}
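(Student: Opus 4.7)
The plan is to reduce to the cohomological identity \eqref{trB_coh} via Theorem \ref{char:thm}, and then apply the classification of principal $S^{1}$-bundles over $\mathbb{C}P^{n}$ by first Chern class.

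First, Theorem \ref{char:thm} produces a conical special complex structure on $M := S \times \mathbb{R}^{>0}$ realizing $\mathbb{C}P^{n}$ as a projective special complex manifold with canonical c-projective structure equal to $[\bar{D}^{\bar{g}}]$. In particular, the identity \eqref{trB_coh} derived in Subsection~\ref{subsec23} applies to this situation. Write $h$ for the generator of $H^{2}(\mathbb{C}P^{n},\mathbb{Z}) \cong \mathbb{Z}$, so that $c_{1}(\mathbb{C}P^{n}) = (n+1)h$. Combining \eqref{trB_coh} with the hypothesis $[\bar{\mathcal{B}}_{\bar{J}}] = 0$ then yields
\begin{equation*}
0 \;=\; -(n+1)h \;+\; (n+1)\left[\frac{1}{\pi}(\bar{a}_{\bar{J}})^{a}\right] \quad \text{in } H^{2}(\mathbb{C}P^{n},\mathbb{Z}).
\end{equation*}

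Since $H^{2}(\mathbb{C}P^{n},\mathbb{Z}) \cong \mathbb{Z}$ is torsion-free and $n+1 \neq 0$, I may divide by $n+1$ to conclude $\left[\tfrac{1}{\pi}(\bar{a}_{\bar{J}})^{a}\right] = h$. By \eqref{int_1} together with condition (6-1) of Definition \ref{intrinsic_3}, this class is exactly the characteristic class of $\pi_{S}: S \to \mathbb{C}P^{n}$. Because the Hopf fibration $S^{2n+1} \to \mathbb{C}P^{n}$ has characteristic class $\pm h$ (a generator), and principal $S^{1}$-bundles over $\mathbb{C}P^{n}$ are classified up to isomorphism by their characteristic class in $H^{2}(\mathbb{C}P^{n},\mathbb{Z})$, it follows that $S \to \mathbb{C}P^{n}$ is isomorphic to the Hopf fibration as a principal $S^{1}$-bundle (up to orientation of the $S^{1}$-action).

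The hypothesis $n \geq 2$ enters only through the applicability of Definition \ref{intrinsic_3} (so that the objects $\bar{a}$, $\bar{\mathcal{B}}$, and the PSCB axioms are in force); the cohomological step itself works for any $n \geq 1$. No serious obstacle is expected: the argument is a short cohomological calculation built on top of the already-established machinery. The only point requiring care is the invocation of Theorem \ref{char:thm} to legitimize \eqref{trB_coh} in our setting, without which one would have to re-derive the identity directly from the PSCB axioms; but Theorem \ref{char:thm} renders this unnecessary.
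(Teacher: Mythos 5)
Your proposal is correct and follows essentially the same route as the paper: invoke Theorem \ref{char:thm} to place the PSCB bundle in the setting where \eqref{trB_coh} holds, combine it with $[\bar{\cal B}_{\bar J}]=0$ and $c_1(\mathbb{C}P^n)=(n+1)h$ to pin down the characteristic class as the generator, and conclude via the classification of principal $S^1$-bundles over $\mathbb{C}P^n$ by $H^2(\mathbb{C}P^n,\mathbb{Z})\cong\mathbb{Z}$. The paper phrases this as showing the integer $k$ labelling the bundle satisfies $(n+1)(k-1)=0$, hence $k=1$, which in particular removes your ``up to orientation'' caveat since the computed class equals the Hopf class itself rather than merely a generator.
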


\begin{proof}
The set $P(\bar{M},S^{1})$ of all principal $S^{1}$-bundles over $\bar{M}$ forms  
an additive group. The map from $P(\bar{M},S^{1})$ to $H^{2}(\bar{M}, \mathbb{Z})$, given by  
$S \mapsto \mbox{(its characteristic class)}$, is homomorphism (\cite[Theorems 1 and 2]{Kob}). 
In particular, 
if $\bar{M}=\mathbb{C}P^{n}$, 
then the homomorphism is actually an isomorphism and 
$H^{2}(\mathbb{C}P^{n}, \mathbb{Z}) \cong \mathbb{Z}$, and hence 
\[ P(\mathbb{C}P^{n},S^{1}) \cong H^{2}(\mathbb{C}P^{n}, \mathbb{Z}) \cong \mathbb{Z}.  \]
The Hopf fibration $S^{2n+1} \to \mathbb{C}P^{n}$ corresponds to $1 \in \mathbb{Z}$ under this identification. See \cite[Theorem 8]{Kob}. 
Let $\alpha \in H^{2}(\bar{M},\mathbb{Z)}$ denote the characteristic class of the  Hopf fibration, which corresponds to $1 \in \mathbb{Z}$. 
Note that $\alpha$ is represented as $[ -\frac{1}{2 \pi} (P^{\bar{D}^{\bar{g}}}_{\bar{J}})^{a} ]$ and 
the first Chern class of $\mathbb{C}P^{n}$ is expressed by $c_{1}(\mathbb{C}P^{n})=(n+1)\alpha$.  
Consider an $S^{1}$-bundle $S \to \mathbb{C}P^{n}$ of PSCB type with $[ \bar{\cal B}_{\bar{J}} ]=0$, 
which corresponds to $k \in \mathbb{Z}$. 
Therefore we have $S=k \cdot S^{2n+1}=S^{2n+1}+\cdots+S^{2n+1}$($k$ times) 
with the characteristic class $k \alpha$. 
Because $S=k \cdot S^{2n+1} \to \mathbb{C}P^{n}$ is PSCB type, 
Theorem \ref{char:thm} shows that $(\mathbb{C}P^{n}, \bar{J}, [\bar{D}^{\bar g}])$ is a 
projective special complex manifold associated with $S=k \cdot S^{2n+1} \to \mathbb{C}P^{n}$ 
and allows us to apply results in Section \ref{subsec23}. 
The equation \eqref{trB_coh} gives   
\[ \left[ \frac{1}{8 \pi} \, \bar{\cal B}_{\bar{J}} \right]
=-(n+1)\alpha+(n+1)(k \alpha) 
=(n+1) (k-1)\alpha \in H^{2}(\mathbb{C}P^{n}, \mathbb{Z}) \cong \mathbb{Z}, \]
which implies $k=1$. This concludes the proof.
\end{proof}

Next corollary shows the realization of $(\bar M , \bar{J}, [\bar{D}])$ with $W^{[\bar{D}]}=0$ 
as a projective special complex manifold. 
Another non-trivial realization of a (four-dimensional) c-projectively flat manifold is given in Proposition \ref{4_dim_ex_0}.

\begin{corollary}[C-projectively flat manifold]\label{cflat_ex}
Let $(\bar M , \bar{J}, [\bar{D}])$ 
be a complex manifold endowed with the c-projective structure induced by a torsion-free complex connection $\bar{D}$ such that $\dim \bar{M}=2n \geq 4$ and 
$W^{[\bar{D}]}=0$. Additionally, we assume that the Cotton-York tensor 
$C^{\bar{D}}:=d^{\bar{D}}P^{\bar{D}}$ vanishes when $2n=4$. 
If $[\frac{1}{2 \pi}(P^{\bar{D}}_{\bar{J}})^{a}] 
\in H^{2}(\bar{M},\mathbb{Z})$, 
then $\bar{M}$ can be realized as a projective 
special complex manifold by a $\mathbb{C}^{\ast}$-quotient 
of a trivial conical special complex manifold $(M,J,\nabla,\xi)$. 
\end{corollary}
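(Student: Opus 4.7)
The plan is to apply Theorem \ref{char:thm} with the trivial choice of auxiliary data $\bar B^{\alpha}=0$ and $\bar c^{\alpha}=0$ for all $\alpha \in \Lambda$. With this choice, conditions (1)--(4) of Definition \ref{intrinsic_3} hold automatically, condition (5) reduces precisely to $W^{[\bar D]}=0$, and the fundamental tensor $\bar{\mathcal B}$ vanishes so that $\bar a = -\tfrac12 P^{\bar D}$. The resulting conical special complex manifold will automatically be trivial, since $A^{\alpha} = e^{-2\theta_{\alpha}J} B^{\alpha}=0$.

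Under the trivial substitution, conditions (6-3) and (6-4) are tautologies ($0=0$). Condition (6-2) collapses to $d^{\bar D}\bar a = 0$, equivalently $C^{\bar D}:=d^{\bar D}P^{\bar D}=0$. I would verify the latter by applying the second Bianchi identity $d^{\bar D}R^{\bar D}=0$ to the c-projectively flat form of $R^{\bar D}$ obtained by setting $W^{[\bar D]}=0$ in \eqref{W_cur} and tracing appropriately: the standard computation shows that in real dimension $2n\geq 6$ the Bianchi identity alone forces $C^{\bar D}=0$, while in dimension $4$ this is precisely the extra hypothesis of the corollary. Since $\bar D$ is complex and torsion-free, a short computation shows that $d\bigl((P^{\bar D}_{\bar J})^{a}\bigr)$ is expressible via $C^{\bar D}$, so $(P^{\bar D}_{\bar J})^{a}$ is in particular a closed $2$-form.

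To supply condition (6-1), I would invoke the integrality hypothesis $[\tfrac{1}{2\pi}(P^{\bar D}_{\bar J})^{a}]\in H^{2}(\bar M,\mathbb Z)$. By the classical correspondence between $H^{2}(\bar M,\mathbb Z)$ and isomorphism classes of principal $S^{1}$-bundles (cf.\ \cite{Kob}) there exists a principal $S^{1}$-bundle $\pi_{S}:S\to\bar M$ whose first Chern class is $-[\tfrac{1}{2\pi}(P^{\bar D}_{\bar J})^{a}]$; since the representative $2$-form is closed, one can choose a connection $\eta$ on $S$ whose curvature realizes it on the nose, namely $d\eta = -\pi_{S}^{\ast}\bigl((P^{\bar D}_{\bar J})^{a}\bigr) = 2\pi_{S}^{\ast}\bigl((\bar a_{\bar J})^{a}\bigr)$. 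This is exactly (6-1).

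All hypotheses of Definition \ref{intrinsic_3} now hold for $(\pi_{S}:S\to\bar M,\eta)$ equipped with the trivial data, so Theorem \ref{char:thm} endows $M:=S\times\mathbb R^{>0}$ with a conical special complex structure $(J,\nabla,\xi)$ whose $\mathbb C^{\ast}$-quotient is $(\bar M,\bar J)$ and whose canonical c-projective structure coincides with $[\bar D]$; triviality $A=0$ is visible from $B^{\alpha}=0$ via \eqref{AB:eq}. The only genuine obstacle in this argument is the Bianchi-type verification of $C^{\bar D}=0$ in dimensions $\geq 6$ (everything else is either bookkeeping or a direct substitution into Definition \ref{intrinsic_3}); this step is what isolates the dimension $2n=4$ case as the one requiring the extra Cotton--York hypothesis.
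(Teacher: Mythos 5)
Your proposal is correct and follows essentially the same route as the paper: choose the trivial data $\bar B^{\alpha}=\bar c^{\alpha}=0$, use the integrality hypothesis to build the $S^1$-bundle with connection realizing $d\eta=2\pi_S^{\ast}((\bar a_{\bar J})^a)$ for $\bar a=-\tfrac12 P^{\bar D}$, reduce (6-2) to $C^{\bar D}=0$ via the second Bianchi identity when $2n\ge 6$ (the extra hypothesis handling $2n=4$), and invoke Theorem \ref{char:thm}. The only difference is that you spell out the bookkeeping of Definition \ref{intrinsic_3} and the closedness of $(P^{\bar D}_{\bar J})^a$ more explicitly than the paper does.
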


\begin{proof}
Since $[-\frac{1}{2 \pi}(P^{\bar{D}}_{\bar{J}})^{a}] 
\in H^{2}(\bar{M},\mathbb{Z})$, 
there exists an $S^{1}$-bundle $\pi_{S}:S \to \bar{M}$ with a connection $\eta$ 
whose curvature form $d \eta=\pi_{S}^{\ast}(-(P^{\bar{D}}_{\bar{J}})^{a})$. 
We can verify that 
$\pi_{S}:S \to \bar{M}$ is of PSCB type with $\bar{B}^{\alpha}=0$ and $\bar{c}^{\alpha}=0$ for all 
$\alpha \in \Lambda$. 
If $\dim \bar{M} \geq 6$, $W^{ [\bar{D}]}=0$ implies vanishing of 
the Cotton-York tensor 
$C^{\bar{D}}$ with the help of the second Bianchi identity. 
Moreover, we have $d \eta=\pi_{S}^{\ast}(-(P^{\bar{D}}_{\bar{J}})^{a})=2\pi_{S}^{\ast}((\bar{a}_{\bar{J}})^{a})$ 
and $2d^{\bar{D}} \bar{a}=-C^{\bar{D}}=0$. Here we note that $\bar{a}=-\frac{1}{2}P^{\bar{D}}$.   
Therefore Theorem \ref{char:thm} shows that $\bar{M}$ is realized as a projective 
special complex manifold by a $\mathbb{C}^{\ast}$-quotient 
of a conical special complex manifold $(M,J,\nabla,\xi)$ with $A=0$. 
\end{proof}

\begin{remark}
{\rm 
If $W^{[D]}=0$, 
the first Chern class $c_{1}(\bar{M})$ is given by
$c_{1}(\bar{M})=[-\frac{n+1}{2 \pi}(P^{\bar{D}}_{\bar{J}})^{a}] $ due to \eqref{W_cur}. 
Compare with Proposition \ref{rep_fist}. All the assumptions in Corollary \ref{cflat_ex} 
are expressed in terms of  c-projective geometric objects by Lemma \ref{rho_inv}. 
}
\end{remark}

\begin{example}\label{ex_cp_n}
When $(\bar M , \bar{J}, [\bar{D}])=(\mathbb{C}P^{n}, \bar{J}, [\bar{D}^{\bar g}])$ with 
$n \geq 2$, then we have $W^{[\bar{D}^{\bar g}]}=0$ 
and 
$[\frac{1}{2 \pi}(P^{\bar{D}}_{\bar{J}})^{a}] 
\in H^{2}(\bar{M},\mathbb{Z})$. 
The $S^{1}$-bundle $\pi_{S}:S \to \mathbb{C}P^{n}$ in the proof of Corollary \ref{cflat_ex}
is isomorphic to the Hopf fibration due to $[ \bar{\cal B}_{\bar{J}} ]=0$, as shown in Corollary \ref{hopf_uni},
and the constructed $M$ is $\mathbb{C}^{n+1} \backslash \{ 0 \}$ with 
the usual $\mathbb{C}^{\ast}$-action.
Therefore the construction in Corollary \ref{cflat_ex}
for $(\mathbb{C}P^{n}, \bar{J}, [\bar{D}^{\bar g}])$  
results the standard projection 
$\mathbb{C}^{n+1} \backslash \{ 0 \} \to \mathbb{C}P^{n}$. 
\end{example}

As an application, we have the following corollary.  
In this corollary, we describe sufficient conditions  (which locally are also necessary) 
for a complex manifold with c-projective structure to 
admit a realization as a projective special complex manifold.  The conditions  
are given purely in terms of a complex manifold 
$\bar M$ without reference to a bundle over $\bar M$.

\begin{corollary}\label{app_1}
Let $(\bar{M},\bar{J},\bar{D})$ be a complex manifold with $\dim \bar{M} \geq  4$ and 
a complex connection $\bar{D}$. 
Let $\bar{B}$ be a symmetric 
$(1,2)$-tensor which anti-commutes 
with $\bar{J}$
and $\bar{c}$ be a symmetric $(0,2)$-tensor on $\bar{M}$ such that  
$\bar{J}^{\ast}\bar{c}=-\bar{c}$. Set 
\[ 
\bar{a}=\frac{1}{8(n+1)} \bar{\cal B}
                 -\frac{1}{2} P^{\bar{D}},  \]
                 where the tensor $P^{\bar{D}}$ is the Rho tensor defined in \eqref{P-tensor:eq} and 
                 $\bar{\mathcal B}(X,Y)= \mathrm{Tr} (\bar B_X\bar B_Y)$, $X,Y\in T\bar M$. 
We assume that the c-projective Weyl curvature $W^{[\bar{D}]}$ of $[\bar{D}]$ satisfies  
\begin{align}\label{c1}
W^{ [\bar{D}]}=& -\frac{1}{4} [\bar{B},\bar{B}] 
-\frac{1}{4(n+1)} \bar{\cal B}_{\bar{J}} \otimes \bar{J} 
+\frac{1}{8(n+1)} \bar{\cal B} \wedge \mathrm{Id}
-\frac{1}{8(n+1)} \bar{\cal B}_{\bar{J}} \wedge \bar{J}.  
\end{align}
If 
\begin{align}\label{c2}
(d^{\bar{D}} \bar{a})(X,Y,Z)
=-\frac{1}{4}(\bar{c}(X,\bar{B}_{Y}Z) 
-\bar{c}(Y,\bar{B}_{X}Z)) 
\end{align}
for all $X$, $Y$, $Z \in T\bar{M}$
and there exists a one-form $\gamma$ on $\bar{M}$
such that 
\begin{align}
&d \gamma = 2 (\bar{a}_{\bar{J}})^{a}, \label{c3} \\ 
&d ^{\bar{D}} \bar{B}  
=- 2\gamma
\wedge (\bar{J} \bar{B})  +\bar c \wedge \mathrm{Id} + \bar c_{\bar J}\wedge \bar J, \label{c4}\\
&\bar{a}(X,\bar{B}_{Y}Z)-\bar{a}(Y,\bar{B}_{X}Z)
+(d^{\bar{D}} \bar{c})(X,Y,Z)
=2(\gamma \wedge \bar{c}_{\bar{J}}) (X,Y,Z), \label{c5}
\end{align}
then $\bar{M}$ can be realized as a projective 
special complex manifold by the $\mathbb{C}^{\ast}$-quotient 
of a conical special complex manifold $(M=\bar{M} \times S^{1} \times \mathbb{R}^{>0},J,\nabla,\xi)$. Moreover 
the c-projective structure $[\bar{D}]$ on $\bar{M}$ coincides 
with the one canonically induced from 
the conical special complex structure $(J,\nabla,\xi)$. 
\end{corollary}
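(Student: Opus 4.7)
The plan is to reduce the statement directly to Theorem \ref{char:thm} by realizing the hypotheses as the data of a \emph{trivial} principal $S^1$-bundle of PSCB type over $\bar{M}$. The key observation is that the conditions \eqref{c1}--\eqref{c5} are formulated globally on $\bar{M}$, which is exactly the form that Definition \ref{intrinsic_3} takes when the open cover reduces to a single chart and all transition functions are trivial.

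First, I would take the trivial bundle $\pi_{S} : S = \bar{M} \times S^{1} \to \bar{M}$ and, using the one-form $\gamma$ supplied by hypothesis \eqref{c3}, define a principal connection by
\[
\eta := \pi_{S}^{\ast} \gamma + d\theta,
\]
where $\theta$ is the $S^{1}$-coordinate. Then $d\eta = \pi_{S}^{\ast}(d\gamma) = 2\pi_{S}^{\ast}((\bar{a}_{\bar{J}})^{a})$ by \eqref{c3}, which is exactly condition (6-1) of Definition \ref{intrinsic_3}. I would then choose the index set $\Lambda = \{\ast\}$ with $\bar{U}_{\ast} = \bar{M}$, so that there are no transition functions to verify, and set $\bar{B}^{\ast} := \bar{B}$, $\bar{c}^{\ast} := \bar{c}$, $\gamma_{2}^{\ast} := \gamma$.

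Next, I would check the axioms of Definition \ref{intrinsic_3} one by one: (1) and (3) follow immediately from the hypotheses on $\bar{B}$ and $\bar{c}$; (2) and (4) are vacuous because $f^{2}_{\alpha\beta}$ does not occur for a single-chart cover; (5) is condition \eqref{c1}; and axioms (6-2), (6-3), (6-4) coincide with \eqref{c2}, \eqref{c4}, \eqref{c5}, respectively, with $\bar{D}$ playing the role of the torsion-free complex connection in the c-projective class $[\bar{D}]$. Thus $\pi_{S}:S \to \bar{M}$ is a principal $S^{1}$-bundle of PSCB type.

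Applying Theorem \ref{char:thm} then produces a conical special complex structure $(J, \nabla, \xi)$ on $M = S \times \mathbb{R}^{>0} = \bar{M} \times S^{1} \times \mathbb{R}^{>0}$ with the associated principal $\mathbb{C}^{\ast}$-bundle $\pi : M \to \bar{M}$ realizing $\bar{M}$ as a projective special complex manifold whose canonical c-projective structure $\bar{\mathcal{P}}(\pi)$ coincides with $[\bar{D}]$. There is no real obstacle here beyond the bookkeeping: the corollary is essentially a dictionary between the intrinsic data on $\bar{M}$ and the PSCB data on the trivial $S^{1}$-bundle, so the only thing to be careful about is matching the sign and normalization conventions for $\bar{a}$ and $P^{\bar{D}}$ between \eqref{c1}--\eqref{c5} and (5)--(6-4) of Definition \ref{intrinsic_3}, which are identical by inspection.
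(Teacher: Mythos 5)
Your proposal is correct and is essentially the paper's own proof: the authors likewise take the trivial bundle $S=\bar M\times S^1$ with connection $\eta=\pi_S^{\ast}(\gamma\otimes\sqrt{-1})+d\theta\otimes\sqrt{-1}$, observe that it is of PSCB type (with the cocycle conditions trivially satisfied and \eqref{c1}--\eqref{c5} supplying axioms (5) and (6-1)--(6-4)), and then invoke Theorem \ref{char:thm}. Your write-up simply makes explicit the single-chart bookkeeping that the paper leaves to the reader.
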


\begin{proof}
Consider the trivial $S^{1}$-bundle 
$S:=\bar{M} \times S^{1}$ over $\bar{M}$ with the connection 
\begin{equation}\label{eta_gamma:eq} \eta = \pi_{S}^{\ast}( \gamma \otimes \sqrt{-1})  
+ d \theta \otimes \sqrt{-1}. \end{equation}
Then $\pi_{S} : S \to \bar{M}$ 
is a principal $S^{1}$-bundle with connection 
$\eta$ of PSCB type.
\end{proof}

\begin{example}[(Affine) special complex manifolds as projective special complex manifolds]\label{asc_ex}

Let $(\bar{M},\bar{J},\bar{\nabla})$ be a special complex manifold of $\dim \bar{M}=2n$. 
Taking 
$\bar{B} :=\bar{A} =\bar{\nabla} \bar{J}$, we have 
\[ R^{\bar{D}}=-\frac{1}{4}[\bar{B},\bar{B}], \,\,\, Ric^{\bar{D}} =\frac{1}{4}  \bar{\cal B}, \]
where $\bar{D}=\bar{\nabla}-\frac12 \bar{J}\bar{A}$. Since $Ric^{\bar{D}}$ is symmetric and $\bar{J}$-Hermitian,
$P^{\bar{D}}=\frac{1}{n+1}Ric^{\bar{D}}=\frac{1}{4(n+1)}\bar{\cal B}$. Then we obtain 
\begin{align*}
W^{[\bar{D}]}=& -\frac{1}{4} [\bar{B},\bar{B}] 
-\frac{1}{4(n+1)} \bar{\cal B}_{\bar{J}} \otimes \bar{J} 
+\frac{1}{8(n+1)} \bar{\cal B} \wedge \mathrm{Id}
-\frac{1}{8(n+1)} \bar{\cal B}_{\bar{J}} \wedge \bar{J}
\end{align*}
and $\bar{a}=0$. Taking the trivial the $S^{1}$-bundle with the trivial connection $\gamma=0$ 
and $\bar{c}=0$, 
we can obtain 
a conical special complex manifold $(M,J,\nabla)$ whose $\mathbb{C}^{\ast}$-quotient is $\bar{M}$ 
by Corollary \ref{app_1}. 
Therefore any special complex manifold can be realized as a projective special complex manifold.    
Note that the first Chern class of a special complex manifold $(\bar{M},\bar{J},\bar{\nabla})$ is given by 
\[ c_{1}(\bar{M})=\left[ \frac{1}{4 \pi} \mathrm{Tr} R^{\bar{D}} \circ \bar{J} \right]
=-\left[ \frac{1}{8 \pi}\bar{\cal B}_{\bar{J}} \right]. \]
If a special complex manifold $\bar{M}$ is realized as a projective special complex manifold, then 
$(n+1)\left[ \frac{1}{\pi } (\bar{a}_{\bar{J}})^{a} \right]=0$ by \eqref{trB_coh}. 
\end{example}

Other examples will be given in Section \ref{low_ex}.

\subsection{$\dim \bar{M}=2$ case}\label{dim_2:sec}

In this subsection we define $S^1$-bundles $S\to \bar M$ of PSCB type  for 
$\dim \bar{M}=2$.

\begin{definition}\label{intrinsic_2}
Let $(\bar M , \bar J, \bar{D})$ be a complex manifold with real dimension 
$\dim \bar{M}=2$ and a torsion-free complex connection $\bar{D}$, 
and 
$\pi_{S}:S \to \bar{M}$ be a principal $S^{1}$-bundle with a connection 
$\eta$. 
Let $\{ \bar{U}_{\alpha} \}_{\alpha \in \Lambda}$ be an open covering of 
$\bar{M}$ 
with local trivializations  
$\pi_{S}^{-1}(\bar{U}_{\alpha}) \cong \bar{U}_{\alpha} \times S^{1}$ for 
each $\alpha \in \Lambda$ and $e^{f^{2}_{\alpha \beta} \sqrt{-1}}$ be the transition functions associated 
with $\{ \bar{U}_{\alpha} \}_{\alpha \in \Lambda}$. 
Let $\{ \gamma_{2}^{\alpha} \}_{\alpha \in \Lambda}$ be the collection of local $1$-forms on the open sets $\bar{U}_{\alpha}$ 
associated to the connection $1$-form $\eta$. 
We say that $\pi_{S}:S\to \bar{M}$ is of 
{\cmssl PSCB type} with 
a $(0,2)$-tensor $\bar{a}$ and collections 
$\{\bar{B}^{\alpha} \}_{\alpha \in \Lambda}$,  
$\{\bar{c}^{\alpha} \}_{\alpha \in \Lambda}$ 
if the following conditions hold: \\
(1) \, $\bar{B}^{\alpha}$ is a symmetric $(1,2)$-tensor on $\bar{U}_{\alpha}$ 
and anti-commutes with $\bar{J}$ for all $\alpha \in \Lambda$, \\
(2) \, 
$\bar{B}^{\beta} =e^{-2 f^{2}_{\alpha \beta} \bar{J}} \bar{B}^{\alpha}$
on $\bar{U}_{\alpha} \cap \bar{U}_{\beta}$, \\
(3) \, $\bar{c}^{\alpha}$ is a symmetric $(0,2)$-tensor on $\bar{U}_{\alpha}$ with 
$\bar{J}^{\ast} \bar{c}^{\alpha}=-\bar{c}^{\alpha}$
for all $\alpha \in \Lambda$, \\
(4) \, $\bar{c}^{\beta}+\sqrt{-1} \, \bar{c}^{\beta}_{\bar{J}} 
=e^{-2 f^{2}_{\alpha \beta} \sqrt{-1}} (\bar{c}^{\alpha}+\sqrt{-1} \, \bar{c}^{\alpha}_{\bar{J}} )$ on $\bar{U}_{\alpha} \cap \bar{U}_{\beta}$, \\ 
(5) \, the curvature of $\eta$ satisfies $d \eta=2 \pi_{S}^{\ast}(\bar{a}_{\bar{J}})^{a}$,\\ 
(6) \, the torsion free complex connection $\bar{D}$ satisfies that \\
(6-1) its curvature tensor 
$R^{\bar{D}}$ is of the form  
\begin{align*}
R^{\bar{D}} 
=& -\frac{1}{4}[\bar{B}^{\alpha},\bar{B}^{\alpha}]
 +2 \bar{a}^{a} \otimes \mathrm{Id} - 2 (\bar{a}_{\bar{J}})^{a} \otimes \bar{J} 
+ \bar{a} \wedge \mathrm{Id} - \bar{a}_{\bar{J}} \wedge \bar{J},  
\end{align*}
(6-2) \, $(d^{\bar{D}} \bar{a})(X,Y,Z)=-(1/4) (\bar{c}^{\alpha}(X,\bar{B}^{\alpha}_{Y}Z) 
-\bar{c}^{\alpha}(Y,\bar{B}^{\alpha}_{X}Z))$ 
for all $\alpha \in \Lambda$ and $X$, $Y$, $Z \in T\bar{U}_{\alpha}$, \\
(6-3) \, $d ^{\bar{D}} \bar{B}^{\alpha}  =- 2 \gamma_{2}^{\alpha} 
\wedge \bar{J} \bar{B}^{\alpha}$ 
$+\bar c^\alpha \wedge \mathrm{Id} + \bar c^\alpha_{\bar J}\wedge \bar J$
all $\alpha \in \Lambda$ \\
and \\
(6-4) \, $\bar{a}(X,\bar{B}^{\alpha}_{Y}Z)-\bar{a}(Y,\bar{B}^{\alpha}_{X}Z)
+(d^{\bar{D}} \bar{c}^{\alpha})(X,Y,Z)
=2(\gamma_{2}^{\alpha} \wedge \bar{c}^{\alpha}_{\bar{J}}) (X,Y,Z)$ for all $\alpha \in \Lambda$ and $X$, $Y$, $Z \in T\bar{U}_{\alpha}$. \\
\end{definition}

We note that Definition \ref{intrinsic_2} is valid for the case of $\dim \bar{M} \geq 4$.  
In a similar way as in the proof of Theorem \ref{char:thm}, we obtain the following theorem.  

\begin{theorem}\label{char:thm_two_dim}
Let $(\bar M , \bar J, \bar{D})$ be a complex manifold of real dimension $\dim \bar{M}=2$
with a complex connection $\bar{D}$ and 
$\pi_{S}:S \to \bar{M}$ be a principal $S^{1}$-bundle with a connection $\eta$. 
If $\pi_{S}:S\to \bar{M}$ is of PSCB type  
in the sense of Definition \ref{intrinsic_2}, 
then $M:=S \times \mathbb{R}^{>0}$ admits a conical special complex structure $(J,\nabla,\xi)$ such that 
$\bar{M}$ is a projective special complex manifold with the corresponding principal $\mathbb{C}^{\ast}$-bundle 
$\pi:=\pi_{S} \circ \pi_{1}:M \to \bar{M}$, where $\pi_{1}:S \times \mathbb{R}^{>0} \to S$ is the projection onto 
the first factor. (Note that the 
c-projective structure on $\bar M$ is unique for dimensional reasons.)
\end{theorem}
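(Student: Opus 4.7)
The plan is to follow the construction in the proof of Theorem \ref{char:thm} essentially verbatim, noting that the role previously played by condition (5) of Definition \ref{intrinsic_3} (via the Weyl curvature formula together with (6-1) in that definition) is now played directly by condition (6-1) of Definition \ref{intrinsic_2}, which prescribes the full curvature $R^{\bar{D}}$. Since any two torsion-free complex connections are c-projectively related in real dimension $2$, there is nothing to verify regarding the matching of c-projective structures, as the theorem statement itself remarks.

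Concretely, I construct the total space $M:=S \times \mathbb{R}^{>0}$ as a principal $\mathbb{C}^*$-bundle over $\bar M$, realizing the $\mathbb{R}^{>0}$-factor as $V = (\Lambda^{2}T^*\bar M\setminus\{0\})/\{\pm 1\}$ equipped with the connection induced from $\bar D$ via $\bar D\Omega = l\otimes \Omega$. The connection form is $\omega = \omega_1 + \sqrt{-1}\,\omega_2$, where $\omega_1 = -p^*\!\bigl(\tfrac{1}{2(n+1)} l\bigr) + dr/r$ with $n=1$, and $\omega_2 = \eta$. The first task is to verify the curvature identity
\[ d\omega = -2(\pi^*\bar a^a)\otimes 1 + 2(\pi^*(\bar a_{\bar J})^a)\otimes \sqrt{-1}, \]
whose imaginary part is condition (5) of Definition \ref{intrinsic_2}. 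The real part $d\omega_1 = -2p^*\bar a^a$ must be extracted from condition (6-1) by a trace computation: writing the prescribed expression for $R^{\bar D}$ and computing $\mathrm{Tr}\,R^{\bar D}_{X,Y}$, the $[\bar B^\alpha,\bar B^\alpha]$-term and the $\bar J$-term contribute nothing, while the remaining terms combine to give $4(n+1)\bar a^a$, which matches $-dl$ up to the normalizing factor chosen in $\omega_1$.

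I then define the almost complex structure $J$ on $M$ via horizontal lifts and the fundamental vector fields $\xi$, $Z_M$, and define the connection $D$ by the formula \eqref{def_c_con}. Lemma \ref{cpx} carries over verbatim to show $D$ is torsion-free, $DJ=0$ (so $J$ is integrable), $D\xi=\mathrm{Id}$, $DZ_M=J$, and $L_\xi J = L_{Z_M}J = 0$. Next I introduce the local tensors $B^\alpha$ on $\pi^{-1}(\bar U_\alpha)$ via \eqref{test:def} using the given $\bar B^\alpha$ and $\bar c^\alpha$. Applying the fundamental equation of an affine submersion \cite[Theorem 3.5]{AH} together with condition (6-2) and the explicit form of $R^{\bar D}$ from (6-1), the calculation of $R^D$ proceeds exactly as in the proof of the corresponding lemma in Section \ref{dim_ge4:sec}, yielding $R^D = -\tfrac{1}{4}[B^\alpha,B^\alpha]$. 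The cocycle conditions (2) and (4) guarantee that the rescaled tensors $A^\alpha := e^{-2\theta_\alpha J}B^\alpha$ agree on overlaps and patch together to a global symmetric $(1,2)$-tensor $A$ with $[A,A]=[B^\alpha,B^\alpha]$.

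Finally, conditions (6-3) and (6-4) give $d^D B^\alpha = 2 d\theta_\alpha \wedge JB^\alpha$ by a direct repetition of the argument in Lemma \ref{closedA}, and hence $d^D A = 0$. Therefore $(D,-\tfrac{1}{2}JA)\in \mathcal{C}_{(M,J)}$, and Lemma \ref{char_sc} yields a special connection $\nabla = D + \tfrac{1}{2}JA \in \mathcal{S}_{(M,J)}$. The identities $A_\xi = 0$ and $D\xi = \mathrm{Id}$ imply $\nabla\xi = \mathrm{Id}$, so $(J,\nabla,\xi)$ is a conical special complex structure on $M$ whose $\mathbb{C}^*$-quotient is $\bar M$. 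The only step requiring genuine attention beyond the transcription is the trace computation establishing $d\omega_1 = -2 p^*\bar a^a$ from condition (6-1); once that relationship is in hand, every other step in the proof of Theorem \ref{char:thm} applies without modification.
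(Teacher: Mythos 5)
Your proposal is correct and takes essentially the same route as the paper, which proves Theorem \ref{char:thm_two_dim} simply by asserting that the construction of Theorem \ref{char:thm} carries over; you have correctly identified the one point that genuinely needs checking, namely that the trace of the curvature expression in condition (6-1) of Definition \ref{intrinsic_2} yields $4(n+1)\bar a^{a}$ (so that $d\omega_1=-2\,p^{*}\bar a^{a}$), which in the higher-dimensional case came instead from the definition of $\bar a$ via the Rho tensor.
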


\begin{remark}
{\rm 
We can give an intrinsic characterization of a projective special complex manifold $(\bar{M},\bar{J},\bar{D})$ with $\dim \bar{M}=2$ purely in terms of objects on $\bar{M}$ as in Corollary~\ref{app_1}. 
}
\end{remark}

\begin{remark}\label{rem_cp}
{\rm
Note that the equation \eqref{trB_coh} holds for $\dim \bar{M}=2$.  
Therefore, similarly to Corollary \ref{hopf_uni}, we see that
any principal $S^{1}$-bundle over 
$(\mathbb{C}P^{1}, \bar{J}, \bar{D}^{\bar g})$ of PSCB type in the sense of Definition \ref{intrinsic_2}
with $[ \bar{\cal B}_{\bar{J}} ]=0$ is isomorphic to 
the Hopf fibration $S^{3} \to \mathbb{C}P^{1}$ as a principal $S^{1}$-bundle, 
where $\bar{g}$ is the Fubini-Study metric on $\mathbb{C}P^{1}$. 
}
\end{remark}

\subsection{The special case of projective special K\"{a}hler manifolds}\label{SpKah}

In this subsection we apply Theorems \ref{char:thm} and \ref{char:thm_two_dim}
in the case when the base is equipped with a K\"ahler metric recovering the characterization of projective 
special K{\"a}hler manifolds in terms of $S^1$-bundles in \cite{M}. Moreover, we give an intrinsic characterization of 
projective special K\"ahler manifolds in the spirit of our Corollary~\ref{app_1}, which does not involve any bundle over $\bar M$. 
At first, we recall the definition of conical special K{\"a}hler and projective special K{\"a}hler manifolds.

\begin{definition}
We call $(M,g,J,\omega,\nabla,\xi)$ a {\cmssl conical special K{\"a}hler manifold} if 
\begin{itemize}
\item $(M,J,\nabla,\xi)$ is a conical special complex manifold such that $D^{g} \xi=\mathrm{Id}$, where 
$D^{g}$ is the Levi-Civita connection of $g$,  
\item $(M,J,g)$ is an indefinite K{\"a}hler manifold and $\omega$ is the K{\"a}hler form with $\nabla \omega=0$, 
\item $g(\xi,\xi)$ is nowhere-vanishing and $\langle \xi , J \xi \rangle$ is negative definite and its orthogonal complement is positive
definite in $TM$. 
\end{itemize}
\end{definition}

We set $r:=\sqrt{-g(\xi,\xi)}$ and $S:=r^{-1}(1) \subset M$. 
Then it holds that $M=S \times \mathbb{R}^{>0}$. 
A K{\"a}hler manifold $(\bar{M},\bar{J},\bar{g})$ is said to be 
a {\cmssl projective special K{\"a}hler} if  $\pi:M \to \bar{M}$ is a 
$\mathbb{C}^{\ast}$-bundle such that 
the principal $\mathbb{C}^{\ast}$-action is generated by the holomorphic vector field 
$\xi-\sqrt{-1} J \xi$ and $\bar{M}$ is the K{\"a}hler quotient by the induced $\mathrm{U}(1)$-action, that is, 
$\bar{M}=S/\langle J\xi \rangle(=M/\mathbb{C}^{\ast})$. 
There exists a unique principal connection $\varphi$ on $M$ whose horizontal subbundle 
is orthogonal to the vertical subbundle with respect to $g$. 
Note that any conical special K\"ahler manifold is of horizontal type 
with respect to $\varphi$, see Definition \ref{hortype:def}. 

We have 
\[ \xi=r \partial_{r}, \,\,\,  
g=r^{2} \pi^{\ast} \bar{g}-r^{2} \eta \otimes \eta -dr \otimes dr,  \]
where $\eta$ is the $\mathfrak{u}(1)$-component of $\varphi$. See \cite{M} for example.  
Using the Koszul formula for the Levi-Civita connection $D^{g}$ of $g$, we obtain 
$g(D^{g}_{\tilde{X}} \tilde{Y},\xi)=-r^{2} \bar{g}(X,Y)$ for 
$X$, $Y \in \Gamma(T \bar{M})$. 
Therefore the fundamental $(0,2)$-tensor $\bar{a}$ coincides with the Riemannian metric $\bar{g}$;
\begin{align}\label{metric_fund}
\bar{a}=\bar{g} 
\end{align}
and it holds that 
\[ \left[ \, \frac{\bar{\omega}}{\pi} \, \right] \in H^{1,1}(\bar{M},\mathbb{Z}) \]
from \eqref{int_1}. 
Note that  
$\pi:(M,g) \to (\bar{M},\bar{g})$ is not a pseudo-Riemannian submersion, however it is 
an affine submersion with respect to the Levi-Civita connections.   
By \eqref{metric_fund} and  \eqref{bar_a},
\begin{align}\label{ineq_scal}
\bar{g}=\frac{1}{8(n+1)} \bar{\cal B} -\frac{1}{2(n+1)} Ric^{\bar{D}^{\bar{g}}}, 
\end{align}
where $\bar{D}^{\bar{g}}$ is the Levi-Civita connection of $\bar{g}$. 
Consequently the equation \eqref{ineq_scal} gives an inequality
\begin{align}\label{ineq_321}
4n(n+1)+\mbox{(the scalar curvature of $\bar{g}$)}\left(=\frac{1}{4}  \mathrm{Tr}_{\bar g}\,   \bar{\cal B}\right) \ge  0, 
\end{align}
where $\mathrm{Tr}_{\bar g}\,   \bar{\cal B} \ge 0$ follows from 
\[ \bar{\cal B}(X,X) = \mathrm{Tr}\, \bar B_X^2 =  \mathrm{Tr}\, B_{\tilde{X}}^2 
= \mathrm{Tr}\, A_{\tilde{X}}^2 = \mathrm{Tr}\, (JA_{\tilde{X}})^2\ge 0,\] since $JA_{\tilde{X}}=2(\nabla -D)_{\tilde{X}}$ is symmetric with respect to $g$ (both connections 
preserve the K\"ahler form and $JA_{\tilde{X}}$ anticommutes with $J$). Here $\tilde{X}$ denotes the horizontal lift of $X$. 
Equality holds in \eqref{ineq_321} if and only if $B^{\alpha}=0$ for all $\alpha \in \Lambda$.
This occurs only when $\bar{g}$ has constant holomorphic curvature $-4$ by \eqref{curv}.
See \cite[Corollary 7.4 and Proposition 9.5]{M}.  

Conversely, if we assume that conditions
(1), (2), (6-1), (6-3) hold and replace a curvature condition 
for the Levi-Civita connection in (5)
and symmetric property for $\{\bar{B}^{\alpha} \}_{\alpha \in \Lambda}$ in (6-4) for 
a K{\"a}hler manifold $(\bar{M},\bar{g},\bar{J})$, then 
we can construct a conical special K{\"a}hler manifold $(M,g,J,\omega,\nabla,\xi)$ 
and its $\mathbb{C}^{\ast}$-quotient is $(\bar{M},\bar{g},\bar{J})$. Precisely we can obtain 
the following corollary. 

\begin{corollary}[Theorem 7.6 in \cite{M}]\label{proj_kaehler_char} 
Let $(\bar{M},\bar{J},\bar{g})$ be a K{\"a}hler manifold 
and $\pi_{S}:S \to \bar{M}$ be a principal $S^{1}$-bundle with a connection $\eta$. 
Let $\{ \bar{U}_{\alpha} \}_{\alpha \in \Lambda}$ be an open covering of 
$\bar{M}$ 
with local trivializations  
$\pi_{S}^{-1}(\bar{U}_{\alpha}) \cong \bar{U}_{\alpha} \times S^{1}$ for 
each $\alpha \in \Lambda$ and $e^{f^{2}_{\alpha \beta} \sqrt{-1}}$ be the transition functions associated 
with $\{ \bar{U}_{\alpha} \}_{\alpha \in \Lambda}$. 
Let $\{ \gamma_{2}^{\alpha} \}_{\alpha \in \Lambda}$ be a collection of a local $1$-form on $\bar{U}_{\alpha}$ 
associated to the connection $1$-form $\eta$ and 
$\{\bar{B}^{\alpha} \}_{\alpha \in \Lambda}$ be 
a collection of a local $(1,2)$-tensor on $\bar{U}_{\alpha}$. 
We assume that \\
(1)' \, $\bar{g}(\bar{B}^{\alpha}_{\, \cdot \, } \, \cdot \,\, , \, \cdot \,)$ is totally symmetric 
and $\bar{B}^{\alpha}$ anti-commutes with $\bar{J}$ for all $\alpha \in \Lambda$, \\
(2)' \, 
$\bar{B}^{\beta} =e^{-2 f^{2}_{\alpha \beta} \bar{J}} \bar{B}^{\alpha}$
on $\bar{U}_{\alpha} \cap \bar{U}_{\beta}$, \\
(5)'\, the curvature tensor $R^{\bar{D}^{\bar{g}}}$ of the Levi-Civita connection $\bar{D}^{\bar{g}}$ 
for the K{\"a}hler metric $\bar{g}$ 
is of the form  
\begin{align*}
R^{\bar{D}^{\bar{g}}} 
=& -\frac{1}{4}[\bar{B}^{\alpha},\bar{B}^{\alpha}]
 - 2 \bar{g}_{\bar{J}}  \otimes \bar{J} 
+ \bar{g} \wedge \mathrm{Id} - \bar{g}_{\bar{J}} \wedge \bar{J},  
\end{align*}
(6-1)' \, the curvature of $\eta$ satisfies 
$d \eta=2 \pi_{S}^{\ast}(\bar{\omega})$ $(\bar{\omega}=\bar{g}_{J})$, \\
(6-3)' \, $d ^{\bar{D}} \bar{B}^{\alpha}  =- 2(\gamma_{2}^{\alpha} 
\wedge \bar{J} \bar{B}^{\alpha})$ all $\alpha \in \Lambda$. \\
Then $M:=S \times \mathbb{R}^{>0}$ admits a conical special K{\"a}hler special structure 
$(g,J,\omega,\nabla,\xi)$ such that 
$\bar{M}$ is a projective special K{\"a}hler manifold with the corresponding principal 
$\mathbb{C}^{\ast}$-bundle 
$\pi:=\pi_{S} \circ \pi_{1}:M \to \bar{M}$, where $\pi_{1}:S \times \mathbb{R}^{>0} \to S$ is the projection onto 
the first factor. 
\end{corollary}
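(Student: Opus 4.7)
The strategy is to apply Theorem~\ref{char:thm} (or Theorem~\ref{char:thm_two_dim} when $n=1$) with $\bar D := \bar D^{\bar g}$, $\bar a := \bar g$, and $\bar c^{\alpha} := 0$ for all $\alpha \in \Lambda$; the last choice reflects the fact that a conical special K\"ahler manifold is of horizontal type with respect to the orthogonal connection, see Definition~\ref{hortype:def}. After extracting the conical special complex structure, one upgrades it to a conical special K\"ahler structure on $M = S \times \mathbb{R}^{>0}$ by equipping $M$ with the pseudo-K\"ahler metric
\[
g := r^{2}\pi^{\ast}\bar g - r^{2}\eta \otimes \eta - dr \otimes dr,
\]
where $r$ denotes the standard coordinate on $\mathbb{R}^{>0}$ and $\eta$ is (the pullback to $M$ of) the connection form on $S \to \bar{M}$.

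The verification of the hypotheses (1)--(6) of Definition~\ref{intrinsic_3} goes as follows: (1) and (2) are precisely (1)' and (2)'; (3) and (4) hold trivially for $\bar c^{\alpha} = 0$; (6-1) reduces, via $(\bar a_{\bar J})^{a} = \bar g_{\bar J} = \bar \omega$, to (6-1)'; (6-2) reduces to $d^{\bar D^{\bar g}} \bar g = 0$, which is the K\"ahler condition; (6-3) is (6-3)' combined with $\bar c^{\alpha} = 0$; and (6-4) reduces to the symmetry $\bar g(X, \bar B^{\alpha}_{Y} Z) = \bar g(Y, \bar B^{\alpha}_{X} Z)$, a direct consequence of the total symmetry of $\bar g(\bar B^{\alpha}_{\, \cdot \,} \, \cdot \, , \, \cdot \,)$ in (1)'. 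The only substantial check is condition (5): Ricci-contracting (5)' yields $Ric^{\bar D^{\bar g}} = \frac{1}{4} \bar{\mathcal B} - 2(n+1)\bar g$, which, together with the fact that $Ric^{\bar D^{\bar g}}$ is symmetric and $\bar J$-Hermitian on a K\"ahler manifold, gives $P^{\bar D^{\bar g}} = -2\bar g + \frac{1}{4(n+1)} \bar{\mathcal B}$; substituting this and (5)' into the Weyl formula~\eqref{W_cur}, the $\bar g$-dependent terms cancel in pairs and leave precisely the expression of condition (5).

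Theorem~\ref{char:thm} (resp.~Theorem~\ref{char:thm_two_dim}) then produces a conical special complex structure $(J, \nabla, \xi)$ on $M$. The metric $g$ above is pseudo-Hermitian with respect to $J$, since the horizontal distribution $\mathrm{Ker}\, \omega$ (with $\omega$ the principal connection form built in the proof of Theorem~\ref{char:thm}) is $J$-invariant and $\bar g$ is $\bar J$-Hermitian. Closedness of the associated two-form $g_{J}$ follows from (6-1)' together with $d(r^{2} \eta) = 2r\, dr \wedge \eta + r^{2} d\eta$ and $d \bar \omega = 0$. Koszul's formula applied to the explicit form of $g$ then yields $D^{g} \xi = \mathrm{Id}$.

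The remaining, and most delicate, step is to verify $\nabla \, \omega_{g} = 0$ with $\omega_{g} := g_{J}$, promoting the output to a genuinely special \emph{K\"ahler} manifold. The plan is to show that the tensor $\nabla - D^{g}$ is symmetric and anti-commutes with $J$; this implies $(\nabla - D^{g}) \omega_{g} = 0$ and hence $\nabla\, \omega_{g} = D^{g} \omega_{g} = 0$. The difference is computed against the splitting $TM = \mathcal{H} \oplus \langle \xi, J \xi \rangle$ using $\nabla = D - \frac{1}{2} J A$, the explicit formula~\eqref{def_c_con} for $D$, and the Koszul formula for $D^{g}$ applied to the given expression of $g$. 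The main obstacle here is the book-keeping of the horizontal-horizontal component, which essentially inverts the relation between $g$ and the fundamental $(0,2)$-tensor $\bar a = \bar g$ recalled in Section~\ref{SpKah}; each individual piece is then governed by one of the identities already verified above.
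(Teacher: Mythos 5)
Your proposal is correct and follows essentially the same route as the paper: define the cone metric $g=r^{2}\pi_{S}^{\ast}\bar g-r^{2}\eta\otimes\eta-dr\otimes dr$, apply Theorem \ref{char:thm} (resp.\ Theorem \ref{char:thm_two_dim}) with $\bar c^{\alpha}=0$, and deduce $\nabla\omega=0$ from the $g$-symmetry of $A$ guaranteed by (1)'. The one simplification the paper makes, which you could adopt to avoid the Koszul-formula book-keeping in your final step, is to observe directly that the connection $D$ of \eqref{def_c_con} satisfies $Dg=0$ and is therefore already the Levi-Civita connection of $g$, so that $\nabla-D^{g}=-\frac{1}{2}JA$ without any further computation.
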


\begin{proof}
Consider a metric 
$g:=r^{2}\pi_{S}^{\ast} \bar{g} - r^{2} \eta \otimes \eta - dr \otimes dr$ on $M$. 
By Theorems \ref{char:thm} and \ref{char:thm_two_dim} with $\bar{c}_{\alpha}=0$ for all $\alpha \in \Lambda$, 
$(M,J,\nabla,\xi)$ constructed as in Section \ref{dim_ge4:sec} is a conical special complex manifold.
The complex connection $D$ defined by (\ref{def_c_con})
is the Levi-Civita connection of $g$. Indeed, it is easy to 
check $Dg=0$, hence $D^{g}\xi=D\xi=\mathrm{Id}$. 
Finally, it is also easy to obtain 
\begin{align*}
\nabla \omega &= (D^{g}-\frac{1}{2}JA)\omega =-\frac{1}{2}JA \cdot \omega 
                       =\frac{1}{2}\omega (JA \, \cdot \, ,\, \cdot \,) 
                       + \frac{1}{2}\omega (\, \cdot \, , JA \, \cdot \,) \\
                     &=\frac{1}{2} g (JA \, \cdot \, ,J \, \cdot \,) 
                       -\frac{1}{2} g (\, \cdot \, , A \, \cdot \,)=0,  
\end{align*}
since $A$ is $g$-symmetric due to (1)' (see \eqref{test:def} and \eqref{AB:eq} for the relation between  the tensors $A$ and $\bar B^\alpha$) and $\nabla=D^{g}-\frac{1}{2}JA$.   
\end{proof}

\begin{remark}
{\rm 
The term
\[ 
 2 \bar{g}_{\bar{J}}  \otimes \bar{J} 
- \bar{g} \wedge \mathrm{Id} + \bar{g}_{\bar{J}} \wedge \bar{J}
\]
in (5)' corresponds to the curvature tensor of the Fubini-Study metric on $\mathbb{C}P^{n}$.
}
\end{remark}

Theorems \ref{char:thm} and \ref{char:thm_two_dim} generalize Theorem 7.6 of \cite{M}.  
See also \cite[Remark 7.7]{M}. Similarly to Corollary~\ref{app_1}, 
we give an intrinsic characterization of 
projective special K\"ahler manifolds.

\begin{corollary}\label{int_ch_psk}
Let $(\bar{M},\bar{J},\bar{g})$ be a K{\"a}hler manifold, and let $\bar{B}$ be a symmetric 
$(1,2)$-tensor on $\bar{M}$ which anti-commutes 
with $\bar{J}$ and satisfies that $\bar{g}(\bar{B}_{\, \cdot \, } \, \cdot \,\, , \, \cdot \,)$ is totally symmetric. 
Assume that the curvature $R^{\bar{D}^{\bar{g}}}$ of the Levi-Civita connection $\bar{D}^{\bar{g}}$ of 
the K{\"a}hler metric $\bar{g}$ satisfies  
\begin{align}\label{spk_curv}
R^{\bar{D}^{\bar{g}}}=
 -\frac{1}{4}[\bar{B},\bar{B}]
 - 2 \bar{g}_{\bar{J}}  \otimes \bar{J} 
+ \bar{g} \wedge \mathrm{Id} - \bar{g}_{\bar{J}} \wedge \bar{J} 
\end{align}
and there exists a one-form $\gamma$ on $\bar{M}$
such that 
\begin{align}
&d \gamma = 2 \bar{\omega} \,\, (\bar{\omega}=\bar{g}_{\bar{J}}), \label{psk_gam}\\
&d ^{\bar{D}^{\bar{g}}} \bar{B}  
=- 2\gamma
\wedge (\bar{J} \bar{B}). \label{psk_B}
\end{align}
Then $\bar{M}$ can be realized as a projective 
special K{\"a}hler manifold by the $\mathbb{C}^{\ast}$-quotient 
of a conical special K{\"a}hler manifold 
$(M=\bar{M} \times S^{1} \times \mathbb{R}^{>0},J,g,\omega, \nabla, \xi)$. 
\end{corollary}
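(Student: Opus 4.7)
The strategy is to mimic the proof of Corollary \ref{app_1}: construct a trivial $S^1$-bundle over $\bar{M}$ with a suitable connection, verify the hypotheses of Corollary \ref{proj_kaehler_char}, and invoke that corollary. Since $d\gamma = 2\bar\omega$ by assumption \eqref{psk_gam}, the form $\bar\omega$ is already exact up to the factor $2$, so using a trivial $S^1$-bundle is consistent with the data; this is why no bundle appears in the statement of Corollary \ref{int_ch_psk}.

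Concretely, I would set $S := \bar{M} \times S^{1}$ with projection $\pi_{S}$ onto the first factor, and define the connection one-form on $S$ by
\[
\eta := \pi_{S}^{\ast}\gamma \otimes \sqrt{-1} + d\theta \otimes \sqrt{-1},
\]
where $\theta$ is the angular coordinate on $S^{1}$. Then $d\eta = \pi_{S}^{\ast}(d\gamma) \otimes \sqrt{-1} = 2\pi_{S}^{\ast}\bar\omega \otimes \sqrt{-1}$, giving condition (6-1)' of Corollary \ref{proj_kaehler_char}. I would take the trivial open covering of $\bar{M}$ by itself (so $\Lambda = \{1\}$, $\bar{U}_{1} = \bar{M}$), for which all transition functions $e^{f^{2}_{\alpha\beta}\sqrt{-1}}$ are constant equal to $1$ and all $f^{2}_{\alpha\beta} \equiv 0$, making condition (2)' vacuous. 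Setting $\bar{B}^{1} := \bar{B}$ and $\gamma_{2}^{1} := \gamma$, condition (1)' is exactly the hypothesis on $\bar{B}$, condition (5)' is precisely \eqref{spk_curv}, and condition (6-3)' is precisely \eqref{psk_B}.

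With all hypotheses of Corollary \ref{proj_kaehler_char} verified, it produces a conical special K\"ahler structure $(J, g, \omega, \nabla, \xi)$ on $M = S \times \mathbb{R}^{>0} = \bar{M} \times S^{1} \times \mathbb{R}^{>0}$, with $g = r^{2}\pi_{S}^{\ast}\bar{g} - r^{2}\eta \otimes \eta - dr \otimes dr$, and realizes $\bar{M}$ as its $\mathbb{C}^{\ast}$-quotient. This completes the proof.

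There is no serious technical obstacle here, since the argument is essentially a translation of the hypotheses through Corollary \ref{proj_kaehler_char} once the trivial bundle construction is set up. The only mild point worth highlighting is that \eqref{psk_gam} is implicitly an exactness/integrality assumption on $\bar\omega$, playing the role that \eqref{int_1} plays in the general framework; this is what allows the bundle $S$ to be chosen trivial and hence what makes an intrinsic formulation (free of any auxiliary bundle) possible, in parallel with Corollary \ref{app_1}.
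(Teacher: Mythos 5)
Your proposal is correct and follows essentially the same route as the paper: the paper's proof also takes the trivial bundle $S=\bar M\times S^1$ with connection $\eta=\pi_S^\ast(\gamma\otimes\sqrt{-1})+d\theta\otimes\sqrt{-1}$ and verifies the hypotheses of Corollary \ref{proj_kaehler_char}. Your write-up simply makes explicit the verification of conditions (1)', (2)', (5)', (6-1)', (6-3)' that the paper leaves to the reader.
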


\begin{proof}
Consider the trivial $S^{1}$-bundle $S:=\bar{M} \times S^{1}$ with the connection 
$\eta = \pi_{S}^{\ast}( \gamma \otimes \sqrt{-1})  
+ d \theta \otimes \sqrt{-1}. $
Then $(S,\eta)$ satisfies the assumptions of Corollary \ref{proj_kaehler_char} .
\end{proof}

See also Corollary 7.9 in \cite{M}. 

\begin{remark}\label{gr_ch_ms}
{\rm
If $[\bar{\omega}]=0$ and $\bar{M}$ is parallelizable, 
then the conditions for PSCB type can be 
an be expressed using matrix-valued one-forms defined on whole $\bar{M}$. 
For instance, Proposition 6.2 in \cite{MS1} 
provides an intrinsic characterization of a simply connected K{\"a}hler group
$\bar{M}$ with exact K{\"a}hler form $\bar{\omega}=d \kappa$, 
which admits a compatible projective special K{\"a}hler structure. 
Let $p=(p^{i}_{j})$, $q=(q^{i}_{j})$ be symmetric $(n \times n)$-matrix valued one-forms on $\bar{M}$ satisfying 
$p=-q \circ \bar{J}$ as stated in Proposition 6.2 of \cite{MS1}. 
Define 
\[ \bar{B}_{X}X_{i}=2 \left( \sum_{j=1}^{n} p^{j}_{i}(X)X_{j}+q^{j}_{i}(X) \bar{J}X_{j} \right), \] 
for $X \in T\bar{M}$, where $(X_{1},\dots,X_{n},\bar{J}X_{1},\dots, \bar{J}X_{n})$ is a globally left-invariant orthonormal frame on $\bar{M}$. 
The anti-commute for $\bar{B}$ corresponds to the condition $p=-q \circ \bar{J}$.  
The totally symmetry of $\bar{g}(\bar{B}_{\, \cdot \, } \, \cdot \,\, , \, \cdot \,)$ 
is equivalent to $p$, $q$ being symmetric matrices and (6.2) in \cite{MS1}. 
Furthermore, the condition \eqref{spk_curv} for $R^{\bar{D}^{\bar{g}}}$ in Corollary \ref{int_ch_psk} 
is equivalent to (6.3) and (6.4) in \cite{MS1}. 
Lastly, it can be verified that
the condition \eqref{psk_B}
corresponds to (6.5) and (6.6) in \cite{MS1} 
(note that $\gamma=2 \kappa$ for \eqref{psk_gam}). 
Thus Proposition 6.2 in \cite{MS1} is recovered via our Corollary \ref{int_ch_psk}. 
}
\end{remark}

\section{Low dimensional examples}\label{low_ex}

In this section we give examples of 
projective special complex manifolds $\bar{M}$ of dimension $4$ and $2$ using 
the results in Sections \ref{dim_ge4:sec} and \ref{dim_2:sec}. 
Some of these examples carry canonical metrics which turn them into well-known homogeneous 
projective special K\"ahler
manifolds, compare \cite{AC,MS, MS1,M}.

Let $(N,J,D)$ be a surface with a complex structure $J$ and 
a torsion-free complex connection $D$. We assume that the Ricci tensor $Ric^{D}$ is symmetric 
 and  $D Ric^{D}=0$. 
Since $Ric^{D}$ is symmetric, there exists locally a $D$-parallel volume form
$\Omega$ on $N$ and globally if $N$ has finite fundamental group.\footnote{In fact, by the finiteness of $\pi_1(N)$, the holonomy group of the induced flat connection 
in the line bundle $\wedge^2T^*N$ is compact (hence, a subgroup if $\{ \pm 1\}$). Thus there exists a parallel metric in the line bundle. Since $N$ is oriented there is a global (parallel) unit section.}
We choose $\Omega$ compatible with the orientation defined by $J$. 
Because $\mathrm{GL(1,\mathbb{C})} \cap \mathrm{SL}(2,\mathbb{R}) \cong \mathrm{U}(1) \cong \mathrm{SO}(2)$, the connection $D$ is the Levi-Civita connection of a K{\"a}hler metric 
\[ g_\Omega :=\Omega(\, \cdot \, , J \, \cdot \,).\]  
Any unit vector field $u$ with respect to $g_\Omega$ extends uniquely to a $g_\Omega$-orthonormal frame $(u_{1},u_{2}) := (u,Ju)$ compatible with 
the orientation. 
Let $(\theta^{i}_{j})$ be the connection form of  $D$ with respect 
to such a frame. So $\theta^{2}_{1}=-\theta^{1}_{2}$, $\theta^{1}_{1}=\theta^{2}_{2}=0$. 
We give some lemmas which will be used later.   

\begin{lemma}\label{Ricci_two_dim}
We have
\begin{align*}
&R^{D}_{u,Ju}u 
=-Ric^{D}(u,u)Ju, \,\, i.e. \,\,\, R^{D}=Ric^{D}_{J} \otimes J, \\
&Ric^{D}(u,u) =Ric^{D}(Ju,Ju), \\
&Ric^{D}(u,Ju) =Ric^{D}(Ju,u)=0. 
\end{align*}
\end{lemma}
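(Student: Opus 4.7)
The plan is to exploit two structural facts at each point $p\in N$: the complex condition $DJ=0$, which forces $[R^D_{X,Y},J]=0$ for all $X,Y$, and the fact that $\dim_{\mathbb R}T_pN=2$, which makes the space of endomorphisms of $T_pN$ commuting with $J$ the two-dimensional algebra spanned by $\mathrm{Id}$ and $J$.

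First I would write $R^D_{u,Ju}=\alpha\,\mathrm{Id}+\beta J$ for scalars $\alpha,\beta\in\mathbb R$ at $p$, and then use commutation with $J$ to get $R^D_{u,Ju}(Ju)=JR^D_{u,Ju}(u)=-\beta u+\alpha Ju$. Next I would compute the four Ricci components from the definition $Ric^D(X,Y)=\mathrm{tr}(Z\mapsto R^D_{Z,X}Y)$ by reading off matrix entries in the basis $(u,Ju)$, using $R^D_{u,u}=R^D_{Ju,Ju}=0$ and $R^D_{Ju,u}=-R^D_{u,Ju}$. A short bookkeeping gives
\[
Ric^D(u,u)=Ric^D(Ju,Ju)=-\beta,\qquad Ric^D(u,Ju)=-\alpha,\qquad Ric^D(Ju,u)=\alpha.
\]
The symmetry hypothesis on $Ric^D$ then forces $\alpha=0$, which yields the second and third claimed identities at once, and the first reduces to $R^D_{u,Ju}u=\beta Ju=-Ric^D(u,u)\,Ju$.

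For the coordinate-free formula $R^D=Ric^D_J\otimes J$ I would promote the pointwise identity $R^D_{u,Ju}=\beta J$ to arbitrary pairs by dimensional reasons: on a surface both sides, viewed as $\mathrm{End}(TN)$-valued $2$-forms, are determined by their single value on $(u,Ju)$ because $\Lambda^2T^*_pN$ is one-dimensional. Checking $Ric^D_J(u,Ju)=Ric^D(u,-u)=-Ric^D(u,u)=\beta$ makes the two sides match on this pair and hence globally, since $J$ is the common endomorphism factor.

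There is essentially no obstacle here: the lemma is a direct consequence of $[R^D_{X,Y},J]=0$, the two-dimensionality of $T_pN$, and the symmetry of $Ric^D$. The hypothesis $DRic^D=0$ plays no role in this particular proof and enters only through the preceding setup (existence of the $D$-parallel volume form $\Omega$ used to define $g_\Omega$); it will presumably be exploited in the subsequent global constructions.
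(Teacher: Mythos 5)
Your proof is correct. The paper states Lemma \ref{Ricci_two_dim} without proof, and your argument is exactly the natural one it implicitly relies on: $DJ=0$ forces $[R^D_{X,Y},J]=0$, so on a surface $R^D_{u,Ju}=\alpha\,\mathrm{Id}+\beta J$; the trace bookkeeping gives $Ric^D(u,Ju)=-\alpha$, $Ric^D(Ju,u)=\alpha$, and symmetry of $Ric^D$ kills $\alpha$, after which all three identities (and, by one-dimensionality of $\Lambda^2T_p^*N$, the tensorial form $R^D=Ric^D_J\otimes J$) follow. Your computation also correctly pins down the sign convention $Ric^D(X,Y)=\mathrm{tr}(Z\mapsto R^D_{Z,X}Y)$ needed for the stated signs, and you are right that $DRic^D=0$ is not used here.
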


\begin{lemma}\label{str._eq}
$R^{D}_{u,Ju}u=d \theta^{2}_{1}(u,Ju)Ju$, and hence  
 $d \theta^{2}_{1}=-Ric^{D}(u,u)\Omega$. 
\end{lemma}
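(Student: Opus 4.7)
The plan is to derive both assertions from the Cartan structure equations for $D$ with respect to the $g_\Omega$-orthonormal frame $(u_1,u_2)=(u,Ju)$. Since $D$ is the Levi-Civita connection of the Kähler metric $g_\Omega$, its connection forms satisfy $\theta^1_1=\theta^2_2=0$ and $\theta^2_1=-\theta^1_2$, so the curvature two-forms $\Omega^i_j=d\theta^i_j+\theta^i_k\wedge\theta^k_j$ reduce to
\[
\Omega^2_1 \;=\; d\theta^2_1, \qquad \Omega^1_1=\Omega^2_2=0,
\]
because the products $\theta^i_k\wedge\theta^k_j$ collapse once the diagonal forms vanish and $\theta^2_1\wedge\theta^2_1=0$.

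Next I would read off the first identity by expanding $R^D_{u,Ju}u=R^D_{u,Ju}u_1=\Omega^i_1(u,Ju)\,u_i$. Only $i=2$ contributes by the observation above, giving
\[
R^D_{u,Ju}u \;=\; \Omega^2_1(u,Ju)\,u_2 \;=\; d\theta^2_1(u,Ju)\,Ju,
\]
which is the first claim. Combining this with Lemma~\ref{Ricci_two_dim}, which supplies $R^D_{u,Ju}u=-Ric^D(u,u)Ju$, I would conclude
\[
d\theta^2_1(u,Ju)\;=\;-Ric^D(u,u).
\]

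Finally, to promote this scalar equality to the claimed equality of two-forms, I would use that on the surface $N$ any two-form is determined by its evaluation on the basis $(u,Ju)$, and that the parallel volume form $\Omega$ satisfies $\Omega(u,Ju)=g_\Omega(u,u)=1$ by the definition $g_\Omega=\Omega(\,\cdot\,,J\,\cdot\,)$ and the unit-length of $u$. Thus $d\theta^2_1$ and $-Ric^D(u,u)\,\Omega$ agree on $(u,Ju)$ and hence as two-forms. No step here is subtle; the main point to keep clean is bookkeeping of the signs and of the $\theta^i_j$ conventions, together with the orientation compatibility ensuring $\Omega(u,Ju)=+1$ rather than $-1$.
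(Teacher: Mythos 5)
Your proof is correct and is exactly the standard structure-equation computation the paper has in mind (the paper states this lemma without proof): with $\theta^1_1=\theta^2_2=0$ and $\theta^1_2=-\theta^2_1$ the quadratic terms in the second structure equation vanish, giving $R^D_{u,Ju}u=d\theta^2_1(u,Ju)Ju$, and Lemma \ref{Ricci_two_dim} together with $\Omega(u,Ju)=g_\Omega(u,u)=1$ upgrades the scalar identity to the equality of two-forms. No gaps.
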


\begin{lemma}\label{rank_ricci}
The rank of $Ric^{D}$ is constant, that is, $Ric^{D}$ is definite or vanishes. 
\end{lemma}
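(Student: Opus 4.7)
The plan is to exploit the very strong constraint that the previous lemmas place on $Ric^{D}$ in the two-dimensional setting. By Lemma \ref{Ricci_two_dim}, in any $g_\Omega$-orthonormal frame of the form $(u,Ju)$ we have
\[ Ric^{D}(u,u) = Ric^{D}(Ju,Ju), \qquad Ric^{D}(u,Ju)=0, \]
so $Ric^{D}$ is pointwise a scalar multiple of $g_\Omega$. Writing $Ric^{D} = \lambda\, g_\Omega$ thus defines a smooth function $\lambda$ on $N$.

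Next I would use the hypothesis $D Ric^{D}=0$ together with the fact that $D$ is the Levi-Civita connection of $g_\Omega$ (which was established in the paragraph preceding Lemma \ref{Ricci_two_dim}), so $D g_\Omega=0$. Differentiating $Ric^{D}=\lambda\, g_\Omega$ gives $0 = D Ric^{D} = d\lambda \otimes g_\Omega$, hence $d\lambda=0$. Therefore $\lambda$ is locally constant on $N$.

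It follows that on each connected component of $N$ the function $\lambda$ takes a single value, so $Ric^{D}$ is either identically zero (when $\lambda=0$) or a nonzero constant multiple of the Riemannian metric $g_\Omega$, which is definite. In either case the rank is constant, proving the lemma.

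I do not foresee any real obstacle: the whole argument is an immediate consequence of Lemma \ref{Ricci_two_dim} (which forces $Ric^{D}$ to be proportional to $g_\Omega$) combined with the parallelism of both $Ric^{D}$ and $g_\Omega$ under $D$. The only thing to be careful about is to state the conclusion componentwise if $N$ is not assumed connected, but this is a cosmetic issue rather than a genuine difficulty.
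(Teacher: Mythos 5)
Your proof is correct. It differs from the paper's argument only in packaging: the paper observes that the matrix of $Ric^{D}$ in the frame $(u,Ju)$ has determinant $(Ric^{D}(u,u))^{2}\ge 0$ and then invokes the general fact that a $D$-parallel tensor has constant rank, so the rank is $2$ or $0$ everywhere and, by the sign of the determinant, $Ric^{D}$ is definite when the rank is $2$. You instead upgrade the same input from Lemma \ref{Ricci_two_dim} to the pointwise proportionality $Ric^{D}=\lambda\, g_\Omega$ and differentiate, using the additional fact $Dg_\Omega=0$ (i.e.\ that $D$ is the Levi-Civita connection of $g_\Omega$) to get $d\lambda=0$. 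Your route yields the slightly stronger conclusion that $Ric^{D}$ is a constant multiple of $g_\Omega$, at the cost of relying on the local existence of $\Omega$; since $\Omega$ is only determined up to a positive constant on overlaps, what is globally well defined is the sign of $\lambda$, which is exactly what the lemma asserts, so your caveat about connected components is the only point needing care and you have already addressed it. Both arguments are sound and of comparable length.
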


\begin{proof}
By Lemma \ref{Ricci_two_dim}, we have 
\begin{equation} \label{sign:eq}\det \left(
\begin{array}{cc}
Ric^{D}(u,u) & Ric^{D}(u,J u)\\
Ric^{D}(J u,u) & Ric^{D}(J u,Ju)
\end{array} 
\right)=(Ric^{D}(u,u))^{2}. 
\end{equation}
Since $Ric^{D}$ is $D$-parallel, the rank of $Ric^{D}$ is constant, that is, 
${\rm rank} \, Ric^{D}=2$ or $0$. In view of \eqref{sign:eq}, this means that   
$Ric^{D}$ is definite or vanishes. 
\end{proof}

\subsection{$\dim \bar{M}=4$ (products of surfaces)}
Here we present four examples obtained as an application of Corollary \ref{app_1}.
Let $(N,J,D)$ (resp. $(N^{\prime},J^{\prime},D^{\prime})$) be a surface equipped 
with a complex structure $J$ (resp. $J^{\prime}$)
and a complex connection $D$ (resp. $D^{\prime}$). 
We assume that the Ricci tensors $Ric^{D}$ and $Ric^{D^{\prime}}$ are symmetric and 
$D Ric^{D}=D^{\prime} Ric^{D^{\prime}}=0$.  
Consider the product  
\[ \bar{M}:=N \times N^{\prime}, \,\, \bar{J}:=J+J^{\prime},\,\, \bar{D}:=D+D^{\prime}. \] 
For simplicity, we denote the pullbacks of covariant tensors on 
$N$ and $N^{\prime}$ to $N \times N^{\prime}$
by the same symbols as the original tensors. 
Similarly, the lifts of vectors and vector fields from each factor to $N \times N^{\prime}$
are denoted by the same symbols.
There exist locally parallel volume forms $\Omega$ and $\Omega^{\prime}$ defined on open subsets $U \subset N$ and 
$U^{\prime} \subset N^{\prime}$ and $\bar D$ coincides on $U\times U'$ with the Levi-Civita connection of the product metric $g_\Omega + g_{\Omega'}$.
Take vector fields $u\in \Gamma(TU)$ and $u'\in \Gamma(TU')$ 
such that $g_\Omega (u,u)=g_{\Omega'}(u',u')=1$. 
Then $(u,Ju,u^{\prime},J^{\prime}u^{\prime})$ forms a orthonormal frame on 
$U \times U^{\prime} \subset \bar{M}=N \times N^{\prime}$ with respect to the product metric.

We begin by computing the c-projective Weyl curvature of $[\bar{D}]$ in order to examine condition \eqref{c1} in Corollary \ref{app_1}. The following lemmas are straightforward.

\begin{lemma}\label{e2}
$R^{\bar{D}}=R^{D}+R^{D^{\prime}}$, 
$Ric^{\bar{D}}=Ric^{D}+Ric^{D^{\prime}}$. 
\end{lemma}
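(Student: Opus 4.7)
The plan is to unpack what the direct sum $\bar{D}=D+D'$ means on the product, and then verify that the curvature splits as a direct sum by a case analysis on lifted vector fields.

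First I would record the basic fact that the decomposition $T\bar{M}=TN\oplus TN'$ is $\bar D$-parallel and that, for lifts $X,Y$ from the same factor, $\bar{D}_X Y$ equals $D_X Y$ (respectively $D'_X Y$), while for lifts from different factors $\bar{D}_X Y=0$. This follows directly from the definition $\bar{D}=D+D'$ and the fact that on the product the Lie bracket of a lift from $N$ and a lift from $N'$ vanishes, so in particular mixed Christoffel symbols in any product chart are zero.

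Next I would compute $R^{\bar D}(X,Y)Z$ case by case, where each of $X,Y,Z$ is the lift of a vector field from $N$ or from $N'$. If all three come from $N$, then by the previous paragraph $\bar{D}$ reduces to $D$ on such vector fields, and $[X,Y]$ is again a lift from $N$; hence $R^{\bar D}(X,Y)Z=R^D(X,Y)Z$. Symmetrically for the $N'$-case. In the mixed cases, at least one of $X,Y$ comes from a different factor than the other; using that $\bar{D}_{X}Y=0$ whenever $X,Y$ are lifts from different factors, together with the fact that $\bar{D}_X$ acting on a lift from the other factor yields zero (so iterated covariant derivatives vanish), and that $[X,Y]=0$ for lifts from different factors, one checks that $R^{\bar D}(X,Y)Z=0$. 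This proves the first identity by $C^\infty(\bar{M})$-multilinearity.

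Finally I would obtain the Ricci identity by taking the trace: at a point $(p,p')\in N\times N'$, choose a frame of $T_p N$ and of $T_{p'} N'$, whose union is a frame of $T_{(p,p')}\bar{M}$. The trace defining $Ric^{\bar{D}}$ splits as the sum of the partial traces along $TN$ and along $TN'$; by the case analysis above the $TN$-partial trace of $Z\mapsto R^{\bar D}(X,Z)Y$ vanishes unless both $X,Y$ are lifted from $N$ (in which case it gives $Ric^D(X,Y)$), and similarly for the $TN'$-partial trace. Since both tensors are pulled back via the projections, this yields $Ric^{\bar{D}}=Ric^{D}+Ric^{D'}$. The only thing to be mildly careful about is the notational convention that tensors from each factor are identified with their pullbacks to $\bar{M}$, which is precisely the convention fixed in the paragraph preceding the lemma; no further obstacle arises.
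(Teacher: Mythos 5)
The paper offers no proof of this lemma (it is dismissed as ``straightforward'' immediately before the statement), and your argument is exactly the standard computation that justifies that claim: the product connection preserves the splitting $T\bar M = TN\oplus TN'$, mixed covariant derivatives and mixed brackets of lifted fields vanish, hence the curvature vanishes on mixed arguments and restricts to $R^D$, $R^{D'}$ on each factor, and the Ricci trace splits accordingly. Your proof is correct and is the intended argument.
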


\begin{lemma}\label{e3}
The Rho tensor of $\bar{D}$, see \eqref{P-tensor:eq}, is given by 
\begin{align*}
P^{\bar{D}}=\frac{1}{2+1}Ric^{\bar{D}}=\frac{1}{3} \left( Ric^{D}+Ric^{D^{\prime}} \right). 
\end{align*}
In particular, $P^{\bar{D}}$ is symmetric and $\bar{J}$-hermitian. 
\end{lemma}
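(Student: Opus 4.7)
The plan is straightforward: plug $n=2$ into the defining formula \eqref{P-tensor:eq} for the Rho tensor, note that the correction term involving $(Ric^{\bar D})^s - \bar J^{\ast}(Ric^{\bar D})^s$ must vanish for $\bar D$, and then apply Lemma \ref{e2}. The non-trivial content is the vanishing of the correction, which amounts to showing that $Ric^{\bar D}$ is already both symmetric and $\bar J$-hermitian.

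First, I would observe that by the standing assumption of this section, $Ric^{D}$ and $Ric^{D'}$ are symmetric on $N$ and $N'$ respectively. By Lemma \ref{Ricci_two_dim}, the identities $Ric^{D}(u,Ju)=0$ and $Ric^{D}(u,u)=Ric^{D}(Ju,Ju)$ (and analogously for $D'$) are equivalent to $Ric^{D}$ being $J$-hermitian on $N$, and similarly $Ric^{D'}$ is $J'$-hermitian on $N'$. Next I would check that these properties lift to $\bar M = N\times N'$: for vectors $X = X_N + X_{N'}$, $Y = Y_N + Y_{N'}$ decomposed with respect to the product, the pullback acts as $Ric^{D}(X,Y) = Ric^{D}(X_N,Y_N)$, and since $\bar J$ restricts to $J$ on $TN$ we get
\[
Ric^{D}(\bar J X,\bar J Y) = Ric^{D}(JX_N, JY_N) = Ric^{D}(X_N, Y_N) = Ric^{D}(X,Y),
\]
and analogously for $Ric^{D'}$. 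Combined with Lemma \ref{e2}, this shows $Ric^{\bar D}=Ric^{D}+Ric^{D'}$ is symmetric and $\bar J$-hermitian.

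Finally, with $n=2$ so that $1/(n-1)=1$, formula \eqref{P-tensor:eq} reads
\[
P^{\bar D} = \tfrac{1}{3}\!\left(Ric^{\bar D} + (Ric^{\bar D})^{s} - \bar J^{\ast}(Ric^{\bar D})^{s}\right).
\]
Symmetry gives $(Ric^{\bar D})^s=Ric^{\bar D}$, and $\bar J$-hermiticity gives $\bar J^{\ast}(Ric^{\bar D})^s=Ric^{\bar D}$, so the bracket collapses to $Ric^{\bar D}$, yielding $P^{\bar D}=\tfrac{1}{3}Ric^{\bar D}=\tfrac{1}{3}(Ric^{D}+Ric^{D'})$. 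The final assertion that $P^{\bar D}$ is symmetric and $\bar J$-hermitian is then immediate from the corresponding properties of $Ric^{\bar D}$ that were just established. There is no real obstacle here; the only point requiring any care is being explicit that the pullback of a $J$-hermitian tensor from a factor remains $\bar J$-hermitian when evaluated on mixed vectors, which as shown above is automatic because the pullback ignores the transverse component.
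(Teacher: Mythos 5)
Your proof is correct and is exactly the direct verification the paper has in mind (the paper labels this lemma as "straightforward" and omits the argument): substitute $n=2$ into \eqref{P-tensor:eq} and observe that the correction term vanishes because $Ric^{\bar D}=Ric^{D}+Ric^{D'}$ is symmetric by hypothesis and $\bar J$-Hermitian by Lemma \ref{Ricci_two_dim} applied to each factor. Your care in checking that $\bar J$-hermiticity survives the pullback to the product is appropriate and the argument is complete.
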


Let $W^{[\bar{D}]}$ be c-projective Weyl curvature of $[\bar{D}]$. 
Since $P^{\bar{D}}$ is symmetric and $P^{\bar{D}}_{\bar{J}}$ is skew-symmetric, we see that 
\begin{align}\label{wcp1}
W^{[\bar{D}]}_{X,Y}Z=
&R^{\bar{D}}_{X,Y}Z - \frac{1}{3}Ric^{\bar{D}}(X, \bar{J}Y) \bar{J}Z \\
&+\frac{1}{6}( Ric^{\bar{D}}(X,Z)Y-Ric^{\bar{D}}(Y,Z)X) \nonumber \\
&-\frac{1}{6} ( Ric^{\bar{D}}(X,\bar{J}Z)\bar{J}Y-Ric^{\bar{D}}(Y,\bar{J}Z)\bar{J}X) \nonumber
\end{align}
for $X$, $Y$, $Z \in T \bar{M}$.
Since $[W^{[\bar{D}]},\bar J]=0$ (cf.\ Proposition \ref{comm_W_J:prop}), $W^{[\bar{D}]}$ is completely 
determined by the following components.

\begin{lemma}\label{e4}
These equations hold for any vector fields $u\in \Gamma(TU)$, $u'\in \Gamma(TU')${\rm :}
\begin{align*}
&W^{[\bar{D}]}_{u, Ju}u=-\frac{1}{3} Ric^{D}(u,u)Ju, \,\,
W^{[\bar{D}]}_{u^{\prime}, J^{\prime}u^{\prime}}u^{\prime}
=-\frac{1}{3} Ric^{D^{\prime}}(u^{\prime},u^{\prime})J^{\prime} u^{\prime}, \\
&W^{[\bar{D}]}_{u, Ju}u^{\prime}=\frac{1}{3} Ric^{D}(u,u)J^{\prime}u^{\prime}, \,\, 
W^{[\bar{D}]}_{u^{\prime}, J^{\prime}u^{\prime}}u
=\frac{1}{3} Ric^{D^{\prime}}(u^{\prime},u^{\prime})Ju, \\
&W^{[\bar{D}]}_{u, u^{\prime}} u = \frac{1}{6} Ric^{D}(u,u) u^{\prime}, \,\,
W^{[\bar{D}]}_{u, u^{\prime}} u^{\prime} =-\frac{1}{6}Ric^{D^{\prime}}(u^{\prime},u^{\prime})u.
\end{align*}
\end{lemma}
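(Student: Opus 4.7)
The plan is to substitute the six specified triples of vectors into the explicit formula \eqref{wcp1} for $W^{[\bar{D}]}$ and simplify using three structural facts. First, $\bar{J}$ preserves the factor decomposition: $\bar{J}u = Ju$ and $\bar{J}u' = J'u'$. Second, by Lemma \ref{e2} the tensor $R^{\bar{D}}$ is the orthogonal sum of $R^{D}$ and $R^{D'}$, so $R^{\bar{D}}_{X,Y}Z$ vanishes unless $X$, $Y$, $Z$ all lie in the same factor, and mixed Ricci evaluations such as $Ric^{\bar{D}}(u,u')$ or $Ric^{\bar{D}}(u,J'u')$ are zero. Third, on each surface factor Lemma \ref{Ricci_two_dim} reduces $Ric^{D}$ essentially to the single scalar $Ric^{D}(u,u)$ (and analogously on $N'$), kills off-diagonal evaluations like $Ric^{D}(u,Ju)$, and gives $R^{D}_{u,Ju}u=-Ric^{D}(u,u)Ju$.

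I would organise the six identities in three groups according to the factor distribution of $(X,Y,Z)$. For the within-factor cases $W^{[\bar{D}]}_{u,Ju}u$ and $W^{[\bar{D}]}_{u',J'u'}u'$, all four terms of \eqref{wcp1} contribute nontrivially, and the coefficients combine as $-1+\tfrac{1}{3}+\tfrac{1}{6}+\tfrac{1}{6}=-\tfrac{1}{3}$, giving the claimed answer. For the semi-mixed cases $W^{[\bar{D}]}_{u,Ju}u'$ and $W^{[\bar{D}]}_{u',J'u'}u$, the curvature term vanishes (three inputs not in the same factor) and the last two symmetrisation brackets vanish because every surviving Ricci entry there is a mixed one; only the $-\tfrac{1}{3}Ric^{\bar{D}}(X,\bar{J}Y)\bar{J}Z$ transvection survives, supplying the $\tfrac{1}{3}$ coefficient (the sign coming from $\bar{J}(Ju)=-u$ and $\bar{J}(J'u')=-u'$). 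For the fully mixed cases $W^{[\bar{D}]}_{u,u'}u$ and $W^{[\bar{D}]}_{u,u'}u'$, $R^{\bar{D}}_{u,u'}=0$ and the two $\bar{J}$-twisted correction terms vanish by mixed-Ricci arguments; only the symmetric difference $\tfrac{1}{6}(Ric^{\bar{D}}(X,Z)Y-Ric^{\bar{D}}(Y,Z)X)$ remains, yielding the coefficients $+\tfrac{1}{6}$ and $-\tfrac{1}{6}$, respectively.

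There is no substantive obstacle; the whole argument is elementary once \eqref{wcp1} and the vanishing of mixed Ricci components are in hand. The only real care required is sign-bookkeeping in the transvection $\bar{J}Y$ (notably $\bar{J}(Ju)=-u$) and the collection step in the within-factor computation, where $Ric^{D}(Ju,Ju)=Ric^{D}(u,u)$ from Lemma \ref{Ricci_two_dim} is used to merge terms. The symmetry between the $N$- and $N'$-roles effectively reduces the six identities to three, cutting the work in half.
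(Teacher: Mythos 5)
Your proposal is correct and follows essentially the same route as the paper: substitute each triple into \eqref{wcp1} and simplify using Lemmas \ref{e2}, \ref{e3} and \ref{Ricci_two_dim} (the paper writes out only the case $W^{[\bar{D}]}_{u,Ju}u$ and declares the rest similar, which your three-group organisation makes explicit). The coefficient bookkeeping you describe, including the combination $-1+\tfrac{1}{3}+\tfrac{1}{6}+\tfrac{1}{6}=-\tfrac{1}{3}$ and the sign from $\bar{J}(Ju)=-u$, checks out against the paper's computation.
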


\begin{proof}
Using Lemmas \ref{Ricci_two_dim}, \ref{e2}, \ref{e3} and \eqref{wcp1}, 
we obtain 
\begin{align*}
W^{[\bar{D}]}_{u, Ju}u =
&R^{\bar{D}}_{u,Ju}u + \frac{1}{3}Ric^{\bar{D}}(u, u) \bar{J}u 
+\frac{1}{6}( Ric^{\bar{D}}(u,u)Ju-Ric^{\bar{D}}(Ju,u)u) \\
&-\frac{1}{6} ( -Ric^{\bar{D}}(u,\bar{J}u)u-Ric^{\bar{D}}(Ju,\bar{J}u)\bar{J}u) \\
=
&-Ric^{D}(u,u)Ju + \frac{1}{3}Ric^{D}(u, u) \bar{J}u 
+\frac{1}{6} Ric^{D}(u,u)Ju \\
&+\frac{1}{6} Ric^{D}(Ju,\bar{J}u)\bar{J}u \\
=
&-\frac{1}{3} Ric^{D}(u,u)Ju.
\end{align*}
Similar calculations give the other equations.   
\end{proof}

Next we fix an orthonormal local frame $(u,Ju,u^{\prime},J^{\prime}u^{\prime})$ and consider $\bar{B}$ of a very special form. 
Specifically we define a symmetric $(1,2)$-tensor $\bar{B}$ on $U \times U^{\prime} \subset \bar{M}$, 
which anti-commutes with $\bar{J}$ as follows:
\begin{align}
&\bar{B}_{u}u := a u^{\prime} +b J^{\prime} u^{\prime}, \label{def_B_four}\\
&\bar{B}_{Ju}u =\bar{B}_{u}Ju  := -\bar{J}\bar{B}_{u}u=bu^{\prime}-aJ^{\prime}u^{\prime}, \nonumber \\
&\bar{B}_{Ju}Ju:=-a u^{\prime} -b J^{\prime} u^{\prime} (=-\bar{B}_{u}u), \nonumber \\
&\bar{B}_{u^{\prime}}u^{\prime}=\bar{B}_{J^{\prime}u^{\prime}}u^{\prime}=\bar{B}_{u^{\prime}}J^{\prime}u^{\prime}
=\bar{B}_{J^{\prime}u^{\prime}}J^{\prime}u^{\prime}:=0, \nonumber \\
&\bar{B}_{u}u^{\prime} =\bar{B}_{u^{\prime}}u :=su + t J u, \nonumber \\
&\bar{B}_{Ju} u^{\prime} =\bar{B}_{u}J^{\prime}u^{\prime}:= -\bar{J}\bar{B}_{u}u^{\prime}
=t u-s J u, \nonumber \\
&\bar{B}_{Ju} J^{\prime}u^{\prime} :=-su - t Ju (= - \bar{B}_{u}u^{\prime}), \nonumber 
\end{align}
where $a$, $b$, $s$ and $t$ are functions on 
$U \times U^{\prime}$. We will derive equations for the conditions 
in Corollary \ref{app_1} to be satisfied.  
The following two lemmas can be proved by straightforward calculations. 

\begin{lemma}\label{e5}
We have the following equations{\rm :}
\begin{align*}
&[\bar{B},\bar{B}]_{u, Ju}u=2(bs-at)u+2(as+bt)Ju, \\
&[\bar{B},\bar{B}]_{u^{\prime}, J^{\prime}u^{\prime}}u^{\prime}=0, \\
&[\bar{B},\bar{B}]_{u, Ju}u^{\prime}=-2(bs-at)u^{\prime}+2(as+bt)J^{\prime}u^{\prime}, \\
&[\bar{B},\bar{B}]_{u^{\prime}, J^{\prime}u^{\prime}}u
=2(s^{2}+t^{2}) Ju,\\
&[\bar{B},\bar{B}]_{u, u^{\prime}} u = (as+bt)u^{\prime}+(bs-at)J^{\prime}u^{\prime}, \\
&[\bar{B},\bar{B}]_{u, u^{\prime}} u^{\prime} =-(s^{2}+t^{2})u,\\
&[\bar{B},\bar{B}]_{Ju, u^{\prime}} u =(sb-ta)u^\prime -(sa+tb)J^\prime u^\prime ,\\
&[\bar{B},\bar{B}]_{Ju, u^{\prime}} u^\prime =-(s^2+t^2)Ju.
\end{align*}
\end{lemma}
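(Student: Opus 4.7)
The plan is to prove the eight identities by directly expanding
\[
[\bar B,\bar B]_{X,Y}Z \;=\; \bar B_X\bar B_YZ-\bar B_Y\bar B_XZ
\]
using only the defining relations (\ref{def_B_four}). The whole computation is bilinear in the four scalar functions $a,b,s,t$, so it amounts to bookkeeping rather than genuine argument; there is no serious obstacle.

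The useful structural observation is that $\bar B$ swaps the two factors in a controlled way: by (\ref{def_B_four}), if both inputs are tangent to $N$ then the output lies in $TN'$, if both are tangent to $N'$ then the output is zero, and if the inputs are mixed then the output lies in $TN$. Consequently, when we form a composition $\bar B_X\bar B_YZ$ the sequence of factors is forced, which already accounts for which summand (tangent to $N$ or to $N'$) each of the eight answers lives in. In particular, $[\bar B,\bar B]_{u',J'u'}u'=0$ follows immediately because $\bar B_{u'}u'=\bar B_{J'u'}u'=0$. Moreover the anti-commutation relation $\bar B\bar J=-\bar J\bar B$, which is built into (\ref{def_B_four}), ensures that every composition reduces to one of the four basic evaluations $\bar B_uu$, $\bar B_uJu$, $\bar B_{u}u'$, $\bar B_{Ju}u'$ already listed there.

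I would then handle each of the remaining seven identities by a two-line direct expansion. For instance, for $[\bar B,\bar B]_{u,Ju}u$ one has
\[
\bar B_u\bar B_{Ju}u=\bar B_u(bu'-aJ'u')=b(su+tJu)-a(tu-sJu)=(bs-at)u+(as+bt)Ju,
\]
and similarly
\[
\bar B_{Ju}\bar B_uu=\bar B_{Ju}(au'+bJ'u')=a(tu-sJu)-b(su+tJu)=-(bs-at)u-(as+bt)Ju,
\]
whose difference is the asserted $2(bs-at)u+2(as+bt)Ju$. The remaining six identities are produced in exactly the same way; each one involves only a single substitution of (\ref{def_B_four}) and a small amount of sign tracking. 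The only care needed is to keep straight the signs coming from $\bar B_{Ju}=\bar B_u\circ(\text{shift})$ together with $\bar B\bar J=-\bar J\bar B$, but these are entirely mechanical.

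Since the calculation is bilinear and purely algebraic, the real content of the lemma is collecting the eight formulas in a form that can be fed into Lemma \ref{e4} and ultimately into condition (\ref{c1}) of Corollary \ref{app_1}; the proof itself is a routine verification with no genuine difficulty beyond careful arithmetic.
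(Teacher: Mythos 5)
Your proposal is correct and matches the paper's (omitted) argument: the paper simply states that Lemma \ref{e5} follows by straightforward calculation, and your direct expansion of $[\bar B,\bar B]_{X,Y}Z=\bar B_X\bar B_YZ-\bar B_Y\bar B_XZ$ using \eqref{def_B_four} is exactly that calculation. Your sample verification of the first identity is accurate, and the structural remarks (vanishing on $TN'\times TN'$, the alternation between the two factors) correctly explain why each bracket lands in the stated summand.
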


Note that the other components of $[\bar{B},\bar{B}]$ are determined by $[\bar{B},\bar{B}]_{\bar J X, \bar JY}=  [\bar{B},\bar{B}]_{X, Y}$ and $[[\bar{B},\bar{B}], \bar J ]=0$, equations which hold in general, since $\bar B$ is symmetric and anti-commutes with $\bar J$. 
We recall the symmetric $J$-Hermitian tensor $\bar{\cal B}$ defined by 
$\bar{\cal B}(X,Y)=\mathrm{Tr} (\bar{B}_{X}\bar{B}_{Y})$ for $X$, $Y \in T \bar{M}$.

\begin{lemma}\label{e6}
$\bar{\cal B}(u,u)=4(as+bt)$, $\bar{\cal B}(u^{\prime},u^{\prime})=2(s^{2}+t^{2})$ and 
$\bar{\cal B}(X,X')=0$, when $X\in TU, X'\in TU'$.
\end{lemma}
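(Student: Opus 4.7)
The plan is to read off the matrices of $\bar{B}_u$, $\bar{B}_{Ju}$, $\bar{B}_{u'}$, $\bar{B}_{J'u'}$ in the orthonormal frame $(u,Ju,u',J'u')$ directly from \eqref{def_B_four} and then compute $\mathrm{Tr}(\bar{B}_X\bar{B}_Y)$ by matrix multiplication. Since everything is an explicit $4\times 4$ calculation, there is no real obstacle; the only thing to exploit is the block structure produced by \eqref{def_B_four}.

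More concretely, the first observation is that $\bar{B}_u$ sends $\mathrm{span}(u,Ju)$ into $\mathrm{span}(u',J'u')$ and $\mathrm{span}(u',J'u')$ into $\mathrm{span}(u,Ju)$ (and likewise for $\bar{B}_{Ju}$), so $\bar{B}_u$ is block off-diagonal in the splitting $TU \oplus TU'$. By contrast $\bar{B}_{u'}$ sends $\mathrm{span}(u,Ju)$ into $\mathrm{span}(u,Ju)$ and annihilates $\mathrm{span}(u',J'u')$ (the corresponding statement holds for $\bar{B}_{J'u'}$). Then for $X\in TU$ and $X'\in TU'$ the product $\bar{B}_X\bar{B}_{X'}$ has the form ``off-diagonal $\circ$ (acts as $0$ on $TU'$)'', hence maps $TU$ into $TU'$ and $TU'$ to $0$; its trace is therefore zero, proving the last assertion.

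For the remaining two identities I would just compute the two diagonal cases. Using \eqref{def_B_four}:
\begin{align*}
(\bar{B}_u)^2 u &= \bar{B}_u(au'+bJ'u') = a(su+tJu)+b(tu-sJu) = (as+bt)u+(at-bs)Ju,\\
(\bar{B}_u)^2 Ju &= \bar{B}_u(bu'-aJ'u') = b(su+tJu)-a(tu-sJu) = (bs-at)u+(as+bt)Ju,\\
(\bar{B}_u)^2 u' &= \bar{B}_u(su+tJu) = (as+bt)u'+(bs-at)J'u',\\
(\bar{B}_u)^2 J'u' &= \bar{B}_u(tu-sJu) = (at-bs)u'+(as+bt)J'u',
\end{align*}
whose four diagonal entries are each $as+bt$, giving $\bar{\mathcal B}(u,u)=4(as+bt)$. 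Similarly $(\bar{B}_{u'})^2u=(s^2+t^2)u$, $(\bar{B}_{u'})^2Ju=(s^2+t^2)Ju$, and $\bar{B}_{u'}$ annihilates $u',J'u'$, giving $\bar{\mathcal B}(u',u')=2(s^2+t^2)$.

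The main thing to be careful about is bookkeeping the symmetry relations $\bar{B}_{Ju}u=\bar{B}_uJu$, $\bar{B}_{u'}u=\bar{B}_uu'$ etc., together with $\bar{B}\bar{J}=-\bar{J}\bar{B}$, so that the values of $\bar{B}$ on all frame pairs agree with the list in \eqref{def_B_four}. Once the matrix of each $\bar{B}_X$ is written out, the identities follow by inspection.
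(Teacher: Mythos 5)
Your proposal is correct and is exactly the straightforward direct computation the paper has in mind (the paper omits the proof of Lemma \ref{e6}, stating only that it follows by straightforward calculation from \eqref{def_B_four}). The block-structure observation for the mixed case and the four diagonal entries for $(\bar{B}_u)^2$ and $(\bar{B}_{u'})^2$ all check out.
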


\begin{lemma}\label{e65}
We denote the right-hand side of \eqref{c1} by $F$. It holds that 
\begin{align*}
&F(u,Ju,u)= -\frac{1}{2} (bs-at)u +\frac{1}{6} (as+bt)Ju, \\
&F(u^{\prime},J^{\prime}u^{\prime},u^{\prime})=\frac{1}{3}(s^{2}+t^{2})J^{\prime}u^{\prime}, \\
&F(u,Ju,u^{\prime})=\frac{1}{2} (bs-at)u^{\prime} - \frac{1}{6} (as+bt)J^{\prime}u^{\prime}, \\
&F(u^{\prime},J^{\prime}u^{\prime},u)=-\frac{1}{3}(s^{2}+t^{2})Ju, \\
&F(u,u^{\prime},u)=-\frac{1}{12} (as+bt)u^{\prime} -\frac{1}{4} (bs-at)J^{\prime} u^{\prime}, \\ 
&F(u,u^{\prime},u^{\prime})=\frac{1}{6}(s^{2}+t^{2})u. 
\end{align*}
\end{lemma}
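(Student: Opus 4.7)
The plan is to verify each of the six component formulas by direct substitution into the defining expression for $F$, which (with $n=2$, i.e.\ $n+1=3$) reads
\[
F \;=\; -\tfrac14 [\bar{B},\bar{B}] \;-\; \tfrac1{12}\,\bar{\cal B}_{\bar{J}}\otimes \bar{J} \;+\; \tfrac1{24}\,\bar{\cal B}\wedge \mathrm{Id} \;-\; \tfrac1{24}\,\bar{\cal B}_{\bar{J}}\wedge \bar{J}.
\]
For each triple $(X,Y,Z)\in\{(u,Ju,u),(u',J'u',u'),(u,Ju,u'),(u',J'u',u),(u,u',u),(u,u',u')\}$, I would evaluate the four summands separately using the values of $[\bar{B},\bar{B}]$ from Lemma \ref{e5} and the values of $\bar{\cal B}$ from Lemma \ref{e6}.

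Before computing, I would record the auxiliary identities that $\bar{\cal B}$ is symmetric and $\bar{J}$-Hermitian: this gives $\bar{\cal B}(X,\bar{J}X)=0$, $\bar{\cal B}_{\bar{J}}(X,X)=0$, $\bar{\cal B}(X,\bar{J}Y)=-\bar{\cal B}(\bar{J}X,Y)$, and hence $\bar{\cal B}_{\bar{J}}$ is skew. Together with Lemma \ref{e6}, which says $\bar{\cal B}(X,X')=0$ whenever $X\in TU$, $X'\in TU'$, these identities kill most of the terms $\bar{\cal B}\wedge \mathrm{Id}$ and $\bar{\cal B}_{\bar{J}}\wedge \bar{J}$ produce. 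For example, in $F(u,Ju,u)$ only $\bar{\cal B}(u,u)=4(as+bt)$ and $\bar{\cal B}_{\bar{J}}(Ju,u)=\bar{\cal B}(u,u)$ survive; plugging these values along with $[\bar{B},\bar{B}]_{u,Ju}u = 2(bs-at)u + 2(as+bt)Ju$ and adding up the coefficients $-\tfrac12+\tfrac13+\tfrac16+\tfrac16=\tfrac16$ in front of $(as+bt)Ju$ yields $-\tfrac12(bs-at)u+\tfrac16(as+bt)Ju$, matching the claim. The remaining five cases are analogous.

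The step I expect to consume the most attention is the case $F(u,u',u)$, which is the only one in which all four summands contribute non-trivially and in which both the $\mathrm{Id}$ and $\bar{J}$ parts of $\bar{\cal B}\wedge\mathrm{Id}-\bar{\cal B}_{\bar{J}}\wedge\bar{J}$ must be tracked carefully. The other cases either have one of the two vectors equal to $\bar{J}$ of the other (killing $\bar{\cal B}\wedge\mathrm{Id}$ or $\bar{\cal B}_{\bar{J}}\otimes\bar{J}$), or involve a mixed pair from $TU$ and $TU'$ (killing the mixed evaluations of $\bar{\cal B}$ by Lemma \ref{e6}), which keeps the arithmetic short. Thus the whole lemma reduces to routine bookkeeping of rational coefficients, with no substantive obstacle beyond making sure the $(1,1)$-type symmetries of $\bar{\cal B}$ are applied consistently.
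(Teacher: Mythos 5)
Your proposal is correct and follows essentially the same route as the paper: the paper's proof likewise substitutes the values of $[\bar{B},\bar{B}]$ from Lemma \ref{e5} and of $\bar{\cal B}$ from Lemma \ref{e6} into the right-hand side of \eqref{c1} with $n+1=3$, carries out the case $F(u,Ju,u)$ explicitly (arriving at the same coefficient sum $-\tfrac12+\tfrac13+\tfrac16+\tfrac16=\tfrac16$ for $(as+bt)Ju$), and dispatches the remaining five cases as analogous. Your sample computation and the vanishing arguments based on symmetry and $\bar J$-Hermiticity of $\bar{\cal B}$ match the paper's.
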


\begin{proof}
Using Lemmas \ref{e5} and \ref{e6}, we have 
\begin{align*}
F(u,Ju,u)=&-\frac{1}{4}[\bar{B},\bar{B}]_{u, Ju}u
-\frac{1}{4 \cdot 3}\bar{\cal B}_{\bar{J}}(u,Ju)Ju 
+\frac{1}{8 \cdot 3}(\bar{\cal B}(u,u)Ju-\bar{\cal B}(Ju,u)u) \\
&-\frac{1}{8 \cdot 3}(\bar{\cal B}_{\bar{J}}(u,u)J^{2}u-\bar{\cal B}_{\bar{J}}(Ju,u)Ju) 
\\
=& -\frac{1}{2} \left( (bs-at)u+(bt+as)Ju \right) +\frac{1}{12} \bar{\cal B}(u,u)Ju \\
 &+\frac{1}{24} \bar{\cal B}(u,u)Ju+\frac{1}{24} \bar{\cal B}(u,u)Ju \\
=& -\frac{1}{2} \left( (bs-at)u+(bt+as)Ju \right) +\frac{1}{6} \cdot 4(as+bt) Ju \\
=& -\frac{1}{2} (bs-at)u +\frac{1}{6} (as+bt)Ju. 
\end{align*}
Other equations are obtained in a similar way. 
\end{proof}

\begin{lemma}\label{e7}
The condition \eqref{c1} in Corollary \ref{app_1} holds if and only if 
\begin{align*}
Ric^{D}(u,u)=-\frac{1}{2}(as+bt), \,\,
bs-at=0, \,\,
Ric^{D^{\prime}}(u^{\prime},u^{\prime})=-(s^{2}+t^{2}) 
\end{align*}
on $U \times U^{\prime}$. 
\end{lemma}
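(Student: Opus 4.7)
The plan is to check that \eqref{c1} is equivalent to the equality $W^{[\bar{D}]}=F$ as $(1,3)$-tensors, where $F$ denotes the right-hand side of \eqref{c1} as in Lemma \ref{e65}, and then to observe that a complete system of independent scalar equations is already listed in Lemmas \ref{e4} and \ref{e65}. Both $W^{[\bar{D}]}$ and $F$ are tensors of c-projective Weyl type: skew-symmetric in the first two arguments, trace-free in the obvious sense, and (by Proposition \ref{comm_W_J:prop} for $W^{[\bar{D}]}$ and by direct inspection of the four terms in the right-hand side of \eqref{c1} for $F$) they commute with $\bar J$. Consequently it suffices to verify the identity $W^{[\bar{D}]}_{X,Y}Z=F(X,Y,Z)$ on representative triples drawn from the local frame $(u,Ju,u^{\prime},J^{\prime}u^{\prime})$, and the six triples displayed in Lemmas \ref{e4} and \ref{e65} already form a complete such system (everything else being obtained by applying $\bar J$ to one of the slots or by the skew-symmetry in the first pair).

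Next I would compare the six pairs of expressions component by component in the frame $(u,Ju,u^{\prime},J^{\prime}u^{\prime})$. From the triple $(u,Ju,u)$: equating the $u$-coefficient forces $bs-at=0$, and equating the $Ju$-coefficient forces $-\tfrac13 Ric^{D}(u,u)=\tfrac16(as+bt)$, i.e.\ $Ric^{D}(u,u)=-\tfrac12(as+bt)$. From the triple $(u^{\prime},J^{\prime}u^{\prime},u^{\prime})$: equating the $J^{\prime}u^{\prime}$-coefficient gives $-\tfrac13 Ric^{D^{\prime}}(u^{\prime},u^{\prime})=\tfrac13(s^{2}+t^{2})$, i.e.\ $Ric^{D^{\prime}}(u^{\prime},u^{\prime})=-(s^{2}+t^{2})$, and the $u^{\prime}$-coefficient yields no new relation.

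I would then check that each of the four remaining triples from Lemma \ref{e65} reproduces only consequences of these three relations. Concretely, the triple $(u,Ju,u^{\prime})$ gives back $bs-at=0$ and $Ric^{D}(u,u)=-\tfrac12(as+bt)$; the triple $(u^{\prime},J^{\prime}u^{\prime},u)$ gives back $Ric^{D^{\prime}}(u^{\prime},u^{\prime})=-(s^{2}+t^{2})$; and the mixed triples $(u,u^{\prime},u)$, $(u,u^{\prime},u^{\prime})$ together with the Jacobi-type $(Ju,u^{\prime},\,\cdot\,)$ analogues (obtainable from $[W^{[\bar D]},\bar J]=0$) likewise collapse to the same three conditions. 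Thus the system of all scalar equations produced by \eqref{c1} is equivalent to exactly the three equations in the statement.

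The only real work is bookkeeping: verifying that the listed components in Lemmas \ref{e4} and \ref{e65} really do exhaust the independent components of $W^{[\bar D]}-F$ once one uses skew-symmetry in the first two slots and $\bar J$-equivariance, and then checking that no ``off-diagonal'' component (for example one of the form $W^{[\bar D]}_{u,u^{\prime}}Ju$) yields a new relation. This follows from the computations already performed together with the observation that $\bar B$, being of the block form \eqref{def_B_four}, mixes the two surface factors only through the pair $(s,t)$, so the same three scalar quantities $as+bt$, $bs-at$, $s^{2}+t^{2}$ govern every component.
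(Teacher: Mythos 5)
Your proof is correct and is essentially the paper's proof, which simply reads off the three conditions by comparing the components of $W^{[\bar D]}$ in Lemma \ref{e4} with those of $F$ in Lemma \ref{e65}. One small imprecision: the missing pairs such as $(Ju,u')$ are not recovered from $[W^{[\bar D]},\bar J]=0$ (which concerns the endomorphism slot) but from the type-$(1,1)$ property $W_{\bar JX,\bar JY}=W_{X,Y}$ shared by both sides in this setting; checking those components still yields only the same three relations, so the conclusion stands.
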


\begin{proof}
These follow from Lemmas \ref{e4} and \ref{e65}. 
\end{proof}

For the tensor 
$\bar{a}=\frac{1}{8(2+1)}\bar{\cal B}-\frac{1}{2}P^{\bar{D}}$, 
we have the following lemma.

\begin{lemma}\label{e14}
It holds that  
\begin{align*}
&\bar{a}(u,u)=\frac{1}{6}(as+bt)-\frac{1}{6}Ric^{D}(u,u), \,\, \bar{a}(u,u^{\prime})=0, \\
&\bar{a}(u^{\prime},u^{\prime})=\frac{1}{12}(s^{2}+t^{2}) - \frac{1}{6}Ric^{D^{\prime}}(u^{\prime},u^{\prime}). 
\end{align*}
\end{lemma}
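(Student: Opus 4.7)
The plan is to proceed by direct substitution into the defining formula
\[ \bar{a}=\frac{1}{8(n+1)}\bar{\mathcal B}-\frac{1}{2}P^{\bar{D}} \]
specialized to $n=2$ (so that the coefficient of $\bar{\mathcal B}$ becomes $1/24$), using Lemma \ref{e3} to expand $P^{\bar{D}}=\tfrac{1}{3}(Ric^{D}+Ric^{D^{\prime}})$ and Lemma \ref{e6} for the values of $\bar{\mathcal B}$ on the frame.

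The key preliminary observation is that the product structure forces the cross contributions to vanish. Since $Ric^{D}$ is the pullback of a tensor on $N$ and $Ric^{D^{\prime}}$ of a tensor on $N^{\prime}$, one has $Ric^{D}(u^{\prime},\,\cdot\,)=0$, $Ric^{D^{\prime}}(u,\,\cdot\,)=0$, and in particular the mixed component $Ric^{\bar{D}}(u,u^{\prime})=0$. Combined with Lemma \ref{e6}, which gives $\bar{\mathcal B}(u,u^{\prime})=0$, this already forces $\bar{a}(u,u^{\prime})=0$.

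For the diagonal components, the substitution is immediate: on the $N$-factor I would compute
\[ \bar{a}(u,u)=\tfrac{1}{24}\cdot 4(as+bt)-\tfrac{1}{6}\bigl(Ric^{D}(u,u)+Ric^{D^{\prime}}(u,u)\bigr)=\tfrac{1}{6}(as+bt)-\tfrac{1}{6}Ric^{D}(u,u), \]
using that $Ric^{D^{\prime}}(u,u)=0$, and symmetrically on the $N^{\prime}$-factor,
\[ \bar{a}(u^{\prime},u^{\prime})=\tfrac{1}{24}\cdot 2(s^{2}+t^{2})-\tfrac{1}{6}Ric^{D^{\prime}}(u^{\prime},u^{\prime})=\tfrac{1}{12}(s^{2}+t^{2})-\tfrac{1}{6}Ric^{D^{\prime}}(u^{\prime},u^{\prime}). \]

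Since the argument is pure algebraic substitution there is no genuine obstacle; the only point requiring care is to keep track that each displayed formula picks up a contribution from only one factor, both in $\bar{\mathcal B}$ (by Lemma \ref{e6}) and in $Ric^{\bar{D}}$ (by the product decomposition). The three components computed suffice to determine $\bar{a}$ completely on the frame $(u,Ju,u^{\prime},J^{\prime}u^{\prime})$, since $\bar{a}$ is symmetric and $\bar{J}$-Hermitian (being the sum of such tensors).
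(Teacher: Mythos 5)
Your proposal is correct and is essentially the computation the paper intends: the lemma is stated without proof immediately after the formula $\bar{a}=\frac{1}{8(2+1)}\bar{\cal B}-\frac{1}{2}P^{\bar{D}}$, and the evident argument is exactly your substitution of Lemma \ref{e6} and Lemma \ref{e3} with $n=2$, together with the vanishing of the mixed components coming from the product structure.
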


Using Lemmas \ref{e7} and \ref{e14}, we obtain 

\begin{lemma}\label{e145}
In particular, when the condition \eqref{c1} is satisfied, then the tensor $\bar{a}$ is given by 
\begin{align*}
&\bar{a}(u,u)=-\frac{1}{2}Ric^{D}(u,u), \,\, \bar{a}(u,u^{\prime})=0, \,\, 
\bar{a}(u^{\prime},u^{\prime})=- \frac{1}{4}Ric^{D^{\prime}}(u^{\prime},u^{\prime}). 
\end{align*}
This shows that $\bar{a}$ is $\bar{D}$-parallel if \eqref{c1} is fulfilled. 
Thus if \eqref{c1} is satisfied and $\bar{c}=0$, then the condition \eqref{c2} holds.  
\end{lemma}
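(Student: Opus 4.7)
The plan is to prove Lemma \ref{e145} by direct substitution of the constraints from Lemma \ref{e7} into the formulas of Lemma \ref{e14}, followed by a short argument about parallelness of a sum of pullbacks.

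First, I would recall from Lemma \ref{e7} that when condition \eqref{c1} holds we have $as+bt = -2\,Ric^{D}(u,u)$ and $s^{2}+t^{2} = -Ric^{D'}(u',u')$ (the third identity $bs-at=0$ is not needed here). Substituting the first of these into the expression $\bar{a}(u,u)=\tfrac{1}{6}(as+bt)-\tfrac{1}{6}Ric^{D}(u,u)$ from Lemma \ref{e14} gives
\[
\bar{a}(u,u) \;=\; -\tfrac{1}{3}Ric^{D}(u,u)-\tfrac{1}{6}Ric^{D}(u,u) \;=\; -\tfrac{1}{2}Ric^{D}(u,u).
\]
Similarly, plugging $s^{2}+t^{2} = -Ric^{D'}(u',u')$ into $\bar{a}(u',u') = \tfrac{1}{12}(s^{2}+t^{2})-\tfrac{1}{6}Ric^{D'}(u',u')$ yields $-\tfrac{1}{4}Ric^{D'}(u',u')$. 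The mixed component $\bar{a}(u,u')=0$ is already stated in Lemma \ref{e14}.

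Next I would deduce $\bar{D}\bar{a}=0$. Under the product decomposition $\bar{M}=N\times N'$, the computation above shows that $\bar{a}$ agrees (when evaluated on lifts of orthonormal frames on $U$ and $U'$) with the globally defined tensor $-\tfrac{1}{2}\pi_{N}^{\ast}Ric^{D} - \tfrac{1}{4}\pi_{N'}^{\ast}Ric^{D'}$; since this identification is pointwise and the chosen frames span all tangent directions, the two tensors coincide on $U\times U'$ and hence on all of $\bar M$. Because $\bar{D}=D+D'$ is the product connection and $D\,Ric^{D}=0$, $D'\,Ric^{D'}=0$ by hypothesis, the pullbacks $\pi_{N}^{\ast}Ric^{D}$ and $\pi_{N'}^{\ast}Ric^{D'}$ are both $\bar{D}$-parallel, so $\bar{D}\bar{a}=0$.

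Finally, if $\bar{c}=0$ then the right-hand side of \eqref{c2} vanishes identically, while the left-hand side satisfies
\[
(d^{\bar{D}}\bar{a})(X,Y,Z)\;=\;(\bar{D}_{X}\bar{a})(Y,Z)-(\bar{D}_{Y}\bar{a})(X,Z)\;=\;0
\]
by $\bar{D}\bar{a}=0$, so \eqref{c2} holds. I do not expect a real obstacle: the whole argument is bookkeeping of coefficients in the substitution, together with the observation that $\bar{a}$ is a sum of pullbacks of parallel Ricci tensors.
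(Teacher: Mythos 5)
Your proposal is correct and follows essentially the same route as the paper: the paper's proof is precisely the substitution of the identities of Lemma \ref{e7} into the component formulas of Lemma \ref{e14}, with the $\bar{D}$-parallelness then read off from the fact that $\bar{a}$ is expressed through the $D$- and $D'$-parallel Ricci tensors via the product connection $\bar{D}=D+D'$. Your additional remarks (using that both $\bar a$ and the Ricci tensors are symmetric and $\bar J$-Hermitian so that the frame components determine the full tensors, and that $\bar c=0$ kills the right-hand side of \eqref{c2}) merely make explicit what the paper leaves implicit.
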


Next we examine condition \eqref{c4} in the case where $\bar{c}=0$. 
For this purpose, 
we prepare the following lemmas. 
Let $(\theta_{i}^{j})$ (resp. $(\theta^{\prime}{}_{i}^{j})$) be the connection form for $D$ (resp. $D^{\prime}$) 
with respect to the local frame $(u_{1},u_{2})=(u,Ju)$ 
(resp. $(u^{\prime}_{1},u^{\prime}_{2})=(u^{\prime},J^{\prime}u^{\prime})$).

\begin{lemma}\label{e8}
The condition \eqref{c4} in Corollary \ref{app_1} holds for $(u, Ju, u)$
if and only if  
\[ db-2a\theta_{2}^{1} - J^{\ast}da+2b J^{\ast} \theta_{1}^{2} =-2a\gamma -2b J^{\ast} \gamma \]
on $U$. 
\end{lemma}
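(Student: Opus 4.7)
The plan is to apply \eqref{c4} (with $\bar c=0$, as stipulated by the current setting) at the triple $(u, Ju, u)$ and to match the resulting vector equation in $TN'$ against the claimed 1-form identity on $U$. First, I would unpack
\[ (d^{\bar D}\bar B)(u, Ju, u) = (\bar D_u \bar B)_{Ju} u - (\bar D_{Ju}\bar B)_u u, \]
using that $\bar D = D + D'$ is the product connection, so that $\bar D_X Y' = 0$ whenever $X \in \{u, Ju\}$ and $Y' \in \{u', J'u'\}$, together with $\bar D_X u = \theta_1^2(X) Ju$ and $\bar D_X Ju = \theta_2^1(X) u = -\theta_1^2(X) u$. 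Combined with the explicit values of $\bar B$ on $(u, u)$, $(Ju, u)$ and $(Ju, Ju)$ read off from \eqref{def_B_four}, a direct expansion yields a vector in $\mathrm{span}(u', J'u')$ whose $u'$-coefficient is
\[ u(b) - (Ju)(a) + 2a\theta_1^2(u) + 2b\theta_1^2(Ju) \]
and whose $J'u'$-coefficient is
\[ -u(a) - (Ju)(b) + 2b\theta_1^2(u) - 2a\theta_1^2(Ju). \]

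Second, I would evaluate the right-hand side
\[ -2(\gamma \wedge \bar J\bar B)(u, Ju, u) = -2\gamma(u)\,\bar J\bar B_{Ju}u + 2\gamma(Ju)\,\bar J\bar B_u u, \]
and, applying $\bar J|_{TN'} = J'$ together with $\bar B_{Ju}u = bu' - aJ'u'$ and $\bar B_u u = au' + bJ'u'$, obtain the $u'$-coefficient $-2a\gamma(u) - 2b\gamma(Ju)$ and the $J'u'$-coefficient $-2b\gamma(u) + 2a\gamma(Ju)$.

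Third, equating coefficients produces two scalar equations. Using $\theta_2^1 = -\theta_1^2$ and $(J^{\ast}\alpha)(X) = \alpha(JX)$, a direct check shows that the $u'$-equation is precisely the claimed 1-form identity evaluated at $u$, while the $J'u'$-equation, after multiplication by $-1$, is its evaluation at $Ju$. Since $(u, Ju)$ spans $TU$ pointwise, the pair of scalar equations is equivalent to the claimed identity of 1-forms on $U$, completing the proof in both directions.

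The argument is essentially bookkeeping with no conceptual obstacle; the only care required is in the signs arising from the anti-commutation $\bar J \bar B = -\bar B \bar J$ (already built into \eqref{def_B_four}) and from $\theta_2^1 = -\theta_1^2$. The cleanest step, and the one doing the real work, is the observation that the product structure of $\bar D$ kills $u'$ and $J'u'$ when differentiated in the $TN$-direction, which collapses the covariant exterior derivative to terms involving only $u(a), u(b), (Ju)(a), (Ju)(b)$ and the connection forms of $D$.
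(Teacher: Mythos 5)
Your proof is correct and follows essentially the same route as the paper: expand $(d^{\bar D}\bar B)(u,Ju,u)$ and $-2(\gamma\wedge\bar J\bar B)(u,Ju,u)$ in the frame $(u',J'u')$ using the product structure of $\bar D$ and the definition \eqref{def_B_four}, then compare coefficients; your coefficients agree with the paper's after substituting $\theta_2^1=-\theta_1^2$. Your closing observation --- that the $u'$-equation is the claimed $1$-form identity evaluated at $u$ while the $J'u'$-equation is $(-1)$ times its evaluation at $Ju$, so the pair is equivalent to the identity of $1$-forms --- is in fact a cleaner justification of the final step than the paper's remark that the two scalar equations are ``equivalent to each other.''
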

\begin{proof}
By the definition \eqref{def_B_four} of $\bar{B}$, we have 
\begin{align*}
(\bar{D}_{u} \bar{B})_{Ju}u
&=\bar{D}_{u} (\bar{B}_{Ju}u)-\bar{B}_{\bar{D}_{u}Ju}u-\bar{B}_{Ju}(\bar{D}_{u} u) \\
&=\bar{D}_{u}(b u^{\prime}-aJ^{\prime}u^{\prime})-\bar{B}_{\theta_{2}^{1}(u)u}u
 -\bar{B}_{Ju}(\theta_{1}^{2}(u)Ju)\\
&=(ub)u^{\prime}-(ua) J^{\prime}u^{\prime}-\theta_{2}^{1}(u)(au^{\prime}+bJ^{\prime}u^{\prime})
  -(\theta_{1}^{2}(u)(-au^{\prime}-bJ^{\prime}u^{\prime})\\
&=(ub)u^{\prime}-(ua) J^{\prime}u^{\prime}-2\theta_{2}^{1}(u)(au^{\prime}+b J^{\prime}u^{\prime})\\
&=((ub)-2\theta_{2}^{1}(u)a)u^{\prime}+(-(ua) -2\theta_{2}^{1}(u)b)J^{\prime}u^{\prime}
\end{align*}
and 
\begin{align*}
(\bar{D}_{Ju} \bar{B})_{u}u
&=\bar{D}_{Ju} (\bar{B}_{u}u)-2\bar{B}_{\bar{D}_{Ju}u}u\\
&=\bar{D}_{Ju} (au^{\prime}+bJ^{\prime}u^{\prime})-2\bar{B}_{\theta_{1}^{2}(Ju) Ju}u \\
&=((Ju)a)u^{\prime}+((Ju)b)J^{\prime}u^{\prime}-2\theta_{1}^{2}(Ju) (b u^{\prime}-aJ^{\prime}u^{\prime}) \\
&=((Ju)a-2\theta_{1}^{2}(Ju) b)u^{\prime}+((Ju)b+2\theta_{1}^{2}(Ju)a)J^{\prime}u^{\prime}. 
\end{align*}
We easily  obtain 
\begin{align*}
(-2\gamma \wedge \bar{J} \bar{B})(u,Ju,u)
&=-2\gamma(u)\bar{J} \bar{B}_{Ju}u+2\gamma(Ju)\bar{J} \bar{B}_{u}u \\
&=-2\gamma(u)\bar{J} (b u^{\prime}-aJ^{\prime}u^{\prime})
+2\gamma(Ju)\bar{J} (au^{\prime}+bJ^{\prime}u^{\prime})\\
&=-2\gamma(u)(b J^{\prime} u^{\prime}+au^{\prime})
+2\gamma(Ju)(a J^{\prime} u^{\prime}-bu^{\prime})\\
&=(-2\gamma(u)a-2\gamma(Ju)b)u^{\prime}+(-2\gamma(u)b+2\gamma(Ju)a)J^{\prime} u^{\prime}. 
\end{align*}
Therefore the equation $(d^{\bar{D}} \bar{B})(u,Ju,u)=(-2\gamma \wedge \bar{J} \bar{B})(u,Ju,u)$ holds
iff 
\begin{align*}
&(ub)-2\theta_{2}^{1}(u)a - (Ju)a+2\theta_{1}^{2}(Ju) b=-2\gamma(u)a-2\gamma(Ju)b 
\end{align*}
and
\begin{align*}
&-(ua) -2\theta_{2}^{1}(u)b-(Ju)b-2\theta_{1}^{2}(Ju)a = -2\gamma(u)b+2\gamma(Ju)a. 
\end{align*}
The two equations above are equivalent to each other. 
Hence $(d^{\bar{D}} \bar{B})(u,Ju,u)=(-2\gamma \wedge \bar{J} \bar{B})(u,Ju,u)$ holds iff 
\begin{align*}
&db-2a\theta_{2}^{1} -J^{\ast} da +2b J^{\ast} \theta_{1}^{2} =-2a\gamma -2b J^{\ast} \gamma  
\end{align*}
on $U$. 
\end{proof}

The remaining components of equation \eqref{c4} can be analyzed through straightforward and repetitive computations, analogous to those in Lemma \ref{e8}. Nevertheless, for the reader's convenience, we provide detailed calculations for each case. Lemma \ref{e125} states a necessary and sufficient condition for equation \eqref{c4}.

\begin{lemma}\label{e9}
The condition \eqref{c4} is satisfied for $(u, Ju, u^{\prime})$
if and only if 
\[ dt-2s \theta_{2}^{1}-J^{\ast} ds +2t J^{\ast} \theta_{1}^{2} =-2s \gamma-2t J^{\ast} \gamma \]
on $U$. 
\end{lemma}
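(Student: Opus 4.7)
The plan is to follow the template of the proof of Lemma~\ref{e8}, with the third argument $u$ replaced by $u'$. Concretely, I will expand both sides of the equation
\[
(d^{\bar D}\bar B)(u,Ju,u') = (-2\gamma\wedge \bar J\bar B)(u,Ju,u'),
\]
developing $(d^{\bar D}\bar B)(u,Ju,u') = (\bar D_u \bar B)_{Ju}u' - (\bar D_{Ju}\bar B)_u u'$ in the local frame $(u,Ju,u',J'u')$, and then read off the resulting one-form identity on $U$.

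The required inputs are all already available in the excerpt. From the defining formulas in \eqref{def_B_four} one has $\bar B_u u' = su + tJu$, and using that $\bar B$ anti-commutes with $\bar J$ also $\bar B_{Ju}u' = \bar B_u(J'u') = -\bar J(su+tJu) = tu - sJu$. Because $\bar D = D + D'$ is the product connection on $N\times N'$, $\bar D_X u' = 0$ whenever $X$ is pulled back from $N$; this kills every term of the form $\bar B_{\bullet}\bar D_{\bullet}u'$ in the covariant derivatives of $\bar B$. What remains in $(\bar D_u \bar B)_{Ju}u'$ is $\bar D_u(tu - sJu) - \theta_2^1(u)\bar B_u u'$, and a mirror expression at $Ju$; both involve only $D_u u = \theta_1^2(u)Ju$, $D_u Ju = \theta_2^1(u)u$, and their analogues at $Ju$.

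Collecting the $u$- and $Ju$-components of the resulting identity separately and using $\theta_1^2 = -\theta_2^1$ together with $J^*\omega(X)=\omega(JX)$ will yield two scalar equations on $U$. These two equations will be equivalent, one being obtained from the other by applying $J^*$ and using $J^*\!\circ J^* = -\mathrm{Id}$ on one-forms; this parallels exactly the equivalence observed at the end of the proof of Lemma~\ref{e8} and ultimately reflects the anti-commutation of $\bar B$ with $\bar J$. Either one rearranges to the displayed equation
\[
dt - 2s\theta_2^1 - J^*ds + 2tJ^*\theta_1^2 = -2s\gamma - 2tJ^*\gamma.
\]

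The only real difficulty will be bookkeeping: one must carefully track the signs coming from $\theta_1^2 = -\theta_2^1$, from $\bar B\bar J = -\bar J\bar B$, and from the decomposition $\bar J = J + J'$ when $\bar J$ is applied to vectors of mixed type (so that, for instance, $\bar J \bar B_{Ju}u' = \bar J(tu-sJu) = su + tJu$ contributes to the $u$-axis, not to $Ju$). Beyond this, no new geometric idea is required; the argument is formally parallel to that of Lemma~\ref{e8}.
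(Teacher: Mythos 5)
Your proposal is correct and follows essentially the same route as the paper: expand $(d^{\bar D}\bar B)(u,Ju,u')$ via $(\bar D_u\bar B)_{Ju}u'-(\bar D_{Ju}\bar B)_u u'$ using $\bar B_u u'=su+tJu$, $\bar B_{Ju}u'=tu-sJu$, the vanishing of $\bar D_\bullet u'$ in $N$-directions, and $\theta_1^2=-\theta_2^1$, then match against $(-2\gamma\wedge\bar J\bar B)(u,Ju,u')$ and observe that the two component equations are equivalent. No gaps.
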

\begin{proof}
We have 
\begin{align*}
(\bar{D}_{u} \bar{B})_{Ju}u^{\prime}
&=\bar{D}_{u} (\bar{B}_{Ju}u^{\prime})-\bar{B}_{\bar{D}_{u}Ju}u^{\prime}\\
&=(ut)u+t(D_{u}u)-(us)Ju-s(D_{u}Ju)-\theta_{2}^{1}(u)(su+tJu) \\
&=(ut)u+t\theta_{1}^{2}(Ju)-(us)Ju-s\theta_{2}^{1}(u)u-s\theta_{2}^{1}(u)u-t \theta_{2}^{1}(u)Ju \\
&=((ut)-2s \theta_{2}^{1}(u))u+(-(us)-2t \theta_{2}^{1}(u))Ju, \\
(\bar{D}_{Ju} \bar{B})_{u}u^{\prime}
&=\bar{D}_{Ju} (\bar{B}_{u}u^{\prime})-\bar{B}_{\bar{D}_{Ju} u}u^{\prime} \\
&=\bar{D}_{Ju} (su+tJu)-\bar{B}_{\theta_{1}^{2}(Ju) Ju}u^{\prime} \\
&=((Ju)s)u+s\theta_{1}^{2}(Ju)Ju+((Ju)t)Ju+t\theta_{2}^{1}(Ju)u
-t \theta_{1}^{2}(Ju) u+s\theta_{1}^{2}(Ju)Ju\\
&=((Ju)s-2t \theta_{1}^{2}(Ju))u+((Ju)t+2 s \theta_{1}^{2}(Ju))Ju
\end{align*}
and 
\begin{align*}
(-2\gamma \wedge \bar{J} \bar{B})(u,Ju,u^{\prime})
&=-2\gamma(u)\bar{J}\bar{B}_{Ju}u^{\prime}+2\gamma(Ju)\bar{J} \bar{B}_{u}u^{\prime}\\
&=-2\gamma(u)\bar{J}(tu-sJu)+2\gamma(Ju)\bar{J} (su+tJu) \\
&=(-2\gamma(u)s-2\gamma(Ju)t)u+(-2\gamma(u)t+2\gamma(Ju)s)Ju. 
\end{align*}
Therefore the equation 
$(d^{\bar{D}} \bar{B})(u,Ju,u^{\prime})=(-2\gamma \wedge \bar{J} \bar{B})(u,Ju,u^{\prime})$ holds
iff 
\begin{align*}
(ut)-2s \theta_{2}^{1}(u) - (Ju)s + 2t \theta_{1}^{2}(Ju)=-2\gamma(u)s-2\gamma(Ju)t
\end{align*}
and
\begin{align*}
-(us)-2t \theta_{2}^{1}(u)-(Ju)t-2s\theta_{1}^{2}(Ju)=-2\gamma(u)t+2\gamma(Ju)s. 
\end{align*}
The above two equations are equivalent to each other. 
\end{proof}

\begin{lemma}\label{e10}
Both sides in \eqref{c4} vanish for $(u^{\prime}, J^{\prime}u^{\prime}, u^{\prime})$. 
\end{lemma}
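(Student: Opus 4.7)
The plan is to observe that both sides of \eqref{c4} evaluated at $(u', J'u', u')$ are forced to vanish because of the very structure of $\bar{B}$ from \eqref{def_B_four}. The key structural fact, which drives the whole lemma, is that by \eqref{def_B_four} together with the symmetry $\bar{B}_X Y = \bar{B}_Y X$, one has $\bar{B}_X Y = 0$ whenever both $X$ and $Y$ lie in $TU'$. Everything else is a bookkeeping exercise.

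For the right-hand side I would simply expand
\[
(-2\gamma \wedge \bar{J}\bar{B})(u', J'u', u') = -2\gamma(u')\, \bar{J}\bar{B}_{J'u'} u' + 2\gamma(J'u')\, \bar{J}\bar{B}_{u'} u',
\]
and apply \eqref{def_B_four} to conclude that $\bar{B}_{u'} u' = \bar{B}_{J'u'} u' = 0$, so the right-hand side is identically zero.

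For the left-hand side I would use the expansion
\[
(d^{\bar{D}} \bar{B})(u', J'u', u') = (\bar{D}_{u'} \bar{B})_{J'u'} u' - (\bar{D}_{J'u'} \bar{B})_{u'} u',
\]
and then expand each covariant derivative via the Leibniz formula used in the proof of Lemma~\ref{e8}, producing terms of the shape $\bar{D}_X(\bar{B}_Y Z)$, $\bar{B}_{\bar{D}_X Y} Z$, and $\bar{B}_Y(\bar{D}_X Z)$. Since $\bar{D} = D + D'$ respects the product decomposition $T\bar{M} = TN \oplus TN'$, any covariant derivative of a vector from $TU'$ along a vector from $TU'$ remains in $TU'$. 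Combined with the structural fact above (and the symmetry of $\bar{B}$), every individual term contains a $\bar{B}$ evaluated on a pair of $TU'$-vectors and therefore vanishes. Hence the left-hand side is also zero.

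There is no genuine obstacle here beyond keeping careful track of which intermediate vectors lie in $TN'$ versus $TN$; the product structure of $\bar{D}$ makes all the bookkeeping immediate. The same mechanism will apply to the analogous evaluations obtained by interchanging the roles of $N$ and $N'$.
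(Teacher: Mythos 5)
Your proof is correct and is exactly the argument the paper has in mind (the paper omits the proof of this lemma as trivial): by \eqref{def_B_four} all components $\bar{B}_{u'}u'$, $\bar{B}_{J'u'}u'$, $\bar{B}_{u'}J'u'$, $\bar{B}_{J'u'}J'u'$ are declared to be zero, and since $\bar{D}=D+D'$ preserves the splitting $T\bar M = TN\oplus TN'$, every term in the Leibniz expansion of $(d^{\bar D}\bar B)(u',J'u',u')$ involves $\bar B$ evaluated on two vectors from $TU'$, hence vanishes, while the right-hand side vanishes for the same reason. One caveat: your closing remark that "the same mechanism will apply" after interchanging the roles of $N$ and $N'$ is false, since $\bar B_u u = a u' + bJ'u'$ is generically nonzero, and indeed the evaluation at $(u,Ju,u)$ yields the nontrivial condition of Lemma~\ref{e8} rather than an identity $0=0$.
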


\begin{lemma}\label{e11}
The condition \eqref{c4} holds for $(u^{\prime}, J^{\prime}u^{\prime}, u)$
if and only if 
\begin{align*}
dt-s \theta^{\prime}{}_{2}^{1}
-J^{\prime \ast } ds +t J^{\prime \ast } \theta^{\prime}{}_{1}^{2} 
=-2s \gamma-2t J^{\prime \ast } \gamma. 
\end{align*}
on $U^{\prime}$
\end{lemma}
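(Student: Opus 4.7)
The plan is to mimic the proof of Lemma~\ref{e9} (or equivalently Lemma~\ref{e8}) almost verbatim, but now with the triple $(u',J'u',u)$ in place of $(u,Ju,u')$. Concretely, I would compute $(d^{\bar D}\bar B)(u',J'u',u)$ and $(-2\gamma\wedge\bar J\bar B)(u',J'u',u)$ directly from \eqref{def_B_four} and equate them component-by-component in the local frame $(u,Ju)$.

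First I would expand
\[
(\bar D_{u'}\bar B)_{J'u'}u = \bar D_{u'}(\bar B_{J'u'}u) - \bar B_{\bar D_{u'}J'u'}u - \bar B_{J'u'}(\bar D_{u'}u).
\]
Since $\bar D$ is the product connection, $\bar D_{u'}u=\bar D_{u'}Ju=0$, so $\bar D_{u'}$ differentiates only the coefficient functions $s,t$ in $\bar B_{J'u'}u=tu-sJu$, and the middle term contributes only $\bar B_{\theta^{\prime}{}_{2}^{1}(u')u'}u=\theta^{\prime}{}_{2}^{1}(u')(su+tJu)$. An analogous calculation for $(\bar D_{J'u'}\bar B)_{u'}u$ uses $\bar D_{J'u'}u'=\theta^{\prime}{}_{1}^{2}(J'u')J'u'$ and $\bar B_{u'}u=su+tJu$. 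The key difference from the situation in Lemmas~\ref{e8} and \ref{e9}, which explains the absence of a factor $2$ in the present statement, is that $u$ and $Ju$ are annihilated by $\bar D_{X'}$ for every $X'\in TU'$, so only a single connection term survives on each line instead of two.

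Next I would evaluate $(-2\gamma\wedge\bar J\bar B)(u',J'u',u)$ from \eqref{def_B_four}, using $\bar J\bar B_{J'u'}u=\bar J(tu-sJu)=su+tJu$ and $\bar J\bar B_{u'}u=\bar J(su+tJu)=tu-sJu$, which yields a linear combination of $u$ and $Ju$ whose coefficients depend linearly on $\gamma(u')$, $\gamma(J'u')$, $s$ and $t$.

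Equating the $u$- and $Ju$-components of $(d^{\bar D}\bar B)(u',J'u',u)=(-2\gamma\wedge\bar J\bar B)(u',J'u',u)$ and invoking $\theta^{\prime}{}_{2}^{1}=-\theta^{\prime}{}_{1}^{2}$, one checks that the two resulting scalar equations coincide; rewriting the pullback by $J'$ as $J^{\prime\ast}$ on one-forms then collapses them into the stated identity on $U'$. The only point requiring attention is verifying the equivalence of the $u$- and $Ju$-components, which is immediate from $\theta^{\prime}{}_{2}^{1}+\theta^{\prime}{}_{1}^{2}=0$; there is no genuine obstacle, the computation being entirely parallel to those already performed in Lemmas~\ref{e8}--\ref{e9}.
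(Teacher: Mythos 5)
Your computation is exactly the paper's proof: expand $(\bar{D}_{u'}\bar{B})_{J'u'}u$ and $(\bar{D}_{J'u'}\bar{B})_{u'}u$ using that $\bar{D}$ is the product connection (so $\bar{D}_{X'}u=\bar{D}_{X'}\bar{J}u=0$ and only a single connection term survives on each line, which is indeed why no factor $2$ appears), compare the $u$- and $\bar{J}u$-components with those of $(-2\gamma\wedge\bar{J}\bar{B})(u',J'u',u)$, and observe that the two resulting scalar equations are equivalent and repackage into the stated one-form identity on $U'$. No substantive differences from the paper's argument.
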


\begin{proof}
We have 
\begin{align*}
(\bar{D}_{u^{\prime}} \bar{B})_{J^{\prime}u^{\prime}}u
&=\bar{D}_{u^{\prime}} (\bar{B}_{J^{\prime}u^{\prime}}u)-\bar{B}_{\bar{D}_{u^{\prime}}J^{\prime}u^{\prime}}u \\
&=\bar{D}_{u^{\prime}} (tu-sJu)-\bar{B}_{\theta^{\prime}{}_{2}^{1}(u^{\prime})u^{\prime}}u \\
&=(u^{\prime}t-\theta^{\prime}{}_{2}^{1}(u^{\prime})s)u
+(-(u^{\prime}s)-\theta^{\prime}{}_{2}^{1}(u^{\prime})t)Ju, \\
(\bar{D}_{J^{\prime} u^{\prime}} \bar{B})_{u^{\prime}}u
&=\bar{D}_{J^{\prime} u^{\prime}} (\bar{B}_{u^{\prime}}u)-\bar{B}_{\bar{D}_{J^{\prime} u^{\prime}}u^{\prime}}u \\
&=\bar{D}_{J^{\prime} u^{\prime}} (su+tJu)
-\bar{B}_{\theta^{\prime}{}_{1}^{2}(J^{\prime} u^{\prime})J^{\prime}  u^{\prime}}u\\
&=((J^{\prime} u^{\prime})s-\theta^{\prime}{}_{1}^{2}(J^{\prime} u^{\prime})t)u
+((J^{\prime} u^{\prime})t+\theta^{\prime}{}_{1}^{2}(J^{\prime} u^{\prime})s)Ju
\end{align*}
and 
\begin{align*}
(-2\gamma \wedge \bar{J} \bar{B})(u^{\prime},J^{\prime}u^{\prime},u)
&=-2\gamma(u^{\prime})\bar{J}\bar{B}_{J^{\prime}u^{\prime}}u
+2\gamma(J^{\prime}u^{\prime})\bar{J} \bar{B}_{u^{\prime}}u\\
&=-2\gamma(u^{\prime})\bar{J}(tu-sJu)+2\gamma(J^{\prime}u^{\prime})\bar{J} (su+tJu) \\
&=(-2\gamma(u^{\prime})s-2\gamma(J^{\prime}u^{\prime})t)u
+(-2\gamma(u^{\prime})t+2\gamma(J^{\prime}u^{\prime})s)Ju. 
\end{align*}
Therefore the equation 
$(d^{\bar{D}} \bar{B})(u^{\prime},J^{\prime}u^{\prime},u)
=(-2\gamma \wedge \bar{J} \bar{B})(u^{\prime},J^{\prime}u^{\prime},u)$ holds
iff 
\begin{align*}
u^{\prime}t-\theta^{\prime}{}_{2}^{1}(u^{\prime})s
-(J^{\prime} u^{\prime})s+\theta^{\prime}{}_{1}^{2}(J^{\prime} u^{\prime})t
=-2\gamma(u^{\prime})s-2\gamma(J^{\prime}u^{\prime})t
\end{align*}
and
\begin{align*}
-(u^{\prime}s)-\theta^{\prime}{}_{2}^{1}(u^{\prime})t
-(J^{\prime} u^{\prime})t-\theta^{\prime}{}_{1}^{2}(J^{\prime} u^{\prime})s
=-2\gamma(u^{\prime})t+2\gamma(J^{\prime}u^{\prime})s
\end{align*}
The two equations above are equivalent to each other.
\end{proof}

\begin{lemma}\label{e12}
The condition \eqref{c4} holds for $(u, u^{\prime}, u)$
if and only if 
\begin{align*}
&ds+2t \theta_{2}^{1}=2t \gamma, \,\, dt+2s \theta_{1}^{2}=-2s\gamma 
\end{align*}
on $U$ and 
\begin{align*}
&-da-b \theta^{\prime}{}_{2}^{1}=-2b\gamma, \,\, 
-db-a \theta^{\prime}{}_{1}^{2}=2 a \gamma 
\end{align*}
on $U^{\prime}$.
\end{lemma}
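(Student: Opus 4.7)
The plan is to compute both sides of condition \eqref{c4} at the triple $(u, u', u)$ by direct expansion, in exactly the same style as Lemmas \ref{e8}--\ref{e11}. The essential simplifications, stemming from $\bar{D} = D + D'$ being a product connection and $u, u'$ being lifts from the two factors, are $\bar{D}_u u' = 0 = \bar{D}_{u'} u$. Unpacking $(\bar{D}_u \bar{B})_{u'} u$ via the Leibniz rule — using the definitions $\bar{B}_{u'}u = su + tJu$, $\bar{B}_u u = au' + bJ'u'$, the connection identities $\bar{D}_u u = \theta_1^2(u) Ju$, $\bar{D}_u(Ju) = -\theta_1^2(u)u$, and their $N'$-analogues — should yield an expression landing in $TU$ and involving only the derivatives $us$, $ut$ together with the form $\theta_1^2$. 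Symmetrically, $(\bar{D}_{u'}\bar{B})_u u$ lands in $TU'$ and involves $u'a$, $u'b$ and $\theta^{\prime}{}_1^2$. The right-hand side $(-2\gamma \wedge \bar{J}\bar{B})(u, u', u)$ is computed from $\bar{J}\bar{B}_{u'}u$ and $\bar{J}\bar{B}_u u$ and, because $\bar{J}u \in TU$ while $\bar{J}u' \in TU'$, decomposes into contributions in all four of $u, Ju, u', J'u'$.

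Next I would form $(d^{\bar{D}}\bar{B})(u, u', u) = (\bar{D}_u \bar{B})_{u'} u - (\bar{D}_{u'}\bar{B})_u u$ and equate with the right-hand side. The key structural remark — distinguishing this lemma from Lemmas \ref{e8} and \ref{e11} where both components of the output live in a single summand and therefore yield only one 1-form equation — is that here the two summands $TU$ and $TU'$ appear independently, so the vector equation decouples into four scalar identities: two in $TU$ (the $u$- and $Ju$-components) and two in $TU'$ (the $u'$- and $J'u'$-components). Using $\theta_2^1 = -\theta_1^2$ and $\theta^{\prime}{}_2^1 = -\theta^{\prime}{}_1^2$, the two $TU$-identities are precisely the claimed 1-form equations $ds + 2t\theta_2^1 = 2t\gamma$ and $dt + 2s\theta_1^2 = -2s\gamma$ evaluated on the frame vector $u$, while the two $TU'$-identities reproduce $-da - b\theta^{\prime}{}_2^1 = -2b\gamma$ and $-db - a\theta^{\prime}{}_1^2 = 2a\gamma$ evaluated on $u'$; the converse direction is immediate by evaluation.

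The main obstacle is purely bookkeeping: keeping straight the sign conventions ($\theta_2^1 = -\theta_1^2$ and $\bar{J}^2 = -\mathrm{Id}$, so that $\bar{J}Ju = -u$ and $\bar{J}J'u' = -u'$), and correctly using the symmetry $\bar{B}_{u'}u = \bar{B}_u u'$ when computing the Leibniz term $\bar{B}_{u'}(\bar{D}_u u)$ through the known value of $\bar{B}_{Ju}u'$. No conceptual ingredient beyond those already deployed in Lemmas \ref{e8}--\ref{e11} is required; the computation is a direct adaptation of those proofs to the ``mixed'' triple that draws one vector from each factor.
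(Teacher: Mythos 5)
Your proposal is correct and follows essentially the same route as the paper: a direct Leibniz-rule expansion of $(\bar{D}_u\bar{B})_{u'}u$ and $(\bar{D}_{u'}\bar{B})_u u$ using $\bar{D}_u u'=\bar{D}_{u'}u=0$, followed by decomposing the resulting vector equation into its four components along $u$, $Ju$, $u'$, $J'u'$, which is exactly why this case yields four scalar conditions rather than the single equivalent pair appearing in Lemmas \ref{e8}--\ref{e11}. The computation and the final identification with the stated one-form equations evaluated on $u$ and $u'$ match the paper's proof.
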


\begin{proof}
We have 
\begin{align*}
(\bar{D}_{u} \bar{B})_{u^{\prime}}u
&=\bar{D}_{u} (\bar{B}_{u^{\prime}}u)-\bar{B}_{u^{\prime}}(\bar{D}_{u} u) \\
&=\bar{D}_{u} (su+tJu)-\bar{B}_{u^{\prime}} \theta_{1}^{2}(u)Ju \\
&=(us)u+s \theta_{1}^{2}(u) Ju+(ut)Ju+t \theta_{2}^{1}(u)u- t \theta_{1}^{2}(u)u+s\theta_{1}^{2}(u)Ju) \\
&=((us)+2t \theta_{2}^{1}(u))u+((ut)+2s\theta_{1}^{2}(u))Ju, \\ 
(\bar{D}_{u^{\prime}} \bar{B})_{u}u
&=\bar{D}_{u^{\prime}} (\bar{B}_{u}u) \\
&=\bar{D}_{u^{\prime}} (a u^{\prime}+bJ^{\prime}u^{\prime}) \\
&=((u^{\prime}a)+b \theta^{\prime}{}_{2}^{1}(u^{\prime}))u^{\prime}
+(u^{\prime}b+a \theta^{\prime}{}_{1}^{2}(u^{\prime}))J^{\prime}u^{\prime}  
\end{align*}
and
\begin{align*}
(-2\gamma \wedge \bar{J} \bar{B})(u,u^{\prime},u)
&=-2\gamma(u)\bar{J}\bar{B}_{u^{\prime}}u
+2\gamma(u^{\prime})\bar{J} \bar{B}_{u}u\\
&=-2\gamma(u)\bar{J}(su+tJu)+2\gamma(u^{\prime})\bar{J} (au^{\prime}+bJ^{\prime}u^{\prime}) \\
&=2t \gamma(u)u-2s\gamma(u)Ju
-2b\gamma(u^{\prime})u^{\prime}+2 a \gamma(u^{\prime}) J^{\prime}u^{\prime}. 
\end{align*}
Therefore the equation 
$(d^{\bar{D}} \bar{B})(u,u^{\prime},u)
=(-2\gamma \wedge \bar{J} \bar{B})(u,u^{\prime},u)$ holds
iff 
\begin{align*}
&us+2t \theta_{2}^{1}(u)=2t \gamma(u), \,\, ut+2s\theta_{1}^{2}(u)=-2s\gamma(u), \\
&-u^{\prime}a-b \theta^{\prime}{}_{2}^{1}(u^{\prime})=-2b\gamma(u^{\prime}), \,\, 
-u^{\prime}b-a \theta^{\prime}{}_{1}^{2}(u^{\prime})=2 a \gamma(u^{\prime}). \qedhere
\end{align*}
\end{proof}

\begin{lemma}\label{e13}
The condition \eqref{c4} holds for $(u, u^{\prime}, u^{\prime})$
if and only if 
\begin{align*}
&-ds+t \theta^{\prime}{}_{1}^{2}=-2t \gamma, \,\, 
-dt-s \theta^{\prime}{}_{1}^{2}=2 s \gamma 
\end{align*}
on $U^{\prime}$. 
\end{lemma}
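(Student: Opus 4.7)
The plan is to follow the template set by Lemmas \ref{e8}--\ref{e12}: I will evaluate both sides of \eqref{c4} on the triple $(u,u',u')$ (and then on $(u,J'u',u')$ to cover the whole of $TU'$), expand in the adapted frame $(u,Ju,u',J'u')$, and match coefficients of $u$ and $Ju$.

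First I will compute
\[ (d^{\bar{D}}\bar{B})(u,u',u') = (\bar{D}_u\bar{B})_{u'}u' - (\bar{D}_{u'}\bar{B})_u u'. \]
For the product connection $\bar{D} = D+D'$, lifts of vector fields from one factor are $\bar{D}$-parallel along the other, so $\bar{D}_u u' = 0$ and $\bar{D}_{u'} u = \bar{D}_{u'}(Ju) = 0$. Together with $\bar{B}_{u'}u' = 0$ from \eqref{def_B_four} this makes the first term vanish, while for the second I will use $\bar{D}_{u'}u' = \theta'{}_1^2(u')J'u'$ and the identity $\bar{B}_u J'u' = tu - sJu$ (which follows from \eqref{def_B_four} together with the symmetry $\bar{B}_u J'u' = \bar{B}_{J'u'}u$). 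A short expansion will give
\[ (\bar{D}_{u'}\bar{B})_u u' = \bigl(u's - t\theta'{}_1^2(u')\bigr)u + \bigl(u't + s\theta'{}_1^2(u')\bigr)Ju. \]

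Next I will compute the right-hand side
\[ (-2\gamma\wedge\bar{J}\bar{B})(u,u',u') = -2\gamma(u)\bar{J}\bar{B}_{u'}u' + 2\gamma(u')\bar{J}\bar{B}_u u', \]
which collapses to $-2t\gamma(u')u + 2s\gamma(u')Ju$ using $\bar{B}_{u'}u' = 0$ and $\bar{J}(su+tJu) = sJu - tu$. Matching coefficients of $u$ and $Ju$ in the resulting equality then yields $ds(u') - t\theta'{}_1^2(u') = 2t\gamma(u')$ and $dt(u') + s\theta'{}_1^2(u') = -2s\gamma(u')$, which are the two equations of the lemma evaluated on the vector $u'$. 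Repeating the identical calculation with $u'$ replaced by $J'u'$ will produce the same identities evaluated on $J'u'$, and since $TU' = \langle u', J'u'\rangle$ this upgrades them to identities of one-forms on $U'$.

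The calculation is routine; the only real obstacle is bookkeeping. One must be careful with the sign and symmetry conventions for $\bar{B}$ (in particular that $\bar{B}_u J'u' = \bar{B}_{J'u'}u = -\bar{J}\bar{B}_u u'$) and with the fact that $u, Ju$ are annihilated by $\bar{D}_{u'}$ and $\bar{D}_{J'u'}$. As in the analogous Lemmas \ref{e11} and \ref{e12}, the two scalar equations coming from the $u$- and $Ju$-components are equivalent to each other via application of $\bar{J}$ (equivalently, via the anti-commutation of $\bar{B}$ with $\bar{J}$), so no independent check is required and the two stated equations already exhaust the content of \eqref{c4} on $(u,u',u')$ and $(u,J'u',u')$.
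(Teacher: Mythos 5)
Your computation coincides with the paper's proof: both sides of \eqref{c4} are expanded on $(u,u^{\prime},u^{\prime})$ using that $\bar{B}_{u^{\prime}}u^{\prime}=0$, $\bar{D}_{u}u^{\prime}=\bar{D}_{u^{\prime}}u=0$ and $\bar{D}_{u^{\prime}}u^{\prime}=\theta^{\prime}{}_{1}^{2}(u^{\prime})J^{\prime}u^{\prime}$, and the coefficients of $u$ and $Ju$ are matched, giving exactly the two stated identities. One small inaccuracy in your closing remark: unlike in Lemmas \ref{e8}, \ref{e9} and \ref{e11} (where the first two arguments are $X$ and $\bar{J}X$), the $u$- and $Ju$-components here are \emph{not} equivalent to each other (e.g.\ for $s\equiv 1$, $t\equiv 0$ the first is vacuous while the second is not), which is precisely why the lemma records both equations; this does not affect your argument since you derive and retain both.
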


\begin{proof}
We have 
\begin{align*}
(\bar{D}_{u} \bar{B})_{u^{\prime}}u^{\prime}
&=\bar{D}_{u} (\bar{B}_{u^{\prime}}u^{\prime})=0, \\
(\bar{D}_{u^{\prime}} \bar{B})_{u}u^{\prime}
&=\bar{D}_{u^{\prime}} \bar{B}_{u}u^{\prime}- \bar{B}_{u}(\bar{D}_{u^{\prime}}u^{\prime}) \\
&=\bar{D}_{u^{\prime}} (su+tJu)- \bar{B}_{u}(\theta^{\prime}{}_{1}^{2}(u^{\prime})J^{\prime}u^{\prime}) \\
&=(u^{\prime}s-\theta^{\prime}{}_{1}^{2}(u^{\prime})t)u
+(u^{\prime}t+\theta^{\prime}{}_{1}^{2}(u^{\prime})s)Ju
\end{align*}
and 
\begin{align*}
(-2\gamma \wedge \bar{J} \bar{B})(u, u^{\prime}, u^{\prime})
&=-2\gamma(u)\bar{J}\bar{B}_{u^{\prime}}u^{\prime}
+2\gamma(u^{\prime})\bar{J} \bar{B}_{u}u^{\prime} \\
&=-2t \gamma(u^{\prime})u+2 s \gamma(u^{\prime})Ju. 
\end{align*}
Therefore the equation 
$(d^{\bar{D}} \bar{B})(u, u^{\prime}, u^{\prime})
=(-2\gamma \wedge \bar{J} \bar{B})(u, u^{\prime}, u^{\prime})$ holds
iff 
\begin{align*}
&-u^{\prime}s+\theta^{\prime}{}_{1}^{2}(u^{\prime})t=-2t \gamma(u^{\prime}), \,\,
-u^{\prime}t-\theta^{\prime}{}_{1}^{2}(u^{\prime})s=2 s \gamma(u^{\prime}). \qedhere
\end{align*}
\end{proof}

By gathering Lemmas \ref{e8}-\ref{e13}, we summarize the results regarding condition \eqref{c4}. 

\begin{lemma}\label{e125}
When $\bar{c}=0$, 
the condition \eqref{c4} is satisfied  
if and only if 
\begin{align*}
&db-2a\theta_{2}^{1} - J^{\ast} da +2b J^{\ast} \theta_{1}^{2} =-2a\gamma -2b  J^{\ast} \gamma, \,\, 
ds+2t \theta_{2}^{1}=2t \gamma, \,\, dt+2s \theta_{1}^{2}=-2s\gamma 
\end{align*}
on $U$ and
\begin{align*}
&da+b \theta^{\prime}{}_{2}^{1}=2b\gamma, \,\, db+a \theta^{\prime}{}_{1}^{2}=-2 a \gamma, \,\, 
ds-t \theta^{\prime}{}_{1}^{2}=2t \gamma, \,\, dt+s \theta^{\prime}{}_{1}^{2}=-2 s \gamma 
\end{align*}
on $U^{\prime}$.
\end{lemma}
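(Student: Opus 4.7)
The plan is to gather the case-by-case computations of Lemmas \ref{e8}--\ref{e13}. Setting $\bar{c}=0$, condition \eqref{c4} reads $d^{\bar{D}}\bar{B} = -2\gamma \wedge \bar{J}\bar{B}$, which is an equation between $\mathrm{End}(T\bar{M})$-valued $2$-forms on $\bar{M}$. It therefore suffices to evaluate both sides on ordered triples $(X,Y,Z)$ drawn from the local frame $(u,Ju,u',J'u')$, which spans $T(U\times U')$ pointwise.

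Next I would reduce the number of independent triples using the tensorial symmetries. Both sides are alternating in $(X,Y)$: trivially so for the wedge product on the right, and for the left because $\bar{B}$ is symmetric and $\bar{D}$ is torsion-free. Furthermore, replacing $Z$ by $\bar{J}Z$ commutes with applying $\bar{J}$ to the output on both sides; this uses $\bar{D}\bar{J}=0$ together with the fact that $\bar{B}$ and $\bar{J}\bar{B}$ anti-commute with $\bar{J}$. Under these two symmetries, every triple built from $(u,Ju,u',J'u')$ is equivalent to one of $(u,Ju,u)$, $(u,Ju,u')$, $(u',J'u',u')$, $(u',J'u',u)$, $(u,u',u)$, or $(u,u',u')$, which are precisely the cases addressed by Lemmas \ref{e8}, \ref{e9}, \ref{e10}, \ref{e11}, \ref{e12}, \ref{e13} respectively.

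Then I would read off the necessary and sufficient condition produced by each of these lemmas and collect them. Lemma \ref{e10} is empty (both sides vanish on the triple $(u',J'u',u')$) and so contributes no constraint. Lemmas \ref{e8}, \ref{e9} produce equations supported on $U$ involving $(\theta^{j}_{i})$, Lemma \ref{e11} produces an equation on $U'$ involving $(\theta'{}^{j}_{i})$, and Lemma \ref{e12} splits into a pair on $U$ and a pair on $U'$ because its triple $(u,u',u)$ mixes the two factors; Lemma \ref{e13} adds two further equations on $U'$. Assembling these produces exactly the system displayed in the conclusion.

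The main task is the bookkeeping of which conditions land on $U$ versus on $U'$ and the verification that the enumeration above is exhaustive; the computations themselves are already performed in the cited lemmas. The potentially subtle point is that the $U$-equations in Lemma \ref{e9} are formally implied by the $U$-equations in Lemma \ref{e12} together with the relation $\theta^{1}_{2}=-\theta^{2}_{1}$, so the list presented in the statement is non-minimal but logically correct; one should simply record all equations obtained, without attempting to prune redundancies.
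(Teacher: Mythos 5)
Your overall strategy coincides with the paper's: Lemma \ref{e125} is obtained there simply by collecting the case-by-case statements of Lemmas \ref{e8}--\ref{e13}, and your bookkeeping of which equations live on $U$ versus $U'$, as well as your observation that Lemma \ref{e10} is vacuous and that the $U$-equations of Lemma \ref{e9} are consequences of those of Lemma \ref{e12} together with $\theta^{1}_{2}=-\theta^{2}_{1}$, are all correct.

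There is, however, a gap in your claimed reduction to the six triples. Alternation in $(X,Y)$ and compatibility with $Z\mapsto \bar JZ$ reduce the problem to the six unordered pairs $\{u,Ju\}$, $\{u',J'u'\}$, $\{u,u'\}$, $\{u,J'u'\}$, $\{Ju,u'\}$, $\{Ju,J'u'\}$ with $Z\in\{u,u'\}$, and the last three pairs are \emph{not} equivalent to $\{u,u'\}$ under these two symmetries: for instance $(d^{\bar D}\bar B)(u,J'u',u)$ contains the term $(\bar D_{J'u'}\bar B)_u u$, which involves differentiation in the direction $J'u'$ and is not determined by the data computed in Lemmas \ref{e12} and \ref{e13} (there is no algebraic identity relating $(\bar D_{\bar JY}\bar B)$ to $(\bar D_{Y}\bar B)$). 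These omitted triples are exactly what produces the $Ju$- and $J'u'$-directional components of the one-form identities asserted ``on $U$'' and ``on $U'$'' in the conclusion coming from the mixed pairs; the proofs of Lemmas \ref{e12} and \ref{e13} only establish the $u$- and $u'$-components. A direct computation (e.g.\ of the $(Ju,u',u)$ case, using $\bar D_{u'}Ju=\bar D_{Ju}u'=0$ for the product connection) shows that the extra triples yield precisely those missing components and no new constraints, so the statement is correct; but as written your argument establishes only part of the ``only if'' direction and does not justify the full one-form identities. You should either check the three remaining pairs explicitly or restate the conclusion so that it records only the components actually derived. (The paper itself is terse on this point, but your proposal makes the exhaustiveness claim explicit, and as stated it is false.)
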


The following lemma addresses condition \eqref{c5}.

\begin{lemma}\label{e155}
Assume that the condition \eqref{c1} is satisfied. 
When $\bar{c}=0$, 
the condition \eqref{c5} holds if and only if 
\begin{align*}
&2s Ric^{D}(u,u)-a Ric^{D^{\prime}}(u^{\prime},u^{\prime})=0, \,\, 2tRic^{D}(u,u)-b 
Ric^{D^{\prime}}(u^{\prime},u^{\prime})=0.
\end{align*}
\end{lemma}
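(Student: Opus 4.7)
The plan is to observe that, under the standing assumption $\bar{c}=0$, condition \eqref{c5} reduces to the single bilinear identity
\[
\bar{a}(X,\bar{B}_{Y}Z)=\bar{a}(Y,\bar{B}_{X}Z)
\]
for all $X,Y,Z\in T\bar{M}$. I will verify this identity on the frame $(u,Ju,u',J'u')$ using the explicit formulas for $\bar{a}$ from Lemma \ref{e145} (which is available since we are assuming \eqref{c1}) and for $\bar{B}$ from \eqref{def_B_four}, and read off from the computation the two scalar equations claimed in the lemma.

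Before the case-by-case check I record three simplifications that cut the number of triples to inspect drastically. First, $\bar{a}$ is symmetric, so the identity is trivial whenever $X=Y$. Second, by Lemma \ref{e145} the tensor $\bar{a}$ is diagonal in the frame $(u,Ju,u',J'u')$: in particular $\bar{a}(u,Ju)=\bar{a}(u',J'u')=0$ and $\bar{a}(X,X')=0$ whenever $X$ comes from $TN$ and $X'$ from $TN'$. Third, since $\bar{a}$ is $\bar{J}$-Hermitian and $\bar{B}$ anti-commutes with $\bar{J}$, the identity at $(X,Y,Z)$ is equivalent to the identity at $(\bar{J}X,\bar{J}Y,\bar{J}Z)$, which further cuts down the cases to inspect.

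The inequivalent non-trivial triples then reduce to $(u,u',u)$ and $(u,u',Ju)$. A direct substitution of \eqref{def_B_four} and Lemma \ref{e145} gives
\[
\bar{a}(u,\bar{B}_{u'}u)-\bar{a}(u',\bar{B}_{u}u)=-\tfrac{s}{2}\,Ric^{D}(u,u)+\tfrac{a}{4}\,Ric^{D'}(u',u'),
\]
\[
\bar{a}(u,\bar{B}_{u'}Ju)-\bar{a}(u',\bar{B}_{u}Ju)=-\tfrac{t}{2}\,Ric^{D}(u,u)+\tfrac{b}{4}\,Ric^{D'}(u',u').
\]
Setting both right-hand sides to zero yields precisely the two equations of Lemma \ref{e155}; conversely, those equations force the identity on every triple of frame vectors, because all the remaining triples either vanish automatically (using $\bar{B}_{u'}u'=\bar{B}_{u'}J'u'=\bar{B}_{J'u'}J'u'=0$ together with the diagonality of $\bar{a}$) or are equivalent to one of the two displayed cases by the $\bar{J}$-symmetry noted above.

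The main difficulty is purely organizational rather than conceptual: among the $4^{3}=64$ frame triples one has to sort out which vanish trivially and which are linked by the $\bar{J}$-symmetry, so that the genuine content reduces to the two displayed computations. Once this bookkeeping is done, matching the surviving conditions with those asserted in Lemma \ref{e155} is a short linear calculation in the four coefficients $a,b,s,t$ and the two Ricci scalars $Ric^{D}(u,u)$ and $Ric^{D'}(u',u')$.
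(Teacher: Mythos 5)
Your overall strategy is the one the paper uses: with $\bar c=0$ condition \eqref{c5} reduces to the vanishing of $L(X,Y,Z):=\bar{a}(X,\bar{B}_{Y}Z)-\bar{a}(Y,\bar{B}_{X}Z)$, the pairs $(u,Ju)$ and $(u',J'u')$ contribute nothing, and the two displayed computations for $(u,u',u)$ and $(u,u',Ju)$ are correct and, via Lemma \ref{e145}, give exactly the two scalar equations of the lemma. However, the symmetry you invoke to dispose of the remaining triples is not correct as stated. From $\bar{B}_{\bar JY}\bar JZ=-\bar{B}_YZ$ and the $\bar J$-Hermiticity of $\bar a$ one gets
\begin{align*}
L(\bar JX,\bar JY,Z)=-L(X,Y,Z),\qquad L(\bar JX,\bar JY,\bar JZ)=-L(X,Y,\bar JZ),
\end{align*}
so the vanishing of $L$ at $(X,Y,Z)$ is \emph{not} equivalent to its vanishing at $(\bar JX,\bar JY,\bar JZ)$; rather, applying $\bar J$ to the first two slots relates $(X,Y,\cdot)$ to $(\bar JX,\bar JY,\cdot)$ with the \emph{same} third slot. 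Consequently the mixed pairs split into two orbits, $\{u,u'\}\sim\{Ju,J'u'\}$ and $\{u,J'u'\}\sim\{Ju,u'\}$, and your reduction to only the two triples $(u,u',u)$, $(u,u',Ju)$ leaves the second orbit unexamined.

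The gap is repairable and does not affect the statement: a direct computation with \eqref{def_B_four} gives, for instance, $L(u,J'u',u)=t\,\bar a(u,u)-b\,\bar a(u',u')$ and $L(u,J'u',Ju)=-\bigl(s\,\bar a(u,u)-a\,\bar a(u',u')\bigr)$, while $L(u,J'u',u')$ and $L(u,J'u',J'u')$ vanish because $\bar B_{u'}u'=\bar B_{J'u'}u'=\bar B_{J'u'}J'u'=0$ and $\bar a$ is block-diagonal. So the second orbit reproduces the same two scalar conditions and nothing new appears. You should either check this orbit explicitly (as the paper implicitly does by computing all four third slots for a mixed pair) or replace your symmetry claim by the correct relation $L(\bar JX,\bar JY,Z)=-L(X,Y,Z)$ together with a separate treatment of the pair $\{u,J'u'\}$.
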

\begin{proof}
Set $L(X,Y,Z):=\bar{a}(X,\bar{B}_{Y}Z)-\bar{a}(Y,\bar{B}_{X}Z)$ for $X$, $Y$, $Z \in T\bar{M}$. 
We will show that  
\begin{align*}
L(u,u^{\prime},u) =s\bar{a}(u,u)-a\bar{a}(u^{\prime},u^{\prime}), \,\, 
L(u,u^{\prime},Ju) =t\bar{a}(u,u)-b\bar{a}(u^{\prime},u^{\prime})  
\end{align*}
for $u \in TU$ and $u^{\prime} \in TU^{\prime}$ and $L=0$ for other cases. 
Since $\bar{B}$ is symmetric and anti-commutes with $\bar{J}$, we obtain 
$L(u,Ju, \, \cdot \,)=0$ and $L(u^{\prime},J^{\prime}u^{\prime}, \, \cdot \,)=0$. 
Moreover it holds that 
\begin{align*}
L(u,u^{\prime},u)
=\bar{a}(u,\bar{B}_{u^{\prime}}u)-\bar{a}(u^{\prime},\bar{B}_{u}u)
=s\bar{a}(u,u)-a\bar{a}(u^{\prime},u^{\prime})
\end{align*}
and 
\begin{align*}
L(u,u^{\prime},Ju)
=\bar{a}(u,\bar{B}_{u^{\prime}}Ju)-\bar{a}(u^{\prime},\bar{B}_{u}Ju)
=t\bar{a}(u,u)-b\bar{a}(u^{\prime},u^{\prime})
\end{align*}
from $\bar{a}(u,u^{\prime})=0$. 
Next we can also check that  
\begin{align*}
&L(u,u^{\prime},u^{\prime})
=\bar{a}(u,\bar{B}_{u^{\prime}}u^{\prime})-\bar{a}(u^{\prime},\bar{B}_{u}u^{\prime})
=-\bar{a}(u^{\prime},su+tJu)=0, \\
&L(u,u^{\prime},J^{\prime}u^{\prime})
=\bar{a}(u,\bar{B}_{u^{\prime}}J^{\prime}u^{\prime})-\bar{a}(u^{\prime},\bar{B}_{u}J^{\prime}u^{\prime})
=-\bar{a}(u^{\prime},tu-sJu)=0. 
\end{align*}
Then the condition \eqref{c5} holds for $\bar{c}=0$ if and only if 
\begin{align*}
&s\bar{a}(u,u)-a\bar{a}(u^{\prime},u^{\prime})=0, \,\, t\bar{a}(u,u)-b\bar{a}(u^{\prime},u^{\prime})=0.
\end{align*}
Using Lemma \ref{e145}, we have the conclusion. 
\end{proof}

To locally realize $\bar{M}=N \times N^{\prime}$ as a projective special complex manifold, 
we consider two cases based on Ricci curvatures of the factors, 
and choose $a$, $b$, $s$, $t$ and $\gamma$ so that \eqref{c1}-\eqref{c5} in Corollary \ref{app_1} 
are satisfied in each case. \\

\noindent
{\bf (1) Flat cases.} $Ric^{D}=0$ and $Ric^{D^{\prime}}=0$:  
At first we choose $s=t=0$ and $a=b=0$. It is easy to see that the condition \eqref{c1} holds from 
Lemma \ref{e7}. 
We take the one-form 
$\gamma=0$. Then it can be verified that Lemma \ref{e125} holds for the condition (\ref{c4}).
Since $\bar{a}=0$ by Lemma \ref{e145}, it is easy to see that $d \gamma=2 \bar{a}_{\bar{J}}$ for \eqref{c3}. 
By Lemma \ref{e155}, the condition \eqref{c5} is satisfied for $\bar{c}=0$. 
The product $\mathbb{C} \times \mathbb{C}$ with $\bar{B} =0$, $\gamma=0$, $\bar{c}=0$ 
satisfies all the conditions in Corollary \ref{app_1}, which is a special case of Example \ref{asc_ex}. 
Next we take functions $s=t=0$ and $a$, $b$ such that $a^{2}+b^{2}$ is a non-zero constant. 
We see that the condition \eqref{c1} holds from 
Lemma \ref{e7}, and hence \eqref{c2} holds by Lemma \ref{e145}. 
We choose the one-form  
\begin{align}\label{flat_gamm}
\gamma=-\theta_{1}^{2}-\frac{1}{2} \theta^{\prime}{}_{1}^{2} 
-\frac{1}{2} \frac{a db -b da}{a^{2}+b^{2}}. 
\end{align}
Note that 
\begin{align}\label{e40}
\dfrac{a db -b da}{a^{2}+b^{2}}= 
\left\{   \begin{array}{l}
-\dfrac{da}{b} \,\,\, \mbox{on} \,\,\, (U \times U^{\prime}) \backslash b^{-1}(0) \vspace{2mm}\\
\dfrac{db}{a} \,\,\, \mbox{on} \,\,\, (U \times U^{\prime}) \backslash a^{-1}(0)
\end{array}
\right.
\end{align}
due to the constancy of $a^{2}+b^{2}$. 
Then it can be verified that the condition \eqref{c4} holds. 
Indeed, the condition \eqref{c4} reduces to 
\begin{align}\label{e30}
&db-2a\theta_{2}^{1} -  J^{\ast} da +2b  J^{\ast} \theta_{1}^{2} =-2a\gamma -2b  J^{\ast} \gamma, 
\end{align}
on $U$ and
\begin{align}\label{e31}
&da+b \theta^{\prime}{}_{2}^{1}=2b\gamma, \,\, db+a \theta^{\prime}{}_{1}^{2}=-2 a \gamma
\end{align}
on $U^{\prime}$ by $s=t=0$. It is easy to see that \eqref{flat_gamm} satisfies \eqref{e30}. 
If $b(x) \neq 0$ and $a(x) \neq 0$ at a point $x \in U \times U^{\prime}$, it holds that \eqref{e31} at $x$ by \eqref{e40}. 
If $b(x)=0$ (resp. $a(x)=0$) for $x \in U \times U^{\prime}$, then $a(x) \neq 0$ (resp. $b(x)\neq 0$) and $(da)_{x}=0$ 
(resp. $(db)_{x}=0$). Then it holds \eqref{e31}. 
Therefore $\gamma$ in \eqref{flat_gamm} satisfies \eqref{c4}. 
Since 
\[ d \gamma=-d \theta_{1}^{2}-\frac{1}{2} d \theta^{\prime}{}_{1}^{2}
=Ric^{D}(u,u) \Omega+\frac{1}{2} Ric^{D^{\prime}} (u^{\prime},u^{\prime}) \Omega^{\prime}=0 \]
by Lemma \ref{str._eq} and $\bar{a}=0$, we see that $d \gamma=2 \bar{a}_{\bar{J}}(=0)$ for \eqref{c3}.
From Lemma \ref{e155}, it follows that the condition \eqref{c5} is satisfied for $\bar{c}=0$. 
Hence we obtain 

\begin{proposition}\label{4_dim_ex_0}
The manifold $(\mathbb{C}^{2}, \bar{J}, [\bar{D}])$ with 
the standard complex structure $\bar{J}$ and the c-projective structure 
induced from standard flat connection $\bar{D}$ can be realized as a 
projective special complex manifold such that $\bar{B}\neq 0$, $\bar{\cal B}=0$ and $\bar{c}=0$ 
by considering an $S^{1}$-bundle with a flat connection $\eta$ given by \eqref{flat_gamm} via \eqref{eta_gamma:eq}. 
The resulting conical special complex manifold is non-trivial and of horizontal type 
with respect to $\eta$, see Definition~\ref{hortype:def}. 
\end{proposition}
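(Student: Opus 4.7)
The plan is to verify in turn the five conditions \eqref{c1}--\eqref{c5} of Corollary \ref{app_1} for the data $(\bar M,\bar J,[\bar D])=(\mathbb{C}^2,\bar J,[\bar D])$, $\bar c=0$, $\bar B$ built from $s=t=0$ and $a$, $b$ with $a^2+b^2$ a non-zero constant, and $\gamma$ as in \eqref{flat_gamm}, leaning on the component-by-component reformulations proved in Lemmas \ref{e7}--\ref{e155}. The geometry of the two factors is trivial ($Ric^{D}=Ric^{D'}=0$ and the connection forms $\theta^{i}_{j}$, $\theta'^{i}_{j}$ are closed by Lemma \ref{str._eq}), so most of the algebraic conditions collapse; the real work is packaged into the identity \eqref{flat_gamm} for $\gamma$ and a short case analysis near the zero loci of $a$ and $b$.

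First I would dispense with \eqref{c1} and \eqref{c2}. By Lemma \ref{e7}, condition \eqref{c1} reduces to $Ric^{D}(u,u)=-\tfrac{1}{2}(as+bt)$, $bs-at=0$ and $Ric^{D'}(u',u')=-(s^2+t^2)$; all three are automatic because both Ricci tensors vanish and $s=t=0$. Lemma \ref{e145} then gives $\bar a \equiv 0$, hence $d^{\bar D}\bar a=0$, so \eqref{c2} is satisfied with $\bar c=0$. Condition \eqref{c5} also becomes trivial by Lemma \ref{e155}, since its two equations carry a factor of a vanishing Ricci tensor.

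Next, I would check \eqref{c4} via Lemma \ref{e125}. With $s=t=0$ the system reduces to \eqref{e30} on $U$ and \eqref{e31} on $U'$. Plugging \eqref{flat_gamm} directly into \eqref{e30} produces an identity after cancellation. For \eqref{e31} the key observation is the alternative expression \eqref{e40}, valid on the open sets $b\ne 0$ and $a\ne 0$; together they cover every point of $U\times U'$ because $a^{2}+b^{2}$ is a nonzero constant. On the (closed) locus where $b=0$ (resp.\ $a=0$) the other function is non-zero and, by differentiating $a^{2}+b^{2}=\text{const}$, $da$ (resp.\ $db$) must vanish there, so both sides of \eqref{e31} agree pointwise. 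This case split is the only place that requires any care and is the step I expect to be the main obstacle.

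Finally I would verify \eqref{c3} and the three structural claims. Since $\bar a=0$, condition \eqref{c3} demands $d\gamma=0$; each of the three summands of \eqref{flat_gamm} is closed, the first two by Lemma \ref{str._eq} together with $Ric^{D}=Ric^{D'}=0$, and the third because $\tfrac{a\,db-b\,da}{a^{2}+b^{2}}$ is locally the differential of the angle function of the unit-modulus map $(a,b)/\sqrt{a^{2}+b^{2}}$. By Lemma \ref{e6} we have $\bar{\mathcal B}=0$ (both $as+bt$ and $s^2+t^2$ vanish). The tensor $\bar B$ itself is non-zero since $a^{2}+b^{2}\neq 0$, and hence $A\ne 0$ by \eqref{test:def} and \eqref{AB:eq}, so the conical special complex manifold $(M,J,\nabla,\xi)$ produced by Theorem \ref{char:thm} is non-trivial. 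Finally, horizontality in the sense of Definition \ref{hortype:def} is equivalent to $\bar c^{\alpha}=0$ for all $\alpha$, which holds by construction.
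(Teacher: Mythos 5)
Your proposal is correct and follows essentially the same route as the paper: choose $s=t=0$ and $a,b$ with $a^{2}+b^{2}$ a non-zero constant, reduce \eqref{c1}, \eqref{c2}, \eqref{c5} to trivialities via Lemmas \ref{e7}, \ref{e145}, \ref{e155}, verify \eqref{c4} through \eqref{e30}--\eqref{e31} with the same case split on the zero loci of $a$ and $b$ using \eqref{e40}, and check $d\gamma=0$ for \eqref{c3}. The only cosmetic difference is your "angle function" justification for the closedness of $\tfrac{a\,db-b\,da}{a^{2}+b^{2}}$, which the paper leaves implicit; everything else, including the conclusions $\bar{\cal B}=0$, $\bar B\neq 0$, non-triviality and horizontality, matches.
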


In particular, there exists a $4$-dimensional c-projectively flat manifold which can be  
realized as a projective special complex manifold with $\bar{B} \neq 0$ and $\bar{\cal B}=0$. 
For a projective special K{\"a}hler manifold,  if $\bar{\cal B}=0$, then $\bar{B} =0$. 
The reason is that the square of an endomorphism which is symmetric
with respect to a definite metric is trace-free only if the endomorphism vanishes.
Therefore, the manifold in Proposition~\ref{4_dim_ex_0} 
does not admit a compatible projective special K{\"a}hler metric. 
\\

\noindent
{\bf (2) Non flat cases.} $Ric^{D^{\prime}} < 0$: 
We take functions $s$, $t$ such that $Ric^{D^{\prime}}(u^{\prime},u^{\prime}) =-(s^{2}+t^{2})$ 
in which $s^{2}+t^{2}$ is non-zero constant. 
We choose a constant $k$ as  
\[  k=\frac{2 Ric^{D}(u,u)}{Ric^{D^{\prime}}(u^{\prime},u^{\prime})} \]
and set $a:=ks$ and $b:=kt$.    
Then we have 
\[ Ric^{D}(u,u) =\frac{1}{2} k Ric^{D^{\prime}}(u^{\prime},u^{\prime}) =-\frac{1}{2}k(s^{2}+t^{2})=-\frac{1}{2}(as+bt)  \]
and $bs-at=0$. 
Then \eqref{c1} and \eqref{c2} are fulfilled. 
By choosing the one-form 
\begin{align}\label{eq423_gamma}
\gamma=-\theta_{1}^{2}-\frac{1}{2} \theta^{\prime}{}_{1}^{2} 
-\frac{1}{2} \frac{s dt -t ds}{s^{2}+t^{2}}
\left(= -\theta_{1}^{2}-\frac{1}{2} \theta^{\prime}{}_{1}^{2} 
-\frac{1}{2} \frac{a db -b da}{a^{2}+b^{2}} \,\,\,
\mbox{when} \,\,\, k\neq 0
\right), 
\end{align}
we can verify that \eqref{c4} holds due to Lemma \ref{e125}, by applying the similar argument 
to the flat case. 
Since 
\begin{align}\label{eq_422_gamma}
d \gamma=-d \theta_{1}^{2}-\frac{1}{2} d \theta^{\prime}{}_{1}^{2}
=Ric^{D}(u,u) \Omega+\frac{1}{2} Ric^{D^{\prime}}(u^{\prime},u^{\prime}) \Omega^{\prime}, 
\end{align}
we easily see that $d \gamma=2 \bar{a}_{\bar{J}}$ for \eqref{c3} from Lemma \ref{e145}. 
From Lemma \ref{e155}, it follows that the condition (6.4) is satisfied for $\bar{c}=0$. 
Therefore the choice of $k$ as above means that \eqref{c5} is fulfilled due to Lemma \ref{e155}. 

We denote the standard complex structure and standard connection on $\mathbb{C}P^{1}$ endowed with 
the Fubini-Study metric
by $J^{+}$ and $D^{+}$, respectively. Let $\mathbb{C}H^{1}$ represent the hyperbolic plane with negative constant Gaussian curvature and $J^{-}$, $D^{-}$ be the standard complex structure and 
standard connection on it. Here a specific value of a constant Gaussian curvature is not crucial
in geometry of projective special complex manifolds 
since homothetic changes of metrics $\langle \, \cdot \, ,  \, \cdot \,\rangle 
\mapsto c \langle \, \cdot \, ,  \, \cdot \,\rangle$ $(c>0)$ 
preserve $J^{\pm}$, $D^{\pm}$. 
Furthermore we denote the standard complex structure and standard connection on $\mathbb{C}$ by $J^{0}$ and $D^{0}$, respectively. We conclude that

\begin{proposition}\label{4_dim_ex_1}
The products of surfaces 
\begin{align*}
(\bar{M},\bar{J},[\bar{D}])= 
\left\{   
\begin{array}{l}
(\mathbb{C} \times \mathbb{C}H^{1}, J^{0}+J^{-},[D^{0}+D^{-}]) \quad (\mbox{if} \,\, k=0), \\ 
(\mathbb{C}P^{1} \times \mathbb{C}H^{1},J^{+}+J^{-},[D^{+}+D^{-}]) \quad (\mbox{if} \,\, k<0), \\ 
(\mathbb{C}H^{1} \times \mathbb{C}H^{1},J^{-}+J^{-},[D^{-}+D^{-}]) \quad (\mbox{if} \,\, k>0)
\end{array}
\right.
\end{align*}
can be locally realized as projective special complex manifolds 
with $\bar{\cal B} \neq 0$ and $\bar{c}=0$ by considering $S^{1}$-bundles 
with the (non-flat) connections $\eta$ given by \eqref{eq423_gamma} via \eqref{eta_gamma:eq}. 
The curvatures of these connections 
are involving the Ricci curvatures of respective factors, as expressed in \eqref{eq_422_gamma}. 
Each resulting conical special complex manifold 
is non-trivial and of horizontal type with respect to the connection $\eta$. 
\end{proposition}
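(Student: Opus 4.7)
The plan is to apply Corollary \ref{app_1} to the data already constructed in the paragraphs leading up to the statement. Specifically, I take $\bar c = 0$, functions $s,t$ on the hyperbolic factor arranged so that $s^{2}+t^{2}$ is the nonzero constant with $Ric^{D'}(u',u') = -(s^{2}+t^{2})$, and the constant
\[ k := \frac{2\, Ric^{D}(u,u)}{Ric^{D'}(u',u')}, \]
which is well defined and constant because $D$ and $D'$ are Ricci-parallel and, by Lemma \ref{rank_ricci}, the Ricci tensor of each surface is either definite or vanishing. Setting $a := ks$, $b := kt$, the tensor $\bar B$ is then defined by \eqref{def_B_four} and the one-form $\gamma$ by \eqref{eq423_gamma}.

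The hypotheses of Corollary \ref{app_1} are verified exactly by the lemmas collected in the preceding pages. Lemma \ref{e7} gives \eqref{c1} for this choice of $a,b,s,t$; Lemma \ref{e145} shows $\bar a$ is $\bar D$-parallel, so \eqref{c2} holds with $\bar c = 0$; \eqref{eq_422_gamma} together with Lemma \ref{str._eq} and the formula for $\bar a$ in Lemma \ref{e145} yields $d\gamma = 2\bar a_{\bar J}$, i.e.\ \eqref{c3}; Lemma \ref{e125} supplies \eqref{c4} by an argument identical to the flat case; and the very choice of $k$ is precisely what makes \eqref{c5} hold, as recorded in Lemma \ref{e155}. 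Corollary \ref{app_1} then produces a conical special complex structure on $M = \bar M \times S^{1} \times \mathbb{R}^{>0}$ whose $\mathbb{C}^{\ast}$-quotient is $\bar M$ with its given c-projective structure, and the $S^{1}$-bundle has connection $\eta$ built from $\gamma$ via \eqref{eta_gamma:eq}.

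To finish I identify the factors according to the sign of $k$: if $k=0$ then $Ric^{D} = 0$, forcing $N = \mathbb{C}$; if $k<0$ then $Ric^{D}$ is positive definite, so $N = \mathbb{C}P^{1}$; and if $k>0$ then $Ric^{D}$ is negative definite, so $N = \mathbb{C}H^{1}$. The auxiliary claims are immediate: $\bar{\cal B} \neq 0$ because Lemma \ref{e6} gives $\bar{\cal B}(u',u') = 2(s^{2}+t^{2}) \neq 0$; non-triviality follows since this forces $\bar B \neq 0$ and hence $A \neq 0$ by \eqref{test:def} and \eqref{AB:eq}; and horizontal type with respect to $\eta$ is automatic from $\bar c = 0$ by the remark following Definition \ref{hortype:def}. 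The only potential bookkeeping concern, rather than a genuine obstacle, is that the construction is phrased on a single patch $U\times U'$; since the proposition only asserts a \emph{local} realization this is unproblematic, and the underlying identities are pointwise in nature.
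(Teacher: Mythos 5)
Your proposal is correct and follows essentially the same route as the paper: the proposition is just the summary of the preceding construction, and you assemble exactly the same data ($a=ks$, $b=kt$ with $k=2\,Ric^{D}(u,u)/Ric^{D^{\prime}}(u^{\prime},u^{\prime})$, $\bar c=0$, $\gamma$ as in \eqref{eq423_gamma}) and verify \eqref{c1}--\eqref{c5} by citing the same chain of Lemmas \ref{e7}, \ref{e145}, \ref{e125}, \ref{e155} before invoking Corollary \ref{app_1}. The only cosmetic difference is your restriction of $s,t$ to functions on the hyperbolic factor, which is an unnecessary but harmless specialization since the identities in Lemma \ref{e125} hold for any $s,t$ with $s^{2}+t^{2}$ a nonzero constant.
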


\begin{remark}
{\rm 
The Cotton-York tensor 
$C^{\bar{D}}=d^{\bar{D}}P^{\bar{D}}$ of $\bar{D}$ in Propositions \ref{4_dim_ex_0} and \ref{4_dim_ex_1} 
vanishes. 
}
\end{remark}

When a projective special complex manifold admits a compatible projective special 
K\"ahler metric $\bar g$, the fundamental $(0,2)$-tensor $\bar{a}$ coincides with $\bar{g}$ 
as we discussed in Section \ref{SpKah}, in particular, $\bar{a}$ is positive definite. 
Consequently, the local realizations for $\mathbb{C} \times \mathbb{C}H^{1}$ 
and $\mathbb{C}P^{1} \times \mathbb{C}H^{1}$ given in Proposition \ref{4_dim_ex_1}  
do not occur in the projective special   
K\"ahler setting, as implied by \eqref{eq_422_gamma}. 
In other words, only the third case $\mathbb{C}H^{1} \times \mathbb{C}H^{1}$ in Proposition \ref{4_dim_ex_1} is projective special   
K\"ahler.  
Lemma \ref{e145} implies that the Gaussian curvatures of the factors 
are $-2$ and $-4$, which is consistent with Section 7 in \cite{MS1} and with one of the two $4$-dimensional simply connected projective special K{\"a}hler Lie groups classified in \cite{M}. The same homogeneous projective special K\"ahler manifolds are obtained by specializing the classification \cite{AC} to  complex dimension~$2$: $\mathbb{C}H^2$ and $(\mathrm{SL}(2,\mathbb{R})/\mathrm{SO}(2)) \times (\mathrm{SO}_0(2,1)/\mathrm{SO}(2))\cong \mathbb{C}H^{1} \times \mathbb{C}H^{1}$ associated with $A_4$ and $B_3$ under the correspondence explained in \cite{AC}. The realization as a Lie group follows from the 
Iwasawa decomposition.

\subsection{$\dim \bar{M}=2$}
Here we give three examples obtained as an application of Theorem \ref{char:thm_two_dim}. 
Let $(\bar{M},\bar{J},\bar{D})$ be a surface with a complex structure $\bar{J}$ and 
a torsion-free complex connection $\bar{D}$. We assume that the Ricci tensor $Ric^{\bar{D}}$ is symmetric 
and  $\bar{D} Ric^{\bar{D}}=0$. 
There exists locally parallel volume forms $\Omega$ on $U \subset \bar{M}$. 
Take a local frame $(u,Ju)$ such that $\Omega(u,\bar{J}u)=1$ on $U$ and define a symmetric tensor 
$\bar{B}$ on $U$ by
\begin{align}\label{def_s_t}
&\bar{B}_{u}u := s u+ t \bar{J}u, \\
&\bar{B}_{u}\bar{J}u = \bar{B}_{\bar{J}u}u := -s\bar{J}u+tu\, (=-\bar{J}\bar{B}_{u}u),  \nonumber \\
&\bar{B}_{\bar{J}u} \bar{J}u :=-su-t \bar{J}u\, (=-\bar{B}_{u}u) \nonumber
\end{align}
for some functions $s$, $t$ on $U$ to be determined in the proof of Proposition \ref{2D:prop}. 
Note that $\bar{B}_X$ anti-commutes with $\bar{J}$ for all $X$. 
It holds that  
\begin{align*}
[\bar{B},\bar{B}]_{u,\bar{J}u}u=\bar{B}_{u}\bar{B}_{\bar{J}u}u-\bar{B}_{\bar{J}u}\bar{B}_{u}u=2(s^{2}+t^{2})\bar{J}u.
\end{align*}

Let $(\theta_{i}^{j})$ be the connection form for $\bar{D}$ 
with respect to a local frame $(u_{1},u_{2})=(u,\bar{J}u)$.

\begin{lemma}\label{der_BB}
It holds that 
\begin{align*}
(\bar{D}_{u} \bar{B})_{\bar{J}u}u
=&\{ dt(u)  + 3s \theta^{2}_{1}(u) \}u + \{  - ds(u) + 3 t \theta^{2}_{1}(u) \} \bar{J}u
\end{align*}
and 
\begin{align*}
(\bar{D}_{\bar{J}u} \bar{B})_{u}u
=&\{ ds(\bar{J}u) - 3t\theta^{2}_{1} (\bar{J}u) \} u 
  +\{ dt(\bar{J}u)+ 3s\theta^{2}_{1} (\bar{J}u) \} \bar{J}u.
\end{align*}
\end{lemma}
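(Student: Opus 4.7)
The plan is to compute both covariant derivatives directly from the definition of $\bar{B}$ in \eqref{def_s_t}, exploiting the very simple form of $\bar{D}$ in the frame $(u,\bar{J}u)$. Since $\bar{D}$ is complex and $\Omega$ is $\bar{D}$-parallel, the connection form satisfies $\theta^1_1=\theta^2_2=0$ and $\theta^1_2=-\theta^2_1$, so every covariant derivative of a basis vector reduces to
\[
\bar{D}_X u = \theta^2_1(X)\,\bar{J}u,\qquad \bar{D}_X \bar{J}u = -\theta^2_1(X)\, u .
\]
This is really the only piece of structure from the connection that will enter the computation.

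Next I would apply the Leibniz rule
\[
(\bar{D}_X \bar{B})_Y Z = \bar{D}_X(\bar{B}_Y Z) - \bar{B}_{\bar{D}_X Y} Z - \bar{B}_Y(\bar{D}_X Z)
\]
to the two triples $(X,Y,Z)=(u,\bar{J}u,u)$ and $(X,Y,Z)=(\bar{J}u,u,u)$. For the first triple, one substitutes $\bar{B}_{\bar{J}u} u = tu - s\bar{J}u$, differentiates, and then uses $\bar{D}_u \bar{J}u = -\theta^2_1(u) u$ together with $\bar{B}_u u = su + t\bar{J}u$ and $\bar{B}_{\bar{J}u}\bar{J}u = -su - t\bar{J}u$ for the two correction terms; collecting coefficients of $u$ and $\bar{J}u$ produces the asserted expression. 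The computation for $(\bar{J}u,u,u)$ is completely analogous, starting from $\bar{B}_u u = su + t\bar{J}u$ and using $\bar{D}_{\bar{J}u} u = \theta^2_1(\bar{J}u)\bar{J}u$ twice in the correction terms, which involve $\bar{B}_{\bar{J}u} u$ and $\bar{B}_u \bar{J}u$ (both equal to $tu - s\bar{J}u$).

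There is no conceptual obstacle: the calculation is purely mechanical bookkeeping, and the factor of $3$ appearing in front of $s\theta^2_1$ and $t\theta^2_1$ simply reflects that the two correction terms contribute an extra $\pm\theta^2_1$ each on top of the one coming from differentiating the coefficients of $\bar{B}_Y Z$. The only thing one has to be attentive to is bookkeeping of signs, ensuring that the identities $\bar{B}_u u = -\bar{B}_{\bar{J}u}\bar{J}u$ and $\bar{B}_{\bar{J}u}u = \bar{B}_u\bar{J}u = -\bar{J}\bar{B}_u u$ are used consistently so that each correction term aligns with the main term rather than cancelling it.
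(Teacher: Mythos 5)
Your proposal is correct and is essentially identical to the paper's own proof: a direct Leibniz-rule expansion of $(\bar{D}_X\bar{B})_YZ$ using $\bar{D}_Xu=\theta^2_1(X)\bar{J}u$, $\bar{D}_X\bar{J}u=-\theta^2_1(X)u$ and the defining values of $\bar{B}$ on the frame, with the symmetry of $\bar{B}$ merging the two correction terms in the second identity. Your accounting for the factor of $3$ is exactly how it arises in the paper's computation.
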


\begin{proof}
We have 
\begin{align*}
(\bar{D}_{u} \bar{B})_{\bar{J}u}u
=&\bar{D}_{u} (\bar{B}_{\bar{J}u}u)-\bar{B}_{\bar{D}_{u}\bar{J}u}u-\bar{B}_{\bar{J}u}(\bar{D}_{u}u) \\
=&\bar{D}_{u}(-s\bar{J}u+tu)-\bar{B}_{\theta^{1}_{2}(u) u}u-\bar{B}_{\bar{J}u} (\theta^{2}_{1}(u) \bar{J}u) \\
=&-(us)\bar{J}u - s \theta^{1}_{2}(u) u+ (ut)u+t \theta^{2}_{1}(u) \bar{J}u\\
&-\theta^{1}_{2}(u) (su+t\bar{J}u)-\theta^{2}_{1}(u) (-su-t\bar{J}u)\\
=&\{ dt(u)  + 3s \theta^{2}_{1}(u) \}u + \{  - ds(u) + 3 t \theta^{2}_{1}(u) \} \bar{J}u
\end{align*}
and 
\begin{align*}
(\bar{D}_{\bar{J}u} \bar{B})_{u}u
=&\bar{D}_{\bar{J}u} (\bar{B}_{u}u)-2 \bar{B}_{\bar{D}_{\bar{J} u} u} u \\
=&\bar{D}_{\bar{Ju}} (su+t \bar{J}u)- 2 \bar{B}_{\theta^{2}_{1} (\bar{J}u) \bar{J}u} u \\
=&((\bar{J}u)s)u+s\theta^{2}_{1} (\bar{J}u) \bar{J}u + ((\bar{J}u) t) \bar{J}u +t \theta^{1}_{2}(\bar{J}u) u
-2\theta^{2}_{1} (\bar{J}u) (-s\bar{J}u+tu) \\
=&\{ ds(\bar{J}u) - 3t\theta^{2}_{1} (\bar{J}u) \} u 
  +\{ dt(\bar{J}u)+ 3s\theta^{2}_{1} (\bar{J}u) \} \bar{J}u. \qedhere
\end{align*}
\end{proof}

\begin{lemma}\label{b_wedge}
For any one-form $\gamma$ on $U$, we have 
\begin{align*}
(\gamma \wedge \bar{J} \bar{B})_{u,Ju}u
=\{ s \gamma(u) + t \gamma(\bar{J}u) \}u
   +\{  t \gamma(u) -s\gamma(\bar{J}u) \}\bar{J}u. 
\end{align*}
\end{lemma}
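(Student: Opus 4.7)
The plan is to compute the two wedge-product terms directly from the definition \eqref{def_s_t}, which specifies $\bar{B}_u u = su + t\bar{J}u$ and $\bar{B}_{\bar{J}u} u = -s\bar{J}u + tu$. The wedge is taken according to the standard convention
\[
(\gamma \wedge \bar{J}\bar{B})_{u,\bar{J}u} u \;=\; \gamma(u)\,\bar{J}\bar{B}_{\bar{J}u}u \;-\; \gamma(\bar{J}u)\,\bar{J}\bar{B}_{u}u,
\]
so the only real work is to evaluate $\bar{J}\bar{B}_u u$ and $\bar{J}\bar{B}_{\bar{J}u}u$.

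First I would apply $\bar{J}$ to each of the two vectors above, using $\bar{J}^{2}=-\mathrm{Id}$:
\[
\bar{J}\bar{B}_{u}u = \bar{J}(su+t\bar{J}u) = s\bar{J}u - tu,
\qquad
\bar{J}\bar{B}_{\bar{J}u}u = \bar{J}(-s\bar{J}u + tu) = su + t\bar{J}u.
\]
Then I would substitute these into the wedge formula and collect the coefficients of $u$ and $\bar{J}u$ separately:
\[
\gamma(u)(su+t\bar{J}u) - \gamma(\bar{J}u)(s\bar{J}u - tu)
= \{s\gamma(u)+t\gamma(\bar{J}u)\}u + \{t\gamma(u)-s\gamma(\bar{J}u)\}\bar{J}u,
\]
which is the stated identity.

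There is really no obstacle here: the lemma is a purely mechanical two-line computation whose only ingredient beyond the definition of $\bar{B}$ is the relation $\bar{J}^{2}=-\mathrm{Id}$. The only thing worth being careful about is the sign convention in the wedge product of a $1$-form with a tensor field, but this is fixed earlier in the paper (see the formula $(l\wedge K)_{X,Y}Z = l(X,Z)KY - l(Y,Z)KX$ in Section~2.3) and is consistent with the convention used throughout, for instance in Lemma~\ref{Bbar:lem} and in the $\bar{c}\wedge \mathrm{Id}$ terms of Definition~\ref{intrinsic_3}.
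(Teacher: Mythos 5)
Your computation is correct and is essentially identical to the paper's own proof: the same wedge convention $(\gamma\wedge \bar J\bar B)_{u,\bar Ju}u=\gamma(u)\bar J\bar B_{\bar Ju}u-\gamma(\bar Ju)\bar J\bar B_{u}u$, the same evaluation of $\bar J\bar B_{u}u$ and $\bar J\bar B_{\bar Ju}u$ from \eqref{def_s_t} using $\bar J^{2}=-\mathrm{Id}$, and the same collection of coefficients. Nothing further is needed.
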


\begin{proof}
By a straightforward calculation, we have
\begin{align*}
(\gamma \wedge \bar{J} \bar{B})_{u,Ju}u
&=\gamma(u)\bar{J} \bar{B}_{\bar{J}u}u-\gamma(\bar{J}u)\bar{J} \bar{B}_{u}u \\
&=\gamma(u)\bar{J} (-s\bar{J}u+tu)-\gamma(\bar{J}u)\bar{J} (s u+ t \bar{J}u)  \\
&=\{ s \gamma(u) + t \gamma(\bar{J}u) \}u
   +\{  t \gamma(u) -s\gamma(\bar{J}u) \}\bar{J}u. \qedhere
\end{align*}
\end{proof}

By Lemmas \ref{der_BB} and \ref{b_wedge}, we conclude

\begin{lemma}\label{sol_gam}
If $s^{2}+t^{2}$ is non-zero constant, then 
the one-form 
\begin{align}\label{gamma_2_dim}
\gamma=-\frac{3}{2} \theta^{2}_{1}-\frac{1}{2} \frac{s dt-tds}{s^{2}+t^{2}} 
\end{align}
satisfies the condition (6-3)  in Definition \ref{intrinsic_2}. 
The one-form $\gamma$ given in \eqref{gamma_2_dim} is the unique one-form satisfying (6-3) 
for a given $\bar{B}$. 
\end{lemma}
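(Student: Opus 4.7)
My approach is to verify that, with $\bar c=0$ (as will be the natural choice in the subsequent 2-dimensional examples), condition (6-3) of Definition \ref{intrinsic_2} collapses to a single scalar one-form equation. Existence will then follow by direct substitution and uniqueness from linear algebra.

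Because $\dim \bar M=2$ and $\bar B$ is symmetric and anti-commutes with $\bar J$, any $(0,3)$-tensor of the form $d^{\bar D}\bar B$ (which is alternating in the first two slots) and any expression of the form $\gamma\wedge \bar J\bar B$ are completely determined by their value on the single triple $(u,\bar Ju,u)$. Thus (6-3) with $\bar c=0$ collapses to the vector equation $(d^{\bar D}\bar B)(u,\bar Ju,u)=-2(\gamma\wedge \bar J\bar B)(u,\bar Ju,u)$. Feeding in Lemma \ref{der_BB} on the left and Lemma \ref{b_wedge} on the right, and reading off the $u$- and $\bar Ju$-components, yields two scalar equations; these turn out to be equivalent, because applying $J^*$ (using $J^{*2}=-\mathrm{Id}$) converts one into the other. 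Therefore (6-3) is equivalent to the single one-form identity
\[
dt-J^*ds+3s\,\theta^{2}_{1}+3t\,J^*\theta^{2}_{1}\;=\;-2s\,\gamma-2t\,J^*\gamma.
\]

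For uniqueness, evaluate this identity on the frame $(u,\bar Ju)$: the result is a $2\times 2$ linear system for the unknowns $(\gamma(u),\gamma(\bar Ju))$ whose coefficient matrix has determinant $-4(s^{2}+t^{2})$, which is nonzero by hypothesis. Hence at most one one-form $\gamma$ can solve the equation for the given $\bar B$.

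For existence, I would substitute the candidate $\gamma=-\tfrac{3}{2}\theta^{2}_{1}-\tfrac{1}{2}(s\,dt-t\,ds)/(s^{2}+t^{2})$ into the one-form identity. The $\theta^{2}_{1}$-pieces match immediately, reducing the verification to the algebraic identity
\[
-st\,ds+st\,J^*dt\;=\;t^{2}\,dt-s^{2}\,J^*ds.
\]
The key (and only non-routine) point is that this identity is forced by the constancy of $s^{2}+t^{2}$: differentiating gives $s\,ds+t\,dt=0$, and applying $J^*$ gives $s\,J^*ds+t\,J^*dt=0$. These two relations deliver $t^{2}\,dt=-st\,ds$ and $-s^{2}\,J^*ds=st\,J^*dt$, which are exactly what the identity asks for. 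I expect this algebraic manipulation to be the main obstacle insofar as spotting the role of $d(s^{2}+t^{2})=0$ is what actually pins down the coefficient $\tfrac{1}{2}$ and the precise form of the exact-plus-connection expression for $\gamma$.
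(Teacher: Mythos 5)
Your proof is correct and follows essentially the same route as the paper: reduce (6-3) (with $\bar c=0$) to the single one-form identity coming from the triple $(u,\bar Ju,u)$ via Lemmas \ref{der_BB} and \ref{b_wedge}, verify the candidate $\gamma$ using $s\,ds+t\,dt=0$, and get uniqueness from the invertibility of the $2\times 2$ system (the paper phrases this as $\phi\wedge\bar J\bar B=0\Rightarrow\phi=0$ for the difference of two solutions, which is the same linear-algebra fact). The only blemish is the sign of your determinant, which should be $+4(s^{2}+t^{2})$; this is immaterial since only its nonvanishing is used.
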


\begin{proof}
By Lemmas \ref{der_BB} and \ref{b_wedge}, the condition (6-3) is equivalent to 
\begin{align*}
& dt + 3 s \theta^{2}_{1} - \bar{J}^{\ast} ds + 3t \bar{J}^{\ast} \theta^{2}_{1} 
=-2 s \gamma -2 t \bar{J}^{\ast} \gamma, \\
&-ds + 3 t \theta^{2}_{1}-\bar{J}^{\ast} dt -3s \bar{J}^{\ast} \theta^{2}_{1} 
= -2 t \gamma +2 s \bar{J}^{\ast} \gamma.
\end{align*}
These two equations are equivalent to each other, so the condition (6-3) is equivalent to
\begin{align}\label{6_3_sur}
& dt + 3 s \theta^{2}_{1} - \bar{J}^{\ast} ds + 3t \bar{J}^{\ast} \theta^{2}_{1} 
=-2 s \gamma -2 t \bar{J}^{\ast} \gamma. 
\end{align}
If $s^{2}+t^{2} \neq 0$, the one-form 
\begin{align*}
\gamma=-\frac{3}{2} \theta^{2}_{1}-\frac{1}{2} \frac{s dt-tds}{s^{2}+t^{2}} 
\end{align*}
satisfies (\ref{6_3_sur}), where we used that $sds = -tdt$ due to the constancy of $s^{2}+t^{2}$. 
To establish the uniqueness of $\gamma$, it suffices to show that $\phi=0$ if 
$\phi \wedge \bar{J}\bar{B}=0$ for a one-form $\phi$. 
By considering $(\phi \wedge \bar{J}\bar{B})(u,\bar{J}u,u)=0$, we obtain the equation 
\[ 
\left( \begin{array}{cc}
                    s   & t \\
                    -t  &  s\\
                    \end{array} 
             \right)
\left( \begin{array}{c}
                    \phi(u)  \\
                    \phi(\bar{J}u) \\
                    \end{array} 
             \right)
             =
\left( \begin{array}{c}
                    0 \\
                    0 \\
                    \end{array} 
             \right).
\]
Since $s^{2}+t^{2}$ is non-zero constant, we have 
$\phi(u)=\phi(\bar{J}u)=0$, thus, $\phi=0$.  
\end{proof}

\begin{proposition}\label{2D:prop}
Let $(\bar{M},\bar{J},\bar{D})$ be a surface with a complex structure $\bar{J}$ and 
a torsion-free complex connection $\bar{D}$. We assume that the Ricci tensor $Ric^{\bar{D}}$ is symmetric and  $\bar{D}$-parallel. 
Then $\bar{M}$ can be locally realized as a projective special complex manifold 
by considering an $S^{1}$-bundle with the connection $\eta$ whose curvature form is $0$, 
$-\frac{1}{2} Ric^{\bar{D}}_{\bar{J}}$ or $-\frac{3}{2} Ric^{\bar{D}}_{\bar{J}}$.    
The conical special complex manifold associated with the first two $S^1$-bundles is trivial and the third case is non-trivial. All manifolds are of horizontal type with respect to $\eta$. 
\end{proposition}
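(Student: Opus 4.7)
The plan is to apply Theorem \ref{char:thm_two_dim} by constructing, for each of the three listed curvature values of $\eta$, a principal $S^{1}$-bundle $\pi_{S}:S\to\bar M$ of PSCB type in the sense of Definition \ref{intrinsic_2}, setting $\bar c^{\alpha}=0$ throughout (which will immediately make the resulting conical special complex manifold of horizontal type). I begin on an open $U\subset\bar M$ admitting a $\bar D$-parallel volume form $\Omega$; such $\Omega$ exists locally since the induced connection on $\Lambda^{2}T^{\ast}\bar M$ is flat (using that $Ric^{\bar D}$ is symmetric). Using the associated K\"ahler metric $g_{\Omega}=\Omega(\cdot,\bar J\cdot)$ and an orthonormal frame $(u,\bar J u)$, I note that $R^{\bar D}=Ric^{\bar D}_{\bar J}\otimes \bar J$ by Lemma \ref{Ricci_two_dim}, and that $Ric^{\bar D}(u,u)$ is constant by Lemma \ref{rank_ricci} and the parallelness hypothesis.

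For the first two curvature values I take $\bar B=0$. The trivial choice $\bar a=0$ with $\gamma=0$ yields the zero-curvature realization and, via condition (6-1), forces $R^{\bar D}=0$. The choice $\bar a=-\tfrac14 Ric^{\bar D}$ with $\gamma=-\tfrac12 \theta^{2}_{1}$ gives the curvature $-\tfrac12 Ric^{\bar D}_{\bar J}$: condition (5) follows from $(\bar a_{\bar J})^{a}=-\tfrac14 Ric^{\bar D}_{\bar J}$ together with Lemma \ref{str._eq}; condition (6-1) is checked by direct expansion on $(u,\bar J u,u)$, where both sides collapse to $-Ric^{\bar D}(u,u)\bar J u=R^{\bar D}_{u,\bar J u}u$; (6-2) follows from $\bar D Ric^{\bar D}=0$; and (6-3), (6-4) are vacuous since $\bar B=\bar c=0$. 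In both cases the tensor $A$ associated to the conical construction vanishes, hence the conical special complex manifold is trivial.

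The substantive case is the third. I choose smooth functions $s,t$ on $U$ with $s^{2}+t^{2}=-4 Ric^{\bar D}(u,u)$ (a constant), define $\bar B$ by \eqref{def_s_t}, set $\bar a=\tfrac{1}{16}\bar{\mathcal B}-\tfrac14 Ric^{\bar D}$ (which simplifies here to $-\tfrac34 Ric^{\bar D}$ using $\bar{\mathcal B}=2(s^{2}+t^{2})g_{\Omega}$), and take $\gamma$ as in \eqref{gamma_2_dim} provided by Lemma \ref{sol_gam}. Conditions (1)--(4) are immediate; (6-3) is precisely Lemma \ref{sol_gam}; (6-2) follows from the $\bar D$-parallelness of $\bar a$; (6-4), with $\bar c=0$, reduces to the identity $\bar a(X,\bar B_{Y}Z)=\bar a(Y,\bar B_{X}Z)$, which holds because $\bar a\propto g_{\Omega}$ and the $(0,3)$-tensor $g_{\Omega}(\cdot,\bar B_{\cdot}\cdot)$ is fully symmetric by inspection of \eqref{def_s_t}; and (5) yields $d\eta=2(\bar a_{\bar J})^{a}=-\tfrac32 Ric^{\bar D}_{\bar J}$. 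The main obstacle is the verification of (6-1): expanding both sides on $(u,\bar J u,u)$ using $[\bar B,\bar B]_{u,\bar J u}u=2(s^{2}+t^{2})\bar J u$ and the explicit $\bar a$, the coefficients of $\bar J u$ match precisely because of the relation $s^{2}+t^{2}=-4 Ric^{\bar D}(u,u)$. The formula $\bar a=\tfrac{1}{16}\bar{\mathcal B}-\tfrac14 Ric^{\bar D}$ is the two-dimensional analogue of \eqref{bar_a}, and it is this identity that forces the simultaneous compatibility of (5), (6-1) and (6-3); without it the system of conditions in Definition \ref{intrinsic_2} would be inconsistent. Non-triviality follows from $\bar B\neq 0$, whence $A\neq 0$.
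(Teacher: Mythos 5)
Your construction is correct and follows the same strategy as the paper: verify the conditions of Definition \ref{intrinsic_2} for the ansatz \eqref{def_s_t} with $\bar c=0$, using Lemmas \ref{Ricci_two_dim}, \ref{str._eq} and \ref{sol_gam}. The difference is one of parametrization. The paper normalizes $\bar a=\delta\,\Omega(\cdot,\bar J\cdot)$ with $\delta\in\{0,\pm1\}$ and then lets conditions (5) and (6-1) \emph{determine} $s^{2}+t^{2}$ and $Ric^{\bar D}(u,u)$ (forcing $\delta=1$, $s^{2}+t^{2}=16/3$, $Ric^{\bar D}(u,u)=-4/3$ in the non-trivial case, and excluding the case $\delta=0$, $\bar B\neq0$); you instead fix $s^{2}+t^{2}=-4\,Ric^{\bar D}(u,u)$ and posit $\bar a=\tfrac1{16}\bar{\mathcal B}-\tfrac14 Ric^{\bar D}=-\tfrac34 Ric^{\bar D}$, which is the same solution written without rescaling $\Omega$. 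Your route is purely constructive and slightly cleaner, since $\bar a$ is expressed frame-independently; what the paper's case analysis buys in addition is the (implicit) exhaustiveness statement that, within this ansatz, no other curvature forms can occur. Two small points you should make explicit: (i) the third construction requires $Ric^{\bar D}\le 0$ for real $s,t$ to exist with $s^{2}+t^{2}=-4\,Ric^{\bar D}(u,u)$, and $Ric^{\bar D}<0$ for $\bar B\neq0$ (by Lemma \ref{rank_ricci} the only alternatives are $Ric^{\bar D}$ definite or zero, so this is exactly the $\delta=1$ case of Remark \ref{delta_rem}); and (ii) in checking (6-1) on the single triple $(u,\bar Ju,u)$ you are tacitly using that both sides are skew in the first two slots and commute with $\bar J$ in the third, which holds but deserves a word.
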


\begin{proof}
The following argument is essentially given in \cite{MS}. 
Therefore our proof proceeds by  constructing the 
required data and checking that the equations from Definition \ref{intrinsic_2} are satisfied. 
We can define 
a symmetric $(0,2)$-tensor $\bar{a}$ on $U$ by 
\begin{align}\label{def_metric}
\bar{a}(X,Y):=\delta\, \Omega(X,\bar{J}Y) \,\,\,\,\, 
(\delta=0, 1 \,\, \mbox{or} \,\, -1)
\end{align}
for $X,Y \in TU$. 
Note that $\bar{a}(u,u)=\bar{a}(\bar{J}u,\bar{J}u)=\delta$, $\bar{a}(u,\bar{J}u)=0$. 
We can see that $\bar{a}$ and  $\bar{B}$ satisfy the conditions (6-2) and (6-4) if $\bar c=0$. 
By Lemma \ref{Ricci_two_dim}, the condition (6-1) is equivalent to
\begin{align}\label{ric_sur_p}
-Ric^{\bar{D}}(u,u)\bar{J}u
=-\frac{1}{2}(s^{2}+t^{2})\bar{J}u+4\bar{a}(u,u)\bar{J}u. 
\end{align}

\noindent
Case (1): If $\delta=0$, then $\bar{a}=0$. Then we have $Ric^{\bar{D}}(u,u)=\frac{1}{2}(s^{2}+t^{2})$. 
Since $Ric^{\bar{D}}$ is $\bar{D}$-parallel, we see that $s^{2}+t^{2}$ is constant. \\
Case (1-i): If $s^{2}+t^{2}=0$, then $\bar{B}=0$ and $Ric^{\bar{D}}=0$. Take a trivial bundle 
$U \times S^{1}$ with the trivial connection and $\gamma=0$. The conditions (5) and (6-3) are satisfied. 
In particular $\bar{D}$ is flat. \\
Case (1-ii): We consider $s^{2}+t^{2} \neq 0$. 
Then the one-form 
\begin{align*}
\gamma=-\frac{3}{2} \theta^{2}_{1}-\frac{1}{2} \frac{s dt-tds}{s^{2}+t^{2}} 
\end{align*}
is the unique one-form satisfying (6-3) for $\bar B$ defined in \eqref{def_s_t}. By Lemma \ref{str._eq}
\begin{align*}
d\gamma=-\frac{3}{2} d \theta^{2}_{1}= \frac{3}{2} Ric^{\bar{D}}(u,u) \Omega. 
\end{align*}
Therefore the condition (5) $d\gamma=2(\bar{a}_{\bar{J}})=0$ holds 
if and only if  $Ric^{D}=0$. 
However this means that $s^{2}+t^{2}=0$, that is, this case does not occur. \\

\noindent
Case (2): If $\delta \neq 0$, then \eqref{ric_sur_p} is equivalent to 
\begin{equation}\label{3_10_prime:eq}
-Ric^{\bar{D}}=-\frac{1}{2}(s^{2}+t^{2}) \delta \,  \bar{a}+4\bar{a}. 
\end{equation}
Note that $s^{2}+t^{2}$ needs to be constant since $\bar{a}$ and $Ric^{\bar{D}}$ are 
$\bar{D}$-parallel. \\
\noindent
Case (2-i): If $s^{2}+t^{2} = 0$, then $\bar{B}=0$. 
In that case, we have $\bar{a}=-\frac{1}{4} Ric^{\bar{D}}$ for both $\delta=1$ and $-1$. 
Since $Ric^{\bar{D}}_{\bar{J}}$ is closed, there exists a $1$-form $\gamma$ such that 
$d \gamma=2(\bar{a}_{\bar{J}})$ locally, which establishes condition (5) in Definition \ref{intrinsic_2}. \\
Case (2-ii): We consider the case $s^{2}+t^{2} \neq 0$. 
By Lemma \ref{sol_gam}, 
\begin{align*}
\gamma=-\frac{3}{2} \theta^{2}_{1}-\frac{1}{2} \frac{s dt-tds}{s^{2}+t^{2}} 
\end{align*}
satisfies (6-3). By Lemma \ref{str._eq}, we have 
\begin{align*}
d\gamma=-\frac{3}{2} d \theta^{2}_{1}= \frac{3}{2} Ric^{\bar{D}}(u,u) \Omega
=-\frac{3}{2} Ric^{\bar{D}}(u,u)  \delta \, \bar{a}_{\bar{J}}. 
\end{align*}
Therefore $d\gamma=2(\bar{a}_{\bar{J}})$ if and only if 
$-\frac{3}{2}  \delta \, Ric^{\bar{D}}(u,u) =2$. 
By \eqref{3_10_prime:eq}, this implies $0<s^{2}+t^{2}=\frac{16}{3} \delta$. 
Hence it should be $\delta=1$. 
Therefore $d\gamma=2(\bar{a}_{\bar{J}})$ if and only if  $d\gamma_2=-\frac{3}{2} Ric^{\bar{D}}_{\bar J}$.
This finishes the proof. 
\end{proof}

\begin{remark}\label{delta_rem}
{\rm 
We see that 
(1) $\bar{a}>0$ and $Ric^{\bar{D}}<0$ when $\delta=1$, 
(2) $\bar{a}<0$ and $Ric^{\bar{D}}>0$ when $\delta=-1$ and 
(3) $\bar{a}=0$ and $Ric^{\bar{D}}=0$ when $\delta=0$. 
}
\end{remark}

\begin{remark}
{\rm
A construction of a conical special K{\"a}hler manifold over a surface of constant 
negative Gaussian curvature 
is given in \cite{MS}, in which there also occur two possibilities. 
See also \cite[Example 2.5]{MS1}. These can be also found by specializing the classification \cite{AC} to the case of complex dimension $1$, which  yields 
precisely the projective special K\"ahler manifolds $\mathrm{SU}(1,1)/\mathrm{S(U(1)\times U(1))}=\mathbb{C}H^1$ and 
$\mathrm{SL}(2,\mathbb{R})/\mathrm{SO}(2)$ associated with the Cartan types $A_2$ and $G_2$, respectively, in terms of the correspondence explained in \cite{AC}. Both have constant negative curvature but are distinguished by the value of their curvature and their origin in a linear or nonlinear Lagrangian cone, respectively.  
}
\end{remark}

\begin{remark}
{\rm
If $\bar{M}$ is simply connected and $\bar{D}$ is complete, then
$(\bar{M},\bar{J},\bar{D})$ is holomorphically and affinely isomorphic to $\mathbb{C}$, 
$\mathbb{C}H^{1}$ or $\mathbb{C}P^{1}$ 
with the standard complex structure and connection. 
By Lemma \ref{rank_ricci}, $Ric^{\bar{D}}$ is definite or vanishes.   
When $Ric^{\bar{D}}=0$, then $\bar{D}$ is the standard flat complex complete connection, so $M$ is isomorphic to 
$\mathbb{C}$.  
When $Ric^{\bar{D}}$ is positive (resp. negative) definite, then $Ric^{\bar{D}}$ 
(resp. $-Ric^{\bar{D}}$) is a Riemannian metric with constant Gaussian curvature $1$ (resp. $-1$). 
Since $\pm Ric^{\bar{D}}$ is homothetic to $\bar{a}$, then 
$\bar{D}=\bar{D}^{\bar{a}}=\bar{D}^{\pm Ric^{\bar{D}}}$. 
Because $\bar{D}$ is complete, then the Riemannian metrics $\pm Ric^{\bar{D}}$ is complete. 
Consequently $\bar{M}$ admits a complete (positive definite) metric of constant Gaussian curvature $-1$ or $1$
and this shows that $\bar{M}$ is isomorphic to $\mathbb{C}H^{1}$ or $\mathbb{C}P^{1}$.  
}
\end{remark}

\section{An application to the generalized rigid c-map}
In this section, 
we construct a non-trivial 
$(\mathbb{R}/\pi \mathbb{Z})$-family of hypercomplex structures 
arising from a generalization of the rigid c-map on the tangent bundle of the total space of a 
$\mathbb{C}^*$-bundle over a complex manifold endowed with 
a c-projective structure of a specific type.

\subsection
{An $S^1$-family of hypercomplex structures obtained from 
a circle bundle of PSCB type}
\setcounter{equation}{0}

Let $(M,J,\nabla)$ be a special complex manifold.
We can define the $\nabla$-horizontal lift $X^{h_{\nabla}}$
and the vertical lift $X^{v}$
of $X \in \Gamma(TM)$. See \cite{Bl} for example. 
The $C^\infty (TM)$-module $\Gamma (T(TM))$ 
is generated by vector fields of the form $X^{h_{\nabla}}+Y^{v}$, 
where $X$, $Y \in \Gamma(TM)$.
On $TM$, we define a triple of $(1,1)$-tensors 
$(I_{1},I_{2},I_{3})$ by 
\begin{align}
I_{1}(X^{h_{\nabla}}+Y^{v}) &=(JX)^{h_{\nabla}}-(JY)^{v}, \label{I_1}\\
I_{2}(X^{h_{\nabla}}+Y^{v}) &= Y^{h_{\nabla}}-X^{v}, \label{I_2}\\
I_{3}(X^{h_{\nabla}}+Y^{v}) &= (J Y)^{h_{\nabla}}+(JX)^{v} \label{I_3}
\end{align}
for $X^{h_{\nabla}}+Y^{v} \in T(TM)$. 
Note that $(I_{1},I_{2},I_{3})$ is a hypercomplex structure (\cite{CH}). 
There exists a unique torsion-free connection for which the hypercomplex structure is parallel (\cite{O}). 
Such a connection is called the Obata connection on a hypercomplex manifold 
and its explicit expression is given in \cite{AM}.  
It is known that the cotangent bundle of a special K{\"a}hler 
manifold carries a (pseudo-)hyperK{\"a}hler structure (\cite{CFG}). 
We recall a generalization of the rigid c-map as follows.

\begin{theorem}[Generalized rigid c-map \cite{CH}]\label{ricci_flat} 
The tangent bundle of any special complex manifold $(M,J,\nabla)$ 
carries a canonical hypercomplex structure defined by $($$\ref{I_1}$$)$-$($$\ref{I_3}$$)$, and 
the Obata connection of the hypercomplex manifold $(TM, (I_{1},I_{2},I_{3}))$ 
is Ricci flat.
\end{theorem}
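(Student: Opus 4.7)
The plan is to prove the two parts of the theorem separately. First I will verify that $(I_{1},I_{2},I_{3})$ is a hypercomplex structure on $TM$. The quaternionic relations $I_{a}^{2}=-\mathrm{Id}$ and $I_{1}I_{2}=I_{3}=-I_{2}I_{1}$ follow directly from the formulas \eqref{I_1}--\eqref{I_3} together with $J^{2}=-\mathrm{Id}$. For integrability, the main tool will be the bracket formulas for $\nabla$-horizontal and vertical lifts: since $\nabla$ is flat and torsion-free, one has
\[
[X^{h_{\nabla}},Y^{h_{\nabla}}]=[X,Y]^{h_{\nabla}},\qquad [X^{h_{\nabla}},Y^{v}]=(\nabla_{X}Y)^{v},\qquad [X^{v},Y^{v}]=0.
\]
The Nijenhuis tensor of $I_{2}$ on any pair of lifts collapses immediately to the torsion-freeness of $\nabla$. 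For $N_{I_{1}}$, the purely horizontal part reduces to $N_{J}=0$ on $(M,J)$; the horizontal-vertical part reduces to the identity $A_{JX}+JA_{X}=0$, which follows from the symmetry $A_{X}Y=A_{Y}X$ combined with $J\circ(\nabla_{Y}J)=-(\nabla_{Y}J)\circ J$; and the vertical-vertical part is automatic. Integrability of $I_{3}=I_{1}I_{2}$ is then a consequence of the general fact that, for two anti-commuting almost complex structures, integrability of the pair implies integrability of their product.

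For the Ricci-flatness of the Obata connection $\hat{D}$, I would introduce the auxiliary connection $\tilde{D}$ on $TM$ defined on lifted vector fields by $\tilde{D}_{X^{h_{\nabla}}}Y^{h_{\nabla}}=(\nabla_{X}Y)^{h_{\nabla}}$, $\tilde{D}_{X^{h_{\nabla}}}Y^{v}=(\nabla_{X}Y)^{v}$, and $\tilde{D}_{X^{v}}=0$ on both kinds of lifts. The bracket formulas together with the flatness of $\nabla$ show that $\tilde{D}$ is torsion-free and in fact \emph{flat}. However, a direct computation gives $(\tilde{D}_{X^{h_{\nabla}}}I_{1})(Y^{h_{\nabla}})=(A_{X}Y)^{h_{\nabla}}$, so $\tilde{D}$ does not parallelize the hypercomplex structure. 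Applying the Alekseevsky--Marchiafava construction \cite{AM} then yields the Obata connection as $\hat{D}=\tilde{D}+S$, where $S$ is a symmetric $(1,2)$-tensor on $TM$ algebraically built from $A=\nabla J$ and the $I_{a}$. Since $R^{\tilde{D}}=0$, one has $R^{\hat{D}}=d^{\tilde{D}}S+[S,S]$, and consequently $Ric^{\hat{D}}$ reduces to a sum of traces over $\mathrm{End}(TM)$ of tensors built from $A$.

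The main obstacle is to show that every such trace vanishes. The crucial algebraic inputs are that each $A_{X}$ anti-commutes with $J$ (being the covariant derivative of a complex structure) and is therefore trace-free over $\mathbb{R}$, and that by the symmetry of $A$ the same holds for the map $Y\mapsto A_{Y}X$ for any fixed $X$. These vanishings, propagated through the explicit form of $S$, force the trace contributions coming from both $d^{\tilde{D}}S$ and $[S,S]$ to vanish component by component on the horizontal-horizontal, horizontal-vertical and vertical-vertical pieces. This algebraic bookkeeping is the substance of the Ricci-flatness computation in \cite{CH}, which I would invoke to conclude rather than redo in full.
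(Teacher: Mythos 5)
The paper states this theorem as a recollection from \cite{CH} and gives no proof of its own, so your proposal can only be judged on its merits. Your treatment of the first half is correct and standard: the quaternionic relations are immediate from \eqref{I_1}--\eqref{I_3}, and the integrability checks via the lift bracket formulas (flatness of $\nabla$ giving $[X^{h_{\nabla}},Y^{h_{\nabla}}]=[X,Y]^{h_{\nabla}}$, torsion-freeness handling $N_{I_{2}}$, and the identity $A_{JX}=-JA_{X}$, which does follow from the symmetry of $A$ together with the anticommutation of each $A_{X}$ with $J$) are sound; your appeal to the fact that two integrable anticommuting complex structures force integrability of their product is also legitimate.

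The gap is in the Ricci-flatness half, and it is twofold. First, the mechanism you propose --- that every trace occurring in $\mathrm{Ric}^{\hat{D}}$ vanishes ``component by component'' because $A_{X}$ and $Y\mapsto A_{Y}X$ are trace-free --- is not correct. The curvature of the Obata connection on horizontal arguments is $R_{X^{h_{\nabla}},Y^{h_{\nabla}}}Z^{h_{\nabla}}=-\tfrac{1}{4}([A_{X},A_{Y}]Z)^{h_{\nabla}}$ (quoted in Section 5.3), and tracing this over the horizontal directions gives $\tfrac{1}{4}\mathrm{Tr}(A_{Y}\circ A_{Z})$: the term $\mathrm{Tr}(X\mapsto A_{X}A_{Y}Z)=\mathrm{Tr}(A_{A_{Y}Z})$ does vanish by trace-freeness, but $\mathrm{Tr}(X\mapsto A_{Y}A_{X}Z)=\mathrm{Tr}(A_{Y}\circ A_{Z})$ does not --- this is precisely the quantity underlying the tensor $\bar{\mathcal B}$ of Section 2, which is nonzero for nontrivial special K\"ahler manifolds (cf.\ \eqref{ineq_321}) and in the examples of Section 4. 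Ricci-flatness therefore rests on a genuine cancellation between the horizontal and the vertical blocks of the curvature, which a block-by-block vanishing argument cannot produce. Second, you close by invoking \cite{CH} for ``the substance of the Ricci-flatness computation''; since that computation is exactly the assertion to be proved, this step is circular and the proposal does not establish the result. To repair it you would need to determine the Obata connection explicitly (e.g.\ the difference tensor $S=\hat{D}-\tilde{D}$ in terms of $A$ and the lifts), compute all components of $R^{\hat{D}}$, and exhibit the cancellation of the $\mathrm{Tr}(A_{Y}A_{Z})$ contributions.
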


\begin{remark}
{\rm
It is known that the Ricci curvature of the Obata connection on 
a hypercomplex manifold of dimension $4n$ ($n \geq 2$)
vanishes if and only if its restricted holonomy group is contained in $\mathrm{SL}(n,\mathbb{H})$. 
}
\end{remark}

Let $(M,J,\nabla,\xi)$ be a conical special manifold. Consider 
$e^{tJ}=(\cos t)\mathrm{Id}+(\sin t)J$ 
for $t \in \mathbb{R}$ and a connection $\nabla^{t}$ defined by  
\begin{align}\label{fam_sp_conn}
\nabla^{t} :=  e^{tJ} \nabla e^{-tJ}. 
\end{align}

\begin{lemma}\label{t_coni}
$(M,J,\nabla^{t},\xi)$ is a conical special complex manifold for all $t \in \mathbb{R}/\pi \mathbb{Z}$. 
\end{lemma}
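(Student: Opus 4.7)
The plan is to verify directly the four defining conditions of a conical special complex manifold from Definition~\ref{scm:def} for $\nabla^t$: namely, that $\nabla^t$ is torsion-free, flat and special (i.e.\ $\nabla^t J$ is symmetric), and that $\nabla^t\xi=\mathrm{Id}$. The condition $L_\xi J=0$ is a property of the pair $(J,\xi)$ alone and is already part of the hypothesis, so it carries over for free.

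First I would unpack the definition $\nabla^t=e^{tJ}\nabla e^{-tJ}$ as $\nabla^t_X Y := e^{tJ}\,\nabla_X(e^{-tJ}Y)$, which is easily checked to be a genuine connection on $TM$. Expanding $e^{-tJ}=\cos t\,\mathrm{Id}-\sin t\,J$ and using $\nabla_X(JY)=A_XY+J\nabla_XY$, one arrives at the useful identity
\begin{equation}\label{nablat:plan}
\nabla^t_X Y \;=\; \nabla_X Y \;-\; \sin t\cdot e^{tJ} A_X Y.
\end{equation}
From \eqref{nablat:plan}, torsion-freeness is immediate: since $A$ is symmetric, the correction term is symmetric in $X,Y$. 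Flatness follows from the general conjugation identity
\[
R^{\nabla^t}_{X,Y} = e^{tJ}\circ R^{\nabla}_{X,Y}\circ e^{-tJ} = 0,
\]
valid because $\nabla^t$ is the pullback of $\nabla$ by the bundle automorphism $e^{tJ}$. The identity $\nabla^t\xi=\mathrm{Id}$ also follows from \eqref{nablat:plan} together with $A_X\xi=0$.

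The only genuine computation is the symmetry of $\nabla^t J$. Starting from \eqref{nablat:plan} and using the anti-commutativity $A_XJ=-JA_X$ established in the preliminaries, one obtains
\[
(\nabla^t_X J)(Y) \;=\; A_X Y \;+\; 2\sin t\cdot e^{tJ} J A_X Y.
\]
Applying the elementary trigonometric identity $\mathrm{Id}+2\sin t\cdot e^{tJ}J = e^{2tJ}$ (which follows from $J^2=-\mathrm{Id}$ together with the double-angle formulas $\cos 2t=1-2\sin^2 t$, $\sin 2t=2\sin t\cos t$), one arrives at the compact form
\[
(\nabla^t_X J)(Y) \;=\; e^{2tJ} A_X Y,
\]
which is manifestly symmetric in $X,Y$ since $A$ is. This reformulation also transparently exhibits the periodicity: $e^{2(t+\pi)J}=e^{2tJ}$, matching $\nabla^{t+\pi}=\nabla^t$, which is consistent with $e^{\pi J}=-\mathrm{Id}$ so that the conjugation by $e^{(t+\pi)J}=-e^{tJ}$ agrees with conjugation by $e^{tJ}$.

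Every step reduces to algebraic manipulation using only the properties $A_XJ=-JA_X$, $A_X\xi=0$, the symmetry and $\nabla$-closedness of $A$, and the flatness of $\nabla$. I expect no substantive obstacle; the only point requiring care is the consistent bookkeeping of how $J$ and the exponentials $e^{\pm tJ}$ move past $A$ and $\nabla$.
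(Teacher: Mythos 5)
Your proof is correct. The paper disposes of the statement in two lines: it cites \cite[Corollary 2]{ACD} for the fact that $\nabla^t=e^{tJ}\nabla e^{-tJ}$ is again a special connection, and then checks $\nabla^t\xi=\mathrm{Id}$ from the same expansion $\nabla^t=\nabla-\sin t\,e^{tJ}A$ together with $A_\cdot\xi=0$ that you derive. What you add is a self-contained verification of the cited part: torsion-freeness from the symmetry of the correction term, flatness from the gauge-conjugation identity $R^{\nabla^t}=e^{tJ}R^{\nabla}e^{-tJ}$, and symmetry of $\nabla^tJ$ via the identity $\nabla^tJ=(\mathrm{Id}+2\sin t\,e^{tJ}J)A=e^{2tJ}A$ --- the first form of which is exactly what the paper uses in its subsequent Lemma \ref{indep_t}, so your computation is consistent with, and effectively reproves, the ingredient the paper imports from \cite{ACD}. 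Your closed form $e^{2tJ}A$ also makes the $\pi$-periodicity transparent, which the paper leaves implicit. All steps check out; the only cost of your route is length, and the only cost of the paper's is the external reference.
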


\begin{proof}
By \cite[Corollary 2]{ACD}, we see that $\nabla^{t}$ is a special connection. 
It follows that $\nabla^{t} \xi=\mathrm{Id}$ from $\nabla^{t} =\nabla-(\sin t) e^{tJ} (\nabla J)$. 
\end{proof}

\begin{lemma}\label{indep_t}
Let $D^{t}$ be a torsion-free complex connection defined from $\nabla^{t}$ by (\ref{connection}).   
We have $D^{t}=D^{0}(=D)$ for all $t \in \mathbb{R}/\pi \mathbb{Z}$. 
\end{lemma}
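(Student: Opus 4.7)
The plan is to show that $D^{t}$ is the conjugation of $D$ by $e^{tJ}$ acting pointwise on tangent vectors, and that this conjugation is in fact trivial precisely because $D$ is complex. Two elementary facts drive the calculation: first, $J$ commutes with $e^{sJ}=(\cos s)\mathrm{Id}+(\sin s)J$ for every $s\in\mathbb{R}$; second, because $A_{X}$ anti-commutes with $J$ for every tangent vector $X$, one has $A_{X}\circ e^{sJ}=e^{-sJ}\circ A_{X}$.

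First I would compute the tensor $A^{t}:=\nabla^{t}J$. Expanding
\[ \nabla_{X}(e^{-tJ}Y)=e^{-tJ}\nabla_{X}Y-(\sin t)\,A_{X}Y \]
and inserting this into the defining identity $\nabla^{t}_{X}Y=e^{tJ}(\nabla_{X}(e^{-tJ}Y))$ from \eqref{fam_sp_conn} yields
\[ \nabla^{t}_{X}Y=\nabla_{X}Y-(\sin t)\,e^{tJ}A_{X}Y; \]
a short calculation using $A_{X}J=-JA_{X}$ then gives
\[ A^{t}_{X}Y=e^{2tJ}A_{X}Y, \]
which coincides with the local tensor denoted $B$ in Subsection~\ref{subsec23}. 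In particular $A^{t}$ still anti-commutes with $J$, consistent with $\nabla^{t}$ being special.

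Next I would establish the conjugation formula
\[ D^{t}_{X}Y=e^{tJ}\bigl(D_{X}(e^{-tJ}Y)\bigr). \]
Indeed, the right-hand side equals $e^{tJ}\nabla_{X}(e^{-tJ}Y)-\tfrac{1}{2}e^{tJ}JA_{X}(e^{-tJ}Y)$, and since $e^{tJ}$ commutes with $J$ while $A_{X}\circ e^{-tJ}=e^{tJ}\circ A_{X}$, the second term simplifies to $\tfrac{1}{2}Je^{2tJ}A_{X}Y=\tfrac{1}{2}JA^{t}_{X}Y$; thus the right-hand side agrees with $\nabla^{t}_{X}Y-\tfrac{1}{2}JA^{t}_{X}Y=D^{t}_{X}Y$ by the definition \eqref{connection}.

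Finally, since $D$ is complex, $DJ=0$, so $D$ commutes with every polynomial in $J$ (and hence with $e^{-tJ}=(\cos t)\mathrm{Id}-(\sin t)J$); in particular $D_{X}(e^{-tJ}Y)=e^{-tJ}D_{X}Y$, whence
\[ D^{t}_{X}Y=e^{tJ}e^{-tJ}D_{X}Y=D_{X}Y. \]
There is no real obstacle beyond careful bookkeeping: although $\nabla$ itself does not commute with $e^{-tJ}$, producing the correction $-(\sin t)\,A_{X}Y$ in the first step, this correction is precisely absorbed by the $-\tfrac{1}{2}JA^{t}$ piece of $D^{t}$. This cancellation is exactly the point of the definition \eqref{connection} of the complex connection $D$.
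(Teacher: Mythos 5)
Your proof is correct. It rests on the same two computations as the paper's proof, namely $\nabla^{t}_{X}Y=\nabla_{X}Y-(\sin t)\,e^{tJ}A_{X}Y$ and $\nabla^{t}J=e^{2tJ}(\nabla J)=(\mathrm{Id}+2(\sin t)e^{tJ}J)(\nabla J)$, but you organize them differently: the paper simply substitutes $\nabla^{t}J$ into $D^{t}=\nabla^{t}-\tfrac12 J(\nabla^{t}J)$ and watches the two $(\sin t)e^{tJ}A$ terms cancel, whereas you first establish the conjugation identity $D^{t}_{X}Y=e^{tJ}\bigl(D_{X}(e^{-tJ}Y)\bigr)$ and then observe that this conjugation is the identity because $DJ=0$. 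Your route is a step longer but buys a conceptual explanation — the invariance of $D$ is an instance of the general fact that a complex connection commutes with any function of $J$, so conjugating it by $e^{tJ}$ does nothing — which the paper's direct cancellation does not make visible. One small point of bookkeeping: when you say ``the second term simplifies to $\tfrac12 Je^{2tJ}A_{X}Y$'' you are evidently referring to the term without its minus sign; the subsequent identification with $\nabla^{t}_{X}Y-\tfrac12 JA^{t}_{X}Y=D^{t}_{X}Y$ is consistent, so this is only a matter of phrasing, not of substance.
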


\begin{proof}
Since $\nabla^{t}J=(\mathrm{Id}+2 (\sin t) e^{tJ}J)(\nabla J)$, we obtain
\begin{align*}
D^{t}&=\nabla^{t}-\frac{1}{2}J (\nabla^{t} J)
=\nabla-(\sin t) e^{tJ} (\nabla J)-\frac{1}{2}J (\mathrm{Id}+2 (\sin t) e^{tJ}J)   (\nabla J) \\
&=\nabla-\frac{1}{2}J (\nabla J)=D. \qedhere
\end{align*}
\end{proof}

\begin{proposition}\label{indep_D}
Let $\{ \nabla^{t} \}_{t \in \mathbb{R}/\pi \mathbb{Z}}$ be the family of special connections 
on $(M,J,\xi)$ with $\nabla^{0}=\nabla$ defined as \eqref{fam_sp_conn}. 
If $\pi:M \to \bar{M}$ is a principal $\mathbb{C}^{\ast}$-bundle for which $\bar{M}$ 
is a projective special complex manifold, 
then the canonical c-projective structure $\bar{\mathcal{P}}(\pi)$ on 
$\bar{M}$ is independent of the choice of $t \in \mathbb{R}/\pi \mathbb{Z}$. 
\end{proposition}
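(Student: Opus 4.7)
The plan is to observe that the canonical c-projective structure $\bar{\mathcal{P}}(\pi)$ depends on $\nabla$ only through the torsion-free complex connection $D = \nabla - \tfrac{1}{2} J A$ defined by \eqref{connection}, and then invoke Lemma~\ref{indep_t}, which tells us that $D^{t} = D$ for every $t \in \mathbb{R}/\pi\mathbb{Z}$.

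More precisely, first I would note that by Lemma~\ref{t_coni} the triple $(M,J,\nabla^{t},\xi)$ is again a conical special complex manifold with the same $J$ and $\xi$. In particular the holomorphic vector field $\xi - \sqrt{-1}J\xi$ is the same for all $t$, so the principal $\mathbb{C}^{\ast}$-action, the base manifold $\bar{M}$, and the projection $\pi: M \to \bar{M}$ are all independent of $t$. Consequently, the choice of a principal connection $\omega$ on $\pi: M \to \bar{M}$ whose horizontal distribution $\mathrm{Ker}\,\omega$ is $J$-invariant is independent of $t$ as well.

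Next, I would recall from Theorem~\ref{can_c_pro} that $\bar{\mathcal{P}}(\pi)$ is defined as the c-projective class of the connection $D^{t,(\omega)}$ on $\bar{M}$ obtained by projecting the connection $D^{t}$ on $M$ along a principal connection $\omega$ with $J$-invariant horizontal distribution, and that this class does not depend on $\omega$. Since $D^{t,(\omega)}$ is constructed from $D^{t}$ by the horizontal lift–projection procedure induced by $\omega$, and since both $\omega$ and $D^{t}$ are independent of $t$ by Lemma~\ref{indep_t}, we obtain $D^{t,(\omega)} = D^{(\omega)}$ for every $t$. Passing to the c-projective class gives $[D^{t,(\omega)}] = [D^{(\omega)}]$, which is the desired conclusion.

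There is no real obstacle here: the whole content of the statement is absorbed into Lemma~\ref{indep_t}, and the remaining work is the bookkeeping above. The only thing worth double-checking is that nothing in the construction of $\bar{\mathcal{P}}(\pi)$ secretly depends on $\nabla$ itself rather than on $D$ and $(J,\xi)$; inspection of Section~\ref{subsec23} confirms this, since the projection procedure uses only the torsion-free complex connection $D$ and the $J$-invariance of $\mathrm{Ker}\,\omega$.
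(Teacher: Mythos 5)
Your proof is correct and follows exactly the paper's route: the paper also derives the proposition immediately from Lemmas \ref{t_coni} and \ref{indep_t}, with your write-up merely making explicit the bookkeeping (same $\xi$, same $\mathbb{C}^{\ast}$-action, same $\pi$, same projected connection $D^{(\omega)}$) that the paper leaves implicit.
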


\begin{proof}
It follows immediately from Lemmas \ref{t_coni} and \ref{indep_t}. 
\end{proof}

Proposition \ref{indep_D} gives an application to the generalized rigid c-map.  

\begin{theorem}\label{app_cmap}
Let $(\bar M , \bar J, \mathcal{P})$ be a complex manifold of $\dim \bar{M}=2n \geq 4$ with 
a c-projective structure $\mathcal{P}$ and 
$\pi_{S}:S \to \bar{M}$ be a principal $S^{1}$-bundle with a connection 
$\eta$ of PSCB type. 
Then there exists 
a $(\mathbb{R}/\pi \mathbb{Z})$-family $\{ (I^{t}_{1},I^{t}_{2},I^{t}_{3}) \}_{t \in \mathbb{R}/\pi \mathbb{Z}}$ of hypercomplex structures 
on the $(4n+4)$-dimensional manifold $T(S \times \mathbb{R}^{>0})$
 derived from $\mathcal{P}$ and $\pi_{S}$, 
all of whose Ricci curvatures of the Obata connections vanish. 
In addition, if $[\bar{B}^{\alpha},[\bar{B}^{\alpha},\bar{B}^{\alpha}]] \neq 0$ on $\bar{U}_{\alpha} \subset M$ 
for some $\alpha \in \Lambda$, then
the induced quaternionic structure $Q^{t}=\langle I^{t}_{1},I^{t}_{2},I^{t}_{3} \rangle$ varies
for each $t \in \mathbb{R}/\pi \mathbb{Z}$. 
\end{theorem}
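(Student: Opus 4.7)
The plan is to apply the generalized rigid c-map of Theorem \ref{ricci_flat} to a one-parameter family of conical special complex structures on $M := S \times \mathbb{R}^{>0}$. First, Theorem \ref{char:thm} produces a conical special complex manifold $(M, J, \nabla, \xi)$ whose $\mathbb{C}^{\ast}$-quotient is $(\bar M, \bar J, \mathcal{P})$. For each $t \in \mathbb{R}/\pi\mathbb{Z}$, Lemma \ref{t_coni} ensures that $\nabla^{t} := e^{tJ}\nabla e^{-tJ}$ is again a conical special connection on $(M, J, \xi)$, and Proposition \ref{indep_D} guarantees that all of them induce the same c-projective structure $\mathcal{P}$ on $\bar M$. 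Applying Theorem \ref{ricci_flat} to $(M, J, \nabla^{t})$ for each $t$ then yields the desired $(\mathbb{R}/\pi\mathbb{Z})$-family of hypercomplex structures $(I_{1}^{t}, I_{2}^{t}, I_{3}^{t})$ on $T(S \times \mathbb{R}^{>0})$, all with Ricci-flat Obata connection.

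For the variation claim, the $t$-dependence of $(I_{a}^{t})$ enters solely through the $\nabla^{t}$-horizontal lift in (\ref{I_1})--(\ref{I_3}). From $\nabla^{t} - \nabla = -(\sin t)\, e^{tJ} A$ and the symmetry $A_{X} y = A_{y} X$, the horizontal lifts at $(m,y) \in TM$ are related by
\[ X^{h_{\nabla^{t}}} = X^{h_{\nabla}} + \bigl((\sin t)\, e^{tJ} A_{y} X\bigr)^{v}. \]
In the $\nabla$-adapted splitting $T_{(m,y)}(TM) \cong T_{m}M \oplus T_{m}M$, each $I_{a}^{t}$ becomes a $2\times 2$ block endomorphism whose $t$-dependence is carried entirely by $\Phi_{y}^{t} := (\sin t)\, e^{tJ} A_{y}$. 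The upper-left blocks of $I_{1}^{t}, I_{2}^{t}, I_{3}^{t}$ are $J$, $-\Phi_{y}^{t}$, and $-J\Phi_{y}^{t}$ respectively, the upper-right blocks are $0, \mathrm{Id}, J$, and the lower-left blocks are $0, -\mathrm{Id} - (\sin^{2} t) A_{y}^{2}, J + (\sin^{2} t) J A_{y}^{2}$, using $(\Phi_{y}^{t})^{2} = (\sin^{2} t) A_{y}^{2}$ derived from the anti-commutation $A_{y} J = -J A_{y}$.

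If $Q^{t} = Q^{t_{0}}$ at a point where $A_{y} \neq 0$, matching upper-right blocks forces the expansions of $I_{2}^{t}, I_{3}^{t}$ in the basis $(I_{1}^{t_{0}}, I_{2}^{t_{0}}, I_{3}^{t_{0}})$ to have $(\beta, \gamma) = (1, 0)$ and $(0, 1)$ respectively; matching upper-left blocks then gives $\Phi_{y}^{t} = \Phi_{y}^{t_{0}}$ (the would-be $J$-correction both commutes and anti-commutes with $J$, so it vanishes), which forces $t = t_{0} \bmod \pi$. It therefore suffices to produce a point where $A \neq 0$: by (\ref{AB:eq}) and (\ref{test:def}), the tensor $A$ vanishes on $\pi^{-1}(\bar U_{\alpha})$ if and only if $\bar B^{\alpha} = 0$ and $\bar c^{\alpha} = 0$ on $\bar U_{\alpha}$, so the hypothesis $[\bar B^{\alpha}, [\bar B^{\alpha}, \bar B^{\alpha}]] \neq 0$ at a point of $\bar U_{\alpha}$ forces $\bar B^{\alpha}$, and hence $A$, to be non-zero somewhere on $\pi^{-1}(\bar U_{\alpha})$, completing the argument. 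The main obstacle is the rigidity step: separating the quaternionic $3$-planes $Q^{t}, Q^{t_{0}} \subset \mathrm{End}(T_{(m,y)}(TM))$ requires exploiting simultaneously the $J$-anti-commuting character of $A_{y}$ (which rigidifies the upper-left comparison) and the explicit quadratic form $(\Phi_{y}^{t})^{2} = (\sin^{2} t) A_{y}^{2}$, with due care at points where $A_{y}$ may degenerate.
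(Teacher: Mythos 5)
Your treatment of the first assertion (existence of the family and Ricci-flatness of the Obata connections) is exactly the paper's: combine Theorem \ref{char:thm}, Lemma \ref{t_coni}, Proposition \ref{indep_D} and Theorem \ref{ricci_flat}. For the second assertion you take a genuinely different route. The paper quotes the quaternionic Weyl curvature formula from Remark 8.9 of \cite{CH}, by which $W^{Q^{t'}}-W^{Q^{0}}$ contains the term $\tfrac{1}{4}(\sin t')e^{t'J}([[A_U,A_V],A_u]X)^{v}$; since $W^{Q}$ is an invariant of the quaternionic structure, $[[A,A],A]\neq 0$ forces $Q^{t'}\neq Q^{0}$. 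You instead compare the fibres of $Q^{t}$ and $Q^{t_0}$ directly inside $\mathrm{End}(T(TM))$ via the block form in the $\nabla$-adapted splitting. I checked the computation: with $X^{h_{\nabla^{t}}}=X^{h_{\nabla}}\pm\bigl((\sin t)e^{tJ}A_{y}X\bigr)^{v}$ and $\Phi^{t}_{y}J=-J\Phi^{t}_{y}$ one indeed gets $I^{t}_{1}=I_{1}$, the upper-right blocks pin down $(\beta,\gamma)=(1,0)$ for $I^{t}_{2}$, and the upper-left comparison $\alpha J=\Phi^{t_0}_{y}-\Phi^{t}_{y}$ splits into commuting and anti-commuting parts, forcing $\alpha=0$ and $(\sin t)e^{tJ}v=(\sin t_{0})e^{t_{0}J}v$ on the image of $A_{y}$, hence $t=t_{0}$ in $\mathbb{R}/\pi\mathbb{Z}$ whenever $A_{y}\neq 0$. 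So your argument is correct. It is a purely pointwise, zeroth-order comparison of the rank-three subbundles, whereas the paper's is a second-order (curvature) comparison; as a consequence your version needs only $A\neq 0$ at a single point of $TM$, i.e. $\bar B^{\alpha}\neq 0$ or $\bar c^{\alpha}\neq 0$ somewhere, which is strictly weaker than the stated hypothesis $[\bar B^{\alpha},[\bar B^{\alpha},\bar B^{\alpha}]]\neq 0$. What the paper's route buys is brevity, given the citable Weyl-curvature formula; what yours buys is self-containedness and a sharper statement of when the quaternionic structures actually vary.
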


\begin{proof}
By Theorem \ref{char:thm}, we see that $S \times \mathbb{R}^{>0}$ is a conical special complex manifold. 
By virtue of Theorem \ref{ricci_flat} and Proposition \ref{indep_D}, 
the $(4n+4)$-dimensional manifold $T(S \times \mathbb{R}^{>0})$ admits 
a $(\mathbb{R}/\pi \mathbb{Z})$-family of hypercomplex structures 
$\{ (I^{t}_{1},I^{t}_{2},I^{t}_{3}) \}_{t \in \mathbb{R}/\pi \mathbb{Z}}$ 
whose Ricci curvatures of the Obata connections are flat. 
Take arbitrary $t, t^{\prime} \in \mathbb{R}/\pi \mathbb{Z}$ and we may assume that $t=0$. 
From Remark 8.9 in \cite{CH}, the quaternionic Weyl curvature $W^{Q^{t^{\prime}}}$ of 
$Q^{t^{\prime}}= \langle I^{t^{\prime}}_{1},I^{t^{\prime}}_{2},I^{t^{\prime}}_{3} \rangle$
satisfies
\[ 
W^{Q^{t^{\prime}}}_{U^{v},V^{v}}X^{h_{\nabla^{t^{\prime}}}}
=W^{Q^{0}}_{U^{v},V^{v}}X^{h_{\nabla^{t^{\prime}}}}
+\frac{1}{4} (\sin t^{\prime})e^{t^{\prime}J}([[A_{U},A_{V}],A_{u}]X)^{v}
\]
at $u \in T(S \times \mathbb{R}^{>0})$, where $X^{h_{\nabla^{t^{\prime}}}}$ is the horizontal lift 
with respect to $\nabla^{t^{\prime}}$ and $A=\nabla^{0} J$. 
Note that the quaternionic Weyl curvature is given in \cite{AM}. 
If $[\bar{B}^{\alpha},[\bar{B}^{\alpha},\bar{B}^{\alpha}]] \neq 0$ for some $\alpha$, then $[[A,A],A] \neq 0$.  
This shows that $Q^{t} \neq Q^{t^{\prime}}$ if $t \neq t^{\prime}$ in $\mathbb{R}/\pi \mathbb{Z}$. 
\end{proof}

The following corollary is a direct consequence of Corollary \ref{app_1} and Theorem \ref{app_cmap}.

\begin{corollary}\label{app3}
Let $(\bar{M},\bar{J},\bar{D})$ be a complex manifold with $\dim \bar{M} \geq 4$ and 
a torsion-free complex connection $\bar{D}$. We assume that all conditions in Corollary \ref{app_1} hold.  
If $[\bar{B},[\bar{B},\bar{B}]] \neq 0$, then we can obtain a non-trivial $(\mathbb{R}/\pi \mathbb{Z})$-family of hypercomplex structures on $T(\bar{M} \times S^{1} \times \mathbb{R}^{>0})$
derived from the c-projective structure $[ \bar{D} ]$,  
all of whose Ricci curvatures of the Obata connections vanish. 
\end{corollary}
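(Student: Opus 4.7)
The plan is to chain together Corollary \ref{app_1} and Theorem \ref{app_cmap}, with only the nontriviality claim requiring a short additional argument.

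First I would invoke Corollary \ref{app_1}: since $(\bar M, \bar J, \bar D)$ together with the data $\bar B$, $\bar c$, and $\gamma$ satisfies conditions \eqref{c1}--\eqref{c5}, the conclusion of that corollary produces a conical special complex manifold $(M, J, \nabla, \xi)$ with $M = \bar M \times S^1 \times \mathbb{R}^{>0}$ whose $\mathbb{C}^*$-quotient is $\bar M$ and whose canonical c-projective structure is $[\bar D]$. In particular the tensor $A = \nabla J$ on $M$ is the global tensor assembled, via Lemma \ref{def_A}, from the local tensors $A^{\alpha} = e^{-2\theta_{\alpha} J} B^{\alpha}$ constructed in Section \ref{dim_ge4:sec}.

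Next I would apply Theorem \ref{app_cmap} to $\pi_S : S = \bar M \times S^1 \to \bar M$ (which by the proof of Corollary \ref{app_1} is of PSCB type with the flat connection $\eta$ in \eqref{eta_gamma:eq}). This yields the desired $(\mathbb{R}/\pi\mathbb{Z})$-family $\{(I_1^t, I_2^t, I_3^t)\}_{t \in \mathbb{R}/\pi\mathbb{Z}}$ of hypercomplex structures on $T(\bar M \times S^1 \times \mathbb{R}^{>0}) = TM$, with Ricci-flat Obata connections by Theorem \ref{ricci_flat}.

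The only point requiring additional verification is the nontriviality assertion, i.e.\ that the family $\{Q^t\}$ of underlying quaternionic structures is genuinely not constant in $t$. By the final part of Theorem \ref{app_cmap} it suffices to show that $[[A, A], A] \neq 0$ on $M$ under the hypothesis $[\bar B, [\bar B, \bar B]] \neq 0$ on $\bar M$. Working on a local trivialization $\pi^{-1}(\bar U_{\alpha})$, the relations \eqref{test:def} and \eqref{AB:eq} express $A^{\alpha}$ in terms of $\bar B^{\alpha}$ and horizontal lifts, and for horizontal vectors the iterated bracket $[[A^{\alpha}, A^{\alpha}], A^{\alpha}]$ restricted to horizontal arguments has horizontal part precisely the lift of $[[\bar B^{\alpha}, \bar B^{\alpha}], \bar B^{\alpha}]$ modulo vertical corrections. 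Hence the hypothesis $[\bar B, [\bar B, \bar B]] \neq 0$ forces $[[A, A], A] \neq 0$ at points above where the bracket is nonzero. Feeding this into the formula from Theorem \ref{app_cmap} shows that for any distinct $t, t' \in \mathbb{R}/\pi\mathbb{Z}$ the quaternionic Weyl curvatures $W^{Q^t}$ and $W^{Q^{t'}}$ differ, so $Q^t \neq Q^{t'}$. The mild obstacle here is simply bookkeeping of horizontal vs.\ vertical components when passing from $\bar B$ to $A$; everything else is immediate from the already-proved results.
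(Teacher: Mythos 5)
Your proposal is correct and follows essentially the same route as the paper, which simply observes that the corollary is a direct consequence of Corollary \ref{app_1} (producing the conical special complex manifold $M=\bar M\times S^1\times\mathbb{R}^{>0}$) combined with Theorem \ref{app_cmap} (whose nontriviality hypothesis $[\bar B^{\alpha},[\bar B^{\alpha},\bar B^{\alpha}]]\neq 0$ is immediate here since the bundle is globally trivialized, so $\bar B^{\alpha}=\bar B$). One minor slip: the connection $\eta$ of \eqref{eta_gamma:eq} is not in general flat (its curvature is $2\pi_S^{\ast}((\bar a_{\bar J})^a)$), but this does not affect the argument.
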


We give an example of Corollary \ref{app3} in Section \ref{loc_ex_TU}.

\begin{remark}
{\rm 
Let $(\bar M , \bar{J}, [\bar{D}])$ 
be a complex manifold endowed with the flat c-projective 
structure induced by a torsion-free complex connection $\bar{D}$ and 
$\dim \bar{M} =2n \geq 4$. Additionally we assume that $C^{\bar{D}}=0$ if $2n=4$.  
As stated in Corollary \ref{cflat_ex}, 
if $[\frac{1}{2 \pi} (P^{\bar{D}}_{\bar{J}})^{a}] \in H^{2}(\bar{M},\mathbb{Z})$, 
then $\bar{M}$ can be realized as a projective 
special complex manifold via a $\mathbb{C}^{\ast}$-quotient 
of a trivial conical special complex manifold $(M,J,\nabla,\xi)$. 
Since the conical special complex manifold $(M,J,\nabla,\xi)$ is trivial, that is, $A=\nabla J=0$, 
we see that $R^{\tilde{\nabla}^{0}}=W^{Q}=0$ for $(TM,(I_{1},I_{2},I_{3}))$. 
Here we refer to equations derived in the proof of \cite[Theorem 6.5]{CH}. 
By applying the H/Q-correspondence to 
$(TM,(I_{1},I_{2},I_{3}))$ with the data $P={\pi_{TM}}^{\#} \pi^{\#}S$, 
$\Theta=-(\pi \circ \pi_{TM})^{\ast}(P^{\bar{D}}_{\bar{J}})^{a}$, $f=f_{1}=1$, 
we obtain a quaternionic manifold $\overline{TM}$. 
See Theorem 8.3 and Remark 8.12 in \cite{CH} and the diagram below. 
\begin{eqnarray*}
  \begin{diagram}[H]
  \node[2]{P={\pi_{TM}}^{\#} \pi^{\#}S}
  \arrow[1]{sw}
  \arrow[3]{s,l,2}{(\pi \circ \pi_{TM}){}_{\#}} 
  \arrow[1]{se}\\
  \node[1]{TM}
  \arrow[2]{e,b,3,..}{\tiny \mbox{H/Q-corresp.}}
  \arrow[1]{s,l}{\pi_{TM}}
  \node[2]{\overline{TM}}\\
  \node[1]{M}
  \arrow[1]{s,l}{\pi}\\
  \node[1]{\bar{M}}
  \arrow[2]{ne,r,3,..}{\tiny \mbox{generalized supergravity c-map}}
  \node[1]{S}
  \arrow[1]{w,b}{\pi_{S}}
\end{diagram}
\end{eqnarray*}
If we choose a different data $P=TM \times S^{1}$, $\Theta=0$, 
$f=f_{1}=1$, the resulting quaternionic structure on $\overline{TM}$ is flat. 
This demonstrates that the c-projectively flat complex manifold with 
$[\frac{1}{2 \pi} (P^{\bar{D}}_{\bar{J}})^{a}] \in H^{2}(\bar{M},\mathbb{Z})$
yields the flat quaternionic manifold 
through our construction, as a consequence of the generalized supergravity c-map. 
For more details on the H/Q-correspondence and the generalized supergravity c-map, see \cite{CH}.
}
\end{remark}

\subsection{A local example}\label{loc_ex_TU}

In this subsection we present a local example which admits a non-trivial family of hypercomplex structures, as described in Corollary \ref{app3}, by demonstrating that
$[A,[A,A]] \neq 0$ for this case. 
For this purpose, recall the local expression of 
a conical special complex manifold given in \cite{ACD}. See also Section \ref{realizations}. 
Let $(\mathbb{C}^{n+1}, J)$ be the standard complex vector space 
with the standard coordinate system $(z_{0},\dots,z_{n})$.
For a holomorphic function $g$ with homogeneity of degree one,  
we consider the holomorphic $1$-form 
\[ \alpha =g dz_{0} - \sqrt{-1} \sum_{i=1}^{n} z_{i} dz_{i}  \]
on $U:=\{ z=(z_{0},z_{1},\dots,z_{n}) \in \mathbb{C}^{n+1} \mid \mathrm{Im}\, g_{0}(z) \neq 0
\,\, \mbox{and} \,\, z_{0} \neq 0\}$, 
where $g_{i}=\frac{\partial g}{\partial z_{i}}$ $(i=0,1,\dots,n)$. 
The condition $\mathrm{Im}\, g_{0} \neq 0$ implies that the one-form $\alpha$ is regular, and hence 
the $1$-form $\alpha$ induces a conical special complex structure on $U$.  
In \cite{CH}, we have the matrix representation 
\begin{align*}
A=\nabla J
=\frac{1}{\mathrm{Im} \, g_{0}}
\left( \begin{array}{ccc}
                    A_{0 } & \dots & A_{n} \\
                    0_{2}  &  \cdots & 0_{2}  \\
                    \vdots &  \ddots &  \vdots\\
                    0_{2}  &  \cdots & 0_{2} 
                    \end{array} 
             \right)
\end{align*}
of 
$A$ with respect to the frame 
\[ \left( \frac{\partial}{\partial u_{0}},  
\frac{\partial}{\partial v_{0}}, \dots ,
\frac{\partial}{\partial u_{n}},   
\frac{\partial}{\partial v_{n}} \right), \]
where
\[
z_{i}=u_{i}+\sqrt{-1}v_{i}, \,\,\,
A_{i}=
\left( \begin{array}{cc}
                    -d \, \mathrm{Re}\, g_{i} & d \, \mathrm{Im}\, g_{i} \\
                    d \, \mathrm{Im}\, g_{i} & d \, \mathrm{Re}\, g_{i}  
                    \end{array} 
             \right)\,\,\, \mbox{and} \,\,\,\, 
      0_{2}=
\left( \begin{array}{cc}
                    0 & 0 \\
                    0 & 0   
                    \end{array} \right).                      
\]
Then it holds that 
\[ [[A,A],A]=
\frac{1}{(\mathrm{Im}\, g_{0})^{3}} 
\left( \begin{array}{cccc}
                    A_{0} \wedge A_{0}\wedge A_{0} &  A_{0} \wedge A_{1}  \wedge A_{1}
                    &  \dots & A_{0} \wedge A_{n}  \wedge A_{n}\\
                     0_{2}  &  0_{2}& \cdots & 0_{2}  \\
                     \vdots & \vdots &  \ddots &  \vdots\\
                    0_{2}  &  0_{2}& \cdots & 0_{2} 
                    \end{array} 
             \right). 
\]
Here we can compute 
\[ 
A_{0} \wedge A_{i}  \wedge A_{i}
=\left(
\begin{array}{cc}
-2 d \, \mathrm{Im} \, g_{0} \wedge d \, \mathrm{Im} \, g_{i} \wedge d \, \mathrm{Re} \, g_{i}
& -2 d \, \mathrm{Re} \, g_{0} \wedge d \, \mathrm{Im} \, g_{i} \wedge d \, \mathrm{Re} \, g_{i} \\
-2 d \, \mathrm{Re} \, g_{0} \wedge d \, \mathrm{Im} \, g_{i} \wedge d \, \mathrm{Re} \, g_{i}
& 2 d \, \mathrm{Im} \, g_{0} \wedge d \, \mathrm{Im} g_{i} \, \wedge d \, \mathrm{Re} g_{i} 
\end{array}
\right).
\]
for $i=1,\dots, n$. Now we set 
\[ w_{i}:=\frac{z_{i}}{z_{0}}, \, w(k_{1},\dots,k_{n}):=w_{1}^{k_{1}} \cdots w_{n}^{k_{n}}, \, 
\bar{w}(k_{1},\dots,k_{n}):=\bar{w}_{1}^{k_{1}} \cdots \bar{w}_{n}^{k_{n}} \]
for $i=1,\dots, n$ and $k_{1} \cdots k_{n} \in \mathbb{Z}$. 
We choose  
\[ g=\frac{-\sqrt{-1}  z_{1}^{l_{1}} \cdots z_{n}^{l_{n}}}{z_{0}^{l-1}}, \]
where $l \in \mathbb{N}$ with $l>1$ and $\sum_{i=1}^{n} l_{i}=l$. Since 
\begin{align*}
g_{0} = -\sqrt{-1}\, (-l+1)\,w(l_{1},\dots,l_{n}), 
g_{i}  = -\sqrt{-1}\, l_{i} \, w(l_{1},\dots, l_{i}-1,\dots,l_{n}) \,\, (i \geq 1), 
\end{align*}
we have 
\begin{align*}
\mathrm{Re} \, g_{0} &=\frac{1}{2}(g_{0}+\bar{g}_{0})
                          =\frac{\sqrt{-1}}{2}(-l+1)(-w(l_{1},\dots,l_{n})+\bar{w}(l_{1},\dots,l_{n})), \\
\mathrm{Im} \, g_{0} &=\frac{1}{2 \sqrt{-1}}(g_{0}-\bar{g}_{0})
                          =-\frac{1}{2}(-l+1)(w(l_{1},\dots,l_{n})+\bar{w}(l_{1},\dots,l_{n})),\\
\mathrm{Re} \, g_{i} &= \frac{1}{2}(g_{i}+\bar{g}_{i})
                          =\frac{\sqrt{-1}}{2}l_{i} (-w(l_{1},\dots, l_{i}-1,\dots,l_{n})
                            +\bar{w}(l_{1},\dots, l_{i}-1,\dots,l_{n})),\\
\mathrm{Im} \, g_{i} &= \frac{1}{2 \sqrt{-1}} (g_{i}-\bar{g}_{i})
                          =-\frac{1}{2} l_{i} (w(l_{1},\dots, l_{i}-1,\dots,l_{n})
                            +\bar{w}(l_{1},\dots, l_{i}-1,\dots,l_{n})). 
\end{align*}
Their exterior derivatives are given as: 
\begin{align*}
d \, \mathrm{Re} \, g_{0} =& \frac{\sqrt{-1}}{2}(-l+1)(-dw(l_{1},\dots,l_{n})+d\bar{w}(l_{1},\dots,l_{n}))
                              \\
                           =&\frac{\sqrt{-1}}{2}(-l+1) \times \\
                             &\left( \sum_{j=1}^{n} -l_{j} w(l_{1},\dots, l_{j}-1,\dots,l_{n}) dw_{j} 
                                   +l_{j}\bar{w}(l_{1},\dots, l_{j}-1,\dots,l_{n}) d \bar{w}_{j} \right), \\
d \, \mathrm{Im} \, g_{0} =&-\frac{1}{2}(-l+1)(dw(l_{1},\dots,l_{n})+d\bar{w}(l_{1},\dots,l_{n})),\\
                             =&-\frac{1}{2}(-l+1) \times \\
                              &\left( \sum_{j=1}^{n} l_{j} w(l_{1},\dots, l_{j}-1,\dots,l_{n}) dw_{j}
                                   +l_{j}\bar{w}(l_{1},\dots, l_{j}-1,\dots,l_{n}) d \bar{w}_{j} \right), \\
d \, \mathrm{Re} \, g_{i} =&\frac{\sqrt{-1}}{2}l_{i} (-d w(l_{1},\dots, l_{i}-1,\dots,l_{n})
                            +d \bar{w}(l_{1},\dots, l_{i}-1,\dots,l_{n})) \\
                            =&\frac{\sqrt{-1}}{2}l_{i}
                             \bigg( \sum_{j=1, j \neq i}^{n} -l_{j} w(l_{1},\dots, l_{i}-1,\dots, l_{j}-1,\dots,l_{n}) dw_{j} \\
                             &+l_{j} \bar{w}(l_{1},\dots, l_{i}-1,\dots, l_{j}-1,\dots,l_{n}) d \bar{w}_{j} \\
                             &-(l_{i}-1)w(l_{1},\dots, l_{i}-2,\dots,l_{n}) dw_{i} 
                             +(l_{i}-1) \bar{w}(l_{1},\dots, l_{i}-2,\dots,l_{n}) d \bar{w}_{i} \bigg), \\
d \, \mathrm{Im} \, g_{i} =&-\frac{1}{2} l_{i} (dw(l_{1},\dots, l_{i}-1,\dots,l_{n})
                            +d\bar{w}(l_{1},\dots, l_{i}-1,\dots,l_{n})) \\
                            =&-\frac{1}{2} l_{i} 
                             \bigg( \sum_{j=1, j \neq i}^{n} l_{j} w(l_{1},\dots, l_{i}-1,\dots, l_{j}-1,\dots,l_{n}) dw_{j} \\
                             &+l_{j} \bar{w}(l_{1},\dots, l_{i}-1,\dots, l_{j}-1,\dots,l_{n}) d \bar{w}_{j} \\
                             &+(l_{i}-1)w(l_{1},\dots, l_{i}-2,\dots,l_{n}) dw_{i} 
                             +(l_{i}-1) \bar{w}(l_{1},\dots, l_{i}-2,\dots,l_{n}) d \bar{w}_{i} \bigg). 
\end{align*}
For example, when $n=2$ and $g=-\sqrt{-1} \, z_{1}^{2} \, z_{2}/z_{0}^{2}$ $(l_{1}=2, l_{2}=1,l=3)$, 
we find that 
 $A_{0} \wedge A_{1}  \wedge A_{1} \neq 0$. This indicates that 
$[A,[A,A]] \neq 0$. To verify this, it suffices to check that the $(1,1)$-entry 
\begin{align*}
-2 d \, \mathrm{Im} \, g_{0} \wedge d \, \mathrm{Im} \, g_{1} \wedge d \, \mathrm{Re} \, g_{1}
\end{align*}
of $A_{0} \wedge A_{1}  \wedge A_{1} $ does not vanish, for instance.  
Indeed, by using  
\begin{align*}
d \, \mathrm{Im} \, g_{0} =&-\frac{1}{2}(-3+1)( 2w(1,1)dw_{1}+w(2,0)dw_{2} 
+2\bar{w}(1,1)d \bar{w}_{1}) +w(2,0) d \bar{w}_{2}) \\
=&2w_{1}w_{2} dw_{1}+(w_{1})^{2} dw_{2}+2\bar{w}_{1}\bar{w}_{2} d\bar{w}_{1}+(\bar{w}_{1})^{2} d\bar{w}_{2}, \\
d \, \mathrm{Re} \, g_{1} =&\frac{\sqrt{-1}}{2} 2 
(-w(1,0)dw_{2}+\bar{w}(1,0)d\bar{w}_{2}  
-(2-1) w(2,-1) dw_{1}+(2-1) w(2,-1)d \bar{w}_{1}) \\
=&\sqrt{-1} \left( -w_{1} dw_{2}+\bar{w}_{1} d \bar{w}_{2}-\frac{(w_{1})^{2}}{w_{2}} dw_{1} 
+ \frac{(\bar{w}_{1})^{2}}{\bar{w}_{2}} d \bar{w}_{1} \right), \\
d \, \mathrm{Im} \, g_{1} =& -\frac{1}{2} 2 (w(1,0)dw_{2}+\bar{w}(1,0)d\bar{w}_{2}  
+(2-1) w(2,-1) dw_{1}+(2-1) w(2,-1)d \bar{w}_{1} )\\
=&- \left( w_{1} dw_{2}+\bar{w}_{1} d \bar{w}_{2}+\frac{(w_{1})^{2}}{w_{2}} dw_{1} 
+ \frac{(\bar{w}_{1})^{2}}{\bar{w}_{2}} d \bar{w}_{1} \right), 
\end{align*}
we can see that 
\begin{align*}
&\sqrt{-1} (d \, \mathrm{Im} \, g_{1} \wedge d \, \mathrm{Re} \, g_{1}) \\
&=2 \left(
|w_{1}|^{2} dw_{2} \wedge d\bar{w}_{2}+
\frac{ |w_{1}|^{2} w_{1} } {w_{2}} dw_{1} \wedge d\bar{w}_{2}
+\frac{ |w_{1}|^{4} } {|w_{2}|^{2}} dw_{1} \wedge d\bar{w}_{1}
+\frac{ |w_{1}|^{2} \bar{w}_{1} } { \bar{w}_{2} } dw_{2} \wedge d\bar{w}_{1}
\right)
\end{align*}
so 
\begin{align*}
\frac{1}{2}\sqrt{-1} 
(d \, \mathrm{Im} \, g_{0} \wedge & d \, \mathrm{Im} \, g_{1} \wedge d \, \mathrm{Re} \, g_{1}) \\
=&
\left( 2w_{1}w_{2} |w_{1}|^{2}-\frac{|w_{1}|^{2} (w_{1})^{3}}{w_{2}} \right) dw_{1} \wedge dw_{2} \wedge d \bar{w}_{2} \\
&-\left( 2\bar{w}_{1} \bar{w}_{2} |w_{1}|^{2}-\frac{|w_{1}|^{2} (\bar{w}_{1})^{3}}{ \bar{w}_{2}} \right) 
d \bar{w}_{1} \wedge d \bar{w}_{2} \wedge d w_{2} \\
&+\frac{| w_{1} |^{4} \bar{w}_{1}}{ | w_{2} |^{2}} d \bar{w}_{1} \wedge d w_{1} \wedge d \bar{w}_{2}  
-\frac{| w_{1} |^{4} w_{1}}{ | w_{2} |^{2}} d w_{1} \wedge d \bar{w}_{1} \wedge d w_{2}.  
\end{align*}
Hence $d \, \mathrm{Im} \, g_{0} \wedge d \, \mathrm{Im} \, g_{1} \wedge d \, \mathrm{Re} \, g_{1}$ 
does not vanish. So we can summarize the results as follows.

\begin{example}\label{12_dim_ex}
Let $U \subset \mathbb{C}^{3}$ be the conical special complex manifold given 
by the holomorphic $1$-form
\[ \alpha =g dz_{0} - \sqrt{-1} \sum_{i=1}^{2} z_{i} dz_{i}, \,\,  g=-\frac{\sqrt{-1} \, z_{1}^{2} \, z_{2}}{z_{0}^{2}}. \]
We can associate $(\bar{U},[\bar{D}])$ to a $12$-dimensional manifold 
$(TU, 
\{ (I^{t}_{1},I^{t}_{2},I^{t}_{3}) \}_{t \in \mathbb{R}/\pi \mathbb{Z}})$ 
with a non-trivial 
$(\mathbb{R}/\pi \mathbb{Z})$-family of hypercomplex structures via the generalized rigid c-map.  
\end{example}

\subsection{Characterization of case of flat quaternionic structure on the tangent bundle 
of a conical special complex manifold}
The curvature of the Obata connection $\tilde{\nabla}^{0}$ on $(TM,(I_{1},I_{2},I_{3}))$ for a special complex manifold 
$M$ was obtained in the proof of \cite[Theorem 6.5]{CH}. It is Ricci flat and the quaternionic Weyl curvature $W^{Q}$ of $Q=\langle I_{1},I_{2},I_{3} \rangle$ is in general not zero but satisfies
\[ W^{Q}_{X^{h_{\nabla}},Y^{h_{\nabla}}  } Z^{h_{\nabla}} \\
=R^{\tilde{\nabla}^{0}} _{X^{h_{\nabla}},Y^{h_{\nabla}}  } Z^{h_{\nabla}}
= -\frac{1}{4}([A_{X}, A_{Y}] Z )^{h_{\nabla}} \]
for tangent vectors $X$, $Y$, $Z$ on $M$. 
For instance, 
the quaternionic structure on $TU$ of 
Example \ref{12_dim_ex} 
is not flat, indeed $[A,[A,A]] \neq 0$.

It is natural to ask for relations between properties of the quaternionic structure 
$Q$ on $TM$ and properties of the canonical c-projective 
structure $\bar{\mathcal{P}}(\pi)$ on $\bar{M}$, since these structures are connected by the fibrations 
\[ (TM, Q=\langle I_{1},I_{2},I_{3} \rangle) \to (M,J,\nabla,\xi) \to (\bar{M},\bar{J}, \bar{\mathcal{P}}(\pi)). \]
In this section we study the case when the quaternionic structure 
of $(TM,(I_{1},I_{2},I_{3}))$ is flat. We show in the next theorem that it 
implies the vanishing of the Weyl tensor of the c-projective structure on $\bar M$.

\begin{theorem}\label{flatness_q_c}
Let $(M,J,\nabla,\xi)$ be a conical special complex manifold fibering over the projective special complex manifold 
$(\bar{M},\bar{J},\bar{\mathcal{P}}(\pi))$ with the corresponding principal $\mathbb{C}^{\ast}$-bundle $\pi : M \to \bar{M}$ and the canonical c-projective structure 
$\bar{\mathcal{P}}(\pi)$. 
If $\dim \bar{M} \geq 4$ and 
$W^{Q}=0$ for the quaternionic structure $Q=\langle I_{1},I_{2},I_{3} \rangle$ on $TM$, 
then we have $W^{ \bar{\mathcal{P}}(\pi)}=0$ and $\bar{\cal B}=0$.  
\end{theorem}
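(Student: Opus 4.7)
The plan is to reduce the hypothesis $W^{Q}=0$ to the algebraic condition $[\bar{B},\bar{B}]=0$ on $\bar{M}$, and then exploit the interaction between the symmetry of $\bar{B}$ and the anti-commutation $\bar{B}_{X}\bar{J}=-\bar{J}\bar{B}_{X}$ to force the pointwise nilpotency $\bar{B}_{X}^{2}=0$, which will yield $\bar{\cal B}=0$ and, in view of \eqref{Weyl_bar}, also $W^{\bar{\mathcal{P}}(\pi)}=0$.

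First, using the formula $W^{Q}_{X^{h_{\nabla}},Y^{h_{\nabla}}}Z^{h_{\nabla}}=-\frac{1}{4}([A_{X},A_{Y}]Z)^{h_{\nabla}}$ recalled just before the theorem, the hypothesis $W^{Q}=0$ forces $[A_{X},A_{Y}]=0$ for all $X,Y\in TM$, i.e. $[A,A]=0$. Combined with the identity $[A,A]=[B,B]$ noted in Section~\ref{subsec23}, this gives $[B,B]=0$ on $M$. Since $B_{\xi}=B_{J\xi}=0$ by Lemma~\ref{localB}, using the decomposition \eqref{BBbar:eq} one computes, for horizontal lifts of vector fields $X,Y,Z$ on $\bar{M}$,
\begin{align*}
[B_{\tilde{X}},B_{\tilde{Y}}]\tilde{Z}
=&\,([\bar{B}_{X},\bar{B}_{Y}]Z)^{\widetilde{}}
+(\bar{c}(X,\bar{B}_{Y}Z)-\bar{c}(Y,\bar{B}_{X}Z))\,\xi\\
&+(\bar{c}_{\bar{J}}(X,\bar{B}_{Y}Z)-\bar{c}_{\bar{J}}(Y,\bar{B}_{X}Z))\,J\xi .
\end{align*}
Extracting the horizontal component of $[B,B]=0$ yields $[\bar{B}_{X},\bar{B}_{Y}]=0$ for all $X,Y\in T\bar{M}$, that is $[\bar{B},\bar{B}]=0$ on $\bar{M}$.

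Next comes the algebraic core. The symmetry $\bar{B}_{X}Y=\bar{B}_{Y}X$ together with $\bar{B}_{X}\bar{J}=-\bar{J}\bar{B}_{X}$ produces the pointwise identity $\bar{B}_{\bar{J}X}=-\bar{J}\bar{B}_{X}$ as endomorphisms of $T\bar{M}$. Substituting $Y=\bar{J}X$ into the vanishing commutator,
\[
0=[\bar{B}_{X},\bar{B}_{\bar{J}X}]=[\bar{B}_{X},-\bar{J}\bar{B}_{X}]=2\bar{J}\bar{B}_{X}^{2},
\]
the last step invoking $\bar{B}_{X}\bar{J}=-\bar{J}\bar{B}_{X}$ once more. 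Invertibility of $\bar{J}$ gives $\bar{B}_{X}^{2}=0$ for every $X\in T\bar{M}$; hence $\bar{\cal B}(X,X)=\mathrm{Tr}(\bar{B}_{X}^{2})=0$ for all $X$, and polarizing the symmetric form gives $\bar{\cal B}\equiv 0$. Substituting $[\bar{B},\bar{B}]=0$ and $\bar{\cal B}=0$ into \eqref{Weyl_bar} kills all four terms and produces $W^{\bar{\mathcal{P}}(\pi)}=0$.

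The main obstacle is the algebraic observation in the third paragraph: recognizing that symmetry, anti-commutation with $\bar{J}$, and the commutativity $[\bar{B}_{X},\bar{B}_{Y}]=0$ (a consequence of the vanishing of the horizontal component of $W^{Q}$) conspire to force the endomorphisms $\bar{B}_{X}$ to be nilpotent of index two at every point. Once this is spotted, the remaining steps amount to routine bookkeeping with formulas already established in Section~\ref{subsec23}.
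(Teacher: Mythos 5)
Your proof is correct, but the mechanism by which you extract $\bar{\cal B}=0$ differs from the paper's. The paper substitutes \eqref{Weyl_bar} into the horizontal component of $W^{Q}_{\tilde{X}^{h_{\nabla}},\tilde{Y}^{h_{\nabla}}}\tilde{Z}^{h_{\nabla}}=-\frac14([B^{\alpha}_{\tilde X},B^{\alpha}_{\tilde Y}]\tilde Z)^{h_{\nabla}}$, obtaining a pointwise identity expressing $W^{\bar{\mathcal{P}}(\pi)}$ plus explicit $\bar{\cal B}$- and $\bar{\cal B}_{\bar J}$-terms equal to zero, and then kills $\bar{\cal B}$ by taking the trace over the first slot and invoking $\mathrm{Tr}\,W^{\bar{\mathcal{P}}(\pi)}_{(\,\cdot\,),Y}Z=0$; the vanishing of $W^{\bar{\mathcal{P}}(\pi)}$ then drops out of the same identity. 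You instead first isolate the purely algebraic consequence $[\bar B_X,\bar B_Y]=0$, and then exploit the identity $\bar B_{\bar JX}=-\bar J\bar B_X$ (from symmetry plus anti-commutation with $\bar J$) to get $0=[\bar B_X,\bar B_{\bar JX}]=2\bar J\bar B_X^2$, hence $\bar B_X^2=0$ and $\bar{\cal B}=0$ by polarization, after which \eqref{Weyl_bar} gives $W^{\bar{\mathcal{P}}(\pi)}=0$ immediately. Both arguments are sound and of comparable length. Your route has the merit of exposing a purely pointwise linear-algebra fact — for a symmetric $(1,2)$-tensor anti-commuting with $\bar J$, the condition $[\bar B,\bar B]=0$ already forces $\bar B_X\bar B_Y=0$ — which is independent of c-projective geometry and in particular does not use the trace-freeness of the Weyl tensor; the paper's route keeps the computation inside the curvature identity relating $W^{Q}$ on horizontal lifts to $W^{\bar{\mathcal{P}}(\pi)}$, which is of independent interest. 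One cosmetic remark: you suppress the index $\alpha$ on $\bar B$ and $\bar c$; this is harmless since the argument is local and the quantities you conclude about, $[\bar B^{\alpha},\bar B^{\alpha}]$ and $\bar{\cal B}$, are globally defined.
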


\begin{proof}
Take an open covering
$\{ \bar{U}_{\alpha} \}_{\alpha \in \Lambda}$ of 
$\bar{M}$ with trivializations $\pi^{-1}(\bar{U}_{\alpha})\cong \bar{U}_{\alpha}\times \mathbb{C}^*$. 
Let $X$, $Y$, $Z$ be vector fields on $\bar{M}$. 
We denote the horizontal lift of $X\in \Gamma (T\bar M)$ by $\tilde{X}\in \Gamma (TM)$ with respect to 
a principal connection on $M$ of type $(1,0)$ and note that the c-projective structure 
$\bar{\mathcal{P}}(\pi)$ 
is independent of the choice of the connection form as shown in Theorem \ref{can_c_pro}.  
On any $\bar{U}_{\alpha}$,  we have 
\begin{align*}
  &W^{Q}_{\tilde{X}^{h_{\nabla}},\tilde{Y}^{h_{\nabla}}  } \tilde{Z}^{h_{\nabla}} 
=-\frac{1}{4}([A_{\tilde{X}}, A_{\tilde{Y}}] \tilde{Z} )^{h_{\nabla}} 
=-\frac{1}{4}([B^{\alpha}_{\tilde{X}}, B^{\alpha}_{\tilde{Y}}] \tilde{Z} )^{h_{\nabla}} \\
=&\bigg[ \left( -\frac{1}{4} [\bar{B}^{\alpha}_{X}, \bar{B}^{\alpha}_{Y}]Z \right)^{\widetilde{}} 
-\frac{1}{4} \left( \bar{c}^{\alpha}(X,\bar{B}^{\alpha}_{Y}Z) 
-\bar{c}^{\alpha} (Y,\bar{B}^{\alpha}_{X}Z) \right) \xi \\
&-\frac{1}{4} \left( \bar{c}^{\alpha}_{\bar{J}}(X,\bar{B}^{\alpha}_{Y}Z) 
-\bar{c}^{\alpha}_{\bar{J}} (Y,\bar{B}^{\alpha}_{X}Z) \right) Z_{M} \bigg]^{h_{\nabla}}  \\
=& \bigg[ \bigg( W^{{\bar{\mathcal{P}}(\pi)}}_{X,Y}Z
+\frac{1}{4(n+1)} \bar{\cal B}_{\bar{J}}(X,Y) \bar{J}Z 
-\frac{1}{8(n+1)} (\bar{\cal B}(X,Z)Y-\bar{\cal B}(Y,Z)X) \\
&+\frac{1}{8(n+1)} (\bar{\cal B}_{\bar{J}} (X,Z)\bar{J}Y-\bar{\cal B}_{\bar{J}} (Y,Z)\bar{J}X) 
\bigg)^{\widetilde{}}  \\
&-\frac{1}{4} \left( \bar{c}^{\alpha}(X,\bar{B}^{\alpha}_{Y}Z) 
-\bar{c}^{\alpha} (Y,\bar{B}^{\alpha}_{X}Z) \right) \xi 
-\frac{1}{4} \left( \bar{c}^{\alpha}_{\bar{J}}(X,\bar{B}^{\alpha}_{Y}Z) 
-\bar{c}^{\alpha}_{\bar{J}} (Y,\bar{B}^{\alpha}_{X}Z) \right) Z_{M} \bigg]^{h_{\nabla}}, 
\end{align*}
where we have used \eqref{BBbar:eq} and \eqref{Weyl_bar}.
Since the quaternionic structure $Q$ on $TM$ is flat, in particular, the horizontal component (with respect to the principal connection) of the term inside of 
$[ \,\,\,\,\, ]^{h_{\nabla}}$ 
in the above equation vanishes: 
\begin{align*}
W^{\bar{\mathcal{P}} (\pi) }_{X,Y}Z
+\frac{1}{4(n+1)} \bar{\cal B}_{\bar{J}}(X,Y) \bar{J}Z 
&-\frac{1}{8(n+1)} (\bar{\cal B}(X,Z)Y-\bar{\cal B}(Y,Z)X) \\
&+\frac{1}{8(n+1)} (\bar{\cal B}_{\bar{J}} (X,Z)\bar{J}Y-\bar{\cal B}_{\bar{J}} (Y,Z)\bar{J}X)=0. 
\end{align*}
Since the c-projective Weyl curvature $W^{\bar{\mathcal{P}}(\pi)}$ satisfies 
$\mathrm{Tr}\, W^{ \bar{\mathcal{P}}(\pi)}_{(\,\, \cdot \,\,),Y}Z=0$, we have 
\begin{align*}
0=\mathrm{Tr}\, W^{ \bar{\mathcal{P}}(\pi)}_{(\,\, \cdot \,\,),Y}Z+\frac{1}{4}\bar{\cal B}(Y,Z)=\frac{1}{4}\bar{\cal B}(Y,Z).
\end{align*}
Then we obtain $W^{ \bar{\mathcal{P}}(\pi)}=0$ and  $\bar{\cal B}=0$. This completes the proof.
\end{proof}

\begin{remark}
{\rm 
The flatness $W^{Q}=0$ of the quaternionic structure on $TM$ is equivalent to 
$R^{\tilde{\nabla}^{0}}=0$. }
\end{remark}

If  $M$ is a special K{\"a}hler manifold, the following corollary holds. 

\begin{corollary}
Let $(M,g,J,\omega,\nabla,\xi)$ be a conical special K{\"a}hler manifold. 
If $W^{Q}=0$ for the quaternionic structure $Q=\langle I_{1},I_{2},I_{3} \rangle$ on $TM$, then 
the projective special K{\"a}hler manifold $(\bar{M},\bar{J},\bar{g})$ is of negative constant holomorphic sectional curvature $-4$ and the conical special K{\"a}hler manifold $M$ is trivial.  
\end{corollary}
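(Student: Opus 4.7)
The plan is to bootstrap Theorem \ref{flatness_q_c} using the positivity provided by the K\"ahler metric. First I would apply Theorem \ref{flatness_q_c} (assuming $\dim \bar M \geq 4$; the surface case can be handled by an analogous direct argument in view of Section \ref{dim_2:sec}) to conclude $\bar{\cal B} = 0$ and $W^{\bar{\mathcal{P}}(\pi)} = 0$. The second step is to upgrade the tensorial identity $\bar{\cal B}=0$ to the pointwise vanishing of every $\bar B^\alpha$.

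As recorded in Section \ref{SpKah}, in the projective special K\"ahler setting the horizontal endomorphism $B_{\tilde X}=JA_{\tilde X}$ is $g$-symmetric (both $\nabla$ and the Levi-Civita connection preserve $\omega$, and $JA$ anti-commutes with $J$); equivalently $\bar B^\alpha$ is $\bar g$-symmetric. Hence $\bar{\cal B}(X,X)=\mathrm{Tr}(B_{\tilde X}^2)\geq 0$ with equality iff $B_{\tilde X}=0$. Since $\bar{\cal B}=0$ and $\bar g$ is positive definite, we obtain $\bar B^\alpha=0$ on every $\bar U_\alpha$, and via \eqref{AB:eq} this forces $A=\nabla J=0$ on all of $M$, so the conical special K\"ahler manifold is trivial.

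With $A=0$ the curvature formula \eqref{curv} for the projected connection reduces, using $\bar a=\bar g$ from \eqref{metric_fund}, $\bar a^a=0$, and $\bar a_{\bar J}=\bar\omega$, to
\[
R^{D^{(\omega)}} = -2\bar\omega\otimes\bar J + \bar g\wedge \mathrm{Id} - \bar\omega\wedge \bar J,
\]
which is the negative of the Fubini--Study curvature form identified in the remark after Corollary \ref{proj_kaehler_char}; this is exactly the curvature of a K\"ahler metric of constant holomorphic sectional curvature $-4$, as already noted at the end of Section \ref{SpKah}. The only nontrivial step is the passage from $\bar{\cal B}=0$ to $\bar B^\alpha=0$, which rests crucially on the positive-definiteness of $\bar g$ and fails in the general special complex case treated in Theorem \ref{flatness_q_c}; everything else is a direct substitution into previously derived formulas.
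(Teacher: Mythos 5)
Your argument follows the same route as the paper's proof: apply Theorem \ref{flatness_q_c} to obtain $\bar{\mathcal B}=0$ and $W^{\bar{\mathcal P}(\pi)}=0$, use the positive definiteness of $\bar g$ to upgrade $\bar{\mathcal B}=0$ to $\bar B^{\alpha}=0$, and read off constant holomorphic sectional curvature $-4$ from \eqref{curv}. Two points need patching, however. First, a notational slip: $B_{\tilde X}$ is not $JA_{\tilde X}$ (one has $B=e^{2\theta J}A$, whereas $JA_{\tilde X}=2(\nabla-D)_{\tilde X}$ is the $g$-symmetric endomorphism); what you actually need is the chain $\bar{\mathcal B}(X,X)=\mathrm{Tr}(B_{\tilde X}^{2})=\mathrm{Tr}(A_{\tilde X}^{2})=\mathrm{Tr}\bigl((JA_{\tilde X})^{2}\bigr)\ge 0$, which is exactly how the paper phrases it around \eqref{ineq_321}. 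Second, and more substantively, $\bar B^{\alpha}=0$ does not by itself give $B^{\alpha}=0$: by \eqref{BBbar:eq} the tensor $B^{\alpha}$ also has vertical components governed by $\bar c^{\alpha}$, and \eqref{AB:eq} only converts $B^{\alpha}=0$ (not $\bar B^{\alpha}=0$) into $A=0$. You must invoke the fact that every conical special K\"ahler manifold is of horizontal type with respect to $\varphi$ (Section \ref{SpKah}), i.e.\ $\bar c^{\alpha}=0$; the paper's proof states this step explicitly.

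Finally, the surface case, which you defer to an ``analogous direct argument,'' is genuinely outside the scope of Theorem \ref{flatness_q_c}, since the c-projective Weyl tensor is not defined for $\dim\bar M=2$. The paper treats it separately: from $W^{Q}=0$ one gets $[\bar B^{\alpha},\bar B^{\alpha}]=0$ directly, then \eqref{curv} yields the constant-curvature form of $R^{\bar D^{\bar g}}$, and taking the Ricci-type trace of $[\bar B^{\alpha},\bar B^{\alpha}]=0$ (using the symmetry of $\bar B^{\alpha}$ and the vanishing of the trace of any endomorphism anticommuting with $\bar J$) recovers $\bar{\mathcal B}=0$ and hence triviality. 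You should either carry out this computation or restrict your statement to $\dim\bar M\ge 4$.
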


\begin{proof}
If $\dim \bar{M} \geq 4$, we have $\bar{\cal B}=0$ by Theorem \ref{flatness_q_c}. 
The vanishing of $\bar{\cal B}$ implies that 
equality holds in \eqref{ineq_321}, thus indicating that 
$\bar{g}$ has constant holomorphic sectional 
curvature $-4$. 
Moreover $\bar{\cal B}=0$ means that $\bar{B}^{\alpha}=0$ for all $\alpha \in \Lambda$. 
Since a conical special K{\"a}hler manifold is of horizontal type, which means $\bar{c}^{\alpha}=0$, 
then we also have $B^{\alpha}=0$ for all $\alpha \in \Lambda$. 
This shows that $A=0$, hence $M$ is trivial. 
If $\dim \bar{M}=2$, we obtain 
$[\bar{B}^{\alpha}, \bar{B}^{\alpha}]=0$ 
in a similar calculation as above. From \eqref{curv}, we have 
\[ 
R^{\bar{D}^{\bar{g}}}=
 - 2 \bar{g}_{\bar{J}}  \otimes \bar{J} 
+ \bar{g} \wedge \mathrm{Id} - \bar{g}_{\bar{J}} \wedge \bar{J}, \] 
which means that $\bar{g}$ has the constant Gaussian curvature $-4$. 
Since $[\bar{B}^{\alpha},\bar{B}^{\alpha}]=0$, taking its trace shows that $\bar{\cal B}=0$. 
Therefore $M$ is trivial. 
\end{proof}

\begin{remark}
{\rm 
It is known that a K{\"a}hler metric  
whose Levi-Civita connection is\linebreak[4] c-projectively flat  
has a constant holomorphic sectional curvature (\cite{CEMN}). 
}
\end{remark}

\noindent
{\bf Acknowledgments.} 
Research by the first author is partially funded by the Deutsche Forschungsgemeinschaft (DFG, German Research 
Foundation) under Germany's
Excellence Strategy -- EXC 2121 Quantum Universe -- 390833306 
and under SFB-Gesch\"afts\-zeichen 1624 -- Projektnummer 506632645.  
The second author's research is partially supported by 
JSPS KAKENHI Grant Number 18K03272.


\noindent
Vicente Cort{\' e}s \\
Department of Mathematics \\
and Center for Mathematical Physics \\
University of Hamburg \\
Bundesstra\ss e 55, \\
D-20146 Hamburg, Germany. \\ 
email:vicente.cortes@uni-hamburg.de \\

\noindent
Kazuyuki Hasegawa \\
Faculty of teacher education \\
Institute of human and social sciences \\
Kanazawa university \\
Kakuma-machi, Kanazawa, \\
Ishikawa, 920-1192, Japan. \\
e-mail:kazuhase@staff.kanazawa-u.ac.jp


\begin{thebibliography}{99}

\bibitem{AH}
N. Abe and K. Hasegawa, 
An affine submersion with horizontal distribution and its applications, 
Differential Geom. Appl. 14 (2001), 235-250.


\bibitem{AC}
D. Alekseevsky and V. Cort{\' e}s,  
Classification of stationary compact homogeneous special pseudo K\"ahler manifolds of semisimple groups,
Proc. London Math. Soc. (3) 81 (2000), 211-230.




\bibitem{ACD}
D. Alekseevsky, V. Cort{\' e}s and C. Devchand,  
Special complex manifolds, 
J. Geom. Phys. 42 (2002), 85-105.
%

\bibitem{ACDM}
D. Alekseevsky, V. Cort{\' e}s, M. Dyckmanns and T. Mohaupt, 
Quaternionic K{\"a}hler metrics associated 
with special K{\"a}hler manifolds, 
J. Geom. Phys. 92 (2015), 271--287.

\bibitem{ACM} 
D. Alekseevsky, V. Cort{\' e}s and T. Mohaupt, 
Conification of K\"ahler and hyper-K\"ahler manifolds, 
Comm. Math. Phys. 324 (2013), 637--655. 

\bibitem{AM}
D. Alekseevsky and S. Marchiafava,
Quaternionic structures on a manifold and subordinated structures, 
Ann. Mat. Pura Appl. (4) 171 (1996), 205--273. 

\bibitem{Bl}
D. Blair, 
{\it Riemannian geometry of contact and symplectic manifolds}, 
Second edition. Progress in Mathematics, 203. 
Birkh{\"a}user Boston, Ltd., 2010.

\bibitem{BC}
A. Bor{\' o}wka and D. Calderbank, 
Projective geometry and the quaternionic Feix-Kaledin construction, 
Trans. Amer. Math. Soc. 372 (2019), 4729-4760. 


\bibitem{CEMN}
D. Calderbank, M. Eastwood, V. S. Matveev and K. Neusser, 
{\it C-projective geometry}, Mem. Amer. Math. Soc. 267 (2020).


\bibitem{CFG} 
S. Cecotti, S. Ferrara and L. Girardello, 
Geometry of type II superstrings and the moduli
of superconformal field theories, Journal of Modern Physics A, 4(10) (1989),  2475--2529.



\bibitem{CH}
V. Cort{\' e}s and K. Hasegawa, 
The H/Q-correspondence and a generalization of the supergravity c-map, 
Tohoku Math. J. (2) 76 (2024), 255-292.

\bibitem{F}
B. Feix, 
Hyperk{\"a}hler metrics on cotangent bundles, 
J. Reine Angew. Math. 532 (2001), 33–-46.


\bibitem{FS}
S. Ferrara and S. Sabharwal, 
Quaternionic manifolds for type II superstring vacua of Calabi Yau spaces, 
Nuclear Physics B, 332(2) (1990),  317--332.



\bibitem{Freed}
D. S. Freed, 
Special K{\"a}hler manifolds, 
Comm. Math. Phys. 203 (1999), 31-52. 



\bibitem{Haydys}
A. Haydys, 
Hyper-K{\"a}hler and quaternionic K{\"a}hler manifolds 
with $S^{1}$-symmetries, 
J. Geom. Phys. 58 (2008), 293--306.


\bibitem{Hitchin}
N. Hitchin, 
On the hyperk{\"a}hler/quaternion K{\"a}hler correspondence, 
Comm. Math. Phys. 324 (2013), 77--106, 


\bibitem{Ishi}
S. Ishihara, 
Holomorphically projective changes and their groups in an almost complex manifold, 
Tohoku Math. J. (2) 9 (1957), 273-297. 

\bibitem{K}
D. Kaledin, 
A canonical hyperk{\"a}hler metric 
on the total space of a cotangent bundle, 
Quaternionic structures in mathematics and physics (Rome, 1999), 
195--230, Univ. Studi Roma "La Sapienza'', Rome, 1999. 

\bibitem{Kob}
S. Kobayashi, Principal fibre bundles with the  $1$-dimensional toroidal group, 
Tohoku Math. J. (2) 8 (1956), 29–45.


\bibitem{NS}
K. Nomizu and T. Sasaki, Affine Differential Geometry, 
Cambridge University Press (1994).  


\bibitem{MS}
O. Macia and A. Swann, 
Twist geometry of the $c$-map, 
Commun. Math. Phys. 336 (2015), 1329-1357.


\bibitem{MS1}
O. Macia and A. Swann, 
The c-map on groups, 
Classical Quantum Gravity 37 (2020), 015015, 17pp.



\bibitem{M}
M. Mantegazza, Construction of projective special K{\"a}hler  manifolds, 
Ann. Mat. Pura Appl. (4) 200 (2021), 2645–2687. 


\bibitem{O}
M. Obata, 
Affine connections on manifolds with almost complex, quaternion or 
Hermitian structure, 
Jap. J. Math., 26 (1956), 43--79.

\end{thebibliography}
\end{document}